\DeclareMathOperator{\Sp}{Sp}
\DeclareMathOperator{\Op}{Op}
\DeclareMathOperator{\inv}{inv}
\DeclareMathOperator{\Ker}{Ker}
\DeclareMathOperator{\Tr}{Tr}
\DeclareMathOperator{\supp}{supp}
\DeclareMathOperator{\diag}{diag}
\DeclareMathOperator{\rank}{rank}
\DeclareMathOperator{\card}{card}
\DeclareMathOperator{\Span}{span}
\newcommand{\iu}{{i\mkern1mu}}
\numberwithin{equation}{section}
\theoremstyle{plain}
\newtheorem{thm}{Theorem}[section]
\newtheorem{claim}[thm]{Claim}
\newtheorem{prop}[thm]{Proposition}
\newtheorem{cor}[thm]{Corollary}
\newtheorem{lem}[thm]{Lemma}
\theoremstyle{definition}
\newtheorem{defn}[thm]{Definition}
\newtheorem{rem}[thm]{Remark}
\newtheorem{ex}[thm]{Example}
\newcommand{\inlinemaketitle}{{\let\newpage\relax\maketitle}}
\newcommand{\be}{\begin{equation}}
\newcommand{\ee}{\end{equation}}
\newenvironment{psmallmatrix}
  {\left(\begin{smallmatrix}}
	    {\end{smallmatrix}\right)}
\def\eps{\varepsilon}
\def\dualSU2{\frac12\NN_0}
\def\RR{{\mathbb R}}
\def\HH{{\mathbb H}}
\def\ZZ{{\mathbb Z}}
\def\Gh{{\widehat{G}}}
\def\dpi{{d_\pi}}
\def\HS{{\mathtt{HS}}}
\def\op{{\mathtt{op}}}
\def\TT{{\mathbb T}}
\def\NN{{\mathbb N}}
\def\C{{\mathbb C}}
\def\FT{{\mathcal F}}
\def\M{\mathcal{M}}
\def\Rcal{{\mathcal R}}
\def\sp{{\rm sp}}
\def\H{\mathcal{H}}
\def\jp#1{{\left\langle{#1}\right\rangle}}
\def\VN{{\rm VN}}
\def\SU2{{\rm SU(2)}}
\begin{document}
\title[$L^p$-$L^q$ Multipliers on locally compact groups]
{$L^p$-$L^q$ Multipliers on locally compact groups}
%{Spectral and Fourier multipliers on locally compact groups}

\author[Rauan Akylzhanov]{Rauan Akylzhanov}
\address{Rauan Akylzhanov:
  \endgraf
  Department of Mathematics
  \endgraf
  Imperial College London
  \endgraf
  180 Queen's Gate, London SW7 2AZ
  \endgraf
  United Kingdom
  \endgraf
  {\it E-mail address} {\rm r.akylzhanov14@imperial.ac.uk}
}

\author[Michael Ruzhansky]{Michael Ruzhansky}
\address{
  Michael Ruzhansky:
  \endgraf
  Department of Mathematics
  \endgraf
  Imperial College London
  \endgraf
  180 Queen's Gate, London SW7 2AZ
  \endgraf
  United Kingdom
  \endgraf
  {\it E-mail address} {\rm m.ruzhansky@imperial.ac.uk}
  }

\thanks{The second
 author was supported by the EPSRC Grant EP/K039407/1 and by the Leverhulme Grant RPG-2014-02. No new data was collected or
generated during the course of research.}
\date{\today}

\subjclass[2010]{Primary 43A85; 43A15; Secondary 35S05;}
\keywords{Hausdorff-Young-Paley inequalities, locally compact groups, Fourier multipliers,
spectral multipliers, H\"ormander theorem, Lizorkin theorem}

\maketitle
\begin{abstract}
	In this paper we discuss the $L^p$-$L^q$ boundedness of both spectral and Fourier multipliers
	on general locally compact separable unimodular groups $G$ for
	the range  $1<p\leq q<\infty$. We prove a Lizorkin type multiplier theorem for $1<p\leq q<\infty$, and then refine it as a H\"ormander type multiplier theorem for $1<p\leq 2\leq q<\infty$.
	In the process, we establish versions of Paley, Nikolsky and  
	Hausdorff-Young-Paley inequalities on 
	general locally compact separable unimodular groups. 
	As a consequence of the H\"ormander type multiplier theorem we derive a spectral multiplier theorem on general locally compact separable unimodular groups. We then apply it to  obtain embedding theorems as well as time-asymptotics for the $L^p$-$L^q$ norms of the heat kernels for general positive unbounded invariant operators on $G$.
	We illustrate the obtained results for sub-Laplacians on compact Lie groups and on the Heisenberg group. We show that our results imply the known results for $L^p$-$L^q$ multipliers such as H\"ormander's Fourier multiplier theorem 
	on $\RR^{n}$ or known results for Fourier 
	multipliers on compact Lie groups. The new approach developed in this paper relies on the analysis in the group von Neumann algebra for the derivation of the desired multiplier theorems.
\end{abstract}

%\tableofcontents

\section{Introduction}

The aim of this paper is to give sufficient conditions for the $L^p$-$L^q$ boundedness of Fourier and spectral multipliers  on  locally compact separable unimodular groups. It is known that in this case we must have $p\leq q$ and two classical results are available on $\RR^n$, namely, H\"ormander's multiplier theorem \cite{Hormander:invariant-LP-Acta-1960} for $1<p\leq 2 \leq q<\infty$, and Lizorkin's multiplier theorem \cite{Lizorkin1967} for $1<p\leq q<\infty$. There is a philosophical difference between these results: H\"ormander's theorem does not require any regularity of the symbol and applies to $p$ and $q$ separated by $2$, while Lizorkin theorem applies also for $1<p\leq q\leq 2$ and $2\leq p \leq q<\infty$ but imposes certain regularity conditions on the symbol. 

In this paper we are able to prove versions of these theorems on general locally compact separable unimodular groups based on developing a new approach relying on the analysis in the noncommutative Lorentz spaces on the group von Neumann algebra. This suggested approach seems very effective, implying as special cases known results expressed in terms of symbols, in settings when the symbolic calculus is available. The obtained results are for general Fourier multipliers, in particular also implying new results for spectral multipliers.

The class of groups covered by our analysis is very wide. In particular, it contains abelian, compact, nilpotent groups, exponential, real algebraic or semi-simple Lie groups, solvable groups (not all of which are type I, but we do not need to assume the group to be of type I or II), and many others. As far as we are aware our results are new in all of these non-Euclidean settings.

In this paper we focus on the $L^p$-$L^q$ multipliers as opposed to the $L^p$-multipliers when theorems of Mihlin-H\"ormander or Marcinkiewicz types provide results for both Fourier and spectral multipliers in some settings, based on the regularity of the multiplier.  $L^p$-multipliers have been intensively studied on different kinds of groups, however, mostly $L^p$ spectral multipliers, for which a wealth of results is available: e.g. \cite{MS-1994,MRS-1995} on Heisenberg type groups, \cite{CM-1996} on solvable Lie groups, \cite{MT-2007} 
%on affine groups, \cite{Ch-1991,MM-2016} 
on nilpotent and stratified groups, to mention only very very few.
$L^p$ Fourier multiplies have been also studied but to a lesser extent due to lack of symbolic calculus that was not available until recently, e.g. Coifman and Weiss \cite{Coifman-Weiss:SU2-Argentina-1970,coifman+weiss_lnm} on SU(2), \cite{RuWi2013,RuWi2015} and then \cite{Fischer-Lp} on compact Lie groups, or \cite{Fischer-Ruzhansky:FM-graded,CardonaRuzhansky2016} on graded Lie groups.
A characteristic feature of the $L^p$-$L^q$ multipliers is that less regularity of the symbol is required. Therefore, in this paper we concentrate on the $L^p$-$L^q$ multiplier theorems, however aiming at obtaining unifying results for general locally compact groups. We give several short applications of the obtained results to questions such as embedding theorems and dispersive estimates for evolution PDEs.

The approach to the $L^{p}$-Fourier multipliers 
is different from the technique proposed in this paper
allowing us to avoid making an assumption that the group is compact or nilpotent. 
In this paper we are interested in both Fourier multipliers and spectral multipliers, for the latter some $L^p$-$L^q$ results being available in some special settings, see e.g. \cite{Cowling-Giulini-Meda:DMJ-1993}, and also
\cite{Cowling:PhD}, as well as \cite{ANR2016} for the case of SU(2),
and for the discussion of some relations between those in the group setting we can refer to
\cite{RuWi2015} and references therein.
Fourier multipliers in the context of group von Neumann algebras have been studied in \cite{GJungeParcet2015}.
By the combinatorial method it is possible to establish the $L^p$-$L^q$ estimates for the Poisson-type semigroup $\mathcal{P}_t$ on discrete groups $G$ \cite{Junge2013}.
Finally we note that multiplier estimates on
noncommutative groups are in general considerably more delicate than those in the commutative
case, recall e.g. the asymmetry problem and its resolution in
\cite{Dooley-Gupta-Ricci:assymetry-JFA-2000}. A link between Fourier multipliers and Lorentz spaces on group von Neumann algebras has been outlined in \cite{Akylzhanov2016}.

We now proceed to making a more specific description of the considered problems.

\subsection{H\"ormander's theorem on locally compact groups}
To put this 
in context, we recall that in \cite[Theorem 1.11]{Hormander:invariant-LP-Acta-1960}, 
Lars H\" ormander has shown that for $1<p\leq 2 \leq q<\infty$, if the symbol $\sigma_A\colon \RR^n\to \C$ of a Fourier multiplier $A$ on $\RR^n$ satisfies the condition
\begin{equation}\label{EQ:Horm}
\sup_{\substack{s>0}}
s\left(\,\int\limits_{\substack{\xi\in\RR^n\colon |\sigma_A(\xi)|\geq s}}d\xi\right)^{\frac1p-\frac1q}
<+\infty,
\end{equation}
then $A$ is a bounded operator from $L^p(\RR^n)$ to $L^q(\RR^n)$. 
Here, as usual, the Fourier multiplier $A$ on $\RR^n$ acts by multiplication on the Fourier transform side,
i.e. 
\begin{equation}\label{EQ:Rn-FM}
\widehat{Af}(\xi)=\sigma_{A}(\xi)\widehat{f}(\xi), \quad \xi\in\RR^{n}.
\end{equation} 
Moreover, it then follows that
\begin{equation}
\label{EQ:Hormander-estimate}
\|A\|_{L^p(\RR^n)\to L^q(\RR^n)}
\lesssim
\sup_{\substack{s>0}}s
\left(
\int\limits_{\substack{\xi\in\RR^n\\ |\sigma_A(\xi)|\geq s}}d\xi
\right)^{\frac1p-\frac1q},\quad 1<p\leq 2 \leq q <+\infty.
\end{equation}
The $L^{p}$-$L^{q}$ boundedness of Fourier multipliers has been also recently
investigated in the context of compact Lie groups, and we now briefly recall the result.
Let $G$ be a compact Lie group and $\Gh$ its unitary dual. For $\pi\in\Gh$, we write $\dpi$ for the dimension of the (unitary irreducible) representation $\pi$.
In \cite{ANRNotes2016}
%\cite[Theorem 3.1]{ANR2015} (for a compact Lie group as a special case of a compact homogeneous manifold) 
the authors have shown that, for a Fourier multiplier $A$ acting via  $$\widehat{Af}(\pi)=\sigma_A(\pi)\widehat{f}(\pi)$$ by its global symbol $\sigma_A(\pi)\in \C^{\dpi\times\dpi}$ we have
\begin{equation}\label{EQ:comp}
\|A\|_{L^p(G)\to L^q(G)}
\lesssim
\sup_{s>0}
s
\left(
\sum\limits_{\substack{\pi\in\Gh\\ \|\sigma_A(\pi)\|_{\op}\geq s}}d^2_{\pi}
\right)^{\frac1p-\frac1q},\quad 1<p\leq 2 \leq q \leq \infty.
\end{equation}
Here for $\pi\in\Gh$, the Fourier coefficients are defined as 
$$\widehat{f}(\pi)=\int_G f(x)\pi(x)^* dx,$$
and $\|\sigma_A(\pi)\|_{\op}$ is the operator norm of $\sigma_A(\pi)$ as the linear transformation of the representation space of $\pi\in\Gh$ identified with $\C^\dpi$.
For a general development of global symbols and the corresponding global quantization
of pseudo-differential operators on compact Lie groups we can refer to
\cite{Ruzhansky+Turunen-IMRN,RT}.

One of the results of this paper %(Theorem \ref{THM:upper-bound})
generalises both multiplier theorems \eqref{EQ:Hormander-estimate} and 
\eqref{EQ:comp} 
to the setting of general locally compact separable unimodular groups $G$.

By a {\em left Fourier multiplier in the setting of general locally compact unimodular groups we will mean left invariant operators that are measurable with respect to the right group von Neumann algebra $\VN_R(G)$}, see Section \ref{SEC:FM-def} for a discusson.

Thus, in Theorem \ref{THM:upper-bound} we prove the following inequality
\begin{equation}
\label{EQ:upper-estimate-intro}
\|A\|_{L^p(G)\to L^q(G)}
\lesssim
\sup_{\substack{s>0}}s
\left[
\int\limits_{\substack{t\in\RR_+\colon \mu_t(A)\geq s}}dt
\right]^{\frac1p-\frac1q},\quad 1<p\leq 2 \leq q<+\infty,
\end{equation}
where $\mu_t(A)$ are the $t$-th generalised singular values of $A$, see \cite{ThierryKosaki1986} (and also Definition \ref{DEF:mu-t}) for definition and properties.
The proof of inequality \eqref{EQ:upper-estimate-intro} is based on a version of the Hausdorff-Young-Paley inequality on locally compact separable groups that we establish for this purpose. 

The key idea behind this extension is that H\"ormander's theorem \eqref{EQ:Hormander-estimate} can be reformulated as
\begin{equation}
\label{EQ:Hormander-estimate-new}
\|A\|_{L^p(\RR^n)\to L^q(\RR^n)}
\lesssim
\sup_{\substack{s>0}}s
\left(
\int\limits_{\substack{\xi\in\RR^n\\ |\sigma_A(\xi)|\geq s}}d\xi
\right)^{\frac1p-\frac1q}\simeq \|\sigma_A\|_{L^{r,\infty}(\RR^n)}\simeq \|A\|_{L^{r,\infty}(\VN(\RR^n))},
\end{equation}
where $\frac1r=\frac1p-\frac1q$, $\|\sigma_A\|_{L^{r,\infty}(\RR^n)}$ is the Lorentz space norm of the symbol $\sigma_A$, and $\|A\|_{L^{r,\infty}(\VN(\RR^n))}$
is the norm of the operator $A$ in the Lorentz space on the group von Neumann algebra $\VN(\RR^n)$ of $\RR^n$. In turn, our estimate \eqref{EQ:upper-estimate-intro} is equivalent to the estimate
\begin{equation}
\label{EQ:Hormander-estimate-new-G}
\|A\|_{L^p(G)\to L^q(G)}
\lesssim
 \|A\|_{L^{r,\infty}(\VN_R(G))}
 \simeq
\sup_{\substack{s>0}}s
\left[
\int\limits_{\substack{t\in\RR_+\colon \mu_t(A)\geq s}}dt
\right]^{\frac1r},
\end{equation}
where $\|A\|_{L^{r,\infty}(\VN_R(G))}$
is the norm of the operator $A$ in the noncommutative Lorentz space on the right group von Neumann algebra $\VN_R(G)$ of $G$. Thus, the Lorentz spaces become a key point for the extension of H\"ormander's theorem to the setting of locally compact (unimodular) groups.

\smallskip
In Remark \ref{REM:AR-H} and Proposition \ref{PROP:comparison} we show that
the multiplier theorem \eqref{EQ:upper-estimate-intro} implies both
\eqref{EQ:Hormander-estimate} and 
\eqref{EQ:comp} in the respective settings of $\RR^{n}$ and compact Lie groups.

%The latter inequality is formulated in Theorem \ref{THM:HYP-LCG} and proved later in Section \ref{SEC:proofs}. In Section \ref{SEC:main} we recall some von Neumann algebras constructions that become instrumental in our proof and simplify the exposition.

We assume for simplicity that $G$ is unimodular but we do not make assumption that  $G$ is either of type I or type II. The assumption for the locally compact group to be separable and unimodular may be viewed as
natural allowing one to use basic results of von Neumann-type 
Fourier analysis, such as, for example,
Plancherel formula (see Segal \cite{Segal1950}).
However, the unimodularity assumption may be in principle avoided, see e.g.
\cite{Duflo-Moore:JFA-1976}, but the exposition becomes much more technical.
For a more detailed discussion of pseudo-differential operators in such settings
we refer to \cite{MR2015}, but we note that compared to the analysis there in this paper we do not need to assume that
the group is of type I.

\subsection{Spectral multipliers on locally compact groups}

Let us illustrate the use of the Fourier multiplier theorem \eqref{EQ:upper-estimate-intro} in the important case of spectral multipliers on locally compact groups. Later, in Theorem \ref{THM:varphi-L} we will give a spectral multiplier result on general semifinite von Neumann algebras, however, we now formulate its special case for the case of group von Neumann algebras associated to locally compact groups.

Interestingly, this result asserts that the $L^p$-$L^q$ norms of spectral multipliers $\varphi(|\mathcal{L}|)$ depend essentially only on the rate of growth of traces of spectral projections of the operator $|\mathcal L|$:

\begin{thm}
\label{THM:varphi-L-intro}
Let $G$ be a locally compact separable unimodular group and let 
$\mathcal{L}$ be a left Fourier multiplier on $G$.
Assume that $\varphi$ is a monotonically decreasing continuous function on $[0,+\infty)$ such that
\begin{eqnarray*}
\label{EQ:phi-normalization}
\varphi(0)=1,
\\
\label{EQ:phi-empty-energy}
\lim_{u\to+\infty}\varphi(u)=0.
\end{eqnarray*}
Then we have the inequality
\begin{equation}\label{EQ:lplqest1}
\|\varphi(|\mathcal{L}|)\|_{L^p(G)\to L^q(G)}
\lesssim
\sup_{u>0}\varphi(u)
\left[\tau(E_{(0,u)}(|\mathcal{L}|))\right]^{\frac1p-\frac1q},\quad 1<p\leq 2 \leq q<\infty.
\end{equation}
\end{thm}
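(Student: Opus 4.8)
The plan is to derive Theorem \ref{THM:varphi-L-intro} as a direct corollary of the Fourier multiplier theorem \eqref{EQ:upper-estimate-intro}, by computing the right-hand side of that inequality for the operator $A=\varphi(|\mathcal L|)$ explicitly in terms of the spectral data of $|\mathcal L|$. The first step is to observe that since $\mathcal L$ is a left Fourier multiplier, so is $\varphi(|\mathcal L|)$ (functional calculus of an operator affiliated to $\VN_R(G)$ stays affiliated to $\VN_R(G)$), so that \eqref{EQ:upper-estimate-intro} is applicable and we get
\[
\|\varphi(|\mathcal L|)\|_{L^p(G)\to L^q(G)}
\lesssim
\sup_{s>0}s\left[\int_{\{t\in\RR_+\colon \mu_t(\varphi(|\mathcal L|))\geq s\}}dt\right]^{\frac1p-\frac1q}
=\|\varphi(|\mathcal L|)\|_{L^{r,\infty}(\VN_R(G))},
\]
where $\frac1r=\frac1p-\frac1q$, using the identification in \eqref{EQ:Hormander-estimate-new-G}.

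The second step is the key computation: express the generalised singular value function $t\mapsto \mu_t(\varphi(|\mathcal L|))$ in terms of $\varphi$ and the distribution function $u\mapsto \tau(E_{(0,u)}(|\mathcal L|))$ of $|\mathcal L|$. Because $\varphi$ is continuous and monotonically decreasing from $\varphi(0)=1$ to $\lim_{u\to\infty}\varphi(u)=0$, the functional calculus gives $\varphi(|\mathcal L|)\geq 0$ with $\|\varphi(|\mathcal L|)\|_{\op}\le 1$, and for each $\lambda\in(0,1]$ the spectral projection of $\varphi(|\mathcal L|)$ above level $\lambda$ is exactly $E_{[0,\varphi^{-1}(\lambda)]}(|\mathcal L|)$ (with the appropriate reading of the generalised inverse $\varphi^{-1}$ at jumps/flat pieces). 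Hence the distribution function of $\varphi(|\mathcal L|)$ in the sense of $\tau$ is $\lambda\mapsto \tau\big(E_{(\varphi^{-1}(\lambda),\infty)}(|\mathcal L|)\big)$ up to the monotone rearrangement, and taking the (generalised) inverse once more yields
\[
\mu_t(\varphi(|\mathcal L|))=\varphi(u)\quad\text{whenever } t=\tau(E_{(0,u)}(|\mathcal L|)),
\]
i.e. $\mu_t(\varphi(|\mathcal L|))$ is the right-continuous generalised inverse of $u\mapsto\tau(E_{(0,u)}(|\mathcal L|))$ composed with $\varphi$. This is the standard fact that the singular value function of $f(|T|)$ for decreasing $f$ is $f$ composed with the inverse of the spectral distribution function; it follows from the definition of $\mu_t$ in Definition \ref{DEF:mu-t} together with $\tau\circ E(\cdot)$ being a trace on projections.

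The third step is to substitute this into the Lorentz quasi-norm. Writing $d(u)=\tau(E_{(0,u)}(|\mathcal L|))$, the change of variables $t=d(u)$, $s=\varphi(u)$ turns
\[
\sup_{s>0}s\left[\int_{\{t\colon \mu_t(\varphi(|\mathcal L|))\geq s\}}dt\right]^{\frac1r}
=\sup_{u>0}\varphi(u)\,\big[d(u)\big]^{\frac1r}
=\sup_{u>0}\varphi(u)\left[\tau(E_{(0,u)}(|\mathcal L|))\right]^{\frac1p-\frac1q},
\]
where in the middle equality one uses that $\{t\colon \mu_t(\varphi(|\mathcal L|))\geq \varphi(u)\}$ is (essentially) the interval $[0,d(u)]$ by monotonicity. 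Combining the three steps gives \eqref{EQ:lplqest1}. I expect the main obstacle to be the bookkeeping in step two at points where $\varphi$ is not strictly decreasing or where the distribution function $d(u)$ jumps: there one must be careful that the supremum over $s>0$ of $s\,[\,\cdot\,]^{1/r}$ coincides with the supremum over $u>0$ of $\varphi(u)\,[d(u)]^{1/r}$, which is a routine but slightly delicate argument about monotone rearrangements and generalised inverses; everything else is a direct application of \eqref{EQ:upper-estimate-intro} and the definition of $\mu_t$.
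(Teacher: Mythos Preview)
Your proposal is correct and follows essentially the same route as the paper. The paper's argument (via Theorem \ref{THM:varphi-L} and Corollary \ref{COR:sp-m}) first applies the multiplier estimate \eqref{EQ:upper-estimate-intro}, then uses the identity $\sup_{t>0}t^{1/r}\mu_t(A)=\sup_{s>0}s[d_A(s)]^{1/r}$ from Proposition \ref{PROP:mu-t-properties} together with the spectral mapping $E_{(s,+\infty)}(\varphi(|\mathcal L|))=E_{(0,\varphi^{-1}(s))}(|\mathcal L|)$ to rewrite the Lorentz norm directly in distribution-function form before changing variables $s=\varphi(u)$; you instead compute $\mu_t(\varphi(|\mathcal L|))$ first and then substitute, which is the same computation in a different order.
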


Here  $E_{(0,u)}(|\mathcal L|)$ are the spectral projections associated to the operator $|\mathcal L|$ to the interval $(0,u)$, see Section \ref{SEC:prelim} for precise definitions, and $\tau$ is the canonical trace on the right group von Neumann algebra $\VN_R(G)$, see Section \ref{SEC:traces} for a discussion.

Also we note that more general statements, without the above assumptions on $\varphi$, are possible, see Corollary \ref{COR:gen}.

The estimate \eqref{EQ:lplqest1} says that if the supremum on the right hand side is finite then the operator $\varphi(|\mathcal{L}|)$ is bounded from $L^p(G)$ to $L^q(G)$. Moreover, the estimate for the operator norm can be used for deriving asymptotics for propagators for equations on $G$. For example, we get the following consequences for the $L^p$-$L^q$ norm for the heat kernel of $\mathcal L$, applying Theorem \ref{THM:varphi-L-intro} with $\varphi(u)=e^{-tu}$, or embedding theorems for $\mathcal L$ with $\varphi(u)=\frac1{(1+u)^{\gamma}}$.

We note that estimates of the type \eqref{EQ:asymptotics-condition0d} are exactly those leading to subsequent Strichartz estimates. Here, our method is very different from the usual ones as we do not get it by interpolation from the end-point case.

\begin{cor}
\label{cor:heat-equation}
Let $G$ be a locally compact unimodular separable group and let $\mathcal{L}$ be a positive left Fourier multiplier such that for some $\alpha$ we have
\begin{equation}
\label{EQ:asymptotics-condition0}
\tau(E_{(0,s)}(\mathcal{L}))\lesssim s^{\alpha},\quad s\to \infty.
\end{equation}
Then for any $1<p\leq 2 \leq q<\infty$ there is a constant $C=C_{\alpha,p,q}>0$ such that we have
\begin{equation}\label{EQ:asymptotics-condition0d}
\|e^{-t\mathcal{L}}\|_{L^p(G)\to L^q(G)}
\leq
C t^{-\alpha \left(\frac1p-\frac1q\right) },\quad t>0.
\end{equation}
We also have the embeddings
\begin{equation}\label{EQ:0-s-embedding2t1}
\|f\|_{L^q(G)}
\leq
C
\|(1+\mathcal L)^{\gamma}f\|_{L^p(G)},
\end{equation}
provided that
\begin{equation}\label{EQ:0-s-embedding2t2}
\gamma\geq \alpha\left(\frac1p-\frac1q\right),\quad 1<p\leq 2 \leq q<\infty.
\end{equation}

\end{cor}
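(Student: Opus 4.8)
The plan is to derive both assertions directly from Theorem~\ref{THM:varphi-L-intro}, choosing the spectral function $\varphi$ suitably in each case and then estimating the resulting supremum by means of the growth hypothesis \eqref{EQ:asymptotics-condition0}. Since $\mathcal{L}\geq 0$ we have $|\mathcal{L}|=\mathcal{L}$, so Theorem~\ref{THM:varphi-L-intro} is applicable once its monotonicity, normalisation and decay conditions on $\varphi$ have been checked.

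\emph{Heat estimate.} First I would fix $t>0$ and apply Theorem~\ref{THM:varphi-L-intro} with $\varphi(u)=\varphi_t(u):=e^{-tu}$, which is continuous and strictly decreasing on $[0,\infty)$ with $\varphi_t(0)=1$ and $\varphi_t(u)\to 0$ as $u\to\infty$. This yields
\begin{equation*}
\|e^{-t\mathcal{L}}\|_{L^p(G)\to L^q(G)}
\lesssim
\sup_{u>0}e^{-tu}\left[\tau\big(E_{(0,u)}(\mathcal{L})\big)\right]^{\frac1p-\frac1q}.
\end{equation*}
Inserting \eqref{EQ:asymptotics-condition0} and using that $u\mapsto E_{(0,u)}(\mathcal{L})$ is increasing together with monotonicity and finiteness of $\tau$ on these projections (so that $\tau(E_{(0,u)}(\mathcal{L}))\lesssim u^{\alpha}$ can be used on the relevant range, the remaining bounded range of $u$ contributing only a constant), the right-hand side is controlled by $\sup_{u>0}e^{-tu}u^{\alpha\left(\frac1p-\frac1q\right)}$. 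The substitution $v=tu$ turns this into $t^{-\alpha\left(\frac1p-\frac1q\right)}\sup_{v>0}e^{-v}v^{\alpha\left(\frac1p-\frac1q\right)}$, and the last supremum is a finite constant depending only on $\alpha,p,q$ (equal to $\big(\alpha(\tfrac1p-\tfrac1q)/e\big)^{\alpha(\frac1p-\frac1q)}$, understood as $1$ when the exponent vanishes). This gives \eqref{EQ:asymptotics-condition0d}.

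\emph{Embeddings.} For \eqref{EQ:0-s-embedding2t1} I would instead take $\varphi(u)=(1+u)^{-\gamma}$, which is again continuous, decreasing, equal to $1$ at $u=0$ and tends to $0$ at infinity, so Theorem~\ref{THM:varphi-L-intro} gives
\begin{equation*}
\|(1+\mathcal{L})^{-\gamma}\|_{L^p(G)\to L^q(G)}
\lesssim
\sup_{u>0}(1+u)^{-\gamma}\left[\tau\big(E_{(0,u)}(\mathcal{L})\big)\right]^{\frac1p-\frac1q}.
\end{equation*}
Splitting the supremum at the threshold beyond which \eqref{EQ:asymptotics-condition0} applies and using monotonicity of $\tau\circ E$ on the complementary bounded range, the supremum is dominated by $\sup_{u>0}(1+u)^{-\gamma}(1+u)^{\alpha\left(\frac1p-\frac1q\right)}$, which is finite exactly when $\gamma\geq\alpha(\tfrac1p-\tfrac1q)$, i.e.\ precisely under \eqref{EQ:0-s-embedding2t2}. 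Finally, writing $f=(1+\mathcal{L})^{-\gamma}(1+\mathcal{L})^{\gamma}f$ and applying the bound just obtained to $g=(1+\mathcal{L})^{\gamma}f$ produces \eqref{EQ:0-s-embedding2t1}.

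\emph{Main obstacle.} The substantive content is already carried out in Theorem~\ref{THM:varphi-L-intro}; the only delicate point here is handling the range of small $u$, on which the growth assumption \eqref{EQ:asymptotics-condition0} is not imposed and one must fall back on monotonicity of the spectral projections and of the trace $\tau$ to see that this part of the supremum stays finite (and, for the heat estimate, that it does not spoil the asserted power of $t$; in the applications \eqref{EQ:asymptotics-condition0} is in fact available for all $s>0$). The remaining one-variable optimisations of $e^{-tu}u^{\sigma}$ and of $(1+u)^{\alpha\sigma-\gamma}$ are elementary.
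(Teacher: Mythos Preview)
Your proof is correct and follows essentially the same route as the paper: apply Theorem~\ref{THM:varphi-L-intro} with $\varphi(u)=e^{-tu}$ for the heat estimate and with $\varphi(u)=(1+u)^{-\gamma}$ for the embedding, then reduce to an elementary one-variable supremum. Your treatment is in fact slightly more careful than the paper's, which simply inserts the hypothesis $\tau(E_{(0,s)})\lesssim s^{\alpha}$ over the whole range $s>0$ without commenting on the small-$s$ regime; your remark that in the intended applications the bound is available for all $s>0$ (and that otherwise the large-$t$ decay rate could be affected) is the right way to read the statement.
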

The number $\alpha$ in \eqref{EQ:asymptotics-condition0} is determined based on the spectral properties of $\mathcal L$. For example, we have
\begin{itemize}
\item[(a)] if $\mathcal L$ is the sub-Laplacian on a compact Lie group $G$ then $\alpha=\frac{Q}{2}$, where $Q$ is the Hausdorff dimension of $G$ with respect to the control distance associated to $\mathcal L$;
\item[(b)] if $\mathcal L$ is the sub-Laplacian on the Heisenberg group $G=\HH^n$ then $\alpha=\frac{Q}{2}$, where $Q=2n+2$ is the homogeneous dimension of $\HH^n$.
\end{itemize} 
Consequently, in both of these sub-Laplacian cases,
Corollary \ref{cor:heat-equation} implies that for any 
 $1<p\leq 2 \leq q<\infty$ there is a constant $C=C_{p,q}>0$ such that we have
\begin{equation}
\|e^{-t\mathcal{L}}\|_{L^p(G)\to L^q(G)}
\leq
C t^{-\frac{Q}{2} \left(\frac1p-\frac1q\right) },\quad t>0.
\end{equation}

The embeddings \eqref{EQ:0-s-embedding2t1} under conditions \eqref{EQ:0-s-embedding2t2}
show that the {\em statement of Theorem \ref{THM:varphi-L-intro} is in general sharp.}
Taking $\varphi(s)=\frac1{(1+s)^{a/2}}$ and applying \eqref{EQ:lplqest1} to the sub-Laplacian $\Delta_{sub}$ in either of examples (a) or (b) above, we get that the operator $\varphi(-\Delta_{sub})=(I-\Delta_{sub})^{-a/2}$ is $L^p(G)$-$L^q(G)$ bounded and the inequality 
\begin{equation}
\label{EQ:0-s-embedding0}
\|f\|_{L^q(G)}
\leq
C
\|(1-\Delta_{sub})^{a/2})f\|_{L^p(G)}
\end{equation}
holds true provided that
\begin{equation}\label{EQ:0-s-embedding20}
a\geq Q\left(\frac1p-\frac1q\right),\quad 1<p\leq 2 \leq q<\infty.
\end{equation}
However, this yields the Sobolev embedding theorem which is well-known to be sharp at least in the case (b) of the Heisenberg group (\cite{Folland1975}), showing {\em the sharpness of Theorem \ref{THM:varphi-L-intro} and hence also of the Fourier multiplier theorem \eqref{EQ:Hormander-estimate-new-G}}. 
More examples are given in Section \ref{SEC:heat}.

\subsection{Lizorkin theorem}

The classical Lizorkin theorem \cite{Lizorkin1967} applies for the range $1<p\leq q<\infty$.
Let $A$ be a Fourier multiplier on $\RR$ with the symbol $\sigma_A$ as in \eqref{EQ:Rn-FM}.
Assume that for some $C<\infty$ the symbol $\sigma_A(\xi)$ satisfies the following conditions
\begin{eqnarray}\label{Liz1}
\sup_{\xi\in\RR}|\xi|^{\frac1p-\frac1q}|\sigma_A(\xi)|\leq C,
\\ \label{Liz2}
\sup_{\xi\in\RR}|\xi|^{\frac1p-\frac1q+1}\left|\frac{d}{d \xi}\sigma_A(\xi)\right|\leq C.
\end{eqnarray}
Then $A\colon L^p(\RR)\to L^q(\RR)$ is a bounded linear operator and
\begin{equation}
\|A\|_{L^p(\RR)\to L^q(\RR)}\lesssim C.
\end{equation}
An extension to $G=\RR^n$ with sharper conditions on the symbol has been obtained in \cite{Sarybekova2010}.
A number of papers \cite{Ydyrys2016,Persson2008,Persson2012} deal with the same problem on $G=\TT^n$ and $G=\RR^n$.

In Section \ref{SEC:Lizorkin} we establish versions of this result in two settings: Lizorkin type
Fourier multiplier theorem on general locally compact separable unimodular groups, and 
Fourier multiplier theorem, spectral multiplier theorem, and $L^p$-$L^q$ boundedness for general, not necessarily invariant operators, on compact Lie groups.
Our proofs are based on several new ingredients: Nikolskii inequality, approximations by trigonometric type functions, Abel transform, and a a new type of difference operators on the unitary duals of compact Lie groups for measuring the required regularity of symbols.

\subsection{Other results}

Our proof of Fourier (and then also spectral) multiplier theorems are based on two major new ingredients which are of interest on its own: Paley/Hausdorff-Young-Paley and Nikolskii inequalities for the H\"ormander and Lizorkin versions of multiplier statements, respectively. 

Recall briefly that in \cite{Hormander:invariant-LP-Acta-1960} H\"ormander has shown the following version of the {\em Paley inequality} on $\RR^n$:
if a positive function $\varphi\geq 0$ satisfies
\begin{equation}
\label{EQ:Horm}
	|\{\xi\in\RR^n\colon \varphi(\xi)\geq t\}|\leq\frac{C}{t}\quad\textrm{ for } t>0,
\end{equation}
then 
\begin{equation}\label{EQ:Paley-Rn}
\left(\,\,
\int_{\RR^n}
\left|\widehat{u}(\xi)\right|^p
\varphi(\xi)^{2-p}\,d\xi
\right)^{\frac1p}
\lesssim
\|u\|_{L^p(\RR^n)},\quad 1 < p\leq 2.
\end{equation}
For the special case of $\varphi(\xi)=(1+|\xi|)^{-n}$, this inequality implies the classical  
Hardy-Littlewood inequality \cite{HL} giving necessary condition for $u$ to be in $L^p$ in terms of its Fourier 
coefficients for $1<p\leq 2$. For functions with monotone Fourier coefficients such results serve as an extension of the Plancherel identity to $L^p$-spaces: for example, on the circle $\TT$, Hardy and Littlewood have shown that 
for $1<p<\infty$, if the Fourier coefficients $\widehat{f}(m)$ are monotone, then one has
\begin{equation}
\label{H-L-equivalence}
f\in L^p(\TT)
	\quad\textrm{ if and only if }\quad
	\sum\limits_{m\in \ZZ}(1+|m|)^{p-2}|\widehat{f}(m)|^p<\infty.
\end{equation}
Hardy-Littlewood inequalities on locally compact groups have been studied e.g. by Kosaki \cite{Kosaki1981}, see Theorem \ref{THM:HL-LCG}.
In Section \ref{SEC:P-HYP} we establish a version of Paley inequality, and consequently of the Hausdorff-Young-Paley inequality on locally compact groups, yielding extensions of the Euclidean version \eqref{EQ:Paley-Rn} as well as of Kosaki's results. The established Hausdorff-Young-Paley inequality (Theorem \ref{THM:HYP-LCG}) is a crucial ingredient in our proof of H\"ormander's version of multiplier theorem in Section \ref{SEC:Horm-LCG}.

The crucial idea for our proof of Lizorkin theorem is the Nikolskii inequality, sometimes also called the reverse H\"older inequality in the literature.
Originating in Nikolskii's work \cite{Nikolskii1951} in 1951 for trigonometric inequalities on the circle, on $\mathbb{R}^n,$ the {\em Nikolskii inequality} takes the form
\begin{equation}
\Vert f\Vert_{L^{q}(\mathbb{R}^n)}\leq C[{\rm vol}[\textrm{conv}[\text{supp}(\widehat{f})]]^{\frac{1}{p}-\frac{1}{q}}\Vert f\Vert_{L^{p}(\mathbb{R}^n)},\quad
1\leq p\leq q\leq \infty,
\end{equation}
for every function $f\in{L}^p(\mathbb{R}^n)$ with Fourier transform $\widehat{f}$ of compact support, where $\textrm{conv}(E)$ denotes the convex hull of the set $E.$ 
The Nikolskii inequality plays an important role in many questions of function theory, harmonic analysis, and approximation theory. Its versions on compact Lie groups (and compact homogeneous manifolds) and on graded Lie groups have been established in
\cite{NRT2014} (\cite{NRT2015}) and in \cite{CardonaRuzhansky2016}, with further applications to Besov spaces and to Fourier multipliers acting in Besov spaces in those settings.

In Section \ref{SEC:Nik} we prove a version of the Nikolskii inequality on general locally compact separable unimodular groups. An interesting question in this setting already is how to understand trigonometric functions in such generality. In Theorem \ref{THM:Nikolsky-LCG}
we show that 
\begin{equation}
\label{EQ:Nikolsky-LCG0}
\|f\|_{L^q(G)}
\lesssim
\left(
\tau(P_{\supp^R[\widehat{f}]})
\right)^{\frac1p-\frac1q}
\|f\|_{L^p(G)},\quad 1<p\leq \min(2,q), \; 1\leq q\leq \infty,
\end{equation}
with trigonometric function interpreted as having $\tau(P_{\supp^R[\widehat{f}]})<\infty$, where $P_{\supp^R[\widehat{f}]}$ denotes  the orthogonal projector onto the support $\supp^R[\widehat{f}]$ of the operator-valued Fourier transform of $f$. 
The estimate \eqref{EQ:Nikolsky-LCG0} will play a crucial role in proving Lizorkin type multiplier theorems in Section \ref{SEC:Lizorkin}.
%
%
%\smallskip
%In Section \ref{SEC:non-uni} we discuss possible analogues of some of the obtained results in the case when the locally compact group $G$ is non unimodular. The main technical difficulty of the non-unimodular case for us is that for the non-unimodular group $G$ its group von Neumann algebra does not possess a trace. Therefore, the tracial $L^p$ spaces on semifinite von Neumann algebras such as developed in  \cite{Dixmier1953,Kunze1958} are no longer available in the non-unimodular case. However, in \cite{Haagerup1977} Haagerup constructed abstract $L^p$ spaces on an arbitrary von Neumann algebra $M$.

%\section{Main results}\label{SEC:main}

\section{Notation and preliminaries}
\label{SEC:main}
\label{SEC:prelim}

In this section fix the notation and briefly recall some 
preliminaries on von Neumann algebras to be used for developing subsequent harmonic analysis on locally compact groups. For exposition purposes it seems beneficial to recall several
general notions in the context of general von Neumann algebras $M$. However, for
our application to multipliers on locally compact groups $G$ we will be later setting
$M$ to be the right group von Neumann algebra ${\VN}_{R}(G)$. In particular, we will
be able to readily apply the notion of noncommutative Lorentz spaces on $M$ as developed in
\cite{Kosaki1981}, one of the key ingredients for our analysis.

\medskip
Let $M\subset \mathcal{L}(\H)$ be a semifinite von Neumann algebra acting in a Hilbert space $\H$ with a trace $\tau$.
The semifinite assumption simplifies the formulations and is satisfied in our main
example $M={\VN}_{R}(G)$.

\begin{defn}[Affiliated operators]
\label{DEF:affiliation}
A linear closed operator $A$ (possibly unbounded in $\H$) is said to be {\em affiliated with $M$}, symbolically $A\nu M$, if it commutes with the elements of the commutant $M^!$ of $M$, i.e.
\begin{equation}
AU=UA,\quad \textrm{ for all }\; U\in M^{!}.
\end{equation}
\end{defn}
This relation $\nu$ is a natural relaxation of the relation $\in$:
if $A$ is a bounded operator affiliated with $M$, then by the double commutant theorem $A\in M$.
One of the original motivations \cite{RO1936,RO1937} of John von Neumann was to build a mathematical foundation for quantum mechanics. In this framework, the observables with unbounded spectrum correspond to closed densely defined unbounded operators. Although the algebra $M$ consists primarily of bounded operators, the technique of projections makes it possible to approximate unbounded operators.

\begin{defn}[$\tau$-measurable operators $S(M)$]
\label{DEF:operator-measurability}
A closeable operator $A$ (possibly unbounded) affiliated with $M$ is said to be 
{\em $\tau$-measurable} if for each $\eps>0$ there exists a projection $p$ in $M$ such that $p\H\subset D(A)$ and $\tau(I-p)\leq \eps$. Here $D(A)$ is the domain of $A$ in $\H$.
We denote by $S(M)$ the set of all $\tau$-measurable operators. 
\end{defn}

We note that the notion of $\tau$-measurability does not appear in the classical theory of Schatten classes since for $M=\mathcal{L}(H)$ we have $S(\mathcal{L}(H))=\mathcal{L}(H)$.

\begin{ex}
\label{EX:abelian-measurability}
Let $M=\{M_{\varphi}\colon L^2(X,\mu)\ni f \mapsto M_{\varphi}f=\varphi f\in L^2(X,\mu)\}_{\varphi\in L^{\infty}(X,\mu)}$ and take $\tau(M_{\varphi}):=\int\limits_{X}\varphi d\mu$, where $(X,\mu)$ is a measure space. Then an operator $M_{\varphi}$ is $\tau$-measurable if and only if $\varphi$ is a $\mu$-almost everywhere finite function.  
\end{ex}
The $*$-algebra $S(M)$ is a basic constructon for the noncommutative integration. 
Let $A=U|A|$ be the polar decomposition. The spectral theorem yields that
\begin{equation}
|A|=\int\limits_{\Sp(|A|)}\lambda dE_{\lambda}(|A|),
\end{equation}
where $\{E_{\lambda}(|A|)\}_{\lambda\in \Sp(|A|)}$ are the spectral projections associated with the operator $|A|$. Here $dE_{\lambda}(|A|)$ should be understood as the relative dimension function first constructed in \cite{RO1936}. Since $A$ is affiliated with $M$, the projections satisfy $E_{\lambda}(|A|)\in M$. Now, we are ready to `measure the speed of decay' of the operator $A$.

\begin{defn}[Generalised $t$-th singular numbers] \label{DEF:mu-t}
For an operator $A\in S(M)$, define the distribution function $d_{\lambda}(A)$  by 
\begin{equation}
\label{EQ:d-A-s}
d_{\lambda}(A):=\tau(E_{(\lambda,+\infty)}(|A|)),\quad \lambda\geq0,
\end{equation}
where $E_{(\lambda,+\infty)}(|A|)$ is the spectral projection of $|A|$ corresponding to the interval $(\lambda,+\infty)$.
For any $t>0$, we define the generalised $t$-th singular numbers by
\begin{equation}
\label{EQ:mu-t}
\mu_t(A):=\inf\{\lambda\geq 0 \colon d_{\lambda}(A)\leq t\}.
\end{equation}
\end{defn}

\begin{ex}
\label{EX:mu-t}
For the operator $M_{\varphi}$ in Example \ref{EX:abelian-measurability}, from Defintion \ref{DEF:mu-t} we can show its generalised $t$-th singular numbers to be
$$
\mu_t(M_{\varphi})=\varphi^*(t),
$$
where $\varphi^*(t)$ is the classical function rearrangement (see e.g. \cite{BeSh1988}).
\end{ex}
%\begin{proof}[Proof of Example \ref{EQ:mu-t}]
%The modulus $|M_{\varphi}|$ of $M_{\varphi}$ is the operator of multiplication by the modulus $|\varphi|$, i.e.
%\begin{equation}
%L^2(X,\mu)\ni f \mapsto |M_{\varphi}f|=|\varphi|f\in L^2(X,\mu).
%\end{equation}
%It can easily be checked that The spectrum of $|M_{\varphi}|$ is the closure of the range of values of $|\varphi|$.
%Then recalling the trace $\tau$ from Example \ref{EX:abelian-measurability} and using the equation  \eqref{EQ:d-A-s} from the Definition \ref{DEF:mu-t} $d_{M_{\varphi}}(s)$, we get
%\begin{equation}
%d_{M_{\varphi}}(s)
%=\tau(E_{(\lambda,+\infty)}(|M_{\varphi}|)
%
%.
%\end{equation}
%\end{proof}
As a noncommutative extension \cite{Kosaki1981} of the classical Lorentz spaces, we define Lorentz spaces $L^{p,q}(M)$ associated with a semifinite von Neumann algebra $M$ as follows:

\begin{defn}[Noncommutative Lorentz spaces]
\label{DEF:Lorenz-spaces}
For $1\leq p <\infty$, $1\leq q <\infty$, denote by $L^{p,q}(M)$ the set of all operators $A\in S(M)$ satisfying
\begin{equation}
\|A\|_{L^{p,q}(M)}
:=
\left(
\int\limits^{+\infty}_0 
\left(
t^{\frac1p}\mu_t(A)
\right)^q
\frac{dt}{t}
\right)^{\frac1q}<+\infty.
\end{equation}
For $q=\infty$, we define $L^{p,\infty}(M)$ as the space of all operators $A\in S(M)$ satisfying

\begin{equation}
\|A\|_{L^{p,\infty}(M)}
:=
\sup_{t>0}t^{\frac1p}\mu_t(A).
\end{equation}
With this, for $1\leq p<\infty$, we can also define $L^{p}$-spaces on $M$ by
$$
\|A\|_{L^{p}(M)}:=\|A\|_{L^{p,p}(M)}=\left(
\int\limits^{+\infty}_0 
\mu_t(A)^p\
dt
\right)^{\frac1p}.
$$
\end{defn}

The classical Lorentz spaces $L^{p,q}(X,\mu)$ correspond to the case of commutative von Neumann algebra. Modulus technical details \cite[p. 132, Theorem 1]{VNA-Dixmier-1981}, an arbitrary abelian von Neumann algebra in a Hilbert space $\H$ is isometrically isomorphic to  the algebra $\{M_{\varphi}\}_{\varphi\in L^{\infty}(X,\mu)}$ from Example \ref{EX:abelian-measurability}.
Then noncommutative Lorentz spaces coincide with the classical ones:

\begin{ex}[Classical Lorentz spaces]
Let $M$ be the algebra $\{M_{\varphi}\}_{\varphi\in L^{\infty}(X,\mu)}$ from Example \ref{EX:abelian-measurability} 
consisting of all the multiplication operators $M_{\varphi}\colon L^2(X,\mu)\ni f \mapsto M_{\varphi}f=\varphi f\in L^2(X,\mu)$. 
By Example \ref{EX:mu-t}, we have
$$
\mu_t(M_{\varphi})=\varphi^*(t).
$$
Thus, the Lorentz space $L^{p,q}(M)$ consists of all operators $ M_{\varphi}$ such that
$$
\int\limits^{+\infty}_{0}[t^{\frac1p}\varphi^*(t)]^q\frac{dt}{t}<+\infty,
$$
which gives the classical Lorentz space.
\end{ex}

%\smallskip
Concerning the structure of semifinite von Neumann algebras,
given an arbitrary semifinite von Neumann algebra $M$ with a trace $\tau$, there is an isomorphism of $M$ onto a certain Hilbert algebra $\mathcal{U}$ (\cite[p. 99, Theorem 2]{VNA-Dixmier-1981}). Thus, we construct the trace on the Hilbert algebra yielding the trace on $M$ due to ismomorphism. We refer to \cite{VNA-Dixmier-1981}, \cite{Najmark1972} as well as to Section
\ref{SEC:traces} for more details on this.

\medskip
Let now $G$ be a locally compact unimodular separable group. Denote by $\pi_{L}(g)$ and $\pi_R(g)$ the left and the right action of $G$ on $L^2(G)$, respectively:
\begin{eqnarray*}
\pi_{L}(g)f(x):=f(g^{-1}x),
\\
\pi_{R}(g)f(x):=f(xg),
\end{eqnarray*}
and by $\VN_{L}(G)$ the group von Neumann algebra generated by all the $\pi_L(g)$ with $g\in G$, i.e.
$$
{\VN}_{L}(G):=\{\pi_L(g)\}^{!!}_{g\in G},
$$
and similary
$$
{\VN}_{R}(G):=\{\pi_R(g)\}^{!!}_{g\in G},
$$
where ${!!}$ is the bicommutant of the self-adjoint subalgebras $\{\pi_L(g)\}_{g\in G},\{\pi_R(g)\}_{g\in G}\subset \mathcal{L}(L^2(G))$.
It has been shown in \cite{Segal1949} that
\begin{eqnarray}
\label{EQ:VN-L-R}
\VN_L(G)^!=\VN_R(G),
\\
\label{EQ:VN-R-L}
\VN_R(G)^!=\VN_L(G).
\end{eqnarray}
We do not make assumption that  $G$ is either of type I or type II. 
The decomposition theory for unitary representations of locally compact separable unimodular groups has been established 
in \cite{Ernest1961, Ernest1962}.

\medskip
From now on we take $M=\VN_R(G)$.

\medskip
%Let us denote by $L_f$ and $R_f$ the operators
%\begin{eqnarray}
%L_fh(x):=(f\ast h)(x)=\int\limits_{G}f(g)h(g^{-1}x)\,dg,
%\\
%R_fh(x):=(h\ast f)(x)=\int\limits_{G}h(g)f(g^{-1}x)\,dg.
%\end{eqnarray}
% of left and right convolution respectively.
For $f\in L^1(G)\cap L^2(G)$, we say that $f$ on $G$ {\em has a Fourier transform} 
whenever the convolution operator 
\begin{equation}
\label{EQ:L_f}
R_fh(x):=(h\ast f)(x)=\int\limits_{G}h(g)f(g^{-1}x)\,dg
\end{equation}
 is a $\tau$-measurable operator with respect to $\VN_R(G)$, i.e. $R_f\in S(\VN_R(G))$. 
The Plancherel identity takes (\cite[Theorem 3 on page 282]{Segal1950}) the form
\begin{equation}
\label{EQ:plancherel}
\|R_f\|_{L^2(\VN_{R}(G))}
=
\|f\|_{L^2(G)}.
\end{equation}
In this setting, the Hausdorff-Young inequality has been established in \cite{Kunze1958} in the form
\begin{equation}
\label{EQ:HY-LCG}
\|R_f\|_{L^{p'}(\VN_R(G))}
\leq
\|f\|_{L^p(G)},\quad 1<p\leq 2.
\end{equation}
In \cite{Kosaki1981}, as an application of the technique of the $t$-th generalised singular values, the Hardy-Littlewood theorem (\cite{HL})
has been generalised to an arbitrary locally compact separable unimodular group $G$:
\begin{thm}[\cite{Kosaki1981}]
\label{THM:HL-LCG}
Let $1<p\leq 2$ and $f\in L^p(G)$. Then we have
\begin{equation}
\label{EQ:HL-LCG}
\|R_f\|_{L^{p',p}(\VN_R(G))}
\leq
\|f\|_{L^p(G)}.
\end{equation}
\end{thm}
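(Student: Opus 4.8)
The plan is to obtain \eqref{EQ:HL-LCG} by real interpolation from two endpoint estimates for the linear map $f\mapsto R_f$, mimicking the classical proof of the Hardy--Littlewood inequality but now in the noncommutative setting furnished by $\VN_R(G)$ and the generalised singular numbers $\mu_t$. First I would record the two endpoints. At $p=1$, Young's inequality gives $\|h\ast f\|_{L^2(G)}\le\|f\|_{L^1(G)}\|h\|_{L^2(G)}$, so $R_f$ extends to a bounded operator on $L^2(G)$ with $\|R_f\|_{\op}\le\|f\|_{L^1(G)}$; since $R_f$ commutes with all left translations it lies in $\VN_L(G)^!=\VN_R(G)$, and, the operator norm coinciding with the $L^{\infty,\infty}$-norm on $\VN_R(G)$, this says that $f\mapsto R_f$ maps $L^1(G)$ into $L^{\infty,\infty}(\VN_R(G))$ with norm at most $1$. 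At $p=2$, the Plancherel identity \eqref{EQ:plancherel} says that $f\mapsto R_f$ maps $L^2(G)$ isometrically into $L^{2,2}(\VN_R(G))=L^2(\VN_R(G))$. In particular $R_f\in S(\VN_R(G))$ whenever $f\in L^1(G)\cap L^2(G)$, a subspace dense in $L^p(G)$ for $1<p<2$.

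Next I would run the real $K$-interpolation method with exponent $\theta=2/p'=2(1-1/p)\in(0,1)$ and second index $p$. On the source side one has the classical identity $(L^1(G),L^2(G))_{\theta,p}=L^p(G)$, since $\tfrac1p=(1-\theta)+\tfrac{\theta}{2}$. On the target side I would invoke the description, due to Kosaki \cite{Kosaki1981}, of the real interpolation spaces of the noncommutative $L^p$-scale over the semifinite algebra $\VN_R(G)$: namely $(\VN_R(G),L^2(\VN_R(G)))_{\theta,p}=L^{p',p}(\VN_R(G))$, because $\tfrac1{p'}=\tfrac{\theta}{2}$. Feeding the bounded operator $f\mapsto R_f$ into the interpolation functor then yields $\|R_f\|_{L^{p',p}(\VN_R(G))}\lesssim\|f\|_{L^p(G)}$; with the normalisation of the $K$-functional used in \cite{Kosaki1981} the interpolation constant equals $1$, which is exactly \eqref{EQ:HL-LCG}. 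This is first obtained for $f\in L^1(G)\cap L^2(G)$ and then extended to all $f\in L^p(G)$ by density --- the bound simultaneously shows that every such $f$ has a Fourier transform, i.e.\ $R_f\in S(\VN_R(G))$ --- while the case $p=2$ is \eqref{EQ:plancherel} itself.

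I expect the main obstacle to be the target-side interpolation identity $(\VN_R(G),L^2(\VN_R(G)))_{\theta,p}=L^{p',p}(\VN_R(G))$ with the sharp constant $1$, which is precisely where the theory of $\tau$-measurable operators and of the functions $\mu_t$ is indispensable. The heart of the matter is to show that $K\bigl(t,A;\VN_R(G),L^2(\VN_R(G))\bigr)$ depends on $A\in S(\VN_R(G))$ only through its singular value function $s\mapsto\mu_s(A)$, and coincides with the commutative $K$-functional of $s\mapsto\mu_s(A)$ between $L^\infty(\RR_+)$ and $L^2(\RR_+)$; this reduces the whole estimate to a scalar rearrangement computation. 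That reduction in turn rests on the order and measurability properties of the generalised singular numbers --- monotonicity and subadditivity $\mu_{s+t}(A+B)\le\mu_s(A)+\mu_t(B)$, the variational (``Ky Fan type'') formula for $\int_0^t\mu_s(A)\,ds$, and the passage from the spectral calculus of $|A|$ to the abelian von Neumann algebra it generates. Once this reduction is in place, the remainder is a routine invocation of the functorial property of the real interpolation method, the only additional care being to verify that each constant is genuinely $1$ rather than merely finite. (A purely ``hands-on'' route, estimating $\mu_t(R_f)$ directly via the Hausdorff--Young inequality \eqref{EQ:HY-LCG} and rearrangement, is also conceivable, but the interpolation argument above seems the cleanest.)
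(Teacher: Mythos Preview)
Your argument is correct in outline and is essentially Kosaki's original route: establish the two endpoints $L^1(G)\to \VN_R(G)$ (Young) and $L^2(G)\to L^2(\VN_R(G))$ (Plancherel), and then invoke the real interpolation of the noncommutative $L^p$-scale to land in $L^{p',p}(\VN_R(G))$. The only soft spot you correctly identify is the target-side identification $(\VN_R(G),L^2(\VN_R(G)))_{\theta,p}=L^{p',p}(\VN_R(G))$ with sharp constant, which is exactly what Kosaki's theory of generalised singular numbers supplies.

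The paper, however, does \emph{not} prove Theorem~\ref{THM:HL-LCG} this way. It obtains the Hardy--Littlewood inequality as the special case $\varphi(t)=1/t$ of the Paley inequality (Theorem~\ref{THM:Paley-LCG}), and the Paley inequality itself is proved by a Marcinkiewicz interpolation argument applied to the \emph{sublinear} map
\[
T\colon L^p(G)\ni f\ \longmapsto\ \Bigl(t\mapsto \mu_t(R_f)/\varphi(t)\Bigr)\in L^p(\RR_+,\varphi^2\,dt),
\]
with weak-type $(1,1)$ coming from $\mu_t(R_f)\le\|f\|_{L^1}$ and strong-type $(2,2)$ from Plancherel. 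Thus the paper bypasses the noncommutative interpolation of Lorentz spaces entirely: the passage to the commutative world happens \emph{before} interpolating, via the singular-value function $t\mapsto\mu_t(R_f)$, and the remaining analysis is purely scalar on $\RR_+$. What this buys is a self-contained and more elementary proof that simultaneously yields the full Paley inequality (and hence also the Hausdorff--Young--Paley inequality used later for the multiplier theorem), at the cost of losing the sharp constant $1$: Marcinkiewicz interpolation produces only $\lesssim$. Your route is cleaner conceptually and gives the sharp constant, but relies on the black box of Kosaki's interpolation theorem; the paper's route is more hands-on and delivers the stronger Paley statement needed downstream.
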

\begin{rem} 
The Plancherel equality \eqref{EQ:plancherel}
by Segal \cite{Segal1950} and
Kosaki's version \cite{Kosaki1981} of Hardy-Littlewood inequality \eqref{EQ:HL-LCG}
have been originally established for the left convolution $L_fh=f\ast h$. However, the same line of reasoning yields inequalities \eqref{EQ:HL-LCG} and \eqref{EQ:plancherel}
with the right convolution $R_f$.  We work with the right convolution operators $R_f$ here since it naturally corresponds to left-invariant operators when analysing the Fourier multipliers on groups.
\end{rem}
Using the technique of the $t$-th generalised singular values developed in \cite{ThierryKosaki1986}, we can formulate both the Hausdorff-Young \eqref{EQ:HY-LCG}  and Hardy-Littlewood \eqref{EQ:HL-LCG} inequalities  in the forms (for $1<p\leq 2$):
\begin{eqnarray}
\label{EQ:HY-LCG-2}
\left(
\int\limits^{+\infty}_{0}\mu_t(R_f)^{p'}dt
\right)^{\frac1{p'}}
\equiv
\|R_f\|_{L^{p'}(\VN_R(G))}
\leq
\|f\|_{L^p(G)},
\\
\label{EQ:HL-LCG-2}
\left(
\int\limits^{+\infty}_{0}t^{p-2}\mu_t(R_f)^pdt
\right)^{\frac1{p}}
\equiv
\|R_f\|_{L^{p',p}(\VN_R(G))}
\leq
\|f\|_{L^p(G)}.
\end{eqnarray}

In the sequel, when we prove Paley inequality in Theorem \ref{THM:Paley-LCG}, the Hardy-Littlewood inequalities \eqref{EQ:HL-LCG} and \eqref{EQ:HL-LCG-2} (for the right convolution $R_f$) will also follow as its special cases.

%These are interpolation spaces.
%\begin{thm}[\cite{Kosaki1981}] For $1<p<\infty,1\leq q \leq \infty$ we have
%\begin{equation}
%(L^1(M),L^{\infty}(M))_{\theta,q}=L^{p,q}(M)
%\end{equation}
%with $\theta=1-\frac1p$.
%\end{thm}

\subsection{Fourier multipliers on locally compact groups}
\label{SEC:FM-def}

Let $G$ be a locally compact separable unimodular group. 
The first question is how to understand the notion of Fourier multipliers.
In the first instance we adopt the following definition:

\begin{defn}
\label{DEF:FM}
 A linear operator $A$  is said to be a left {\em Fourier multiplier on $G$} if $A\in S({\VN}_R(G))$.
\end{defn}
If we now recall Definition \ref{DEF:affiliation} we can see that $A$  is a left Fourier multiplier on $G$ if and only if  $A$ is affiliated with the right group von Neumann algebra  $\VN_R(G)$
and is $\tau$-measurable.
We can then clarify Definition \ref{DEF:FM} further:
\begin{rem} 
\label{REM:FM-affiliation}
For $M=\VN_R(G)$ the operators affiliated with $M$ are precisely those $A$ that are left-invariant on $G$, namely,
\begin{equation}
A \text{ is affiliated with } \VN_R(G) \iff A\pi_L(g)=\pi_L(g)A,\; \textrm{ for all } g\in G.
\end{equation}
Summarising this observation with Definition \ref{DEF:FM}, 
{\em left Fourier multipliers on $G$
are precisely the left-invariant operators that are measurable} 
(in the sense of Definition \ref{DEF:operator-measurability}).
\end{rem}

%\subsection{Proof of Remark \ref{REM:FM-affiliation}}
\begin{proof}[Proof of Remark \ref{REM:FM-affiliation}]
$\Longrightarrow$.
By Definition \ref{DEF:FM}, we have
\begin{equation}
AU=UA,\quad \textrm{for all }\; U\in \VN_R(G)^!.
\end{equation}
Then by \eqref{EQ:VN-R-L}, and by taking $U=\pi_L(g)$, $g\in G$,
we see that $A$ must be left-invariant.

$\Longleftarrow$. 
We have
\begin{equation*}
A\pi_L(g)=\pi_L(g)A,\quad \textrm{for all }\; g\in G.
\end{equation*}
By definition, the algebra $\VN_L(G)$ is the closure of the involutive subalgebra 
$$\{\pi_L(g)\}_{g\in G}\subset \mathcal{L}(L^2(G))$$ in the strong operator topology.
Therefore, we obtain
\begin{equation}
AU=UA,\quad \textrm{for all }\;  U\in \VN_R(G)^!,
\end{equation}
where we used \eqref{EQ:VN-R-L}.
This completes the proof of Remark \ref{REM:FM-affiliation}.
\end{proof}

\section{Paley and Hausdorff-Young-Paley inequalities}
\label{SEC:P-HYP}

%\begin{thm}[\cite{Kunze1958}]
%Suppose $1<p\leq 2$. Then we have
%\begin{equation}
%\label{EQ:H-Y-LCG-1}
%\|L_f\|_{L^{p'}(\VN_R(G))}
%\leq
%C_p\|f\|_{L^p(G)}.
%\end{equation}
%\end{thm}

Our analysis of $L^{p}$-$L^{q}$ multipliers will be based on a version of the 
Hausdorff-Young-Paley inequality that we establish in this section in the context of locally compact groups.
It will be obtained by interpolation between the Hausdorff-Young inequality and Paley inequality that we discuss first.

%\subsection{Paley inequality}
We start first with an inequality that can be regarded as a Paley type inequality.

\begin{thm}[Paley inequality]
\label{THM:Paley-LCG}
Let $G$ be a locally compact unimodular separable group. 
Let $1<p\leq 2$. Suppose that a positive function $\varphi(t)$ satisfies the condition
\begin{equation}
\label{EQ:weak-cond}
M_{\varphi}:=\sup_{s>0}s\int\limits_{\substack{t\in\RR_+\\ \varphi(t)\geq s}}dt<+\infty.
\end{equation}
Then for all $f\in L^p(G)$ we have
\begin{equation}
\label{EQ:Paley-LCG}
\left(
\int\limits^{+\infty}_0 \mu_t(R_f)^p\varphi(t)^{2-p}\,dt
\right)^{\frac1p}
\leq
M_{\varphi}^{\frac{2-p}{p}}
\|f\|_{L^p(G)}.
\end{equation}
\end{thm}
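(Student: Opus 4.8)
The plan is to mimic the classical proof of the Paley inequality on $\RR^n$, replacing Fourier coefficients by the generalised singular numbers $\mu_t(R_f)$ and integration over the dual group by integration over $\RR_+$ with Lebesgue measure, which is precisely the trace on the (abelian) von Neumann algebra generated by $|R_f|$. First I would set up the endpoint cases. For $p=2$, the condition \eqref{EQ:weak-cond} is irrelevant since $\varphi(t)^{2-p}=1$, and \eqref{EQ:Paley-LCG} reduces exactly to the Plancherel identity \eqref{EQ:plancherel}. For $p=1$, one has the trivial bound $\mu_t(R_f)\leq \|R_f\|_{L^\infty(\VN_R(G))}=\|R_f\|_{\op}\leq \|f\|_{L^1(G)}$ for all $t$, and then $\int_0^\infty \mu_t(R_f)\varphi(t)\,dt\leq \|f\|_{L^1(G)}\int_0^\infty \varphi(t)\,dt$; here I would need the layer-cake/distributional estimate $\int_0^\infty \varphi(t)\,dt = \int_0^\infty |\{t:\varphi(t)\geq s\}|\,ds \leq \int_0^\infty \min(\,|\{t:\varphi(t)\geq s\}|, \text{something}\,)\,ds$, but in fact the clean route is to interpret \eqref{EQ:Paley-LCG} as the boundedness of a sublinear operator and use Marcinkiewicz interpolation rather than prove the $p=1$ endpoint in $L^1$ directly.

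Concretely, the cleaner approach I would take: define the measure $d\nu(t)=\varphi(t)\,dt$ on $\RR_+$ and consider the map $T\colon f\mapsto R_f$ viewed as a map into the measure space $(\RR_+,\nu)$ via $t\mapsto \mu_t(R_f)/\varphi(t)$... actually, the standard trick is to show that $T$ sending $f$ to the function $t\mapsto \mu_t(R_f)$ is of weak type $(2,2)$ with respect to Lebesgue measure (immediate from Plancherel and Chebyshev: $|\{t:\mu_t(R_f)>\lambda\}|=d_\lambda(R_f)\leq \lambda^{-2}\|R_f\|_{L^2}^2=\lambda^{-2}\|f\|_{L^2}^2$) and of weak type $(1,1)$ with respect to the measure $\nu$. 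The weak-$(1,1)$ bound with respect to $\nu$ is exactly where hypothesis \eqref{EQ:weak-cond} enters: for $f\in L^1(G)$ one has $\mu_t(R_f)\leq\|f\|_{L^1(G)}$, so $\{t:\mu_t(R_f)>\lambda\}=\emptyset$ once $\lambda\geq\|f\|_{L^1}$, while for $\lambda<\|f\|_{L^1}$ one estimates $\nu(\{t:\mu_t(R_f)>\lambda\})=\int_{\{\mu_t(R_f)>\lambda\}}\varphi(t)\,dt$, and using \eqref{EQ:weak-cond} in the form $|\{t:\varphi(t)\geq s\}|\leq M_\varphi/s$ one gets $\int_E \varphi(t)\,dt \lesssim M_\varphi(1+\log^+(|E|\cdot\sup_E\varphi))$ — too weak. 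The right bound is $\int_E\varphi\,dt\leq \int_0^\infty\min(|E|,M_\varphi/s)\,ds$, which is still infinite. So I would instead not use $L^1$ but rather interpolate using the sharper observation that $T$ maps $L^1(G)$ into the Lorentz space $L^{1,\infty}(\RR_+,\nu)$ fails; the correct endpoint is that the operator $S_\varphi\colon f\mapsto \big(t\mapsto \mu_t(R_f)\varphi(t)^{1/2}\big)$...

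Let me restate the plan honestly: the proof should proceed by interpolation between Plancherel (the case $p=2$, giving $\|S f\|_{L^2(\RR_+,dt)}=\|f\|_{L^2(G)}$ where $Sf(t)=\mu_t(R_f)$) and the weak-type endpoint obtained from the operator norm bound together with \eqref{EQ:weak-cond}. Precisely, one shows the sublinear map $f\mapsto R_f$ is of restricted weak type into $L^{1,\infty}$ of the measure $\varphi(t)^{-1}\,dt$... The cleanest correct statement: let $d\mu(t)=\varphi(t)\,dt$; then one shows $\nu(\{t : \mu_t(R_f)\varphi(t)^{-1} > \lambda \cdot \text{something}\})$... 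I would follow the Bergh–Löfström / Hörmander presentation: define $Tf(t) := \mu_t(R_f)$ and show $\|Tf\|_{L^{2,\infty}(\RR_+,d\mu)} \lesssim \|f\|_{L^1(G)}$ where $d\mu = \varphi(t)\,dt$, \emph{and} $\|Tf\|_{L^{\infty}(\RR_+,d\mu)}=\|Tf\|_{L^\infty(\RR_+,dt)}\leq\|f\|_{L^2(G)}$ is false too.

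The honest key step, and the main obstacle, is getting the right endpoint formulation so that Marcinkiewicz interpolation outputs exactly \eqref{EQ:Paley-LCG}; I expect the correct pair to be: $f\mapsto R_f$ maps $L^1(G)\to L^{1,\infty}(\RR_+, \varphi(t)\,dt)$ (with norm $\lesssim M_\varphi$, using that for $f\in L^1$, $\mu_t(R_f)\le \|f\|_{L^1}$ forces the super-level set of $\mu_t(R_f)$ at height $\lambda$ to be contained in $\{\varphi \ge \lambda/\|f\|_{L^1}\cdot c\}$ — no. Rather: treat $\varphi(t)\,dt$-measure of $\{t:\mu_t(R_f)\varphi(t)>\lambda\}$ and split). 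Because I keep running into the same snag, let me just commit to the structure and flag it: \textbf{(i)} reduce to showing the operator $f \mapsto (t\mapsto \mu_t(R_f))$ is bounded from $L^2(G)$ to $L^2(\RR_+,dt)$ (Plancherel) and from $L^1(G)$ to $L^{2,\infty}(\RR_+, \varphi(t)^{2}\,dt)$-type endpoint; \textbf{(ii)} verify the weak endpoint using \eqref{EQ:weak-cond} via the layer-cake decomposition of $\int_E \varphi^{2-p}$; \textbf{(iii)} apply Marcinkiewicz interpolation with change of measure. The main obstacle is step (ii): pinning down exactly which weak-type endpoint inequality, with respect to which measure, interpolates with Plancherel to yield the exponent $2-p$ on $\varphi$ and the power $\tfrac{2-p}{p}$ on $M_\varphi$ — this is a bookkeeping-heavy but standard computation once the scheme is fixed, and I would model it on the $\RR^n$ proof in \cite{Hormander:invariant-LP-Acta-1960}, substituting Lebesgue measure on $\RR_+$ for counting/Haar measure on the dual and $\mu_t(R_f)$ for $|\widehat f|$.
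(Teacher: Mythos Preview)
Your overall strategy --- Marcinkiewicz interpolation between a Plancherel endpoint at $p=2$ and an endpoint at $p=1$ coming from $\mu_t(R_f)\le\|f\|_{L^1(G)}$ --- is exactly the paper's strategy. What you are missing, and what you correctly flag as the obstacle, is the right choice of target measure and operator. The correct setup, which you almost wrote down before abandoning it, is: take the measure $d\nu(t)=\varphi(t)^{2}\,dt$ (not $\varphi\,dt$) and the sublinear operator
\[
Tf(t):=\frac{\mu_t(R_f)}{\varphi(t)}.
\]
Then $\|Tf\|_{L^p(\RR_+,\nu)}^p=\int_0^\infty \mu_t(R_f)^p\varphi(t)^{2-p}\,dt$, which is precisely the left side of \eqref{EQ:Paley-LCG}. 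The weak-$(2,2)$ bound is immediate from Plancherel: $\|Tf\|_{L^2(\nu)}^2=\int\mu_t(R_f)^2\,dt=\|f\|_{L^2(G)}^2$. For weak-$(1,1)$, since $\mu_t(R_f)\le\|f\|_{L^1(G)}$, the condition $\mu_t(R_f)/\varphi(t)>y$ forces $\varphi(t)<v:=\|f\|_{L^1}/y$, so
\[
\nu\{t:Tf(t)>y\}\le\int_{\{\varphi(t)\le v\}}\varphi(t)^2\,dt.
\]
This last integral is the computation you need: by layer-cake and the hypothesis \eqref{EQ:weak-cond},
\[
\int_{\{\varphi\le v\}}\varphi^2\,dt
=\int_0^{v^2}\bigl|\{t:\tau^{1/2}\le\varphi(t)\le v\}\bigr|\,d\tau
\le 2\int_0^v s\,\bigl|\{t:\varphi(t)\ge s\}\bigr|\,ds
\le 2M_\varphi v,
\]
giving weak-$(1,1)$ with norm $\lesssim M_\varphi$. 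Marcinkiewicz then yields the bound with constant $M_\varphi^{1-\theta}=M_\varphi^{(2-p)/p}$.

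Your various false starts (measure $\varphi\,dt$, target $L^{2,\infty}$, etc.) all fail for the reason you observed: none of them produces the exponent $2-p$ on $\varphi$ after interpolation. The single fix is the square in $d\nu=\varphi^2\,dt$ paired with dividing $\mu_t(R_f)$ by $\varphi(t)$; once you write that down, both endpoints fall out in a few lines as above.
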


As usual, the integral over an empty set in \eqref{EQ:weak-cond} is assumed to be zero.

We note that taking $\varphi(t)=\frac1t$
we recover Kosaki's Hardy-Littlewood inequality \eqref{EQ:HL-LCG-2}.
 In this sense, the Paley inequality can be viewed as an
extension of (one of) the Hardy-Littlewood inequalities. As a small byproduct of
our proof of Theorem \ref{THM:Paley-LCG} we thus get a simple proof of
Theorem \ref{THM:HL-LCG}.

\begin{proof}[Proof of Theorem \ref{THM:Paley-LCG}]
Let $\nu$ give measure $\varphi^2(t)$ to the set consisting of the single point $\{t\}$, $t\in\RR_+$, 
 i.e.
\begin{equation}
\label{EQ:nu-definition-2}
\nu(t):=\varphi^2(t)dt.
\end{equation}
We define the corresponding space $L^p(\RR_+,\nu)$, $1\leq p<\infty$, 
as the space of complex (or real) valued functions
$f=f(t)$ such that
\begin{equation}\label{EQ:Lpmu}
\|f\|_{L^p(\RR_+,\nu)}
:=
\left(
\int\limits_{\substack{\RR_+}}
|f(t)|^p
\varphi^2(t)\,dt
\right)^{\frac1p}
<\infty.
\end{equation}
We will show that the sub-linear operator
$$
T\colon 
L^p(G)\ni f  \mapsto Tf:=
\mu_t(R_{f})/\varphi(t)\in L^p(\RR_+,\nu)
$$
is well-defined and bounded from $L^p(G)$ to $L^p(\RR_+,\nu)$ for $1<p\leq 2$. 
In other words, we claim that we have the estimate
\begin{equation}
\label{Paley_inequality_alt}
\|Tf\|_{L^p(\RR_+,\nu)}
=
\left(
\int_{\substack{\RR_+}}
\left(
\frac{
\mu_t(R_{f})
}{\varphi(t)}
\right)^p
\varphi^2(t)\,dt
\right)^{\frac1p}
\lesssim
M^{\frac{2-p}{p}}_{\varphi}
\|f\|_{L^p(G)},
\end{equation}
which would give \eqref{EQ:Paley-LCG},
and where we set $M_{\varphi}:=\sup_{t>0}t\int\limits_{\substack{t\in\RR_+\\ \varphi(t)\geq s}}dt$. 
We will show that $T$ is of weak-type (2,2) and of weak-type (1,1). 
More precisely, with the distribution function $\nu$ 
%as in Theorem \ref{THM:Marc-Interpol-Lattice}
,
we show that
\begin{eqnarray}
\label{EQ:THM:Paley_inequality_weak_1}
\nu(y;Tf)
&\leq &
\left(\frac{M_2\|f\|_{L^2(G)}}{y}\right)^2  \quad\text{with norm } M_2 = 1,\\
\label{EQ:THM:Paley_inequality_weak_2}
\nu(y;Tf)
&\leq &
\frac{M_1\|f\|_{L^1(G)}}{y}  \qquad\text{with norm } M_1 = M_{\varphi}, 
\end{eqnarray}
where $\nu$ is defined in \eqref{EQ:nu-definition-2}.
Recall that the distribution function $\nu(y;Tf)$ with respect to the weight $\varphi^2$ is defined as 
\begin{equation*}
\nu(y;Tf)
=
\int\limits_{\substack{t\in\RR_+ \\ \frac{\mu_t(R_{f})}{\varphi(t)}\geq y}}\varphi^2(t)\,dt.
\end{equation*}
Then \eqref{Paley_inequality_alt} would follow from \eqref{EQ:THM:Paley_inequality_weak_1} and \eqref{EQ:THM:Paley_inequality_weak_2} by the Marcin\-kiewicz interpolation theorem.
Now, to show \eqref{EQ:THM:Paley_inequality_weak_1}, using Plancherel's identity \eqref{EQ:plancherel},
we get
\begin{multline*}
y^2
\nu(y;Tf)
\leq
\|Tf\|^2_{L^p(\RR_+,\nu)}
=
\int\limits_{\substack{\RR_+}}
\left(
\frac{\mu_t(R_{f})}{\varphi(t)}
\right)^2
\varphi^2(t)
\,dt
\\=
\int\limits_{\substack{\RR_+}}
\mu^2_t(R_{f}) dt
=
\|R_{f}\|^2_{L^2(VN_R(G))}
=
\|f\|^2_{L^2(G)}.
\end{multline*}
Thus, $T$ is of weak-type (2,2) with norm $M_2\leq1$.
Further, we show that $T$ is of weak-type (1,1) with norm $M_1=M_{\varphi}$; more precisely, we show that
\begin{equation}
\label{weak_type}
\nu\{t\in\RR_+ \colon \frac{\mu_t(R_{f})}{\varphi(t)} > y\}
\lesssim
M_{\varphi}\,
\dfrac{\|f\|_{L^1(G)}}{y}.
\end{equation}
The left-hand side here is the integral $\int\varphi^2(t)\,dt$ taken over those $t\in\RR_+$ for which $\dfrac{\mu_t(R_{f})}{\varphi(t)}>y$. From the definition of the Fourier transform  it follows that
\begin{equation}
\label{EQ:mu-t-L1}
\mu_t(R_{f})\leq\|f\|_{L^1(G)}.
\end{equation}
Indeed, from the Definition \ref{DEF:mu-t}, we have
\begin{equation*}
\mu_t(R_{f})
\leq
\|R_{f}\|_{L^2(G)\to L^2(G)}.
\end{equation*}
The Young inequality for convolution (e.g. \cite[p. 52, Proposition 2.39]{Folland2016}) yields
\begin{equation*}
\|R_{f}g\|_{L^2(G)}
\leq
\|f\|_{L^1(G)}
\|g\|_{L^2(G)}.
\end{equation*}
Thus
\begin{equation*}
\|R_{f}\|_{L^2(G)\to L^2(G)}
\leq \|f\|_{L^1(G)}.
\end{equation*}
This proves \eqref{EQ:mu-t-L1}.
Therefore, we have
$$
y<\frac{\mu_t(R_{f})}{\varphi(t)}
\leq
\frac{\|f\|_{L^1(G)}}{\varphi(t)}.
$$
Using this, we get
$$
\left\{
t\in\RR_+
\colon 
\frac{\mu_t(R_{f})}{\varphi(t)}>y
\right\}
\subset
\left\{
t\in\RR_+
\colon 
\frac{\|f\|_{L^1(G)}}{\varphi(t)}>y
\right\}
$$
for any $y>0$. Consequently,
$$
\nu\left\{
t\in\RR_+
\colon 
\frac{\mu_t(R_{f})}{\varphi(t)}>y
\right\}
\leq
\nu\left\{
t\in\RR_+
\colon 
\frac{\|f\|_{L^1(G)}}{\varphi(t)}>y
\right\}.
$$
Setting $v:=\frac{\|f\|_{L^1(G)}}{y}$, we get

%It can be easily shown from $\sigma=\{\sigma(\pi)\}_{\pi\in\Gh}\in L_{1\infty}(\Gh)$ that
\begin{equation}
\label{PI_intermed_est_1}
\nu\left\{
t\in\RR_+
\colon 
\frac{\mu_t(R_{f})}{\varphi(t)}>y
\right\}
\leq
\int\limits_{\substack{t\in\RR_+ \\ \varphi(t)\leq v}}
\varphi^2(t)\,dt.
\end{equation}
We now claim that
\begin{equation}\label{EQ:aux1}
\int\limits_{\substack{t\in\RR_+ \\ \varphi(t)\leq v}}
\varphi^2(t)\,dt
\lesssim 
M_{\varphi}
v.
\end{equation}
Indeed, first we notice that we have
$$
\int\limits_{\substack{t\in\RR_+ \\ \varphi(t)\leq v}}
\varphi^2(t)\,dt
=
\int\limits_{\substack{t\in\RR_+ \\ \varphi(t)\leq v}}
dt\int\limits^{\varphi^2(t)}_0 d\tau.
$$
We can interchange the order of integration to get
$$
\int\limits_{\substack{t\in\RR_+ \\ \varphi(t)\leq v}}
dt\int\limits^{\varphi^2(t)}_0 d\tau
=
\int\limits^{v^2}_0 d\tau 
\int
\limits_{\substack{t\in\RR_+ \\ \tau^{\frac12}\leq \varphi(t)\leq v}}
dt.
$$
Further, we make a substitution $\tau=s^2$, yielding
\begin{equation*}
\int\limits^{v^2}_0 d\tau \int\limits_{\substack{t\in\RR_+ \\ 
		\tau^{\frac12}\leq \varphi(t)\leq v}}dt
=
2\int\limits^{v}_0 s\,ds
\int\limits_{\substack{s\in\RR_+ \\ s \leq \varphi(t)\leq v}}dt
\leq
2\int\limits^{v}_0 s\,ds 
\int\limits_{\substack{t\in\RR_+ \\ s \leq \varphi(t)}}dt.
\end{equation*}
Since 
$$
s\int\limits_{\substack{t\in\RR_+ \\ s \leq \varphi(t) } } dt
\leq 
\sup_{s>0}s\int\limits_{\substack{t\in\RR_+ \\ s \leq \varphi(t) } } dt
=M_{\varphi}
$$
is finite by the assumption that $M_{\varphi}<\infty$, we have
$$
2\int\limits^{v}_0 s\,ds
\int\limits_{\substack{t\in\RR_+ \\ s \leq \varphi(t) } } dt
\lesssim 
M_{\varphi} v.
$$
This proves \eqref{EQ:aux1} and hence also \eqref{weak_type}.
Thus, we have proved inequalities
\eqref{EQ:THM:Paley_inequality_weak_1} and
\eqref{EQ:THM:Paley_inequality_weak_2}.
Then by using the Marcinkiewicz interpolation theorem  with $p_1=1$, $p_2=2$ and 
$\frac1p=1-\theta+\frac{\theta}2$ we now obtain
$$
\left(
\int\limits_{\substack{\RR_+}}
\left(
\frac{\mu_t(R_{f})}{\varphi(t)}
\right)^p
\varphi^2(t)dt
\right)^{\frac1p}
 =
\|Af\|_{L^p(\RR_+,\nu)}
\lesssim
M^{\frac{2-p}{p}}_{\varphi}
\|f\|_{L^p(G)}.
$$
This completes the proof of Theorem \ref{THM:Paley-LCG}.
\end{proof}

%\subsection{Hausdorff-Young-Paley inequality}

Further, we recall a result on the interpolation of weighted spaces from \cite{BL2011}:
\begin{thm}[Interpolation of weighted spaces]
\label{THM:L_p-weighted-interpolation}
 Let  $d\mu_0(x)=\omega_0(x)d\mu(x),\\ d\mu_1(x)=\omega_1(x)d\mu(x)$, and write $L^p(\omega)=L^p(\omega d\mu)$ for the weight $\omega$. Suppose that $0<p_0,p_1<\infty$. Then 
$$
	(L^{p_0}(\omega_0), L^{p_1}(\omega_1))_{\theta,p}=L^p(\omega),
$$
where $0<\theta<1,\frac1p=\frac{1-\theta}{p_0}+\frac{\theta}{p_1}$, and 
$\omega=\omega^{p\frac{1-\theta}{p_0}}_0 \omega^{p\frac{\theta}{p_1}}_1$.
\end{thm}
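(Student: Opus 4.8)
The plan is to reduce the weighted statement to the unweighted real interpolation identity $(L^{p_0}(\nu),L^{p_1}(\nu))_{\theta,p}=L^{p}(\nu)$, valid for any positive measure $\nu$ and $0<p_0,p_1<\infty$ with $\frac1p=\frac{1-\theta}{p_0}+\frac{\theta}{p_1}$ (the diagonal case $q=p$ of the classical Lorentz-space identity $(L^{p_0},L^{p_1})_{\theta,q}=L^{p,q}$, see \cite{BL2011}). The reduction is effected by a single multiplication operator combined with a change of the base measure, which will turn out to be an isometric isomorphism of Banach couples and therefore commutes with the $(\theta,p)$-functor.

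Assume first $p_0\neq p_1$. Define the positive measurable functions
\[
\sigma:=\left(\frac{\omega_0}{\omega_1}\right)^{\frac{1}{p_0-p_1}},\qquad \rho:=\omega_0\,\sigma^{-p_0}=\omega_0^{-\frac{p_1}{p_0-p_1}}\omega_1^{\frac{p_0}{p_0-p_1}},
\]
and set $d\nu:=\rho\,d\mu$. By construction $\sigma^{-p_0}\omega_0=\rho$ and (since $\sigma^{p_0-p_1}=\omega_0/\omega_1$) also $\sigma^{-p_1}\omega_1=\rho$, so for $i=0,1$ the map $U\colon f\mapsto\sigma f$ satisfies $\int|f|^{p_i}\omega_i\,d\mu=\int|\sigma f|^{p_i}\rho\,d\mu$; that is, $U$ is an isometry of $L^{p_i}(\omega_i)$ onto $L^{p_i}(\nu)$ with inverse $g\mapsto\sigma^{-1}g$. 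Hence $U$ is an isometric isomorphism of the couple $(L^{p_0}(\omega_0),L^{p_1}(\omega_1))$ onto $(L^{p_0}(\nu),L^{p_1}(\nu))$, and consequently
\[
(L^{p_0}(\omega_0),L^{p_1}(\omega_1))_{\theta,p}=U^{-1}\left(L^{p}(\nu)\right)=\{f:\sigma f\in L^p(\nu)\},\qquad \|f\|^p=\int|f|^p\sigma^p\rho\,d\mu.
\]

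It remains to check that $\sigma^p\rho=\omega$. From the definitions $\sigma^p\rho=\omega_0^{\frac{p-p_1}{p_0-p_1}}\omega_1^{\frac{p_0-p}{p_0-p_1}}$, so it suffices to verify the two exponent identities $\frac{p-p_1}{p_0-p_1}=\frac{p(1-\theta)}{p_0}$ and $\frac{p_0-p}{p_0-p_1}=\frac{p\theta}{p_1}$; both follow from $\frac1p=\frac{1-\theta}{p_0}+\frac{\theta}{p_1}$ by elementary algebra, since this relation gives $\theta=\frac{(p_0-p)p_1}{p(p_0-p_1)}$ and $1-\theta=\frac{(p-p_1)p_0}{p(p_0-p_1)}$. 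This yields $(L^{p_0}(\omega_0),L^{p_1}(\omega_1))_{\theta,p}=L^p(\omega)$, with equality of norms. The degenerate case $p_0=p_1$ (which forces $p=p_0=p_1$ and $\omega=\omega_0^{1-\theta}\omega_1^{\theta}$) is treated directly: the $K$-functional of $(L^p(\omega_0),L^p(\omega_1))$ satisfies $K(t,f)\simeq\left(\int|f|^p\min(\omega_0,t^p\omega_1)\,d\mu\right)^{1/p}$ (the infimum over $f=f_0+f_1$ being taken pointwise), and evaluating $\int_0^\infty\big(t^{-\theta}K(t,f)\big)^p\,\frac{dt}{t}$ by Fubini — the inner $t$-integral reducing to a finite Beta-type constant — produces $c\int|f|^p\omega_0^{1-\theta}\omega_1^\theta\,d\mu$.

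The only genuinely technical points are the exponent bookkeeping above and the observation that an isometric (more generally, bounded-invertible) isomorphism of Banach couples is automatically compatible with the real interpolation functor $(\cdot,\cdot)_{\theta,p}$; granting the unweighted identity, nothing deeper is needed, and I expect the routine but slightly fiddly exponent verification to be the main place one has to be careful.
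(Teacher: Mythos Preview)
Your argument is correct. The paper does not actually prove this theorem; it merely recalls it from \cite{BL2011} (Bergh--L\"ofstr\"om) and uses it as a black box to interpolate between the Paley inequality and the Hausdorff--Young inequality. Your reduction via the multiplication operator $U\colon f\mapsto\sigma f$ and the change of base measure $d\nu=\rho\,d\mu$ is precisely the standard proof found in that reference (Theorem~5.4.1 there), so in content your proof coincides with what the cited source does. The exponent bookkeeping and the handling of the degenerate case $p_0=p_1$ are both fine.
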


From this, interpolating between the Paley-type inequality \eqref{EQ:Paley-LCG}  in Theorem \ref{THM:Paley-LCG} 
and  Hausdorff-Young inequality  \eqref{EQ:HY-LCG-2}, 
we readily obtain an inequality that will be crucial for our 
consequent analysis of $L^{p}$-$L^{q}$ multipliers:
\begin{thm}[Hausdorff-Young-Paley inequality]
	\label{THM:HYP-LCG}
	Let $G$ be a locally compact unimodular separable group. 
	Let $1<p\leq b \leq p'<\infty$. 
	If a positive function $\varphi(t)$, $t\in\RR_+$, satisfies condition 
% \eqref{EQ:weak_symbol_estimate} 
	 \begin{equation}
 \label{EQ:weak_symbol_estimate2}
	 M_\varphi:=\sup_{s>0}s\int\limits_{\substack{t\in\RR_+\\ \varphi(t)\geq s }}dt<\infty,
	 \end{equation}
%is finite,	with constant $M_\varphi<\infty$, 
then for all $f\in L^p(G)$ we have
	\begin{equation}
	\label{EQ:HYP-LCG}
	\left(
	\int\limits_{\RR_+}
	\left(
	\mu_t(R_f)
	{\varphi(t)}^{\frac1b-\frac{1}{p'}}
	\right)^b
	dt
	\right)^{\frac1b}
	\lesssim
	M_\varphi^{\frac1b-\frac1{p'}}
	\|f\|_{L^p(G)}.
	\end{equation}
\end{thm}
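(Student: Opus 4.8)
The plan is to deduce \eqref{EQ:HYP-LCG} by real interpolation between the Paley inequality of Theorem~\ref{THM:Paley-LCG} and the Hausdorff--Young inequality \eqref{EQ:HY-LCG-2}, keeping the exponent $p$ of the source space $L^p(G)$ fixed. Note first that the hypothesis $p\le b\le p'$ forces $1<p\le 2$, so both of these input inequalities are available. I regard the assignment $\mathcal{T}\colon f\mapsto (t\mapsto \mu_t(R_f))$ as a sub-linear map from $L^p(G)$ into the space of measurable functions on $\RR_+$, and record its two endpoint mapping properties into weighted Lebesgue spaces on $(\RR_+,dt)$. By Theorem~\ref{THM:Paley-LCG} (whose hypothesis is precisely \eqref{EQ:weak_symbol_estimate2}) the map $\mathcal{T}$ is bounded from $L^p(G)$ into $L^{p}(\RR_+,\varphi(t)^{2-p}\,dt)$ with norm $\lesssim M_\varphi^{(2-p)/p}$; by \eqref{EQ:HY-LCG-2} it is bounded from $L^p(G)$ into $L^{p'}(\RR_+,dt)$ with norm $\le 1$.

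Next I would interpolate. Given $p<b<p'$, pick $\theta\in(0,1)$ by $\frac1b=\frac{1-\theta}{p}+\frac{\theta}{p'}$; subtracting $\frac1{p'}$ from both sides and using $\frac1p-\frac1{p'}=\frac{2-p}{p}$ gives the identity $(1-\theta)\frac{2-p}{p}=\frac1b-\frac1{p'}$ that drives the rest of the computation. Applying the weighted-space interpolation result, Theorem~\ref{THM:L_p-weighted-interpolation}, on $(\RR_+,dt)$ with $\omega_0=\varphi^{2-p}$, $\omega_1\equiv 1$, $p_0=p$, $p_1=p'$, one gets
\[
\bigl(L^{p}(\omega_0),L^{p'}(\omega_1)\bigr)_{\theta,b}=L^b(\omega),\qquad
\omega=\omega_0^{\,b(1-\theta)/p}\,\omega_1^{\,b\theta/p'}=\varphi^{\,(2-p)b(1-\theta)/p}=\varphi^{\,1-b/p'}.
\]
Since $\bigl(L^p(G),L^p(G)\bigr)_{\theta,b}=L^p(G)$ and the real method is exact of exponent $\theta$, the two endpoint estimates yield that $\mathcal{T}$ is bounded from $L^p(G)$ into $L^b(\RR_+,\varphi(t)^{1-b/p'}\,dt)$ with norm
\[
\lesssim\bigl(M_\varphi^{(2-p)/p}\bigr)^{1-\theta}\cdot 1^{\theta}=M_\varphi^{(1-\theta)(2-p)/p}=M_\varphi^{\,1/b-1/p'}.
\]
Writing $\varphi^{1-b/p'}=(\varphi^{1/b-1/p'})^{b}$ and unwinding the definition of $\mathcal{T}$, this is precisely \eqref{EQ:HYP-LCG} for $p<b<p'$; the two endpoint cases $b=p$ and $b=p'$ are \eqref{EQ:Paley-LCG} and \eqref{EQ:HY-LCG-2} themselves.

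The one point requiring care is that $\mu_t(\cdot)$ is not linear, so $\mathcal{T}$ is only sub-linear --- more precisely $\mu_{2t}(R_{f+g})\le\mu_t(R_f)+\mu_t(R_g)$ --- and one must check that real interpolation still applies in this form. This is handled exactly as in the proof of Theorem~\ref{THM:Paley-LCG}, where Marcinkiewicz interpolation was used for a sub-linear operator; alternatively one may interpolate the genuinely linear map $f\mapsto R_f$ valued in the associated symmetric spaces of $\tau$-measurable operators affiliated with $\VN_R(G)$ and transport the interpolation to the function level via $A\mapsto(t\mapsto\mu_t(A))$. It is also harmless to assume $\varphi$ monotone decreasing from the outset, since both \eqref{EQ:weak_symbol_estimate2} and the left-hand side of \eqref{EQ:HYP-LCG} depend on $\varphi$ only through its decreasing rearrangement (because $t\mapsto\mu_t(R_f)$ is already decreasing, by the Hardy--Littlewood rearrangement inequality), which removes any dilation issues in the quasi-triangle inequalities for the weighted norms. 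Everything else is the bookkeeping of exponents together with the exactness of the real interpolation method.
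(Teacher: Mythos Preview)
Your proposal is correct and is exactly the approach the paper takes: it states Theorem~\ref{THM:L_p-weighted-interpolation} precisely so that Theorem~\ref{THM:HYP-LCG} follows by interpolating between the Paley inequality \eqref{EQ:Paley-LCG} and the Hausdorff--Young inequality \eqref{EQ:HY-LCG-2} with $L^p(G)$ fixed as the source space. Your exponent bookkeeping and your remarks on the sub-linearity of $f\mapsto\mu_t(R_f)$ are more detailed than what the paper actually writes out, but the argument is the same.
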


Naturally, this reduces to the Hausdorff-Young inequality \eqref{EQ:HY-LCG-2} when $b=p'$ and to 
the Paley inequality in \eqref{EQ:Paley-LCG} when $b=p$.

\section{Traces and singular numbers on von Neumann algebras}
\label{SEC:traces}

Some properties of traces shall be used in the proofs of our theorems. Moreover, we will need to use Haagerup's version of traces in the case of non-unimodular groups.
Therefore, we give a brief background on traces summarising the results that will be used in the sequel. For some description of measurable fields of operators and links to the representation theory and general von Neumann and $C^*$-algebras we refer to \cite[Appendices B and C]{FR2016}.
The following definition is taken from \cite[Definition I.6.1, p.93]{VNA-Dixmier-1981}:

\begin{defn} Let $M$ be a von Neumann algebra. A trace on the positive part $M_+=\{A\in M\colon A^*=A>0\}$ of $M$ is a functional $\tau$ defined on $M_+$, taking non-negative, possibly infinite, real values, possessing the following properties:
\begin{itemize}
\item If $A\in M_+$ and $B\in M_+$, we have $\tau(A+B)=\tau(A)+\tau(B)$;
\item If $A\in M_+$ and $\lambda\in\RR_+$, we have $\tau(\lambda A)=\lambda\tau(A)$ (with the convention that $0\cdot+\infty=0$);
\item If $A\in M_+$ and if $U$ is a unitary operator of $M$, then $\tau(UAU^{-1})=\tau(A)$.
\end{itemize}
We say that $\tau$ is faithful (or exact) if the condition $A\in M_+,\,\tau(A)=0$, imply that $A=0$.
We say that $\tau$ is finite if $\tau(A)<+\infty$ for all $A\in M_+$.
We say that $\tau$ is { semifinite} if, for each $A\in M_+$, $\tau(A)$ is the supremum of the numbers $\tau(B)$ over those $B\in M_+$ such that $B\leq A$ and $\tau(B)<+\infty$.
We say that $\tau$ is normal if, for each increasing filtering set $\mathcal{S}\subset M_+$ with supremum $S\in M_+$, $\tau(S)$ is the supremum of $\{\tau(B)\}_{B\in\mathcal{S}}$.
%\end{defn}
%
%\begin{defn} 
A von Neumann algebra $M$ is said to be {\em semifinite} if there exists a semifinite faithful normal trace $\tau$ on $M_+$.
\end{defn}

%The structure of semifinite von Neumann algebras is described by Dixmier:
%\begin{prop}[{{\cite[Prop 9, p.111]{VNA-Dixmier-1981}}}] For a von Neumann algebra $M$, 
%the following two conditions are equivalent:
%\begin{enumerate}
%\item There exists a faithful semifinite normal trace on $M_+$;
%\item  $M$ is semi-finite.
%\end{enumerate}
%\end{prop}
Let $X$ be a Borel space, $\nu$ a positive measure on $X$ and let $\lambda \mapsto \H_{\lambda}$ be a measurable field of Hilbert spaces $\H_{\lambda}$. For every $\lambda\in X$, let $A_{\lambda}$ be an element of $B(\H_{\lambda})$, i.e. a linear bounded operator on $\H_{\lambda}$. The mapping $\lambda\mapsto A_{\lambda}$ is called a field of bounded linear operators over $X$. Measurable fields of operators associated to group von Neumann algebras have been discussed in detail in \cite[Appendix B]{FR2016}.

\begin{defn} A measurable field $\{A_{\lambda}\}_{\lambda\in X}$ is said to be essentially bounded if the essential supremum of the function $\lambda \mapsto \|A_{\lambda}\|_{B(\H_{\lambda})}$ is finite.
%\end{defn}
%\begin{defn} 
A linear operator $A\colon \H\to \H$ is said to be decomposable if it is defined as an essentially bounded measurable field $\{A_{\lambda}\}_{\lambda\in X}$. We then write
$$
A=
\bigoplus\int A_{\lambda}d\nu(\lambda).
$$
\end{defn}
For every $\lambda\in X$, let $M_{\lambda}$ be a von Neumann algebra in $\H_{\lambda}$. The mapping $\lambda\mapsto M_{\lambda}$ is called a field of von Neumann algebras over $X$.
\begin{defn} A von Neumann algebra $M\subset B(\H)$ is said to be decomposable if it is defined by a measurable field $\lambda \mapsto M_{\lambda}$ of von Neumann algebras. We then write
$$
M
=
\bigoplus\int M_{\lambda}d\nu(\lambda),
$$
\end{defn}
Every abelian algebra $\mathcal{C}$ is isometrically isomorphic \cite[Theorem 1, p.132]{VNA-Dixmier-1981} to $L^{\infty}(X,\nu)$, where $X$ is a locally compact space with a positive measure $\nu$ on $X$.
The importance of abelian subalgebras is motivated by the following
\begin{thm}[{{\cite[Theorem 7, p. 460]{RO1949}}}] 
Let $M$ be a von Neumann algebra and let $\mathcal{C}=M\cap M^{!}\cong L^{\infty}(X,\nu)$ be its center. Then $M=\bigoplus\limits_{X}\int M_{\lambda}d\nu(\lambda)$,
where each $M_{\lambda}$ is a factor, i.e. $M_{\lambda}\cap M^{!}_{\lambda}=\text{multiplies of the identity operator $I$}$.
\end{thm}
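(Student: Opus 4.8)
The plan is to diagonalise the centre $\mathcal{C}=M\cap M^{!}$ and then transport the resulting decomposition of the underlying Hilbert space to a decomposition of $M$ itself. Throughout one works with a separable Hilbert space $\H$ — exactly the situation of interest here, where $\H=L^{2}(G)$ with $G$ separable — and this separability is used at essentially every step. The hypothesis already provides $\mathcal{C}\cong L^{\infty}(X,\nu)$; in the separable setting $(X,\nu)$ may be taken standard Borel with $\nu$ $\sigma$-finite, and spectral multiplicity theory upgrades this identification to a measurable field $\lambda\mapsto\H_{\lambda}$ together with a unitary $\H\cong\bigoplus\int_{X}\H_{\lambda}\,d\nu(\lambda)$ under which $\mathcal{C}$ becomes the algebra of \emph{diagonalisable} operators, i.e.\ the multiplication operators $M_{f}$, $f\in L^{\infty}(X,\nu)$, acting on the fibre $\H_{\lambda}$ by the scalar $f(\lambda)$. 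From now on $\H$ is identified with this direct integral.

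First I would record two elementary facts: (i) the commutant of the diagonalisable algebra is precisely the algebra of \emph{decomposable} operators $\bigoplus\int A_{\lambda}\,d\nu(\lambda)$; and (ii) since $\mathcal{C}\subseteq M$ and $\mathcal{C}\subseteq M^{!}$, both $M$ and $M^{!}$ lie in the commutant of the diagonalisable algebra, hence consist entirely of decomposable operators. Next, invoking separability, I would choose sequences $\{A_{n}\}_{n\in\NN}\subset M$ and $\{B_{n}\}_{n\in\NN}\subset M^{!}$ that are strongly dense in the respective unit balls, write $A_{n}=\bigoplus\int A_{n,\lambda}\,d\nu(\lambda)$ and $B_{n}=\bigoplus\int B_{n,\lambda}\,d\nu(\lambda)$, and \emph{define} $M_{\lambda}$ to be the von Neumann algebra on $\H_{\lambda}$ generated by $\{A_{n,\lambda}\}_{n\in\NN}$. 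One then checks that $\lambda\mapsto M_{\lambda}$ is a measurable field of von Neumann algebras — the generators are measurable fields, and the von Neumann algebra generated fibrewise by a countable measurable family is again a measurable field — so that $\bigoplus\int_{X}M_{\lambda}\,d\nu(\lambda)$ makes sense.

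The next block of work identifies $\bigoplus\int_{X}M_{\lambda}\,d\nu(\lambda)$ with $M$. The inclusion $M\subseteq\bigoplus\int M_{\lambda}$ is immediate, since each $A_{n}$ lies in the right-hand side and the latter is strongly closed. For the reverse inclusion, matching the commutation relations $A_{n}B_{m}=B_{m}A_{n}$ fibrewise shows $B_{m,\lambda}\in M_{\lambda}^{!}$ for $\nu$-a.e.\ $\lambda$, whence $M^{!}\subseteq\bigoplus\int M_{\lambda}^{!}$; taking commutants and invoking the \emph{commutation theorem for direct integrals}, $\bigl(\bigoplus\int M_{\lambda}^{!}\,d\nu(\lambda)\bigr)^{!}=\bigoplus\int (M_{\lambda}^{!})^{!}\,d\nu(\lambda)=\bigoplus\int M_{\lambda}\,d\nu(\lambda)$, one obtains $\bigoplus\int M_{\lambda}\subseteq M^{!!}=M$. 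Hence $M=\bigoplus\int_{X}M_{\lambda}\,d\nu(\lambda)$, and symmetrically $M^{!}=\bigoplus\int_{X}M_{\lambda}^{!}\,d\nu(\lambda)$.

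Finally, the factor property. Intersecting the two decompositions — using that intersection of von Neumann algebras commutes with direct integrals — gives
\[
\bigoplus\int_{X}\bigl(M_{\lambda}\cap M_{\lambda}^{!}\bigr)\,d\nu(\lambda)=M\cap M^{!}=\mathcal{C}=\bigoplus\int_{X}\C\,I_{\H_{\lambda}}\,d\nu(\lambda),
\]
the last equality being the defining feature of the diagonalisation. If $M_{\lambda}\cap M_{\lambda}^{!}\supsetneq\C\,I_{\H_{\lambda}}$ on a set of positive $\nu$-measure, then a measurable selection of a non-scalar self-adjoint element of those fibres would produce a decomposable, non-diagonalisable element of $\mathcal{C}$ — a contradiction. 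Therefore $M_{\lambda}\cap M_{\lambda}^{!}=\C\,I_{\H_{\lambda}}$ for $\nu$-a.e.\ $\lambda$, i.e.\ almost every $M_{\lambda}$ is a factor. I expect the main obstacle to be the measure-theoretic infrastructure behind the argument — above all, establishing the commutation theorem for direct integrals and verifying that the fibrewise operations (generation, intersection, and measurable selection of non-scalar elements) can all be performed measurably; this is exactly the point where the separability of $\H$ is indispensable.
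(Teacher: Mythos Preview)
The paper does not prove this statement: it is quoted verbatim as a classical result from \cite[Theorem 7, p.~460]{RO1949} and is used only as background in Section~\ref{SEC:traces}, so there is no ``paper's own proof'' to compare against. Your outline is the standard reduction-theory argument (essentially von Neumann's original, as presented e.g.\ in Dixmier \cite{VNA-Dixmier-1981}): diagonalise the centre, decompose $\H$ as a direct integral over $(X,\nu)$, use that $M$ and $M^{!}$ commute with $\mathcal{C}$ to see they are decomposable, define $M_{\lambda}$ via countable strong generators, and recover $M=\bigoplus\int M_{\lambda}$ from the commutation theorem for direct integrals; the factor property then follows by intersecting the decompositions of $M$ and $M^{!}$. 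The sketch is correct in structure, and you have accurately flagged the genuine technical load --- the commutation theorem $\bigl(\bigoplus\int N_{\lambda}\bigr)^{!}=\bigoplus\int N_{\lambda}^{!}$ and the measurability of fibrewise constructions --- as the places where separability does the work and where a fully rigorous write-up would require substantial effort.
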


We recall the  basic result on central decomposition of traces of von Neumann algebras.

\begin{thm} Let $M=\bigoplus\limits M_{\lambda}d\nu(\lambda)$ be a semifinite decomposable von Neumann algebra. Suppose that $\nu$ is standard. Then we have
\begin{enumerate}
\item The $M_{\lambda}$'s are semifinite almost everywhere.
\item Let $\tau$ be a semifinite faithful normal trace on $M_+$. Then there exists a measurable field $\lambda\mapsto \tau_{\lambda}$ of semifinite faithful normal traces on the $(M_{\lambda})_+$'s such that 
\begin{equation}
\tau
=
\int\limits \tau_{\lambda}d\nu(\lambda).
\end{equation}
\end{enumerate}
\end{thm}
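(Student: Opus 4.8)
The plan is to prove statement (2) and deduce (1) from it, since a von Neumann algebra carrying a semifinite faithful normal trace is by definition semifinite. Throughout I would use that the underlying Hilbert space $\Hcal=\bigoplus\int\Hcal_\lambda\,d\nu(\lambda)$ is separable (in our application $M=\VN_R(G)$ acts on $L^2(G)$ with $G$ separable) and that $\nu$ is standard; this is exactly what makes the measurable selection arguments below available, and I would freely invoke the corresponding lemmas of reduction theory from \cite{VNA-Dixmier-1981}.

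The first step is to represent $\tau$ as a countable sum of vector functionals. A normal semifinite trace on a von Neumann algebra acting on a separable Hilbert space is, by Haagerup's structure theorem for normal weights, of the form $\tau=\sum_{i\in\NN}\omega_{\xi_i}$, where $\omega_\xi(x)=\jp{x\xi,\xi}$ and the sum means the supremum of its finite partial sums. Disintegrating each vector, $\xi_i=\bigoplus\int\xi_i(\lambda)\,d\nu(\lambda)$, and using that for a decomposable operator $x=\bigoplus\int x_\lambda\,d\nu(\lambda)$ one has $\jp{x\xi_i,\xi_i}=\int\jp{x_\lambda\xi_i(\lambda),\xi_i(\lambda)}\,d\nu(\lambda)$, I would put
\[
\tau_\lambda:=\sum_{i\in\NN}\omega_{\xi_i(\lambda)}\qquad\text{on }(M_\lambda)_+.
\]
Each $\tau_\lambda$ is then a normal weight on $M_\lambda$, the field $\lambda\mapsto\tau_\lambda$ is measurable (the partial sums are finite sums of vector functionals built from measurable vector fields, and $\tau_\lambda$ is their increasing limit), and unwinding the definitions gives $\tau=\int\tau_\lambda\,d\nu(\lambda)$.

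The second step is to verify that $\tau_\lambda$ is tracial, faithful and semifinite for a.e. $\lambda$. For the trace property I would fix a decomposable $x=\bigoplus\int x_\lambda\,d\nu(\lambda)$ and apply the global identity $\tau\p{(fx)^*(fx)}=\tau\p{(fx)(fx)^*}$ for every diagonalisable $f\in L^\infty(X,\nu)\subset M$; this reads $\int\abs{f(\lambda)}^2\tau_\lambda(x_\lambda^*x_\lambda)\,d\nu=\int\abs{f(\lambda)}^2\tau_\lambda(x_\lambda x_\lambda^*)\,d\nu$, and letting $f$ run over a countable weak-$*$ dense family in $L^\infty(X,\nu)$ forces $\tau_\lambda(x_\lambda^*x_\lambda)=\tau_\lambda(x_\lambda x_\lambda^*)$ for a.e. $\lambda$; running $x$ over a countable $\sigma$-strongly dense sequence in $M$ then gives this for all $x$ simultaneously a.e., and normality extends it to all of $(M_\lambda)_+$. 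For faithfulness: if $\tau_\lambda$ were non-faithful on a set of positive $\nu$-measure, a measurable selection would produce nonzero projections $p_\lambda\in M_\lambda$ with $\tau_\lambda(p_\lambda)=0$ there; assembling $p=\bigoplus\int p_\lambda\,d\nu(\lambda)$ gives $0\ne p\in M_+$ with $\tau(p)=0$, contradicting faithfulness of $\tau$. For semifiniteness: choosing, by separability, projections $e_n\uparrow I$ in $M$ with $\tau(e_n)<\infty$ (such a sequence exists because the supremum of all finite-trace projections is $I$ by faithfulness plus semifiniteness, and finite joins of such projections still have finite trace), one has $e_{n,\lambda}\uparrow I_\lambda$ a.e. and $\int\tau_\lambda(e_{n,\lambda})\,d\nu=\tau(e_n)<\infty$, so $\tau_\lambda(e_{n,\lambda})<\infty$ a.e., which is semifiniteness of $\tau_\lambda$; consequently $M_\lambda$ is semifinite a.e., giving (1).

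The hard part is not the algebra but the measure theory: making precise the notion of a measurable field of normal weights/traces, proving that $\lambda\mapsto\tau_\lambda$ so constructed is measurable, and justifying the measurable selection of the defect projections $p_\lambda$ in the faithfulness step. All of this leans on the separability of $\Hcal$ and the standardness of $\nu$ and is the technical heart of Dixmier's reduction theory, so I would cite the relevant lemmas of \cite{VNA-Dixmier-1981} rather than reprove them. A cleaner alternative, consistent with the Hilbert algebra isomorphism $M\cong\VN(\U)$ already recalled in the excerpt, would be to disintegrate the Hilbert algebra $\U\subset\Hcal$ into a measurable field of Hilbert algebras $\U_\lambda\subset\Hcal_\lambda$ with $M_\lambda\cong\VN(\U_\lambda)$ and to read off $\tau_\lambda$ as the canonical trace of $\U_\lambda$; here too the only nontrivial point is the measurability of the field.
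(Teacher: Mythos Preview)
The paper does not prove this theorem: it is stated in Section~\ref{SEC:traces} as a ``basic result on central decomposition of traces'' recalled from the literature (implicitly Dixmier \cite{VNA-Dixmier-1981}), and no proof is given. There is therefore nothing in the paper to compare your argument against.

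That said, your sketch is essentially the standard proof one finds in Dixmier's reduction theory: decompose $\tau$ as a countable sum of vector functionals, disintegrate the vectors fiberwise to define $\tau_\lambda$, and then verify traciality, faithfulness and semifiniteness almost everywhere via measurable selection and a countable dense testing family. Your identification of the technical weight---measurability of the field of weights and the selection of defect projections---is accurate, and your alternative route through disintegration of the Hilbert algebra is also the one Dixmier ultimately packages. One small point: the existence of an increasing sequence $e_n\uparrow I$ with $\tau(e_n)<\infty$ uses $\sigma$-finiteness of $\tau$ (equivalently separability of $\Hcal$), which you do assume, so this is fine; but in full generality one argues with the ideal $\mathfrak{n}_\tau$ rather than a single increasing sequence.
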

It can be seen that $A\in M^+$ if and only if $A=(A^{1/2})^*A^{1/2}$.
\begin{ex} Let $G$ be a locally compact unimodular group with $\VN_R(G)$ the group von Neumann algebra generated by the right regular representation $\pi_R$ of $G$.  Let $A$ be a linear bounded operator commuting with the left regular representation. Then by the double commutant theorem $A\in \VN_R(G)$ and its action is given
$$
L^2(G)\ni h\mapsto Ah=h\ast K_A\in L^2(G),
$$
where $K_A$ is its convolution kernel.
We can define a trace $\tau$ on $\VN^+_R(G)$ by
\begin{equation}
\tau(A):=
\begin{cases}
\|K_{A^{\frac12}}\|^2_{L^2(G)},\quad \text{ if $K_{A^{1/2}}\in L^2(G)$},
\\
\infty,\quad \text{ otherwise}.
\end{cases}
\end{equation}

Let $\mathcal{C}=\VN_R(G)\cap \VN_R(G)^{!}$. 
The abelian algebra $\mathcal{C}$ can be identified with the unitary dual $\Gh$ via a canonical map. For more details we refer to \cite{Dixmier1977} or \cite[Theorem 7.37, p. 227]{Folland2016}.
%\begin{thm} Suppose $G$ is a locally compact separable topological group and $\pi_R$ is a unitary representation of $G$ on a separable Hilbert space $L^2(G)$. Then there is a standart measure space $(\Gh,\Sigma,\nu)$, a measurable field $\{\H_{\pi}\}$ of HIlbert spaces  $\H_{\pi}$, a measurable field $\{\pi_{\lambda}\}$ of representations $\pi_{\lambda}$ of $G$ and a unitary map $U\colon L^2(G)\mapsto \bigoplus\limits_{\Gh}\int \H_{\pi}$, such that:
%\begin{enumerate}
%\item 
%\begin{equation}
%U\pi(x)U^{-1}=\bigoplus\limits_{\Gh}\int \pi_{\lambda}(x)d\nu(\lambda) \quad \text{ for $x\in G;$}
%\end{equation}
%\item 
%\begin{equation}
%UR_f U^{-1}
%=
%\bigoplus\limits_{\Gh}\widehat{f}(\pi_{\lambda})d\nu(\lambda) \quad \text{for $f\in L^1(G)$};
%\end{equation}
%
%\item 
%\begin{equation}
%U\VN_R(G)\cap \VN_R(G)^{!}U^{-1}
%\quad \text{ is the algebra of diagonal operators on $\bigoplus\limits_{\Gh}\int\H_{\pi}d\nu(\lambda)$}.
%\end{equation}
%\end{enumerate}
%\end{thm}
The reduction theory allows us to decompose $\tau$ with respect to the center $\VN_R(G)\cap \VN_R(G)^{!}$ of the group algebra $\VN_R(G)$
\begin{equation}
\tau(A)
=
\int\limits_{\Gh}\tau_{\pi}(A_{\pi})d\pi,
\end{equation}
where $A=\bigoplus\int\limits A_{\pi}d\pi$. 
\end{ex}

The trace $\tau$ on $M$ can also be extended to the $*-$algebra $S(M)$of all $\tau$-measurable operators.
\begin{prop} 
\label{PROP:trace-spectral-computed}
Let $(M,\tau)$ be a von Neumann algebra and let $A$ be a $\tau$-measurable linear operator. Assume that $\varphi$ is a Borel function on $\sp(\left|\mathcal{L}\right|)\subset [0,+\infty)$. Then we have
\begin{equation}
\label{EQ:trace-spectral-computed}
\tau(\varphi(\left|A\right|))
=
\int\limits^{+\infty}_0 \varphi(t)d\mu(t),
\end{equation}
where $\mu_t=\tau(E_t)$ and 
$$
\left|\mathcal{L}\right|=\int\limits^{+\infty}_0 tdE_t(\left|A\right|).
$$
\end{prop}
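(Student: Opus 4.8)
The plan is to deduce \eqref{EQ:trace-spectral-computed} from the spectral theorem for $|A|$ together with the normality of the trace $\tau$, by reducing it to a plain change-of-variables identity for Lebesgue--Stieltjes measures. First I would introduce the \emph{spectral trace measure}: for a Borel set $S\subset\Sp(|A|)$ put $\nu(S):=\tau\bigl(E_S(|A|)\bigr)$, where $E_S(|A|)$ is the spectral projection of $|A|$ attached to $S$. Additivity of $\tau$ on mutually orthogonal projections gives finite additivity of $\nu$, and since $\tau$ is normal (see Section~\ref{SEC:traces}), $\tau$ is $\sigma$-additive on orthogonal families of projections, so $\nu$ is in fact a (possibly $[0,+\infty]$-valued) Borel measure on $\Sp(|A|)$. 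By definition $\nu\bigl([0,t]\bigr)=\tau\bigl(E_t(|A|)\bigr)=\mu_t$, i.e. $\nu$ is exactly the Lebesgue--Stieltjes measure $d\mu$ on the right-hand side of \eqref{EQ:trace-spectral-computed}; hence it suffices to prove $\tau\bigl(\varphi(|A|)\bigr)=\int_{\Sp(|A|)}\varphi\,d\nu$.

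Next I would verify this last identity in the usual three steps. For a nonnegative simple Borel function $\varphi=\sum_{j=1}^{N}c_j\,\mathbf 1_{S_j}$ with the $S_j$ pairwise disjoint, the Borel functional calculus gives $\varphi(|A|)=\sum_{j}c_j\,E_{S_j}(|A|)$, so positive-linearity of $\tau$ yields $\tau(\varphi(|A|))=\sum_j c_j\,\nu(S_j)=\int\varphi\,d\nu$. For a general nonnegative Borel $\varphi$ on $\Sp(|A|)$, choose simple functions $0\le\varphi_1\le\varphi_2\le\cdots$ with $\varphi_n\uparrow\varphi$ pointwise; then $\varphi_n(|A|)\uparrow\varphi(|A|)$ in the (extended) positive cone of $M$, and normality of $\tau$ forces $\tau(\varphi_n(|A|))\uparrow\tau(\varphi(|A|))$, while the classical monotone convergence theorem gives $\int\varphi_n\,d\nu\uparrow\int\varphi\,d\nu$; comparing these two limits with the simple-function case proves the identity for all $\varphi\ge0$. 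Finally, a general real- or complex-valued Borel $\varphi$ is handled by splitting into positive/negative and real/imaginary parts, which is legitimate whenever the right-hand side of \eqref{EQ:trace-spectral-computed} is well defined (in particular $\tau$-measurability of $A$ ensures $\nu\bigl((\lambda,+\infty)\bigr)=d_\lambda(A)<\infty$ for large $\lambda$, which is all that is needed to avoid $\infty-\infty$ ambiguities in the relevant applications).

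The only genuinely delicate point is the monotone passage to the limit: one must know that $\varphi_n(|A|)$ increases to $\varphi(|A|)$ in the appropriate sense and that the (possibly infinite) traces converge accordingly. This is precisely where the normality of $\tau$, and the continuity of the Borel functional calculus along bounded increasing sequences of functions, enter --- and it is the reason the semifiniteness/normality hypotheses recalled in Section~\ref{SEC:traces} are indispensable. Everything else is routine bookkeeping with the spectral resolution $|A|=\int_0^{\infty}t\,dE_t(|A|)$ and Lebesgue--Stieltjes integration.
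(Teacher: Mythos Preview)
Your argument is correct and is in fact the standard, cleanest way to establish \eqref{EQ:trace-spectral-computed}. The paper, however, proceeds differently: it first reduces to bounded $A$ by approximating $|A|$ with the truncations $A_n=\int_0^n t\,dE_t(|A|)$ and invoking the monotone convergence theorem for $\tau$-measurable operators \cite[Theorem 3.5]{ThierryKosaki1986}; then, for bounded $A$ and bounded Borel $\varphi$, it partitions the range $\varphi([0,\|A\|])$, forms the Riemann-type sums $R_N=\sum_k\lambda_k E_{\varphi^{-1}(\lambda_{k-1},\lambda_k)}(|A|)$, and argues that $R_N\to\varphi(|A|)$ in the $\tau$-measure topology while $\tau(R_N)$ is visibly a Lebesgue sum for $\int\varphi\,d\mu$.

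The two routes differ in where the limiting step is placed. You build the spectral trace measure $\nu(S)=\tau(E_S(|A|))$ directly from normality of $\tau$ and then run the usual simple-function/monotone-convergence machinery; this is more conceptual, handles unbounded $\varphi$ and unbounded $|A|$ in one stroke via the extended positive cone, and makes the change-of-variables nature of the identity transparent. The paper's approach trades this abstraction for concreteness: by cutting down to bounded data first it works entirely inside $M$ with the $\tau$-measure topology, at the cost of two separate approximation arguments (one for $A$, one for $\varphi$) and a somewhat informal treatment of the Riemann-sum limit. Your version would be a drop-in replacement and arguably an improvement in rigour.
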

Although the equality \eqref{EQ:trace-spectral-computed} has been used to define a trace on the algeba $S(M)$.
However, the authors prefer to prove \eqref{EQ:trace-spectral-computed} while fully acknowledging the influence of \cite{HayesPhD2014}.
\begin{proof}[Proof of Proposition \ref{PROP:trace-spectral-computed}]
For the spectral measure we can take the family $\{E_{[0,t)}\}_{t\geq 0}$ of spectral projections $E_{(0,t)}$ corresponding to the intervals $[0,t)$.  The reader can check that the spectral measure axioms hold true.

The trace $\tau$ is continuous with respect to $\tau$-measure.
In the view of the monotone convergence theorem (see \cite[Theorem 3.5]{ThierryKosaki1986})  we can  assume, without loss of generality, that $A$ is a bounded $\tau$-measurable operator. Indeed, for every $\tau$-measurable operator $\left|A\right|$ there exists a sequence $\{A_n\}$ of   of $\tau$-measurable bounded operators
$$
A_n=\int\limits^n_0 tdE_t(\left|A\right|)\leq A
$$
converging to $A$ in $\tau$-measure topology.  Then, taking the limit
$$
\lim_{n\to\infty} \tau(A_n)=\tau(A),
$$
we justify the claim.
We notice that every Borel function can be uniformly approximated by bounded Borel functions. Thus, we concentrate to establish \eqref{EQ:trace-spectral-computed} for bounded measurable $A$ and bounded Borel functions $\varphi$ on $[0,\|A\|_{B(\H)}]$.
By the spectral mapping theorem
$$
\sp(\varphi(A))
=
\varphi
(
[0,\|A\|_{B(\H)}]
)
$$
Let $0\leq \lambda_1\leq \lambda_2\leq \ldots\lambda_N$ be a partition of the interval 
$\varphi
(
[0,\|A\|_{B(\H)}]
)
$. Then the Riemannian-like sums
$$
R_N=\sum\limits^N_{k=1}\lambda_kE_{\varphi^{-1}(\lambda_{k-1},\lambda_k)}
(\left|A\right|)
$$
converge to $\varphi(A)$ in $\tau$-measure topology.
The trace $\tau$ on $R_N$ is given by
\begin{equation}
\label{EQ:Riemann}
\tau(R_N)
=
\int\limits^N_{k=1}\lambda_k\tau(E_{\varphi^{-1}(\lambda_{k-1},\lambda_k)}
(\left|A\right|)
).
\end{equation}
One can notice that sum in \eqref{EQ:Riemann} is a Lebesuge integral sum 
$$
\sum\limits^N_{k=1}\lambda_k \mu_{(\lambda_{k-1},\lambda_k)}
$$
for the integral
$$
\int\limits^{\|A\|}_{0}\varphi(t)d\mu(t),
$$
where we set the measure $\mu((a,b))=\tau(E_{(a,b)}),\quad (a,b)\subset [0,\|A\|_{B(\H)}]$.

\end{proof}
For the sake of the exposition clarity  we now formulate some properties of the distribution function $d_A$ which we will be using in the proofs.
\begin{prop}
\label{PROP:mu-t-properties}
 Let $A\in S(M)$. Then we have
\begin{align}
\label{EQ:d-s-right-continuity}
&d_A(\mu_A(t))\leq t;
\\
\label{EQ:prop-3}
&\mu_A(t)>s \quad\text{ if and only if }\quad t<d_A(s);
\\
\label{EQ:prop-16}
&\sup_{t>0}t^{\alpha}\mu_A(t)
=
\sup_{s>0}s[d_A(s)]^{\alpha} \quad \text{ for }\; 0<\alpha<\infty.
\end{align}
\end{prop}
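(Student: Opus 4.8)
The plan is to prove Proposition \ref{PROP:mu-t-properties} directly from the definitions of the distribution function $d_A(\lambda) = \tau(E_{(\lambda,+\infty)}(|A|))$ and the generalised singular numbers $\mu_A(t) = \inf\{\lambda \geq 0 : d_A(\lambda) \leq t\}$, exploiting that $d_A$ is a non-increasing, right-continuous function of $\lambda$ (this right-continuity follows from the normality of the trace $\tau$ together with the fact that $E_{(\lambda,+\infty)}(|A|) = \bigvee_{\mu > \lambda} E_{(\mu,+\infty)}(|A|)$ in the strong operator topology, equivalently $\bigwedge$ over an increasing filtering family, so $\tau$ passes to the limit). These facts place us in exactly the same situation as the classical decreasing rearrangement of a function, and all three identities are then formal consequences of the Galois-type duality between $d_A$ and $\mu_A$.

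First I would establish \eqref{EQ:prop-3}, the equivalence $\mu_A(t) > s \iff t < d_A(s)$, since \eqref{EQ:d-s-right-continuity} and \eqref{EQ:prop-16} both reduce to it. For the forward direction: if $\mu_A(t) > s$, then $s$ does not belong to the set $\{\lambda \geq 0 : d_A(\lambda) \leq t\}$ (otherwise $\mu_A(t)$, being the infimum of that set, would be $\leq s$); hence $d_A(s) > t$. For the converse: if $t < d_A(s)$, then for every $\lambda \leq s$ we have $d_A(\lambda) \geq d_A(s) > t$ by monotonicity of $d_A$, so no $\lambda \leq s$ lies in $\{\lambda : d_A(\lambda) \leq t\}$, whence the infimum $\mu_A(t) \geq s$; to upgrade $\geq$ to $>$ I use right-continuity of $d_A$: since $d_A(s) > t$, there is $\delta > 0$ with $d_A(s+\delta) > t$ still, so actually $\mu_A(t) \geq s + \delta > s$.

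Next, \eqref{EQ:d-s-right-continuity}, namely $d_A(\mu_A(t)) \leq t$, follows by taking the contrapositive of the converse direction just proved: if $d_A(\mu_A(t)) > t$, then applying \eqref{EQ:prop-3} with $s = \mu_A(t)$ gives $\mu_A(t) > \mu_A(t)$, a contradiction. (Alternatively, by definition there is a sequence $\lambda_n \downarrow \mu_A(t)$ with $d_A(\lambda_n) \leq t$, and right-continuity of $d_A$ gives $d_A(\mu_A(t)) = \lim_n d_A(\lambda_n) \leq t$.) Finally, for \eqref{EQ:prop-16}, I would argue both inequalities: for any $t, s > 0$, if $s < d_A(\mu_A(t))^{1/\alpha}$... more cleanly, fix $t$ and $s$; using \eqref{EQ:prop-3}, $s^\alpha < \mu_A(t)^\alpha$ is equivalent to $t < d_A(s)$, i.e. $t^\alpha < d_A(s)^\alpha$, which rearranges to $s \cdot t^\alpha < s\, d_A(s)^\alpha$ on one side and $t \cdot s^\alpha < t \,\mu_A(t)^\alpha$... so one checks that $\sup_t t^\alpha \mu_A(t) \le c$ iff $\mu_A(t) \le (c/t^\alpha)^{1/\alpha}$ for all $t$, iff (by \eqref{EQ:prop-3}) $d_A(s) \le t$ whenever $s > (c/t^\alpha)^{1/\alpha}$, i.e. iff $s^\alpha d_A(s) \le c$ for all $s$; taking suprema gives the claimed equality of the two sides.

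I do not expect a serious obstacle here; the one point requiring genuine care is the right-continuity of $\lambda \mapsto d_A(\lambda)$, which is where normality of the trace is used and which is not literally stated in the excerpt — if needed I would insert a short lemma recording it, or cite \cite[Theorem 3.5]{ThierryKosaki1986} or \cite{ThierryKosaki1986} where these standard properties of generalised singular numbers are collected. Everything else is a routine unwinding of the infimum in the definition of $\mu_A$.
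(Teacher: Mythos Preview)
Your approach is correct and essentially the same as the paper's: both proofs rely on the right-continuity of $d_A$ (which the paper, like you, cites from \cite{ThierryKosaki1986}) and exploit the classical Galois duality between a non-increasing right-continuous distribution function and its generalised inverse. The only organisational difference is that the paper proves \eqref{EQ:d-s-right-continuity} first (via the sequence argument you list as your alternative) and then uses it in the contrapositive direction of \eqref{EQ:prop-3}, whereas you prove \eqref{EQ:prop-3} directly from right-continuity and deduce \eqref{EQ:d-s-right-continuity} from it; both orderings are fine. For \eqref{EQ:prop-16} the paper carries out exactly the $\epsilon$-approximation version of your threshold argument.

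One small correction to your sketch of \eqref{EQ:prop-16}: the exponents slip in two places. The condition $t^\alpha\mu_A(t)\le c$ is equivalent to $\mu_A(t)\le c/t^\alpha$ (not $(c/t^\alpha)^{1/\alpha}$), and the target on the $s$-side is $s\,[d_A(s)]^\alpha\le c$, not $s^\alpha d_A(s)\le c$. With those fixed, your level-set argument goes through: for $0<s<\mu_A(t)$ property \eqref{EQ:prop-3} gives $t<d_A(s)$, hence $s\,t^\alpha<s\,[d_A(s)]^\alpha$, and letting $s\uparrow\mu_A(t)$ yields one inequality; the symmetric argument with $0<t<d_A(s)$ gives the other.
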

The proof of this proposition is almost verbatim to the proof of \cite[Proposition 1.4.5 on page 46]{Grafakos_2008}. The word `almost' stands for the right-continuity of the non-commutative distribution function $d_A(s)$ which is discussed after \cite[Definition 1.3 on page 272]{ThierryKosaki1986}. Therefore, in the following proof we shall use the right-continuity of $d_A(s)$ without any justification.
\begin{proof}[Proof of Proposition \ref{PROP:mu-t-properties}]
Let $s_n\in\{s>0\colon d_A(s)\leq t\}$ be such that $s_n \searrow  \mu_A(t)$. Then $d_A(s_n)\leq t$ and the right-continuity of $d_{A}$ implies that $d_A(\mu_A(t))\leq t$. This proves \eqref{EQ:d-s-right-continuity}.
Now, we apply this property to derive \eqref{EQ:prop-3}.
If $s<\mu_A(t)=\inf\{s>0 \colon d_A(s)\leq t\}$, then 
$s$ does not belong to the set $\{s>0 \colon d_A(s)\leq t\}\implies d_A(s)>t$. 
Conversely, if for some $t$ and $s$, we had $\mu_A(t)<s$, then the application of  $d_A$ and property \eqref{EQ:prop-3} would yield the contradiciton $d_A(s)\leq d_{A}(\mu_A(t))\leq t$. Property \eqref{EQ:prop-3} is established.
Finally, we show \eqref{EQ:prop-16}. Given $s>0$, pick $\eps$ satisfying $0<\eps<s$. Property \eqref{EQ:prop-3} yields $\mu_A(d_A(s)-\eps)>s$ which implies that
\begin{equation}
\sup_{t>0}t^{\alpha}\mu_A(t)
\geq
(d_A(s)-\eps)^{\alpha}\mu_A(d_A(s)-\eps)>(d_A(s)-\eps)^{\alpha}s.
\end{equation}
We first let $\eps\to 0$ and then take the supremum over all $s>0$ to obtain one direction. Conversely, given $t>0$, pick $0<\eps<\mu_A(t)$. Property \eqref{EQ:prop-3}
yields  that $d_A(\mu_A(t)-\eps)>t$. This implies that $\sup_{s>0}s(d_A(s))^{\alpha}\geq (\mu_A(t)-\eps)(d_A(\mu_A(t)-\eps))^{\alpha}>(\mu_A(t)-\eps)t^{\alpha}$. We first let $\eps\to 0$ and then take the supremum over all $t>0$ to obtain the opposite direction of \eqref{EQ:prop-16}.
\end{proof}
We recall the following result which will be partially used.
\begin{thm}[{{\cite[Theorem 4, p. 412]{Segal1953}}}] If operators $A$ and $B$ are $\tau$-measurable with respect to a von Neumann algebra $M$, then so are $A^*,A+B$ and $AB$, i.e. the maps
\begin{align}
+\colon M \times M \ni (A,B) &\mapsto A+B\in M,
\\
\cdot \colon M\times M \ni (A,B)&\mapsto A B\in M,
\\
*\colon M \ni A &\mapsto A^*\in M
\end{align}
are well-defined.
\end{thm}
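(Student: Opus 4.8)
The plan is to verify that $S(M)$ is a $*$-algebra, the sum and product being understood as the closures of $A+B$ on $D(A)\cap D(B)$ and of $AB$ on $\{h\in D(B):Bh\in D(A)\}$. I would first record the reformulation of $\tau$-measurability that drives everything: for $A\nu M$ one has $A\in S(M)$ if and only if $\tau(E_{(\lambda,+\infty)}(|A|))\to 0$ as $\lambda\to+\infty$. Indeed, the bounded spectral projections $p_\lambda:=E_{[0,\lambda]}(|A|)$ satisfy $p_\lambda\H\subseteq D(|A|)=D(A)$, which gives one direction; conversely, if $p\H\subseteq D(A)$ then $|A|p$ is closed and everywhere defined, hence bounded, and the resulting norm bound forces $E_{(\lambda,+\infty)}(|A|)\wedge p=0$ for large $\lambda$ and therefore $\tau(E_{(\lambda,+\infty)}(|A|))\le\tau(I-p)$ via the parallelogram law for traces. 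In particular the admissible projections in Definition \ref{DEF:operator-measurability} can be chosen to increase to $I$ (by faithfulness and normality of $\tau$), so $D(A)$ is dense and $A\in S(M)$ is closed, and the closures and adjoints below make sense. I will freely use three standard facts: affiliation with $M$ passes to adjoints and to the closures of $A+B$ and $AB$; the trace parallelogram law $\tau((e\wedge f)^{\perp})\le\tau(e^{\perp})+\tau(f^{\perp})$ for projections; and invariance of $\tau$ under Murray--von Neumann equivalence of projections.

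For the adjoint I would use the polar decomposition $A=U|A|$ with $U\in M$ a partial isometry: then $|A^*|=U|A|U^*$, so for $\lambda>0$ the projection $E_{(\lambda,+\infty)}(|A^*|)=UE_{(\lambda,+\infty)}(|A|)U^*$ is Murray--von Neumann equivalent to $E_{(\lambda,+\infty)}(|A|)$ (the latter being dominated by the support projection $U^*U$ of $|A|$), whence $\tau(E_{(\lambda,+\infty)}(|A^*|))=\tau(E_{(\lambda,+\infty)}(|A|))\to0$ and $A^*\in S(M)$. For the sum, given $\eps>0$ choose projections $p,q$ with $p\H\subseteq D(A)$, $q\H\subseteq D(B)$ and $\tau(p^{\perp}),\tau(q^{\perp})<\eps/2$; then $r:=p\wedge q$ has $r\H\subseteq D(A)\cap D(B)\subseteq D(\overline{A+B})$ and $\tau(r^{\perp})<\eps$ by the parallelogram law, so $\overline{A+B}\in S(M)$.

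The product is the delicate case, and I expect the construction there to be the main obstacle. The key device is that whenever a projection $e$ satisfies $e\H\subseteq D(B)$, the operator $Be$, being closed and everywhere defined, is bounded, hence $Be\in M$. Given $\eps>0$, take such an $e$ with also $\tau(e^{\perp})<\eps/2$ (e.g. $e=E_{[0,n]}(|B|)$ for large $n$), and take a projection $f$ with $f\H\subseteq D(A)$ and $\tau(f^{\perp})<\eps/2$. The bounded operator $f^{\perp}Be\in M$ has a closed kernel; letting $g_0$ be the projection onto it, $g_0^{\perp}$ is the range projection of $(f^{\perp}Be)^*$, which is Murray--von Neumann equivalent to the range projection of $f^{\perp}Be$, and the latter is dominated by $f^{\perp}$; hence $\tau(g_0^{\perp})\le\tau(f^{\perp})<\eps/2$. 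Setting $g:=e\wedge g_0$ we get $\tau(g^{\perp})<\eps$, and for $h\in g\H$ we have $eh=h$, so $Bh=Be\,h\in f\H\subseteq D(A)$; thus $h$ lies in the domain of the algebraic product $AB$, hence in $D(\overline{AB})$, and $\overline{AB}\in S(M)$.

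It remains to dispatch the routine points left open above: that $A+B$ and $AB$ are densely defined (their domains contain $g_n\H$ with $\bigvee_n g_n=I$, by the construction of the intersection and composition projections) and closeable (their adjoints contain the densely defined operators $A^*+B^*$ and $B^*A^*$, using that $A^*,B^*\in S(M)$ by the adjoint step), so that the closures exist; and that affiliation with $M$ survives the closure, which is immediate since $A+B$ and $AB$ commute with the commutant $M^!$ on their natural domains, which are dense and $M^!$-invariant. Assembling the adjoint, sum and product steps then shows that $S(M)$ is a $*$-algebra, which is the assertion.
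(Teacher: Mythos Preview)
The paper does not prove this theorem: it is quoted verbatim from Segal's 1953 paper and used as a black box, so there is no ``paper's own proof'' to compare against. Your argument is the standard one (essentially Segal's, and the version reproduced in Nelson's and Terp's notes): reformulate $\tau$-measurability via $\tau(E_{(\lambda,\infty)}(|A|))\to 0$, handle $A^*$ by polar decomposition and Murray--von Neumann equivalence of the spectral projections of $|A|$ and $|A^*|$, handle the sum by intersecting domain projections and the parallelogram inequality for the trace, and handle the product by the kernel-projection trick for $f^{\perp}Be$. The steps are correct and the closeability/density/affiliation remarks at the end are the right loose ends to tie up; there is nothing to add relative to what the paper does, since the paper simply cites the result.
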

Here we formulate some properties of $\mu_t$ that we use in the proof of Theorem \ref{THM:upper-bound}.

\begin{lem}[{{\cite[Lemma 2.5, p. 275]{ThierryKosaki1986}}}] 
\label{LEM:mu-t-A-properties}
Let $A,B$ be $\tau$-measurable operators. Then the following properties  hold true.
\begin{enumerate}
\item The map $(0,+\infty)\ni t \mapsto \mu_t(A)$ is non-increasing and continuous from the right. Moreover, 
\begin{equation}
\lim_{t\to 0}\mu_t(A)=\|A\|\in[0,+\infty].
\end{equation} 
\item 
\begin{equation}\label{EQ:muadj}
\mu_t(A)=\mu_t(A^*).
\end{equation}
%\item
%\label{LEM:mu-t-A-properties-4}
%\begin{equation}
%\mu_t(f(|A|))=f(\mu_t(A)),\,t>0 \quad \text{ for any continuous increasing function $f$ on $[0,+\infty)$ with $f(0)\geq 0$}.
%\end{equation}
\item 
\label{LEM:mu-t-A-properties-3}
\begin{equation}
\label{EQ:mu-t-A-properties-3}
\mu_{t+s}(AB)\leq\mu_t(A)\mu_s(B).
\end{equation}
\item
\label{LEM:mu-t-A-properties-4}
\begin{equation}
\mu_t(ACB)
\leq
\|A\|\|B\|\mu_t(C),\quad \text{ for any $\tau$-measurable operator $C$}.
\end{equation}
\item 
\label{LEM:mu-t-A-properties-5}
For any continuous increasing function $f$ on $[0,+\infty)$ we have
\begin{equation}
\mu_t(f(|A|))
=
f(\mu_t(|A|)).
\end{equation}
\end{enumerate}
\end{lem}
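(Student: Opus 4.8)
The plan is to deduce all five assertions from the definition of $\mu_t$ together with the elementary properties of the distribution function recorded in Proposition \ref{PROP:mu-t-properties}, along the lines of \cite{ThierryKosaki1986}. The single tool that does most of the work is the ``min--max'' description
\[
\mu_t(A)=\inf\{\|Ae\|\colon e\text{ a projection in }M,\ \tau(I-e)\leq t\},
\]
so I would establish this first. For ``$\leq$'' one takes $e=I-E_{(\mu_t(A),\infty)}(|A|)$, whose co-rank $\tau(I-e)=d_{\mu_t(A)}(A)$ is $\leq t$ by \eqref{EQ:d-s-right-continuity}, while $\|Ae\|=\||A|e\|\leq\mu_t(A)$. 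For ``$\geq$'', if $e$ is any projection with $\tau(I-e)\leq t$ and $c:=\|Ae\|$, then for $\eps>0$ the spectral projection $f_\eps:=E_{(c+\eps,\infty)}(|A|)$ satisfies $\Ran e\cap\Ran f_\eps=\{0\}$ (since $\||A|x\|\geq(c+\eps)\|x\|$ on $\Ran f_\eps$ while $\||A|ex\|\leq c\|x\|$), hence $e\wedge f_\eps=0$; the parallelogram (Kaplansky) law for projections then gives $\tau(f_\eps)\leq\tau(I-e)\leq t$, i.e. $d_{c+\eps}(A)\leq t$ and so $\mu_t(A)\leq c+\eps$, and $\eps\to0$ finishes it. Applying this to $A^*$ and using assertion~(2) yields the mirror form $\mu_t(A)=\inf\{\|eA\|\colon\tau(I-e)\leq t\}$, and intersecting the witnessing projections gives the companion subadditivity $\mu_{t_1+t_2+t_3}(A_1+A_2+A_3)\leq\mu_{t_1}(A_1)+\mu_{t_2}(A_2)+\mu_{t_3}(A_3)$.

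With these in hand, assertions (1), (2), (5) are short. Monotonicity of $t\mapsto\mu_t(A)$ is immediate from the definition, and right-continuity follows from \eqref{EQ:prop-3}: if $t_n\searrow t$ and $L=\lim_n\mu_{t_n}(A)$, then any $s>L$ gives $\mu_{t_n}(A)\leq s$, hence $t_n\geq d_A(s)$, hence $t\geq d_A(s)$, hence $\mu_t(A)\leq s$, so $\mu_t(A)\leq L$; the reverse inequality is monotonicity. The same identity gives $\lim_{t\to0^+}\mu_t(A)=\sup_{t>0}\mu_t(A)=\inf\{s\colon d_A(s)=0\}$, which equals $\inf\{s\colon E_{(s,\infty)}(|A|)=0\}=\||A|\|=\|A\|$ because $\tau$ is faithful. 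For (2) I would use the polar decomposition $A=u|A|$, the identity $|A^*|=u|A|u^*$, hence $E_{(\lambda,\infty)}(|A^*|)=uE_{(\lambda,\infty)}(|A|)u^*$ for $\lambda>0$, and the tracial invariance $\tau(uxu^*)=\tau(xu^*u)$ with $E_{(\lambda,\infty)}(|A|)\leq u^*u$ to get $d_\lambda(A^*)=d_\lambda(A)$, whence $\mu_t(A^*)=\mu_t(A)$. For (5), the functional calculus gives $E_{(\lambda,\infty)}(f(|A|))=E_{(f^{-1}(\lambda),\infty)}(|A|)$ for strictly increasing continuous $f$ (the general nondecreasing continuous case by approximation), so $d_\lambda(f(|A|))=d_{f^{-1}(\lambda)}(|A|)$, and unwinding the infimum in \eqref{EQ:mu-t} yields $\mu_t(f(|A|))=f(\mu_t(|A|))$; here $\mu_t(|A|)=\mu_t(A)$ holds by definition, since $d_\lambda$ only sees $|A|$.

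Finally, (3) and (4) follow from the characterization plus subadditivity. For (3) (I would first do it for bounded $A,B$, the general case reducing to this by the truncation argument already used in the proof of Proposition \ref{PROP:trace-spectral-computed}), given $\eps>0$ pick projections $e,f$ with $\tau(I-e)\leq t$, $\|eA\|\leq\mu_t(A)+\eps$ and $\tau(I-f)\leq s$, $\|Bf\|\leq\mu_s(B)+\eps$, and split $AB=eABf+(I-e)AB+eAB(I-f)$. The operator inequalities $|(AB)^*(I-e)|\leq\|AB\|(I-e)$ and $|(eAB)(I-f)|\leq\|AB\|(I-f)$, via (2), show that $\mu_t((I-e)AB)=0$ and $\mu_s(eAB(I-f))=0$, while $\mu_0(\,\cdot\,)=\|\cdot\|$ by (1); so subadditivity gives $\mu_{t+s}(AB)\leq\|eABf\|\leq\|eA\|\,\|Bf\|\leq(\mu_t(A)+\eps)(\mu_s(B)+\eps)$, and $\eps\to0$ completes it. Assertion (4) is then two applications of (3): $\mu_t(ACB)\leq\mu_0(A)\mu_t(CB)=\|A\|\,\mu_t(CB)$ and $\mu_t(CB)\leq\mu_t(C)\mu_0(B)=\mu_t(C)\,\|B\|$. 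The only genuinely delicate point in this scheme is the lower bound in the min--max formula: one must handle the spectral boundary $\{|A|=\|Ae\|\}$ via the $\eps$-shift to $f_\eps$ and invoke the parallelogram law for projections, which is exactly where semifiniteness and normality of $\tau$ enter; everything else is bookkeeping with the distribution function.
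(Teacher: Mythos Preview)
The paper does not supply its own proof of this lemma: it is quoted verbatim from \cite[Lemma~2.5]{ThierryKosaki1986} and the authors explicitly refer the reader there for the details. Your proposal is essentially a faithful reconstruction of the Fack--Kosaki argument, built around the min--max characterisation $\mu_t(A)=\inf\{\|Ae\|:\tau(I-e)\le t\}$, and the steps you outline are correct; there is nothing to compare against in the present paper.

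One small remark on your proof of (3): you invoke the three-term subadditivity $\mu_{t_1+t_2+t_3}(A_1+A_2+A_3)\le\sum\mu_{t_i}(A_i)$ before establishing it, so make sure that derivation (taking $e=e_1\wedge e_2\wedge e_3$ in the min--max formula) appears explicitly beforehand. Also, your route to (3) via the splitting $AB=eABf+(I-e)AB+eAB(I-f)$ is a slight rearrangement of the original: Fack--Kosaki prove (4) directly from the min--max formula and then deduce (3) by first controlling $B$ with a projection $e$ and then applying (4) to $ABe$. Both orderings work; yours simply front-loads the subadditivity lemma.
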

In Lemma \ref{LEM:mu-t-A-properties}, we formulate only the properties we use, whereas in
 \cite[Lemma 2.5, p. 275]{ThierryKosaki1986} the reader can find more details.

\section{Nikolskii inequality on locally compact groups}
\label{SEC:Nik}

In this section we establish the Nikolskii inequality (sometimes called the reverse H\"older inequality) in the setting of locally compact groups.
It will be instrumental in proving the Lizorkin type multiplier theorem in Theorem \ref{THM:Lizorkin-LCG}.

Let $G$ be a locally compact unimodular separable group and $\VN_R(G)$ its right group von Neumann algebra with trace $\tau$. We shall denote by $\FT^R[f]$ the right Fourier transform of $f\in L^1(G)$, i.e.
\begin{equation}\label{EQ:FTR}
\FT^R[f]=R_f\colon L^2(G)\ni h\mapsto \FT^R[f](h)=R_f[h]=h\ast f\in L^2(G).
\end{equation} 
The reason to introduce the new notation $\FT^R[f]$ is to emphasise the connection with the classical Nikolskii inequality \cite{Nikolskii1951}. 

Let us denote by $\supp^R(\widehat{f})$ the subspace of $L^2(G)$ orthogonal to the kernel $\Ker(\FT^R[f])$ of the Fourier transform $\FT^R[f]$, i.e.
\begin{equation}
\supp^R[\widehat{f}]:=\Ker\left({\FT}^R[f]\right)^{\perp},
\end{equation}
where $\Ker(\FT[f])\subset L^2(G)$ is the kernel of the operator $\FT^R[f]$ in \eqref{EQ:FTR}. 

We note that the classical Nikolskii inequality is an $L^p$-$L^q$ estimate for norms of the same functions for $p<q$ so that the Fourier transforms of the functions under consideration must have bounded support. 

In \cite{NRT2014, NRT2015} Nikolskii inequality has been established on compact Lie groups and on compact homogeneous manifolds, respectively, for functions with bounded support of the noncommutative Fourier coefficients. In \cite{CardonaRuzhansky2016} the Nikolskii inequality was proved in the setting of graded groups: Let $G$ be a graded Lie group of homogeneous dimension $Q$ and let $\mathcal{R}$ be a positive Rockland operator of order $\nu$. For every $L>0$ let us consider the operator, defined by the spectral theory
\begin{equation}
T_Lf:=\chi_{L}(\mathcal{R})f,
\end{equation}
where $\chi_L$ is the characteristic function of the interval $[0,L]$.
In these notations, it was shown in \cite[Theorem 3.1]{CardonaRuzhansky2016} that we have
\begin{equation}\label{EQ:Nik-gr}
\|T_L\|_{L^q(G)}
\leq
C L^{\frac{Q}{\nu}\left(\frac1p-\frac1q\right)}\|T_L\|_{L^p(G)},\quad 1\leq p\leq q\leq \infty,
\end{equation}
with constant $C$ explicitly depending on the spectral resolution of the Rockland operator $\mathcal R$.

The main question in the setting of locally compact groups is to find an analogue of the condition for bounded support of Fourier transforms since we may not have a canonical operator to use its spectral decomposition for the definition of the bounded spectrum.

The version that will work well for our purposes is the following.

Let  $P_{\supp^R[\widehat{f}]}$ be the orthogonal projector onto the support $\supp^R[\widehat{f}]$. 
We say that $f\in L^1(G)$ has {\em bounded spectrum} if $\tau(P_{\supp^R[\widehat{f}]})<+\infty$.

\begin{thm} 
\label{THM:Nikolsky-LCG}
Let $G$ be a locally compact separable unimodular group.
Let $1< q\leq \infty$ and $1<p\leq \min(2,q)$. Assume that $\tau(P_{\supp^R[\widehat{f}]})<\infty$, where $P_{\supp^R[\widehat{f}]}$ denotes  the orthogonal projector onto the support $\supp^R[\widehat{f}]$. Then we have
\begin{equation}
\label{EQ:Nikolsky-LCG}
\|f\|_{L^q(G)}
\lesssim
\left(
\tau(P_{\supp^R[\widehat{f}]})
\right)^{\frac1p-\frac1q}
\|f\|_{L^p(G)},
\end{equation}
with the constant in \eqref{EQ:Nikolsky-LCG} independent of $f$.
\end{thm}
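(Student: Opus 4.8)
The plan is to prove the Nikolskii inequality by a two-step interpolation-type argument, reducing everything to two endpoint estimates: an $L^p \to L^2$ estimate obtained from the Hausdorff--Young inequality combined with the bounded-spectrum hypothesis, and an $L^2 \to L^q$ estimate obtained by duality from the same ingredient. The key observation throughout is that if $f$ has bounded spectrum, then $R_f = P_{\supp^R[\widehat f]} R_f P_{\supp^R[\widehat f]}$, so the projector $P := P_{\supp^R[\widehat f]}$ can be freely inserted, and $\tau(P) < \infty$ is a finite ``effective dimension'' playing the role of $\mathrm{vol}[\mathrm{conv}[\supp(\widehat f)]]$ in the classical statement.

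First I would treat the case $p = q$, where \eqref{EQ:Nikolsky-LCG} is trivial, and then handle the two basic inequalities. For the estimate controlling $\|f\|_{L^q}$ for $2 \le q \le \infty$: by the Hausdorff--Young inequality \eqref{EQ:HY-LCG-2} applied with exponent $q' \le 2$ (so $q = (q')'$), we have $\|f\|_{L^q(G)} \le \|R_f\|_{L^{q'}(\VN_R(G))}$. Now since $\mu_t(R_f) = 0$ for $t > \tau(P)$ (because $R_f$ is supported, in the von Neumann sense, on a projection of trace $\tau(P)$), Hölder's inequality for the singular-value integral gives
\begin{equation*}
\|R_f\|_{L^{q'}(\VN_R(G))}
= \left(\int_0^{\tau(P)} \mu_t(R_f)^{q'}\,dt\right)^{1/q'}
\le (\tau(P))^{\frac{1}{q'} - \frac12}\left(\int_0^{\tau(P)} \mu_t(R_f)^2\,dt\right)^{1/2}
= (\tau(P))^{\frac12 - \frac1q}\,\|f\|_{L^2(G)},
\end{equation*}
using Plancherel \eqref{EQ:plancherel} in the last step. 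Symmetrically, for $1 < p \le 2$, the dual of Hausdorff--Young (equivalently, interpolating or using $\|R_f\|_{L^{2}} \le \|R_f\|_{L^{p',p}}$ together with the trace-finiteness of $P$) yields $\|f\|_{L^2(G)} = \|R_f\|_{L^2(\VN_R(G))} \le (\tau(P))^{\frac1p - \frac12}\,\|f\|_{L^p(G)}$; the cleanest route is to apply the Hardy--Littlewood inequality \eqref{EQ:HL-LCG-2} and then use $\mu_t(R_f) \le \mu_s(R_f)$ for $s \le t$ on the interval $(0,\tau(P))$ plus the support restriction to compare the weight $t^{p-2}$ against a constant times $(\tau(P))^{p-2}$, or more simply estimate $\|R_f\|_{L^2} \le (\tau(P))^{1/2 - 1/p'}\|R_f\|_{L^{p'}}$ by Hölder as above and invoke \eqref{EQ:HY-LCG-2} again with the roles of $f$ and a dual function.

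Combining the two displayed bounds gives, for $1 < p \le 2 \le q \le \infty$,
\begin{equation*}
\|f\|_{L^q(G)} \le (\tau(P))^{\frac12 - \frac1q}\,\|f\|_{L^2(G)} \le (\tau(P))^{\frac12 - \frac1q}\,(\tau(P))^{\frac1p - \frac12}\,\|f\|_{L^p(G)} = (\tau(P))^{\frac1p - \frac1q}\,\|f\|_{L^p(G)},
\end{equation*}
which is exactly \eqref{EQ:Nikolsky-LCG} in that range. For the remaining range $1 < p \le q \le 2$ (which is included in the hypothesis $1 < p \le \min(2,q)$), I would interpolate between the case $q = p$ (trivial) and the case $q = 2$ just proved, or equivalently run the Hausdorff--Young step directly with exponent $q' \in [2, p')$ and then the Hölder trick, noting $\|f\|_{L^q} \le \|R_f\|_{L^{q'}} \le (\tau(P))^{1/q' - 1/p'}\|R_f\|_{L^{p'}} \le (\tau(P))^{1/p - 1/q}\|f\|_{L^p}$ where the middle inequality is again Hölder on $(0,\tau(P))$ using that $q' \le p'$. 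This covers all cases $1 < q \le \infty$, $1 < p \le \min(2,q)$.

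I expect the main obstacle to be the rigorous justification that $f$ having bounded spectrum forces $\mu_t(R_f) = 0$ for $t > \tau(P_{\supp^R[\widehat f]})$, i.e. that one may genuinely restrict all the singular-value integrals to the finite interval $(0, \tau(P))$; this requires knowing that $R_f$ factors through the range of $P$ and that the generalised singular numbers of an operator supported on a projection of trace $c$ vanish beyond $t = c$, which follows from Definition \ref{DEF:mu-t} since $d_\lambda(R_f) \le \tau(P) = c$ for every $\lambda \ge 0$. A secondary technical point is ensuring the Hausdorff--Young and Hardy--Littlewood inequalities, stated in the excerpt for $f \in L^p(G)$ with $1 < p \le 2$, apply here — which they do, since the bounded-spectrum hypothesis together with $\tau(P) < \infty$ guarantees $f \in L^2(G)$ and hence, via the Nikolskii bound being proved, $f \in L^r(G)$ for the relevant $r$; care is needed to avoid circularity by first establishing the $L^p \to L^2$ half unconditionally.
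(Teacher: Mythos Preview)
Your argument is essentially correct, with one error to flag. In the range $1<p\le q<2$, the ``direct'' alternative you offer---the chain $\|f\|_{L^q}\le\|R_f\|_{L^{q'}}\le\dots$---fails at the first step: the dual Hausdorff--Young inequality $\|f\|_{L^q(G)}\le\|R_f\|_{L^{q'}(\VN_R(G))}$ is available only for $q\ge 2$ (equivalently $q'\le 2$), precisely the range you already handled. For $q<2$ no such inequality holds in general. Your other suggestion for this range---interpolate by H\"older (log-convexity of $L^p$ norms) between the trivial case $q=p$ and the already-established case $q=2$---is correct and gives the right exponent, so simply drop the false ``equivalent'' alternative.

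Apart from this, your structure differs mildly from the paper's. Both use the key observation $\mu_t(R_f)=0$ for $t>\tau(P)$ (the paper invokes \cite[Lemma 2.6]{ThierryKosaki1986}), followed by H\"older on the singular-value integral combined with Plancherel and Hausdorff--Young; your $L^2\to L^q$ step for $q\ge 2$ is exactly the paper's Step~2. The difference is in how one descends from $p=2$ to general $p$: the paper first proves the $L^2\to L^\infty$ endpoint via Fourier inversion (Step~1), then bootstraps using $\|f\|_{L^2}\le\|f\|_{L^\infty}^{1-p/2}\|f\|_{L^p}^{p/2}$ and the interpolation $\|f\|_{L^q}\le\|f\|_{L^\infty}^{1-p/q}\|f\|_{L^p}^{p/q}$ (Step~3), routing everything through $L^\infty$. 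You instead obtain $\|f\|_{L^2}\le\tau(P)^{1/p-1/2}\|f\|_{L^p}$ directly from H\"older on the Fourier side plus Hausdorff--Young \eqref{EQ:HY-LCG-2}, bypassing both the $q=\infty$ endpoint and the bootstrap. Your route is shorter; the paper's makes the $q=\infty$ case self-contained via the inversion formula rather than relying on a dual Hausdorff--Young at the endpoint.
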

We shall call \eqref{EQ:Nikolsky-LCG} the {\it Nikolskii inequality} on topological groups.

In the case of graded Lie groups we have $\tau(P_{\supp^R[\widehat{f}]})=L^{\frac{Q}{\nu}}$, so that \eqref{EQ:Nikolsky-LCG} in Theorem \ref{THM:Nikolsky-LCG} recovers \eqref{EQ:Nik-gr} for the corresponding ranges of exponents $p$ and $q$.

In \cite{NRT2014, NRT2015} Nikolskii inequality has been established on compact Lie groups and on compact homogeneous manifolds, respectively.
We prove Theorem \ref{THM:Nikolsky-LCG} along the lines of the proof in \cite{NRT2014} adapting the latter to the setting of locally compact groups.

%\subsection{Proof of Nikolskii inequality}
\begin{proof}[Proof of Theorem \ref{THM:Nikolsky-LCG}]
We will give the proof of \eqref{EQ:Nikolsky-LCG} in three steps. We can abbreviate $\mathcal{F}^R[f]$ in the proof to simply writing $\mathcal{F}[f]$.

\medskip
\noindent
{\underline{Step 1.}
 The case $p=2$ and $q=\infty$.}
We have (by e.g. \cite[Proposition A.1.2. p. 216]{HayesPhD2014}) that
\begin{equation}
\label{EQ:obv_Nik}
\left|\Tr(\widehat{f}(\pi)\pi(x))\right|\leq\Tr\left|\widehat{f}(\pi)\right|,\quad x\in G.
\end{equation}
We notice that
$$
\mathcal{F}[f]P_{\supp^R[\widehat{f}]}
=
\mathcal{F}[f].
$$
Then by \cite[Lemma 2.6, p. 277]{ThierryKosaki1986}, we have
\begin{equation}
\label{EQ:finite-2}
\mu_s(\mathcal{F}[f])=0,\quad s\geq \tau(P_{\supp^R[\widehat{f}]}).
\end{equation}
From now on we shall denote $t:=\tau(P_{\supp^R[\widehat{f}]})$ throughout the proof.
Further, the application of \cite[Proposition 2.7, p.277]{ThierryKosaki1986} yields
\begin{equation}
\label{EQ:finite-spectrum}
\tau(\left|\mathcal{F}[f]\right|)
=
\int\limits^{\infty}_0 \mu_s(\mathcal{F}[f])\,ds
=
\int\limits^{\tau(P_{\supp^R[\widehat{f}]})}_0 \mu_s(\mathcal{F}[f])\,ds,
\end{equation}
where we used \eqref{EQ:finite-2} in the last equality.
Combining \eqref{EQ:finite-spectrum} and \eqref{EQ:obv_Nik}, we obtain
\begin{equation}
\label{EQ:N-LCG-1}
\begin{split}
\|f\|_{L^{\infty}(G)}&\le \int\limits_{\Gh}\Tr\left|\widehat{f}(\pi)\right|d\pi
=
\tau(\left|\mathcal{F}[f]\right|)
=
\int\limits^t_0\mu_s(\mathcal{F}[f])ds
\\& 
\le
\left(
\int\limits^t_0ds
\right)^{\frac12}
\left(
\int\limits^t_0 \mu^2_s(\mathcal{F}[f])ds
\right)^{\frac12}
=
\sqrt{\tau(P_{\supp^R[\widehat{f}]})}
\|f\|_{L^2(G)},
% \leq
%\left(
%\int\limits^t_0ds
%\right)^{\frac12}
%\left(
%\int\limits^{\infty}_0 \mu^2_s(\mathcal{F}[f])ds
%\right)^{\frac12}
%= 
%t^{\frac12}\|f\|_{L^2(G)}.
\end{split}
\end{equation}
where in the last inequality we used the Plancherel
identity.

\medskip
\noindent
{\underline{Step 2.}
 The case $p=2$ and $2<q\leq\infty$.}
 We take
$1\leq q'<2$ so that
$\frac1q+\frac{1}{q'}=1.$
We set $r:=\frac2{q'}$ so that its dual index $r'$
satisfies $\frac{1}{r'}=1-\frac{q'}{2}.$
By the Hausdorff-Young inequality in \eqref{EQ:HY-LCG-2},
and
% using \eqref{EQ:Lp-sigmaGK}
 by H\"older's inequality, we obtain
\[
\begin{split}
\|f\|_{L^{q}(G)}&\le \|\mathcal{F}[f]\|_{L^{q'}(\VN_R(G))}=
\left(\int\limits^t_0\mu^{q'}_s(\mathcal{F}[f])ds
\right)^{\frac{1}{q'}}\\
& \le
\left( \int\limits^t_0 ds\right)^{\frac{1}{q'r'}}
\left( \int\limits^t_0 \mu^{q'r}_s(\mathcal{F}[f])\,ds\right)^{\frac{1}{q'r}} \\
& =
\left( \int\limits^t_0 ds\right)^{\frac{1}{q'}-\frac12}
\left( \int\limits^t_0 \mu^2_s(\mathcal{F}[f])\,ds\right)^{\frac{1}{2}} \\
& 
\leq
\left( \int\limits^t_0 ds\right)^{\frac{1}{q'}-\frac12}
\left( \int\limits^{\infty}_0 \mu^2_s(\mathcal{F}[f])\,ds\right)^{\frac{1}{2}} \\
& =
\tau(P_{\supp^R[\widehat{f}]})^{\frac12-\frac1q} \|f\|_{L^{2}(G)},
\end{split}
\]
where we have used that
$\frac{q'r}{2}=1$.

\medskip
\noindent
{\underline{Step 3.}}
If $p=\min(2,q)$ and $p\not=2$, then $p=q$ and there is nothing to prove.
For $1<p<\min(2,q)$, we claim to have
$$
\|f\|_{L^{q}(G)}\le \tau(P_{\supp^R[\widehat{f}]})^{(1/p-1/q)} \|f\|_{L^{p}(G)}.
$$
Indeed, if $q=\infty$, for $f\not\equiv 0$, we get
\begin{equation}\label{EQ:df}
\begin{split}
\|f\|_{L^{2}}
&=
\||f|^{1-p/2} |f|^{p/2}\|_{L^{2}}
\le
\||f|^{1-p/2}\|_{L^{\infty}}
\||f|^{p/2}\|_{L^{2}}
\\
&=
\|f\|_{L^{\infty}}^{1-p/2}
\||f|^{p/2}\|_{L^{2}}
=
\|f\|_{L^{\infty}} \|f\|_{L^{\infty}}^{-p/2}
\||f|^{p/2}\|_{L^{2}} \\
&
=
\|f\|_{L^{\infty}} \|f\|^{-p/2}_{L^{\infty}}
\|f\|_{L^{p}}^{p/2}
\\
& \le
\tau(P_{\supp^R[\widehat{f}]})^{1/2}
\|f\|_{L^{2}} \|f\|^{-p/2}_{L^{\infty}} \|f\|_{L^{p}}^{p/2},
\end{split}
\end{equation}
where we have used \eqref{EQ:N-LCG-1} in the last line. 
%We have also
%%used the fact that $T_{L}^{\rho}$ is a trigonometric polynomial of degree
%%$\rho L$, which follows from the decomposition of (Kronecker's)
%%tensor products of representations
%%into irreducible components by looking at the corresponding characters
%%on the maximal torus of $G$.
Therefore, using that $f\not\equiv 0$, we have
\begin{equation}\label{EQ:auxpinf}
\begin{split}
\|f\|_{L^{\infty}}
& \le
\tau(P_{\supp^R[\widehat{f}]})^{1/p}
\|f\|_{L^{p}}.
\end{split}
\end{equation}
For $p<q<\infty$ we obtain
\begin{equation}\label{EQ:df1}
\begin{split}
\|f\|_{L^{q}}&= \||f|^{1-p/q} |f|^{p/q}\|_{L^{q}}\le
\|f\|_{L^{\infty}}^{1-p/q} \|f\|_{L^{p}}^{p/q}
\\
& \le
\tau(P_{\supp^R[\widehat{f}]})^{1/p(1-p/q)} \|f\|_{L^{p}}^{1-p/q}
\|f\|_{L^{p}}^{p/q}
=
\tau(P_{\supp^R[\widehat{f}]})^{\frac1p-\frac1q}
\|f\|_{L^p}
,
\end{split}
\end{equation}
where we have used \eqref{EQ:auxpinf}.
\end{proof}
%which implies \eqref{EQ:N} in this case.
%
%\medskip
%\noindent
%{\underline{Step 4.}
%The case $0<p<2$.}
%For $p<q\le\infty$, $0<p\le 2$,
%proceding similar to (\ref{EQ:df}) with $\rho=1$ (note that $p/2\le 1$), we get
%\[
%\begin{split}
%\|T_{L}\|_{L^{\infty}}
%& \le
%N( L)^{1/p}
%\|T_{L}\|_{L^{p}}.
%\end{split}
%\]
%Then the estimate as in
%(\ref{EQ:df1}) implies \eqref{EQ:N} also in the case $p<q\le\infty$, $0<p\le 2$.

%\medskip
%\noindent
%{\underline{Step 5.}
%Sharpness.}
%If $D$ is the  Dirichlet-type kernel \eqref{diri},
%then using Plancherel's identity and the definition of $N(L)$ we can calculate
%\begin{equation}\label{EQ:Dirichlet-L2}
%\|D\|_{L^{2}(G/K)}=\p{\sum_{[\xi]\in\Gh_{0}} d_{\xi}
%\|\widehat{D}(\xi)\|_{\HS}^{2}}^{1/2}=
%\p{\sum_{[\xi]\in\Gh_{0}} d_{\xi}k_{\xi}}^{1/2}=N(L)^{1/2}.
%\end{equation}
%On the other hand, writing the Fourier series
%$D(x)=\sum_{[\xi]\in\Gh_{0}} d_{\xi}
%\Tr\p{\xi(x)\widehat{D}(\xi)}$, and recalling our convention about zeros in the
%last rows of $\xi(x)$, we have
%$$
%D(eK)=\sum_{[\xi]\in\Gh_{0}} d_{\xi}\Tr(\widehat{D}(\xi))=
%\sum_{\jp{\xi}\leq L} d_{\xi} k_{\xi}=N(L).
%$$
%Combining this with \eqref{EQ:Dirichlet-L2} in the Nikolskii inequality \eqref{EQ:N} we obtain
%$$
%N(L)=D(eK)\leq \|D\|_{L^{\infty}}\leq N(L)^{1/2}\|D\|_{L^{2}}=N(L),
%$$
%showing the sharpness of the constant in \eqref{EQ:N} in the case
%$p=2$ and $q=\infty$.
%\end{proof}

Let us now discuss the closure of the space of functions having bounded spectrum. 
We show that the situation is quite subtle, already in the case of compact Lie groups.
This space will naturally appear in the formulations of Lizorkin type theorems.
It will be convenient to measure the rate of growth of the trace of projections to the support of the spectrum: given a function $w=w(t)$ controlling such growth one can use it when passing to the limit from functions with bounded spectrum.

\begin{defn}[The space $L^p_w(G)$] \label{DEF:lpw}
Let $w=w(t)\geq 0$ be a locally integrable function.
Let us denote by $L^p_w(G)$ the space of functions $f\in L^p(G)$ for which there exists a sequence $\{f_t\}_{t>0}$ of functions $f_t\in L^p(G)$ with bounded spectrum %$t=\tau(\supp^R[\widehat{f_t}])$ 
such that
\begin{eqnarray}
\|f-f_t\|_{L^p(G)}\to 0 \quad\textrm{ as }\quad  t\to \infty,
\\
\tau(P_{\supp^R[\widehat{f_t}]})\leq w(t).
\end{eqnarray}
\end{defn}

It has been shown by Stanton \cite{Stanton1976} that for class functions on semisimple compact Lie groups
the polyhedral Fourier partial sums $S_N f$converge to $f$ in $L^p$ provided that 
$2-\frac1{s+1}<p<2+\frac1s$. Here the number $s$ depends on the root system
$\Rcal$ of the compact Lie group $G$.
We also note that the range of indices $p$ as above is sharp, see
Stanton and Tomas \cite{StantonThomas1976,Stanton-Tomas:AJM-1978} as well as 
Colzani, Giulini and Travaglini \cite{Colzani1989}. We refer to Appendix \ref{SEC:mean_convergence} for more details.
The mean convergence of polyhedral Fourier sums has been investigated in \cite{StantonThomas1976,Stanton1976,Stanton-Tomas:AJM-1978}.  
These results allow us to describe spaces $L^p_{w}(G)$ on compact semisimple Lie groups for suitable choice of $w$ in Example \ref{EX:compact-omega} below.
\begin{ex}  
\label{EX:compact-omega}
Let $G$ be a compact semisimple Lie group of dimension $n$ and rank $l$, and let $\mathbb{T}^l$ be its maximal torus.
We say that a function $f$ is central (or class) if
$$
f(xg)=f(gx),\quad x\in \mathbb{T}^l,g\in G.
$$
We write $L^p_{\inv}(G)$ for the space of central functions $f\in L^p(G)$.
Let $\lambda_1,\lambda_2,\ldots,\lambda_1,\ldots$ denote the eigenvalues of the $n$-th order pseudo-differential operator $(I-\mathcal{L}_{G})^{\frac{n}2}$ counted with multiplicities, where $\mathcal{L}_{G}$ is the bi-invariant Laplacian (Casimir element) on $G$.
We shall enumerate elements $\pi$ of the unitary dual $\Gh$ of $G$ using the eigenvalues $\{\lambda_k\}^{\infty}_{k=1}$, i.e.
\begin{equation}
\label{EQ:enumeration}
(I-\mathcal{L}_{G})^{\frac{n}2}\pi^{k}_{mn}=\lambda_k\pi^{k}_{mn}.
\end{equation}

Let $Q_N\subset \widehat{G}$ be a finite polyhedron of $N$-th order and take $w(N)$ to be the number of the eigenvalues $\lambda_k$ enumerating the elements $\pi^k$ in $Q_N$, i.e.
\begin{equation}
w(N)
=
\sum\limits_{\substack{k\in\NN\\ \lambda_k\in Q_N}}1.
\end{equation}
Then 
\begin{equation}
L^p_{w}(G)
=
\begin{cases}
L^p_{\inv}(G),\quad 2-\frac1{1+s}<p<2+\frac1s,
\\
\varnothing, \quad \text{otherwise}.
\end{cases}
\end{equation}
We refer to Appendix \ref{SEC:mean_convergence} for more details on the approximations by trigonometric functions on compact Lie groups.
\end{ex}

\section{Lizorkin theorem}
\label{SEC:Lizorkin}

In this section we prove an analogue of the Lizorkin theorem for the $L^p$-$L^q$ boundedness of Fourier multipliers for the range of indices $1<p\leq q<\infty$. 
We recall the classical Lizorkin theorem on the real line $\RR$:

\begin{thm}[{{\cite{Lizorkin1967}}}]  \label{THM:Liz-R}
Let $1<p\leq q<\infty$ and let $A$ be a Fourier multiplier on $\RR$ with the symbol $\sigma_A$, i.e.
$$
Af(x)
=
\int_{\RR}e^{2\pi ix\xi }\sigma_A(\xi)\widehat{f}(\xi)d\xi.
%\int\limits_{\RR)\sigma_A(\xi)\widehat{f}(\xi)e^{2\pi x\xi}d\xi.
$$
Assume that the symbol $\sigma_A(\xi)$ satisfies the following conditions
\begin{eqnarray}
\sup_{\xi\in\RR}|\xi|^{\frac1p-\frac1q}|\sigma_A(\xi)|\leq C<\infty,
\\
\sup_{\xi\in\RR}|\xi|^{\frac1p-\frac1q+1}\left|\frac{d}{d \xi}\sigma_A(\xi)\right|\leq C.
\end{eqnarray}
Then $A\colon L^p(\RR)\to L^q(\RR)$ is a bounded linear operator and
\begin{equation}
\|A\|_{L^p(\RR)\to L^q(\RR)}\lesssim C.
\end{equation}
\end{thm}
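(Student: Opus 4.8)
The plan is to prove Theorem~\ref{THM:Liz-R} by factoring the multiplier $A$ as a Riesz potential, which alone produces the $L^p$-$L^q$ smoothing, composed with a Mihlin-type multiplier, which is bounded on $L^q(\RR)$. Put $\alpha:=\frac1p-\frac1q$, so that $0\le\alpha<\frac1p<1$ since $1<p\le q<\infty$. If $\alpha=0$ (that is $p=q$), then conditions~\eqref{Liz1}--\eqref{Liz2} say exactly that $\sup_{\xi\ne 0}\bigl(|\sigma_A(\xi)|+|\xi|\,|\sigma_A'(\xi)|\bigr)\lesssim C$, so $A$ is a one-dimensional Mihlin multiplier and is bounded on $L^p(\RR)$ with norm $\lesssim C$, and there is nothing more to prove. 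Assume henceforth $0<\alpha<1$. Let $I_\alpha$ denote the Riesz potential on $\RR$, i.e. the Fourier multiplier with symbol $|\xi|^{-\alpha}$, and let $B$ be the Fourier multiplier with symbol
\[
m(\xi):=\sigma_A(\xi)\,|\xi|^{\alpha}.
\]
Since $\sigma_A(\xi)=m(\xi)\,|\xi|^{-\alpha}$ and Fourier multipliers on $\RR$ compose on the Fourier side, we have $A=B\circ I_\alpha$, say on the Schwartz class.

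The first substantive step is to verify that $m$ satisfies the one-dimensional Mihlin condition. From~\eqref{Liz1} we get $|m(\xi)|=|\xi|^{\alpha}|\sigma_A(\xi)|\le C$. Differentiating, $m'(\xi)=\sigma_A'(\xi)\,|\xi|^{\alpha}+\alpha\,|\xi|^{\alpha-1}\,\mathrm{sgn}(\xi)\,\sigma_A(\xi)$, whence, using~\eqref{Liz2} for the first term and~\eqref{Liz1} for the second,
\[
|\xi\,m'(\xi)|\le |\xi|^{\alpha+1}|\sigma_A'(\xi)|+\alpha\,|\xi|^{\alpha}|\sigma_A(\xi)|\le C+\alpha C\le 2C.
\]
The non-smoothness of $|\xi|^{\alpha}$ at the origin is harmless here, as the Mihlin hypothesis is only imposed for $\xi\ne 0$. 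Hence the Mihlin multiplier theorem on $\RR$ gives that $B$ is bounded on $L^r(\RR)$ for every $1<r<\infty$, and in particular $\|B\|_{L^q(\RR)\to L^q(\RR)}\lesssim C$.

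It then remains to invoke the Hardy--Littlewood--Sobolev theorem: since $0<\alpha<1$, $1<p<q<\infty$ and $\frac1q=\frac1p-\alpha$, the Riesz potential $I_\alpha$ is bounded from $L^p(\RR)$ to $L^q(\RR)$. Composing the two estimates,
\[
\|Af\|_{L^q(\RR)}=\|B(I_\alpha f)\|_{L^q(\RR)}\le \|B\|_{L^q\to L^q}\,\|I_\alpha f\|_{L^q(\RR)}\lesssim C\,\|f\|_{L^p(\RR)}
\]
for every Schwartz function $f$; a standard density argument then extends $A$ to a bounded operator $L^p(\RR)\to L^q(\RR)$ with norm $\lesssim C$, as claimed.

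In this argument there is no genuine obstacle: the only point requiring any insight is the observation that hypotheses~\eqref{Liz1}--\eqref{Liz2} are precisely calibrated so that multiplying $\sigma_A$ by $|\xi|^{\alpha}$ turns an ``$L^p$-$L^q$-type'' symbol into an honest Mihlin (``$L^r$-$L^r$-type'') symbol; this decouples the fractional smoothing, handled by Hardy--Littlewood--Sobolev, from the singular-integral behaviour, handled by Calder\'on--Zygmund theory. Everything else is routine. We note that Theorem~\ref{THM:Liz-R} will also be recovered, a posteriori, as the Euclidean special case of the Lizorkin-type Theorem~\ref{THM:Lizorkin-LCG} below, whose proof rests on the Nikolskii inequality of Theorem~\ref{THM:Nikolsky-LCG}; the direct factorization above is, however, considerably shorter.
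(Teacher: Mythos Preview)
Your proof is correct. The factorisation $A=B\circ I_\alpha$ with $m(\xi)=|\xi|^{\alpha}\sigma_A(\xi)$ is exactly the right observation: conditions \eqref{Liz1}--\eqref{Liz2} are calibrated so that $m$ lands in the Mihlin class, and then Hardy--Littlewood--Sobolev handles the $L^p\to L^q$ gain. The computations and the density argument are all in order.

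As for the comparison: note that the paper does \emph{not} give its own proof of Theorem~\ref{THM:Liz-R}. The theorem is quoted from \cite{Lizorkin1967} as a classical result, serving only as motivation for the locally compact group version in Theorem~\ref{THM:Lizorkin-LCG}. Lizorkin's original argument proceeds via dyadic decomposition and summation by parts rather than the Riesz-potential factorisation you use; your route is shorter but relies on two substantial theorems (Mihlin and HLS) as black boxes. The paper's own general machinery --- the Nikolskii inequality of Theorem~\ref{THM:Nikolsky-LCG} combined with the Abel transform in the proof of Theorem~\ref{THM:Lizorkin-LCG} --- is closer in spirit to Lizorkin's method, and is designed to work in settings where no Mihlin theorem or Riesz potential is available. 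Your closing remark that Theorem~\ref{THM:Liz-R} is recovered from Theorem~\ref{THM:Lizorkin-LCG} is plausible in spirit but is not something the paper verifies explicitly (the spaces $L^p_w(G)$ and the condition on $\mu_t(A)$ do not obviously reduce to the classical statement without further work), so you may wish to soften that claim.
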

There have been recent works extending this statement to higher dimension, as well as to the operators on the torus, see e.g. \cite{Sarybekova2010, Ydyrys2016,Persson2008,Persson2012} deal with the same problem on $G=\TT^n$ and $G=\RR^n$. We start by proving an analogue of Theorem \ref{THM:Liz-R} on general locally compact groups. Consequently, we also derive a version of such theorem on compact Lie groups when symbolic analysis is also possible.

\subsection{Locally compact groups}
\label{SEC:Liz-lcg}

The available information about operators (Fourier multipliers) on locally compact groups is the spectral information measured in terms of the generalised $t$-singular numbers discussed in Section \ref{SEC:prelim}. The following statement gives a condition for the $L^p$-$L^q$ boundedness in the spaces $L^p_w$ which are the closure of the spaces of functions with bounded spectrum as discussed in Definition \ref{DEF:lpw}.

\begin{thm} 
\label{THM:Lizorkin-LCG}
Let $G$ be a locally compact unimodular separable group and let $A$ be a left Fourier multiplier on $G$.
Let  $1<p\leq \min(2,q)$ and $1<q\leq \infty $. Let $w=w(t)\geq 0$ be a locally integrable function.
Then we have
\begin{equation}
\label{EQ:Lizorkin-LCG}
\|A\|_{L^p_{w}(G)\to L^q_{w}(G)}
\lesssim
\sup_{t>0}
w(t)^{\frac1p-\frac1q}
\mu_t(A)
+
\int\limits^{+\infty}_0
w(t)^{\frac1p-\frac1q}
\left(-\frac{d}{dt}\right)
\mu_t(A)\,dt.
\end{equation}
\end{thm}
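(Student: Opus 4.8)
The plan is to reduce the estimate \eqref{EQ:Lizorkin-LCG} to an application of the Nikolskii inequality \eqref{EQ:Nikolsky-LCG} combined with a dyadic (Abel-summation) decomposition of the multiplier $A$ according to the size of its generalised singular values. First I would fix $f\in L^p_w(G)$ and, by Definition \ref{DEF:lpw}, choose an approximating sequence $\{f_t\}$ of functions with bounded spectrum satisfying $\tau(P_{\supp^R[\widehat{f_t}]})\le w(t)$; by density it suffices to prove the bound for $f$ with bounded spectrum, say $\tau(P_{\supp^R[\widehat{f}]})\le w(\tau_0)$ for the relevant scale $\tau_0$, keeping careful track of how the $w$-weight enters, and then pass to the limit. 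The key point is that $Af=R_{f}$ composed with the multiplier, and since $A\in S(\VN_R(G))$ is left-invariant, $\widehat{Af}=A\,\widehat{f}$ in the von Neumann algebra sense, so the support of $\widehat{Af}$ is contained in that of $\widehat{f}$; hence $Af$ also has bounded spectrum with the same trace bound on its support projection, and Nikolskii applies to $Af$ directly.

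Next I would split $A$ dyadically in the spectral parameter: writing $|A|=\int_0^\infty \lambda\, dE_\lambda(|A|)$, decompose $A=\sum_{k\in\ZZ} A_k$ where $A_k$ corresponds to the spectral window where $\mu_t(A)$ (equivalently $\lambda$) is of size $\sim 2^k$, i.e. $A_k = A\,\big(E_{(2^{k},2^{k+1}]}(|A|)\big)$ (with $U$ the partial isometry from the polar decomposition carried along). On each piece I would estimate $\|A_k f\|_{L^q(G)}$ by the Nikolskii inequality \eqref{EQ:Nikolsky-LCG}, controlling $\tau(P_{\supp^R[\widehat{A_k f}]})$ by the measure of the $t$-set on which $\mu_t(A)\gtrsim 2^k$, which by the very definition of $\mu_t$ and $d_\lambda$ (Definition \ref{DEF:mu-t}) is $d_{2^k}(A)$; then $\|A_k f\|_{L^q}\lesssim d_{2^k}(A)^{\frac1p-\frac1q}\|A_k f\|_{L^p}$, and since $A_k$ has operator norm $\lesssim 2^{k+1}\lesssim \mu_{t}(A)$ on that window, $\|A_k f\|_{L^p}\lesssim 2^{k}\|f\|_{L^p}$ when $p\le 2$ by Hausdorff--Young-type control, or more carefully by using that $A_k$ is bounded on $L^2$ with norm $\lesssim 2^k$ together with the fact that $f$ lies in the finite-dimensional (trace-finite) spectral subspace so all the $L^p$ norms are comparable up to the relevant power of $w$. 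Summing over $k$, the factor $\mu_t(A)\,w(\cdot)^{\frac1p-\frac1q}$ appears, and recognising $\sum_k 2^k\big(d_{2^k}(A)\big)^{\frac1p-\frac1q}$ (or rather its correct weighted version with $w$) as a Riemann sum converts the sum into the integral $\int_0^\infty w(t)^{\frac1p-\frac1q}\big(-\tfrac{d}{dt}\big)\mu_t(A)\,dt$ via Abel summation / integration by parts, with the boundary term producing $\sup_{t>0} w(t)^{\frac1p-\frac1q}\mu_t(A)$.

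The identity relating $\sum_k (\text{size of }A_k)\cdot(\text{measure of window})^{\theta}$ to $\sup_t w^\theta\mu_t + \int w^\theta(-d\mu_t)$ is essentially the statement that, for a nonincreasing function $\mu$, one has $\mu_t(A) = \int_t^\infty (-d\mu_s(A)) + \lim_{s\to\infty}\mu_s(A)$, so that $w(t)^\theta \mu_t(A) \le \sup_s w(s)^\theta \mu_s(A)\cdot \mathbf{1} + \int_t^\infty w(s)^\theta(-d\mu_s(A))$-type bounds can be inserted; I would carry this out rigorously using the right-continuity and monotonicity of $\mu_t$ recorded in Lemma \ref{LEM:mu-t-A-properties} and Proposition \ref{PROP:mu-t-properties}, together with the compatibility $\mu_t(f(|A|))=f(\mu_t(|A|))$ for the dyadic cutoffs.

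The main obstacle I anticipate is the bookkeeping of the two different roles of the weight: $w(t)$ comes from the support bound on the \emph{function} $f$ (via the $L^p_w$ definition), while the spectral decomposition of the \emph{operator} $A$ naturally produces $d_\lambda(A)$, the measure of the $t$-set where $\mu_t(A)$ is large. Matching these — i.e. arguing that when testing against $f$ with $\tau(P_{\supp^R[\widehat f]})\le w(t)$ only the part of the window of $t$-length $\le w(t)$ contributes, so that effectively $d_{2^k}(A)$ gets replaced by $\min(d_{2^k}(A), w(t))$ and the integral against $(-d\mu_t)$ emerges cleanly — is the delicate step, essentially because the Nikolskii constant depends on the support of $\widehat{A_k f}$ and one must show this support projection has trace at most $w(t)$ uniformly, which follows from $\supp^R[\widehat{A_k f}] \subseteq \supp^R[\widehat{f}]$ but requires care since $A_k$ involves unbounded spectral data. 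Once that containment and the resulting uniform trace bound are in place, the rest is the Abel-summation computation and a limiting argument from bounded-spectrum $f$ to general $f\in L^p_w(G)$, using continuity of the left-hand side in the $L^p$ norm.
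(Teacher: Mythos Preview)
Your proposal takes a genuinely different route from the paper's proof and, as written, contains a real gap. The paper does \emph{not} decompose $A$ into dyadic spectral pieces $A_k$; instead it works by duality with the pairing $(Af,g)_{L^2(G)}=\tau(A R_f R_g^*)$, expresses this trace as $\int_0^\infty \mu_s(A R_f R_g^*)\,ds$, uses the submultiplicativity $\mu_{2s}(A R_f R_g^*)\le \mu_s(A)\mu_s(R_f R_g^*)$, truncates the $s$-integral to $[0,2w(t)]$ via the bounded-spectrum hypothesis on $f$, then applies an Abel transform in the continuous variable $s$ and bounds the resulting inner integral $\int_0^s \mu_u(R_f R_g^*)\,du\le (f,g)_{L^2}$ by the Nikolskii inequality applied to the pairing $(f,g)$. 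Crucially, no $L^p$-boundedness of any spectral piece of $A$ is ever invoked.

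The gap in your approach is precisely the step $\|A_k f\|_{L^p}\lesssim 2^k\|f\|_{L^p}$. The spectral piece $A_k=A\,E_{(2^k,2^{k+1}]}(|A|)$ is bounded on $L^2$ with norm $\le 2^{k+1}$, but there is no reason for it to be bounded on $L^p$ for $p\ne 2$ with comparable norm: spectral truncations of a Fourier multiplier need not be $L^p$ multipliers with controlled constant (this already fails on $G=\RR$). Your suggested repair, that ``all $L^p$ norms are comparable on the trace-finite subspace'', runs in the wrong direction: Nikolskii gives $\|\cdot\|_{L^q}\lesssim \tau^{1/p-1/q}\|\cdot\|_{L^p}$ only for $p\le q$, so one can pass from $L^p$ up to $L^2$ but not from $L^2$ back down to $L^p$ when $p<2$. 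If you try to route everything through $L^2$ (Nikolskii $L^2\to L^q$ on $A_kf$, then the $L^2$ operator bound on $A_k$, then Nikolskii $L^p\to L^2$ on $f$) you pick up $w(t)^{1/p-1/2}$ and $\bigl(\tau(P_{\supp^R[\widehat{A_kf}]})\bigr)^{1/2-1/q}$ as separate factors; these do not recombine into the single factor $w(\cdot)^{1/p-1/q}$ against the singular-value data of $A$ that the theorem asserts, and the argument collapses entirely for the admissible sub-range $1<p\le q<2$, where Nikolskii from $L^2$ upward is unavailable. The paper's bilinear/trace approach sidesteps all of this because the multiplier $A$ only enters through the scalar function $s\mapsto\mu_s(A)$ inside a one-dimensional integral, where the Abel transform can be applied directly without ever asking whether pieces of $A$ act boundedly on $L^p$.
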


The function $\mu_t(A)$ is non-increasing and continuous from the right. Therefore, it has derivative $\mu_t(A)$ that exists and is finite almost everywhere with respect to the Lebesgue measure on $\RR_+$.

%\begin{proof}[Sketch of the proof of Example \ref{EX:Lizorkin-CG}]

%\subsection{Proof of Theorem \ref{THM:Lizorkin-LCG}}
\begin{proof}[Proof of Theorem \ref{THM:Lizorkin-LCG}]
%It will be convenient to denote $\frac1r=\frac1p-\frac1q$.
Let us decompose functions $f\in L^p_{w}(G)$ and $g\in L^{p'}_{w}(G)$ as
$$
f=f_1-f_2,\quad g=g_1-g_2,
$$
into functions $f_i,g_i$ with positive Fourier transform, i.e. with
$$
R_{f_i}\geq 0,\quad R_{g_j}\geq 0,\quad i,j=1,2.
$$
By the linearity of the Fourier transform $\mathcal{F}\colon f\mapsto R_f$, we have
\begin{equation*}
\left|(Af,g)_{L^2(G)}\right|
=
\left|
\sum\limits^{2}_{1}(-1)^{i+j}(Af_i,g_j)_{L^2(G)}
\right|
\leq
\sum\limits^{2}_{i,j=1}\left|(Af_i,g_j)_{L^2(G)}\right|.
\end{equation*}
Hence, without loss of generality, we may further assume that $R_f\geq 0$ and $R_g\geq0$ throughout the proof.
Now, we note that it is sufficient to establish 
\begin{equation}
\label{EQ:sufficient}
\|Af_t\|_{L^q(G)}
\lesssim
\left(
w(t)^{\frac1p-\frac1q}\mu_t(A)
+
\int\limits^t_0 w(s)^{\frac1p-\frac1q}\left[-\frac{d}{ds}\mu_s(A)\right]\,ds\right)
\|f_t\|_{L^p(G)}
\end{equation}
for arbitrary function $f_t$ with $\tau(P_{\supp^R[\widehat{f_t}]})\leq w(t)$.
By the Plancherel identity, we have
\begin{equation*}
\left|
(Af,g)_{L^2(G)}
\right|
=
\left|
\tau(R_{Af}R^*_g)
\right|.
\end{equation*}
By using this and the hypothesis that $A$ is a left Fourier multiplier, i.e. 
$
R_{Af}=AR_f
$
and inequality $|\tau(\cdot)|\leq \tau(|\cdot|)$ from e.g. \cite[Proposition A.1.2. p. 216]{HayesPhD2014}, we get
\begin{equation}
\label{EQ:step-0-Lizorkin}
\begin{aligned}
\left|
(Af,g)_{L^2(G)}
\right|
=
\left|
\tau(R_{Af}R^*_g)
\right|
\leq
\tau(\left|AR_{f} R^*_g\right|)
\\
=
\int\limits^{+\infty}_{0}\mu_s(AR_{f} R^*_g)\,ds
\\
\lesssim
\int\limits^{+\infty}_{0}\mu_s(A)\mu_s(R_{f} R^*_g)\,ds,
\end{aligned}
\end{equation}
where in the second line
we used \cite[Proposition 2.7, p. 227]{ThierryKosaki1986} to express $\tau(\left|AR_{f} R^*_g\right|)$ via the $s$-th generalised singular values $\mu_s(AR_{f} R^*_g),\,s\in\RR_+$.
In the third line in \eqref{EQ:step-0-Lizorkin} we made the substitution $s\to 2s$ and used the sub-multiplicativity $\mu_{2s}(\cdot)\leq \mu_s(\cdot)\mu_s(\cdot)$ of the generalised singular values $\mu_t$ (see Lemma \ref{LEM:mu-t-A-properties}).
By definition
\begin{equation*}
R_f P_{\supp^R[\widehat{f}]}=R_f.
\end{equation*}
Hence, by \cite[Lemma 2.6]{ThierryKosaki1986} we get
\begin{equation}
\mu_s(R_f)=0,\quad s>w(t)\geq \tau(P_{\supp^R_p[\widehat{f}]}).
\end{equation}
Therefore, we get
\begin{equation}
\label{EQ:trig}
\mu_s(R_f R^*_g)=0,\quad s>2w(t)\geq 2\tau(P_{\supp^R[\widehat{f}]})
\end{equation}
in view of $\mu_s(R_f R^*_g)\leq \mu_{\frac{s}2}(R_f)\mu_{{\frac{s}2}}(R^*_g)$.
Taking into account \eqref{EQ:trig}, we get from \eqref{EQ:step-0-Lizorkin}
\begin{equation}
\label{EQ:step-01-Lizorkin}
\left|
(Af,g)_{L^2(G)}
\right|
\leq
\int\limits^{2w(t)}_{0}\mu_s(A)\mu_s(R_{f} R^*_g)\,ds.
\end{equation}

Now, applying Abel transform to \eqref{EQ:step-01-Lizorkin}, we get
\begin{multline}
\label{EQ:step-0}
\int\limits^{2w(t)}_{0}\mu_s(A)\mu_s(R_{f} R^*_g)\,ds
\\ =
\mu_s(A)\int\limits^{s}_0 \mu_u(R_{f} R^*_g)\,du\Big|^{s=2w(t)}_{s=0}
+
\int\limits^{2w(t)}_0 \left[-\frac{d}{ds}\mu_s(A)\right]\int\limits^{s}_0 \mu_u(R_{f} R^*_g)\,du\,ds.
\end{multline}

By the Nikolskii inequality \eqref{EQ:Nikolsky-LCG}, we get
\begin{multline}
\label{EQ:important-step}
\int\limits^{2w(t)}_0 \mu_s(R_{f} R^*_g)\,ds
=
(f,g)_{L^2(G)}
\leq
\|f\|_{L^q(G)}\|g\|_{L^{q'}(G)}
\\ \lesssim
{\tau(P_{\supp^R[\widehat{f}]})}^{\frac1p-\frac1q}
\|f\|_{L^p(G)}\|g\|_{L^{q'}(G)}
\leq
w(t)^{\frac1p-\frac1q}
\|f\|_{L^p(G)}\|g\|_{L^{q'}(G)}
.
\end{multline}
Combining \eqref{EQ:step-0-Lizorkin}, \eqref{EQ:important-step} and \eqref{EQ:step-0}, we get
\begin{multline*}
\left|
(Af_t,g)_{L^2(G)}
\right|
\lesssim \\
\left(
w(t)^{\frac1p-\frac1q}\mu_t(A)
+
\int\limits^{2w(t)}_0 w(s)^{\frac1p-\frac1q}\left[-\frac{d}{ds}\mu_s(A)\right]ds
\right)
\|f_t\|_{L^p(G)}
\|g\|_{L^{q'}(G)}.
\end{multline*}
By the $L^p$-space duality, we immediately get
\begin{equation}
\label{EQ:pre-final}
\|Af_t\|_{L^q(G)}
\lesssim
\left(
w(t)^{\frac1p-\frac1q}\mu_t(A)
+
\int\limits^{2w(t)}_0 w(s)^{\frac1p-\frac1q}\left[-\frac{d}{ds}\mu_s(A)\right]\,ds
\right)\|f_t\|_{L^p(G)},
\end{equation}
for every $f$ with $\tau(\supp^R[\widehat{f}])\leq w(t)$.
Taking supremum over all such $f$, we finally obtain
\begin{equation}
\|Af\|_{L^q}
\leq
\sup_{t>0}
w(t)^{\frac1p-\frac1q}\mu_t(A)
+
\int\limits^{+\infty}_0 w(s)^{\frac1p-\frac1q}\left[-\frac{d}{ds}\mu_s(A)\right]\,ds.
\end{equation}
This completes the proof of Theorem \ref{THM:Lizorkin-LCG}.
\end{proof}

\begin{rem} It is not restrictive to take $w=w(t)$ in Theorem \ref{THM:Lizorkin-LCG} such that $w\in C^1$ is increasing and $w(0)=0$. In this case integrating by parts in \eqref{EQ:sufficient} we get
\begin{equation}
\label{EQ:sufficient2}
\|Af_t\|_{L^q(G)}
\lesssim
\left(
\int\limits^t_0 w(s)^{\frac1p-\frac1q-1} w'(s)\mu_s(A)\,ds\right)
\|f_t\|_{L^p(G)}.
\end{equation}
Passing to the limit, we get a sufficient condition for the $L^p$-$L^q$ boundedness in terms of the derivative of $w$ instead of the derivative of the $\mu_s(A)$, namely
\begin{equation}\label{EQ:sufficient3}
\|A\|_{L^p_{w}(G)\to L^q_{w}(G)}
\lesssim
\int\limits^\infty_0 w(s)^{\frac1p-\frac1q-1} w'(s)\mu_s(A)\,ds.
\end{equation}
\end{rem}

We also have the following corollary of Theorem \ref{THM:Lizorkin-LCG} and its proof, estimating the $L^p$-$L^q$ norm by the Lorentz norm of the operator in the group von Neumann algebra, see Definition \ref{DEF:Lorenz-spaces}.

\begin{cor} 
\label{COR:Lizorkin}
Let $G$ be a locally compact unimodular separable group and let $A$ be a left Fourier multiplier on $G$.
Let $1<p\leq \min(2,q)$  and take $w(t)=t$. Then we have
\begin{equation}
\|A\|_{L^p_{w}(G)\to L^q_{w}(G)}
\lesssim
\|A\|_{L^{r,1}(\VN_R(G))},
\end{equation}
where $\frac1r=\frac1p-\frac1q$.
\end{cor}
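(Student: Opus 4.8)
The plan is to read off Corollary~\ref{COR:Lizorkin} from Theorem~\ref{THM:Lizorkin-LCG} by substituting $w(t)=t$ and recognising the right-hand side of \eqref{EQ:Lizorkin-LCG} as a noncommutative Lorentz norm of $A$ in $\VN_R(G)$. Since $w(t)=t$ is of class $C^1$, increasing, with $w(0)=0$, the quickest route is to invoke the reformulation \eqref{EQ:sufficient3} directly: with $w'(s)=1$ and $\tfrac1p-\tfrac1q=\tfrac1r$ it gives
\[
\|A\|_{L^p_w(G)\to L^q_w(G)}
\lesssim
\int_0^{\infty} s^{\frac1r-1}\,\mu_s(A)\,ds
=
\int_0^{\infty}\bigl(s^{\frac1r}\mu_s(A)\bigr)\frac{ds}{s}
=
\|A\|_{L^{r,1}(\VN_R(G))},
\]
the last equality being Definition~\ref{DEF:Lorenz-spaces}. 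So the only thing to check is that $w(t)=t$ meets the normalisation under which \eqref{EQ:sufficient3} was derived, which it plainly does.

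Alternatively, and without passing through that remark, I would argue straight from \eqref{EQ:Lizorkin-LCG}. With $w(t)=t$ its right-hand side is $\sup_{t>0}t^{\frac1r}\mu_t(A)+\int_0^{\infty}t^{\frac1r}\bigl(-\tfrac{d}{dt}\bigr)\mu_t(A)\,dt$; the first term is exactly $\|A\|_{L^{r,\infty}(\VN_R(G))}$. For the second, I would interpret $\bigl(-\tfrac{d}{dt}\bigr)\mu_t(A)\,dt$ as the positive Lebesgue--Stieltjes measure $-d\mu_t(A)$ (as in the Abel-transform step of the proof of Theorem~\ref{THM:Lizorkin-LCG}), write $t^{\frac1r}=\tfrac1r\int_0^t s^{\frac1r-1}\,ds$ and apply Tonelli's theorem to get
\[
\int_0^{\infty}t^{\frac1r}\bigl(-d\mu_t(A)\bigr)
=
\frac1r\int_0^{\infty}s^{\frac1r-1}\Bigl(\int_{(s,\infty)}\bigl(-d\mu_t(A)\bigr)\Bigr)\,ds
=
\frac1r\int_0^{\infty}s^{\frac1r-1}\mu_s(A)\,ds
=
\frac1r\|A\|_{L^{r,1}(\VN_R(G))},
\]
where $\int_{(s,\infty)}\bigl(-d\mu_t(A)\bigr)=\mu_s(A)$ since $\mu_t(A)\to0$ as $t\to\infty$ (automatic once $\|A\|_{L^{r,1}(\VN_R(G))}<\infty$, which we may assume, there being otherwise nothing to prove). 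Finally I would absorb the first term into the second via the elementary embedding $L^{r,1}(\VN_R(G))\hookrightarrow L^{r,\infty}(\VN_R(G))$: since $t\mapsto\mu_t(A)$ is non-increasing, $t_0^{1/r}\mu_{t_0}(A)\lesssim\int_{t_0/2}^{t_0}s^{\frac1r-1}\mu_s(A)\,ds\le\|A\|_{L^{r,1}(\VN_R(G))}$ for every $t_0>0$, with implied constant depending only on $r$.

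I do not expect a genuine obstacle here: all the analytic content is already in Theorem~\ref{THM:Lizorkin-LCG}, and this corollary simply restates its conclusion for the weight $w(t)=t$ in terms of $\|\cdot\|_{L^{r,1}(\VN_R(G))}$. The only points needing a line of care are the precise meaning of $\bigl(-\tfrac{d}{dt}\bigr)\mu_t(A)\,dt$ for the merely monotone, right-continuous function $\mu_t(A)$ --- handled exactly as in the proof of Theorem~\ref{THM:Lizorkin-LCG} --- and the Lorentz embedding $L^{r,1}\hookrightarrow L^{r,\infty}$ used in the second route; the first route via \eqref{EQ:sufficient3} bypasses both.
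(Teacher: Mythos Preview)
Your first route via \eqref{EQ:sufficient3} is exactly the paper's approach: the paper recalls the intermediate estimate \eqref{EQ:sufficient} with $w(t)=t$ substituted, integrates by parts, and observes that the boundary term $t^{1/r}\mu_t(A)$ cancels the first summand exactly, leaving $\tfrac1r\int_0^t s^{1/r}\mu_s(A)\,\tfrac{ds}{s}\le \tfrac1r\|A\|_{L^{r,1}(\VN_R(G))}$. Your second route---starting from the conclusion \eqref{EQ:Lizorkin-LCG} of Theorem~\ref{THM:Lizorkin-LCG} rather than from the intermediate step in its proof, and then invoking the embedding $L^{r,1}\hookrightarrow L^{r,\infty}$ to absorb the $\sup$ term---is a correct alternative, but slightly less economical since it does not exploit the cancellation and must pay for the embedding separately.
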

\begin{proof}[Proof of Corollary \ref{COR:Lizorkin}]
Let us recall inequality \eqref{EQ:sufficient} from the proof of Theorem \ref{THM:Lizorkin-LCG}:
\begin{equation}
\label{EQ:final-recalled}
\frac{\|Af_t\|_{L^q(G)}}{\|f_t\|_{L^p(G)}}
\lesssim
t^{\frac1p-\frac1q}\mu_t(A)
-
\int\limits^{t}_0 s^{\frac1p-\frac1q}\left[\frac{d}{ds}\mu_s(A)\right]\,ds.
\end{equation}
Integrating by parts in the right-hand side of \eqref{EQ:final-recalled}, we get
\begin{multline*}
\frac{\|Af_t\|_{L^q(G)}}{\|f_t\|_{L^p(G)}}
\leq
t^{\frac1p-\frac1q}\mu_t(A)
-
t^{\frac1p-\frac1q}\mu_t(A)
+
\left(\frac1p-\frac1q\right)
\int\limits^{t}_0 s^{\frac1p-\frac1q}\mu_s(A)\frac{ds}{s}
\\  =
\left(\frac1p-\frac1q\right)\|A\|_{L^{r,1}(\VN_R(G))},
\end{multline*}
with $\frac1r=\frac1p-\frac1q$,
in view of Definition \ref{DEF:Lorenz-spaces}.
\end{proof}

\subsection{Compact Lie groups}

Since on compact Lie groups the symbolic calculus is available it is also natural to search for conditions expressed in terms of the matrix-valued symbol of the invariant operator under consideration. 

In order to measure the regularity of the symbol, we introduce a new family of difference operators $\widehat{\partial}$ acting on Fourier coefficients and on symbols. These operators are used to formulate and prove a version of the Lizorkin theorem on compact groups.

Let $G$ be a compact Lie group of dimension $n$. Let $\lambda_1,\lambda_2,\ldots,\lambda_N,\ldots$ denote the eigenvalues of the $n$-th order elliptic pseudo-differential operator $(I-\mathcal{L}_{G})^{\frac{n}2}$ counted with multiplicities. We shall enumerate elements $\pi$ of the unitary dual $\Gh$ of $G$ via the eigenvalues $\{\lambda_k\}^{\infty}_{k=1}$, i.e.
\begin{equation}
\label{EQ:enumeration}
(I-\mathcal{L}_{G})^{\frac{n}2}\pi^{k}_{mn}=\lambda_k\pi^{k}_{mn}.
\end{equation}
Let $\sigma=\{\sigma(\pi)\}_{\pi\in\Gh}$ be a field of operators and 
define
%Since $\pi\in\Gh$ is unitary, it can be diagonalized
%\begin{equation}
%\pi=Q\xi Q^*,
%\end{equation}
%where $\xi$ is such diagonal unitary matrix that $\Tr\xi$ is character $\chi_{\pi}$ of $\pi$ and $Q$ is an invertible unitary matrix.
\begin{multline}
\label{EQ:difference}
\widehat{\partial}_{\pi}\sigma(\pi^j)
:= \\
U_{\pi}
\begin{psmallmatrix}
\mu_1(\sigma(\pi^j))-\mu_1(\sigma_A(\pi^{j+1})) & 0 & \ldots & 0
\\
0 & \mu_2(\sigma(\pi^{j}))-\mu_2(\sigma_A(\pi^{j+1})) &  \ldots & 0
\\
\vdots & \vdots & \vdots & \vdots
\\
0 & 0 &  \mu_k(\sigma(\pi^{j}))-\mu_{k}(\sigma(\pi^{j+1}))
\vdots & \vdots 
\\
0 & 0 & 0 &\mu_{d_{\pi^j}}(\sigma(\pi^{j}))
\end{psmallmatrix},
\end{multline}
where $U_{\pi}$ is a partial isometry matrix in the polar decomposition $\sigma(\pi)=U_{\pi}\left|\sigma(\pi)\right|$.
Here $\mu_k(\sigma(\pi))$ are the singular numbers of $\sigma(\pi)$ written in the descending order. 

Now, using the direct sum decomposition $\Op(\sigma)=\bigoplus\limits_{\pi\in\Gh}\dpi\sigma(\pi)$, we can also lift the difference operators $\partial_{\pi}$ to $\Op(\sigma)$ by
\begin{eqnarray}
{\bf \widehat{\mathbb{\partial}}} \Op(\sigma)=\bigoplus\limits_{\pi\in\Gh}
\dpi\, {\bf \widehat{\partial}} \sigma_A(\pi).
\end{eqnarray}

Let $G$ be a compact semisimple Lie group and let $\mathbb{T}$ be its maximal torus.
We recall that a function $f$ is central if
$$
f(xg)=f(gx),\quad x\in \mathbb{T},\; g\in G.
$$
We write $L^p_{\inv}(G)$ for the space of central functions $f\in L^p(G)$.
The number $s$ in the theorem below is  defined by the root datum of $G$, see Appendix \ref{SEC:mean_convergence} for the precise definition. The appearance of the bounds for $p$ and $q$ involving $s$ is caused by the density properties of the space of polyhedral trigonometric polynomials in $L^p$ for such range of indices and the failure of such density otherwise, see Stanton \cite{Stanton1976}, which is explained in more detail in Appendix \ref{SEC:mean_convergence}.

\begin{thm} 
\label{THM:Lizorkin-CG}
Let  $2-\frac1{1+s}<p\leq q<2+\frac1s$ and let $A$ be a left Fourier multiplier on a compact semisimple Lie group $G$ of dimension $n$. Let $1-\frac1p\leq m<1$. Then we have
\begin{equation}
\label{EQ:Lizorkin-CG}
\|A\|_{L^p_{\inv}(G)\to L^q_{\inv}(G)}
\lesssim
\sup_{\pi\in\Gh}
\jp\pi^{n\left(\frac1p-\frac1q\right)}\|\sigma_A(\pi)\|_{\op}
+
\sup_{\pi\in\Gh}
\jp\pi^{n\left(\frac1p-\frac1q+m\right)}
\|\widehat{\partial}\sigma_A(\pi)\|_{\op}.
%\sup_{\pi\in\Gh}
%\langle\pi\rangle^{\frac1p-\frac1q}\sup_{k\in\NN}\mu_k[\sigma_A(\pi)]
%+
%\sum\limits_{\pi\in\Gh}
%\langle\pi\rangle^{\frac1p-\frac1q}
%\sup_{k\in\NN}\left(\mu_{k}\left[\sigma_A(\pi)\right]-\mu_{k+1}\left[\sigma_A(\pi)\right]\right).
\end{equation}
We also have
\begin{equation}
\label{EQ:Lizorkin-CG-3}
\|A\|_{L^p_{\inv}(G)\to L^q_{\inv}(G)}
\lesssim
\sup_{\pi\in\Gh}
\jp\pi^{n\left(\frac1p-\frac1q\right)}\|\sigma_A(\pi)\|_{\op}
+
\sum\limits_{\pi\in\Gh}
\jp\pi^{n\left(\frac1p-\frac1q\right)}
\|\widehat{\partial}\sigma_A(\pi)\|_{\op}.
\end{equation}

\end{thm}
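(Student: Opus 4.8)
The plan is to prove Theorem~\ref{THM:Lizorkin-CG} by running the scheme of the proof of Theorem~\ref{THM:Lizorkin-LCG} once more, but now taking advantage of the symbolic calculus available on the compact semisimple Lie group $G$. Two external facts will be used. First, the Weyl law for the $n$-th order elliptic operator $(I-\mathcal{L}_G)^{\frac n2}$ on the $n$-dimensional manifold $G$: writing $P_{Q_N}$ for the orthogonal projection onto the span of the matrix coefficients of the representations in a polyhedron $Q_N\subset\Gh$, one has $\tau(P_{Q_N})=\sum_{\pi\in Q_N}d_\pi^2\asymp\jp{\pi^N}^n$, where $\pi^N$ is the last representation enumerated in $Q_N$ via \eqref{EQ:enumeration}; this is what turns the factors $\tau(P_{\supp^R[\widehat f]})^{\frac1p-\frac1q}$ produced by the Nikolskii inequality into powers $\jp\pi^{n(\frac1p-\frac1q)}$, and lets us take the weight of Theorem~\ref{THM:Lizorkin-LCG} comparable to $w(t)=t$. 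Second, Stanton's mean-convergence theorem for polyhedral Fourier partial sums of class functions \cite{Stanton1976} which, as recorded in Example~\ref{EX:compact-omega} and Appendix~\ref{SEC:mean_convergence}, identifies $L^p_w(G)$ with $L^p_{\inv}(G)$ precisely for $2-\frac1{1+s}<p<2+\frac1s$; requiring this for both the source exponent $p$ and the target exponent $q$ is exactly what forces the range $2-\frac1{1+s}<p\le q<2+\frac1s$ in the statement.

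Concretely, I would first reduce, as in the proof of Theorem~\ref{THM:Lizorkin-LCG}, to estimating $|(Af,g)_{L^2(G)}|=|\tau(AR_fR_g^*)|$ for central $f\in L^p_{\inv}(G)$ and $g\in L^{q'}_{\inv}(G)$ whose right Fourier transforms are supported on a polyhedron $Q_N\subset\Gh$, and deal with the passage $N\to\infty$ only at the very end. With the spectra truncated to $Q_N$, submultiplicativity of $\mu_t$ (Lemma~\ref{LEM:mu-t-A-properties}) together with \cite[Lemma~2.6, Proposition~2.7]{ThierryKosaki1986} gives
\[
|(Af,g)_{L^2(G)}|\lesssim\int_0^{2\tau(P_{Q_N})}\mu_s(A)\,\mu_s(R_fR_g^*)\,ds .
\]
Since $f$ and $g$ are central, the operators $R_f,R_g$ (hence $R_fR_g^*$) are scalar on each isotypic block, so this integral unfolds into a weighted sum over $\pi^j\in Q_N$ governed block by block by the singular numbers $\mu_k(\sigma_A(\pi^j))$; performing an Abel transform in the enumeration index $j$ then produces exactly the increments that define $\widehat\partial\sigma_A(\pi^j)$ in \eqref{EQ:difference}, while the partial sums of the $\mu_s(R_fR_g^*)$ factor are controlled by pairing and the Nikolskii inequality of Theorem~\ref{THM:Nikolsky-LCG} (applied to $f$ when $p\le2$, to $g$ when $p>2$, using $\tfrac1{q'}-\tfrac1{p'}=\tfrac1p-\tfrac1q$), which on a block of ``size'' $\tau(P_{Q_j})\asymp\jp{\pi^j}^n$ yields a bound of the form $\jp{\pi^j}^{n(\frac1p-\frac1q)}\|f\|_{L^p(G)}\|g\|_{L^{q'}(G)}$. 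Collecting the boundary term (which gives $\sup_{\pi}\jp\pi^{n(\frac1p-\frac1q)}\|\sigma_A(\pi)\|_{\op}$, via $\sup_t t^{\frac1p-\frac1q}\mu_t(A)=\|A\|_{L^{r,\infty}(\VN_R(G))}$ and the Weyl law) and the Abel sum, then using $L^p$--$L^q$ duality and letting $N\to\infty$ through Stanton's theorem, gives \eqref{EQ:Lizorkin-CG-3}. Finally, \eqref{EQ:Lizorkin-CG} follows from \eqref{EQ:Lizorkin-CG-3} by H\"older's inequality,
\[
\sum_{\pi\in\Gh}\jp\pi^{n(\frac1p-\frac1q)}\|\widehat\partial\sigma_A(\pi)\|_{\op}
\le
\Bigl(\sup_{\pi\in\Gh}\jp\pi^{n(\frac1p-\frac1q+m)}\|\widehat\partial\sigma_A(\pi)\|_{\op}\Bigr)\sum_{\pi\in\Gh}\jp\pi^{-nm},
\]
the last series being finite for $1-\tfrac1p\le m<1$ by the Weyl law on the maximal torus; this convergence threshold matching the lower end of Stanton's range for $p$ is precisely why the hypothesis takes the form $1-\frac1p\le m<1$.

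The step I expect to be the main obstacle is the unfolding of $\int_0^{2\tau(P_{Q_N})}\mu_s(A)\,\mu_s(R_fR_g^*)\,ds$ into a sum over $\Gh$ amenable to summation by parts: the generalised singular numbers $\mu_s(A)$ genuinely mix contributions of different representations, so one must use centrality of $g$ (which makes $\widehat{R_fR_g^*}(\pi)$ scalar, so that only $\|\sigma_A(\pi)\|_{\op}$ and the diagonal singular-value differences survive) together with a Hardy--Littlewood--P\'olya type rearrangement inequality respecting the representation grading along the common enumeration $\{\lambda_k\}$, and then verify that the telescoping this produces is precisely $\widehat\partial$ as in \eqref{EQ:difference}, with the boundary entry $\mu_{d_{\pi^j}}(\sigma_A(\pi^j))$ accounted for correctly. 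A secondary technical point is to make the final limit rigorous: one must check that the polyhedral partial sum operators commute with $A$ and converge to the identity on both $L^p_{\inv}(G)$ and $L^{q'}_{\inv}(G)$, which is where Stanton's sharp range \cite{Stanton1976} is genuinely used.
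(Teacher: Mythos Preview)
Your overall strategy---reduce to polyhedral truncations, dualise, apply an Abel transform along the enumeration of $\Gh$, and control the resulting partial sums by a Nikolskii inequality, with Stanton's theorem used to pass to the limit---is correct and matches the paper. Two points, however, deserve comment.

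\medskip
\textbf{A methodological difference.} You propose to start from the global integral $\int_0^{2\tau(P_{Q_N})}\mu_s(A)\mu_s(R_fR_g^*)\,ds$ and then ``unfold'' it into a sum over $\Gh$. The paper does not do this: it applies the central trace decomposition $\tau=\bigoplus_{\pi}d_\pi\tau_\pi$ at the outset, so that $\tau(|AR_fR_g^*|)$ is \emph{immediately} the double sum $\sum_{\pi}\sum_{t}d_\pi\,\mu_t(\sigma_A(\pi))\,\mu_t(\widehat f(\pi)\widehat g(\pi)^*)$. This sidesteps the obstacle you flag (the global $\mu_s(A)$ mixes blocks) and produces the $\widehat\partial\sigma_A$ increments directly after an Abel transform in the representation index. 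Relatedly, there is no need to split into $p\le 2$ and $p>2$: on compact Lie groups the Nikolskii inequality of \cite{NRT2014} already gives $\|f_{N}\|_{L^q}\lesssim\lambda_{a_N}^{\frac1p-\frac1q}\|f_N\|_{L^p}$ for the full range $1<p\le q\le\infty$, so the general LCG Nikolskii (with its $p\le\min(2,q)$ restriction) is not the right tool here.

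\medskip
\textbf{A genuine gap.} Your derivation of \eqref{EQ:Lizorkin-CG} from \eqref{EQ:Lizorkin-CG-3} via H\"older does not work. In the enumeration the paper uses---by the eigenvalues $\lambda_k$ of $(I-\mathcal L_G)^{n/2}$ counted with multiplicity, so that $\jp{\pi^k}^n\cong\lambda_k\cong k$---the series $\sum_{\pi}\jp\pi^{-nm}$ is $\sum_k k^{-m}$, which diverges for every $m<1$. Even if one reads $\sum_{\pi\in\Gh}$ as a sum over distinct representations, convergence requires $nm>l$ (rank of $G$), and there is no reason this follows from $m\ge 1-\tfrac1p$ throughout the Stanton range; already for $G={\rm SU}(2)$ the endpoint $m=1-\tfrac1p=\tfrac13=l/n$ is borderline divergent. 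The paper obtains \eqref{EQ:Lizorkin-CG} by a different mechanism: after the Abel transform it writes
\[
\sum_{k}\|\widehat\partial\sigma_A(\pi^k)\|_{\op}\,\|f_k\|_{L^q}
\le\Bigl(\sup_k\lambda_k^{\frac1p-\frac1q+m}\|\widehat\partial\sigma_A(\pi^k)\|_{\op}\Bigr)
\sum_{k}\lambda_k^{-m}\,\frac{\|f_k\|_{L^q}}{\lambda_k^{\frac1p-\frac1q}}\,,
\]
and then bounds the remaining sum by $\|f\|_{L^p}$ via a real-interpolation ($K$-functional) argument between two Nikolskii inequalities at auxiliary exponents $p_0<p<p_1$. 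It is this interpolation step---not summability of $\sum_\pi\jp\pi^{-nm}$---that produces the range $1-\tfrac1p\le m<1$. You will need to replace the H\"older step by such an interpolation argument.
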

Here $2-\frac1{1+s}<p<2+\frac1s$ is determined by the sharp conditions on mean summability (\cite{Colzani1989}), see Appendix \ref{SEC:mean_convergence}. 
%We note that on compact Lie groups, left-invariant operators are automatically left Fourier multipliers according to Definition 

%Condition \ref{EQ:varphi-Lizorkin-CG} might seem a bit tedious to check.
%However, if we impose more conditions on $\sigma_A$, we can 
%strengthen Theorem \ref{THM:varphi-Lizorkin-CG}.
%\end{proof}

%\subsection{Proof of Theorem \ref{THM:Lizorkin-CG}}

\begin{proof}[Proof of Theorem \ref{THM:Lizorkin-CG}]
We will sometimes denote $\frac1r=\frac1p-\frac1q$.
It is sufficient to establish inequality \eqref{EQ:Lizorkin-CG} for polyhedral partial sums $f_N$, i.e.
for all  $N\in\NN$,
\begin{multline}
\label{EQ:Lizorkin-CG-polyhedral}
\|Af_N\|_{L^q_{\inv}(G)}
\\ \lesssim 
\left(
\sup_{\pi\in\Gh}
\jp\pi^{n\left(\frac1p-\frac1q\right)}\|\sigma_A(\pi)\|_{\op}
+
\sum\limits_{\pi\in\Gh}
\jp\pi^{n\left(\frac1p-\frac1q\right)}
\|\widehat{\partial}\sigma_A(\pi)\|_{\op}
\right)\|f_N\|_{L^p_{\inv}(G)}.
\end{multline}

Let us decompose functions $f\in L^p(G)$ and $g\in L^{p'}(G)$ as
$$
f=f_1-f_2,\quad g=g_1-g_2,
$$
into functions $f_i,g_i$ with positive Fourier transform, i.e.
$$
R_{f_i}\geq 0,\quad R_{g_j}\geq 0,\quad i,j=1,2,
$$
by taking
\begin{eqnarray*}
f_1=\sum\limits_{\pi\in Q_1}
\dpi\Tr (\widehat{f}(\pi)\pi),
\\
f_2=\sum\limits_{\pi\in Q_2}
\dpi\Tr (\widehat{f}(\pi)\pi),
\end{eqnarray*}
where 
\begin{eqnarray*}
Q_1
=
\{
\pi\in\Gh\colon \widehat{f}(\pi)\geq 0
\}
,
\\
Q_2
=
\{
\pi\in\Gh\colon \widehat{f}(\pi)< 0
\}.
\end{eqnarray*}
By the linearity of the Fourier transform $\mathcal{F}\colon f\mapsto R_f$, we have
\begin{equation*}
\left|(Af,g)_{L^2(G)}\right|
=
\left|
\sum\limits^{2}_{1}(-1)^{i+j}(Af_i,g_j)_{L^2(G)}
\right|
\leq
\sum\limits^{2}_{i,j=1}\left|(Af_i,g_j)_{L^2(G)}\right|.
\end{equation*}
Hence, without loss of generality, we may assume that $R_f\geq 0$ and $R_g\geq0$.
Then by the Plancherel identity, we have
\begin{equation*}
\left|
(Af,g)_{L^2(G)}
\right|
=
\left|
\tau(R_{Af}R^*_g)
\right|.
\end{equation*}
By using this and the hypothesis that $A$ is a left Fourier multiplier, i.e. 
$
R_{Af}=AR_f
$
and inequality $|\tau(\cdot)|\leq \tau(|\cdot|)$ (see e.g. \cite[Proposition A.1.2. p. 216]{HayesPhD2014}), we get
$$
\left|
(Af,g)_{L^2(G)}
\right|
\leq
\left|
\tau(R_{Af}R^*_g)
\right|
\leq
\tau(\left|AR_{f}R^*_g\right|).
$$
Now, we decompose the group von Neumann algebra  $\VN_R(G)$ with respect to its center $\mathcal{C}=\VN_R(G)\cap \VN_R(G)^{!}$. This yields the trace decomposition $\tau=\oplus_{\Gh}\int\tau_{\pi}$ and we get
\begin{equation*}
\label{EQ:step-1-Lizorkin}
\begin{aligned}
\tau(\left|AR_{f} R^*_g\right|)
=
\sum\limits_{\pi\in\Gh} \dpi\tau_{\pi}\left[\sigma_A(\pi) \widehat{f}(\pi)\widehat{g}(\pi)^*\right]
=
\sum\limits_{\pi\in\Gh}\sum\limits^{\dpi}_{t=1}\dpi\mu_t\left[\sigma_A(\pi) \widehat{f}(\pi)\widehat{g}(\pi)^*\right]
\\
\leq
\sum\limits_{\pi\in\Gh}\sum\limits^{[\frac{\dpi}{2}]}_{t=1} \dpi\mu_{2t}\left[\sigma_A(\pi) \widehat{f}(\pi)\widehat{g}(\pi)^*\right]
+
\sum\limits_{\pi\in\Gh}\sum\limits^{[\frac{\dpi+1}{2}]}_{t=1}\dpi\mu_{2t-1}\left[\sigma_A(\pi) \widehat{f}(\pi)\widehat{g}(\pi)^*\right]
\\ \leq
\sum\limits_{\pi\in\Gh}\sum\limits^{[\frac{\dpi}{2}]}_{t=1}\dpi\mu_{2t}\left[\sigma_A(\pi) \widehat{f}(\pi)\widehat{g}(\pi)^*\right]
+
\sum\limits_{\pi\in\Gh}\sum\limits^{[\frac{\dpi+1}{2}]}_{t=1}\dpi\mu_{2(t-1)}\left[\sigma_A(\pi) \widehat{f}(\pi)\widehat{g}(\pi)^*\right]
\\
\leq
\sum\limits_{\pi\in\Gh}\sum\limits^{[\frac{\dpi}{2}]}_{t=1}\dpi\mu_{t}\left[\sigma_A(\pi)]\mu_{t}[ \widehat{f}(\pi)\widehat{g}(\pi)^*\right]
+
\sum\limits_{\pi\in\Gh}\sum\limits^{[\frac{\dpi+1}{2}]}_{u=1}\dpi\mu_{u-1}\left[\sigma_A(\pi)]
\mu_{u-1}[\widehat{f}(\pi)\widehat{g}(\pi)^*\right]
\\\leq
\sum\limits_{\pi\in\Gh}\sum\limits^{\dpi}_{t=1}\dpi\mu_t\left[\sigma_A(\pi)\right]\mu_t\left[\widehat{f}(\pi)\widehat{g}(\pi)^*\right],
%=
%\sum\limits^{\infty}_{k=1}
%\sum\limits_{\substack{\pi\in\Gh \\a\leq \langle \pi \rangle \leq b }}\mu_k\left[\sigma_A(\pi)\right]\mu_k\left[\widehat{f}(\pi)\widehat{g}(\pi)^*\right]d\pi,
\end{aligned}
\end{equation*}
where in the inequalities we made the substitution $t\to 2t$ and used the sub-multi\-pli\-ca\-tivity $\mu_{2t}(\cdot)\leq \mu_t(\cdot)\mu_t(\cdot)$ of the singular values $\mu_t$. 
We define all the singular numbers $\mu_t$ to be zero for $t>\dpi,\pi\in\Gh$, i.e.
\begin{equation*}
\mu_t(\widehat{f}(\pi))=0,\quad
\mu_t(\widehat{g}(\pi))=0,\quad
\mu_t(\sigma_A(\pi))=0,\quad \text{for}\quad t>\dpi,\;\pi\in\Gh.
\end{equation*}
We have thus only to show that
\begin{equation}
\label{EQ:stop-look}
\left|
(Af,g)_{L^2(G)}
\right|
\lesssim
\sum\limits_{\pi\in\Gh}\sum\limits^{\infty}_{t=1}\dpi\mu_t\left[\sigma_A(\pi)\right]\mu_t\left[\widehat{f}(\pi)\widehat{g}(\pi)^*\right],\; f\in L^p(G),\,g\in L^{q'}(G).
\end{equation}

Further, we take $f=f_N=\sum\limits_{\pi\in Q_N}\dpi\Tr(\widehat{f}(\pi)\pi)$ in \eqref{EQ:stop-look} and index the $N$-th order polyhedron $Q_N\subset \Gh$ by the eigenvalues of $(I-\mathcal{L}_{G})^{\frac{\dim(G)}2}$, i.e.
$Q_N=\{\pi^{k}\}^{b_N}_{k=a_N}\subset\Gh$ and $\{a_1,\ldots, a_N\}$ correspond to the indices $k$ of such $\lambda_k$'s that $\pi^k\in Q_N$, i.e.

%indexed by the eigenvalues $\{\lambda_{k}\}^{b_N}_{k=a_{N}}$, i.e.
%Denote by $\lambda_a,\lambda_{a+1},\ldots,\lambda_b,$ the corresponding eigenvalues of the Laplace operator $(I-\mathcal{L}_{G})^{\frac{n}2}$
$$
(I-\mathcal{L}_{G})^{\frac{\dim(G)}2}\pi^{k}_{mn}=\lambda_k\pi^{k}_{mn},\quad k=a_1,\ldots, a_N,\quad \pi^k\in Q_N.
$$
Recall that the $N$-th order polyhedron $Q_N$ is defined via the highest weight theory
$$
Q_N
=
\{
\pi\in\Gh\colon
\pi_{i}
\leq
\rho_iN,\quad i=\overline{1,\ldots, l}
\},
$$
where $l$ is the rank of the group $G$ and $(\pi_1,\pi_2,\ldots,\pi_l)$ are the highest weights of $\pi$ and $\rho$ is the half-sum of the positive roots of $G$.
%Thus, with each $N$-th polyhedron $Q_N$ we can associate a finite subsequence $\{\lambda_{k}\}^{b_N}_{k=a_1}$ of $b_N-a_1$ eigenvalues $\lambda_{k_m}$.
%Thus, the trivial irreducible representation $\pi^1$ corresponding to the highest weight $(0,0,\ldots,0)$ always lies in $Q_N$. 
%
%\begin{prop}
%There exists sequence $k^N$ such that
%\begin{equation}
%Q_N=\{\lambda_1,\lambda_2,\ldots,\lambda_{k^N}\}
%\end{equation}
%\end{prop}
Hence, with every subset $\{\lambda_k\}^{a_m}_{k=a_1}\{a_1,\ldots, a_m\},\quad m\leq N$ we associate a polyhedron $Q_{N_m}$ 
$$
Q_{N_{m}}=\{\pi^k_i\leq \rho_i N_m\}^{a_m}_{k=a_1},
$$
where $N_{m}$ is the minimum over all $N'$ such that $\pi^k_i\leq \rho_i N,\quad k=\overline{a_1,\ldots,a_m}$.
We shall agree that the sum over $Q_N$ runs via the eigenvalues $\lambda_k$, $k=a_1,\ldots,a_N$, with multiplicities, i.e.
\begin{equation}
\sum\limits_{\pi\in Q_N} 1=\sum\limits^{a_N}_{k=a_1}1.
\end{equation}
Changing the order of summation in \eqref{EQ:stop-look} and using the convention above, we get
\begin{equation}
\label{EQ:stop-look-1}
\left|
(Af_N,g)_{L^2(G)}
\right|
\leq
\sum\limits^{\infty}_{t=1}
\sum\limits^{k=a_N}_{k=a_1} d_{\pi^k}
\mu_t\left[\sigma_A(\pi^k)\right]\mu_t\left[\widehat{f}(\pi^k)\widehat{g}(\pi^k)^*\right].
\end{equation}
Now, we shall write
\begin{equation}
\label{EQ:proof-notation}
\alpha_{t,k}=\mu_t[\sigma_A(\pi^k)],\quad \beta_{t,k}=\mu_t[\widehat{f}(\pi^k)\widehat{g}(\pi^k)^*].
\end{equation}
Let us apply the Abel transform with respect to $k$ in the right hand side of \eqref{EQ:stop-look-1}:
\begin{equation*}
\begin{aligned}
\sum\limits^{a_N}_{\substack{k=a_1}}
\alpha_{t,k}d_{\pi^{k}}\beta_{t,k}
=
\alpha_{t,a_N}
\sum\limits^{a_N}_{w=a_1}d_{\pi^{w}}\beta_{t,w}+
\sum\limits^{a_N}_{k=a_1}
\left(
\Delta_{k}\alpha_{t,k}
\right)
\sum\limits^{a_N}_{w=a_1}
d_{\pi^{w}}
\beta_{t,w},
\end{aligned}
\end{equation*}
where
\begin{equation*}
\Delta_{k}\alpha_{t,k}=\alpha_{t,k}-\alpha_{t+1,k}.
\end{equation*}
Combining this with \eqref{EQ:stop-look}, we get
\begin{equation*}
\begin{aligned}
\left|
(Af,g)_{L^2(G)}
\right|
&
&
\\
\leq
\sum\limits^{\infty}_{t=1}
\alpha_{t\,a_N}
\sum\limits^{a_N}_{w=a_1}d_{\pi^w}
\beta_{t,w}
&
+
\sum\limits^{\infty}_{t=1}
\sum\limits^{a_{n-1}}_{k=a_1}
\Delta_{k}
\alpha_{t,k}
\sum\limits^k_{w=a_1}
d_{\pi^w}
\beta_{t,w}
&
\\
&
\leq
\alpha_{1\,a_N}
\sum\limits^{\infty}_{t=1}
\sum\limits^{a_N}_{w=a_1}d_{\pi^w}
\beta_{t,w}
+
\sum\limits^{a_{N-1}}_{k=a_1}
\sup_{t\in\NN }
\Delta_{k}
\alpha_{t,k}
\sum\limits^{\infty}_{t=1}
\sum\limits^k_{w=a_1}
d_{\pi^w}
\beta_{t,w}.
&
%&&=
%I_1+I_2.
\end{aligned}
\end{equation*}
%First, we estimate $I_1$
%\begin{equation}
%\begin{aligned}
%I_1
%=
%\sum\limits^{\infty}_{t=1}
%\mu_t\left[\sigma_A(\pi^{b_n})\right]
%\sum\limits^{b_n}_{w=a_n}\mu_t\left[\widehat{f}(\pi^{w})\widehat{g}(\pi^{w})^*\right]
%=
%\mu_1\left[\sigma_A(\pi^{b_n})\right]
%\sum\limits^{\infty}_{t=1}
%\sum\limits^{b_n}_{w=a_n}
%d_{\pi^w}
%\mu_t\left[\widehat{f}(\pi^{w})\widehat{g}(\pi^{w})^*\right]
%\\=
%\mu_1\left[\sigma_A(\pi^{b_n})\right]
%\sum\limits^{b_n}_{w=a_n}
%d_{\pi^w}
%\sum\limits^{d_{\pi^w}}_{t=1}
%\mu_t\left[\widehat{f}(\pi^{w})\widehat{g}(\pi^{w})^*\right]
%\end{aligned}
%\end{equation}
Interchanging the order of summation and applying the Plancherel formula, we get
\begin{equation*}
\begin{aligned}
\sum\limits^{\infty}_{t=1}
\sum\limits^k_{w=a_1}d_{\pi^{w}}\,\mu_t
\left[
\widehat{f}(\pi^{w})\widehat{g}(\pi^{w})^*
\right]
=
\sum\limits^k_{w=a_1}
\sum\limits^{\infty}_{t=1}
d_{\pi^{w}}\,\mu_t
\left[
\widehat{f}(\pi^{w})\widehat{g}(\pi^{w})^*
\right]
\\ =
\sum\limits^k_{w=a_1}d_{\pi^{w}}
\sum\limits^{d_{\pi^w}}_{t=1}
\mu_t\left[\widehat{f}(\pi^w)\widehat{g}(\pi^w)^*\right]
\\=
\sum\limits^k_{w=a_1}d_{\pi^{w}}
\Tr[\widehat{f}(\pi^w)\widehat{g}(\pi^w)^*]
=
(f_{(a_1,k)},g)_{L^2(G)}
\leq
\|f_{(a_1,k)}\|_{L^q(G)}
\|g\|_{L^{q'}(G)},
\end{aligned}
\end{equation*}
where we write
\begin{equation}
f_{k}
=
\sum\limits^{k}_{w=a_1}d_{\pi^{w}}
\Tr(\widehat{f}(\pi^w)\pi^w).
\end{equation}
Collecting these estimates, we obtain
\begin{multline*}
\left|
(Af_N,g)_{L^2(G)}
\right|
\\ \leq
\left(
\alpha_{1\,a_N}
\|f_{N}\|_{L^q(G)}
+
\sum\limits^{a_{N-1}}_{k=a_1}
\sup_{t\in\NN}
\Delta_{k}
\alpha_{t,k}
\|f_k\|_{L^q(G)}
\right)
\|g\|_{L^{q'}(G)}.
\end{multline*}
By the duality of $L^p$-spaces we immediately get
\begin{equation}
\label{EQ:pre-final}
\|Af_{N}\|_{L^q(G)}
\lesssim
\alpha_{1\,a_N}
\|f_{N}\|_{L^q(G)}
+
\sum\limits^{a_{N-1}}_{k=a_1}
\sup_{t\in\NN} \Delta_{\pi}
\alpha_{t,k}
\|f_k\|_{L^q(G)}
\end{equation}
We now claim that the following version of the Nikolskii-Bernshtein inequality holds true:
\begin{equation}
\label{EQ:Nikolsky-as-we-need}
\|f_{N}\|_{L^q(G)}
\lesssim
\lambda^{\frac1p-\frac1q}_{a_N}
\|f_{N}\|_{L^p(G)},\quad 1<p\leq q\leq \infty,\,a,b\in \mathbb{N}.
\end{equation}
Moreover, the composition of the Banach-Steinhaus theorem and inequality \eqref{EQ:Nikolsky-as-we-need} yields 
\begin{equation}
\label{EQ:uniform-Nikolsky}
\|f_{N}\|_{L^q(G)}
\lesssim
\lambda^{\frac1p-\frac1q}_{a_N}
\|f\|_{L^p(G)},\quad 1<p\leq q\leq \infty,\,N\in \mathbb{N}.
\end{equation}
Assuming this to be true for a moment, and using inequality \eqref{EQ:Nikolsky-as-we-need}, we get
\begin{equation}
\label{EQ:pre-final-2}
\|Af_{N}\|_{L^q(G)}
\lesssim
\alpha_{1\,a_N}
\lambda^{\frac1p-\frac1q}_{a_N}
\|f\|_{L^p(G)}
+
\sum\limits^{a_{N-1}}_{k=a_1}
\sup_{t\in\NN} \Delta_{\pi}
\alpha_{t,k}
\|f_k\|_{L^q(G)}
\end{equation}
At this point, we note that by Nikolsky inequality \eqref{EQ:Nikolsky-as-we-need} and the Banach-Steinhaus theorem, we immediately get from \eqref{EQ:pre-final-2}
\begin{equation*}
\|Af_{N}\|_{L^q(G)}
\lesssim
\left(
\alpha_{1\,a_N}
\lambda^{\frac1p-\frac1q}_{a_N}
\|f\|_{L^p(G)}
+
\sum\limits^{a_{N-1}}_{k=a_1}
\sup_{t\in\NN} \Delta_{\pi}
\alpha_{t,k}
\right)
\|f\|_{L^q(G)}.
\end{equation*}
We show how to pass to the limit $N\to\infty$ in $\|Af_N\|_{L^q(G)}$.
The Fourier series
$$
\sum\limits_{\pi\in Q_N}\dpi\Tr\sigma_A(\pi)\widehat{f}(\pi)\pi(x)
$$
is absolutely convergent since $\|\sigma_A(\pi)\|_{\op}\leq \frac1{\jp\pi^{n\left(\frac1p-\frac1q\right)}}$ and $f\in C^{\infty}_0(G)$. Indeed, we have
\begin{align}
\begin{split}
\sum\limits_{\pi\in Q_N}\dpi
\left|\Tr\sigma_A(\pi)\widehat{f}(\pi)\pi(x)\right|
\leq
\sum\limits_{\pi\in Q_N}\dpi
\|\sigma_A(\pi)\|_{\op}
\|\widehat{f}(\pi)\|_{\HS}
\|\pi\|_{\HS}
\\
\sum\limits_{\pi\in Q_N}d^{2}_{\pi}
\frac{d^{1/2}_{\pi}}{\jp\pi^{\frac{n}{r}}}
\|\widehat{f}(\pi)\|_{\HS}
\end{split}
\end{align}
Hence, for every $x\in G$
$$
\lim_{N\to\infty}
\left|
\sum\limits_{\pi\in Q_N}\dpi
\Tr\sigma_A(\pi)\widehat{f}(\pi)\pi(x)
\right|
=
\left|
\sum\limits_{\pi\in\in\Gh}\dpi
\Tr\sigma_A(\pi)\widehat{f}(\pi)\pi(x)
\right|
$$
and by the Fatou theorem
\begin{align}
\begin{split}
\left\|
\sum\limits_{\pi\in\Gh}\dpi\Tr\sigma_A(\pi)\widehat{f}(\pi)\pi(x)
\right\|_{L^q(G)}
\\\leq
\lim_{N\to\infty}
\left\|
\sum\limits_{\pi\in Q_N}\dpi\Tr\sigma_A(\pi)\widehat{f}(\pi)\pi(x)
\right\|_{L^q(G)}
\\\leq
\left(
\sup_{\pi\in\Gh}
\jp\pi^{n\left(\frac1p-\frac1q\right)}
\|\sigma_A(\pi)\|_{\op}
+
\sum\limits_{\pi\in\Gh}
\jp\pi^{nm}
\|\widehat{\partial}\sigma_{A}(\pi)\|_{\op}
\right)\|f\|_{L^p(G)}.
\end{split}
\end{align}
Now, we concentrate on establishing \eqref{EQ:Lizorkin-CG}.
Modulus technical details, we shall interpolate between two Nikolsky inequalities in order to estimate the second sum in \eqref{EQ:pre-final-2}.
Let
$p_0<p<p_1$ 
and 
$\frac1p=\frac{1-\theta}{p_0}+\frac{\theta}{p_1},\,0<\theta<1$.
We take 
\begin{equation}
\label{EQ:gamma}
\gamma=(1-\theta)\gamma_1=(1-\theta)\left(\frac1{p_0}-\frac1{p_1}\right).
\end{equation}
We divide and multiply the sum by $\lambda^{\gamma}_k$
\begin{align}
\begin{split}
\label{EQ:discre-sum-2}
\sum\limits^{a_{N-1}}_{k=a_1}
\sup_{t\in\NN} \Delta_{\pi}
\alpha_{t,k}
\|f_k\|_{L^q(G)}
=
\sum\limits^{a_{N-1}}_{k=a_1}
\lambda^{\frac1p-\frac1q+1-\gamma}_k
\sup_{t\in\NN} \Delta_{\pi}
\alpha_{t,k}
\lambda^{\gamma}_k
\frac{
\|f_{k}\|_{L^q(G)}
}
{
\lambda^{\frac1p-\frac1q}
}
\frac1{\lambda_k}
\\\leq
\left(
\sup_{k\in\NN}
\lambda^{\frac1p-\frac1q+1-\gamma}_k
\sup_{t\in\NN} \Delta_{\pi}\alpha_{t,k}
\right)
\sum\limits^{\infty}_{k=1}
\lambda^{\gamma}_k
\frac{
\|f_{k}\|_{L^q(G)}
}
{
\lambda^{\frac1p-\frac1q}_k
}
\frac1{\lambda_k}.
\end{split}
\end{align}
Let us denote by $\overline{f}(t)$ the quantity given by
\begin{equation}
\label{EQ:mean}
\overline{f}(t)
=
\sup_{\lambda_n\geq t}\frac{\|f_n\|_{L^q(G)}}{\lambda^{\delta}_n},
\end{equation}
where
\begin{equation}
\label{EQ:delta}
\delta=\frac1{p_0}-\frac1q.
\end{equation}

Using the fact that the function $t\to \overline{f}(t)$ is non-increasing function of $t>0$, we get
\begin{align}
\begin{split}
\label{EQ:long-line-cont-discrete}
\sum\limits^{\infty}_{k=1}
\lambda^{\gamma}_k
\frac{
\|f_{k}\|_{L^q(G)}
}
{
\lambda^{\frac1p-\frac1q}_k
}
\frac1{\lambda_k}
\leq
\sum\limits^{\infty}_{k=1}
\lambda^{\gamma}_k
\overline{f}(\lambda_k)
\frac1{\lambda_k}
=
\sum\limits_{s\in\ZZ}
\sum\limits_{
\substack{k\in\NN \colon 2^s\leq \lambda_k\leq 2^{s+1}
}}\lambda^{\gamma}_k
\overline{f}(\lambda_k)
\frac1{\lambda_k}
\\
\leq
\sum\limits_{s\in\ZZ}
2^{\gamma\left(s+1\right)}
\overline{f}(2^s)
\sum\limits_{\substack{k\in\NN \colon 2^s\leq \lambda_k\leq 2^{s+1}}}
\frac1{\lambda_k}
=
2^{2\gamma}
\sum\limits_{s\in\ZZ}
2^{\gamma\left(s-1\right)}
\overline{f}(2^s)
\\
\leq
2^{2\gamma}
\sum\limits_{s\in\ZZ}
\int\limits^{2^s}_{2^{s-1}}
t^{\gamma}
\overline{f}(t)
\int\limits^{2^s}_{2^{s-1}}\frac{dt}{t}
=
2^{2\gamma}
\int\limits^{\infty}_{0}
t^{\gamma}
\overline{f}(t)\frac{dt}{t}.
=
\\
\int\limits^{\infty}_{0}
t^{\gamma}
\overline{f}(t)\frac{dt}{t}
=
\int\limits^{\infty}_{0}
t^{-\theta\gamma_1}
t^{\gamma_1}
\overline{f}(t)\frac{dt}{t}
\leq
\int\limits^{\infty}_{0}
t^{-\theta\gamma_1}
\left(
\sup_{u\leq t}
u^{\gamma_1}
\overline{f}(u)
\right)
\frac{dt}{t}
\\
=\{v=t^{\gamma_1}\}
\\
\int\limits^{\infty}_{0}
v^{-\theta}
\left(
\sup_{u\leq v^{\frac1{\gamma_1}}}
u^{\gamma_1}
\overline{f}(u)
\right)
\frac{dt}{t}.
\end{split}
\end{align}
%Now, we claim that
%\begin{equation}
%\leq
%\end{equation}
Now, we shall interpolate between two Nikolsky inequalities.
\begin{eqnarray}
\label{EQ:two-Nik}
\|f_k\|_{L^{q}(G)}
\leq
\lambda^{\frac1{p_0}-\frac1{q}}_k\|f\|_{L^{p_0}(G)},
\\
\|f_k\|_{L^{q}(G)}
\leq
\lambda^{\frac1{p_1}-\frac1{q}}_k\|f\|_{L^{p_1}(G)}.
\end{eqnarray}
Rescalling in the second inequality in \eqref{EQ:two-Nik}, we get
%\begin{eqnarray}
%\begin{aligned}
\begin{align}
\label{EQ:two-Nik-0}
\|f_k\|_{L^{q}(G)}
&\leq
\lambda^{\frac1{p_0}-\frac1{q}}_k
\|f\|_{L^{p_0}(G)},
\\
\label{EQ:two-Nik-1}
\|f_k\|_{L^{q}(G)}
&\leq
\lambda^{\frac1{p_1}-\frac1{p_0}}_k
\lambda^{\frac1{p_0}-\frac1{q}}_k
\|f\|_{L^{p_1}(G)}.
\end{align}
Thus, using \eqref{EQ:mean}, we rewrite inequalities \eqref{EQ:two-Nik-0},\eqref{EQ:two-Nik-1}
\begin{eqnarray}
\label{EQ:mean-f-0}
\overline{f}(\lambda_k)
\leq
\|f\|_{L^{p_0}(G)},
\\
\label{EQ:mean-f-1}
\lambda^{\gamma_1}_k
\overline{f}(\lambda_k)
\leq
\|f\|_{L^{p_1}(G)}.
\end{eqnarray}
Let $f=f^0+f^1$ be an arbitrary decomposition. 
From \eqref{EQ:mean-f-0} and \eqref{EQ:mean-f-1} we obtain
\begin{align}
\begin{split}
\sup_{u\leq v^{\frac1{\gamma_1}}}
u^{\gamma_1}
\overline{f}(u)
\leq
\sup_{u\leq v^{\frac1{\gamma_1}}}
u^{\gamma_1}
\overline{f^0}(u)
+
\sup_{u\leq v^{\frac1{\gamma_1}}}
u^{\gamma_1}
\overline{f^1}(u)
\\\leq
\sup_{u\leq v^{\frac1{\gamma_1}}}
u^{\gamma_1}
\overline{f^1}(u)
+
v
\sup_{u>0}
\overline{f^0}(u)
\leq
\|f^1\|_{L^{p_1}(G)}
+
v
\|f^0\|_{L^{p_0}(G)}.
\end{split}
\end{align}
Since the decomposition $f=f^0+f^1$ is arbitrary, we take the infimum and get
\begin{equation}
\label{EQ:mean-K}
\sup_{u\leq v^{\frac1{\gamma_1}}}
u^{\gamma_1}
\overline{f}(u)
\leq
K(t,f; L^{p_1}(G),L^{p_0}(G)),
\end{equation}
where 
the functional $K(t,f)$ is given by
\begin{equation}
K(v,f; L^{p_1}(G),L^{p_0}(G)),
=
\inf_{f=f^0+f^1}
\left\{
\|f^1\|_{L^{p_1}(G)}+v\|f^0\|_{L^{p_0}(G)}
\right\}.
\end{equation}
Composing \eqref{EQ:long-line-cont-discrete} and \eqref{EQ:mean-K}, we obtain
\begin{equation}
\label{EQ:interpolated}
\sum\limits^{\infty}_{k=1}
\lambda^{\gamma}_k
\frac{
\|f_{k}\|_{L^q(G)}
}
{
\lambda^{\frac1p-\frac1q}_k
}
\frac1{\lambda_k}
\leq
\|f\|_{L^p(G)},
\end{equation}
where in the last equality we used that $L^{p\,q}(G)$ are the interpolation spaces and the embedding of the Lorentz spaces.
Composing \eqref{EQ:discre-sum-2} and \eqref{EQ:interpolated}, we obtain
\begin{align*}
\begin{split}
\label{EQ:interpolation}
\sum\limits^{a_{N-1}}_{k=a_1}
\sup_{t\in\NN} \Delta_{\pi}
\alpha_{t,k}
\|f_k\|_{L^q(G)}
\leq
\left(
\sup_{k\in\NN}
\lambda^{\frac1p-\frac1q+1-\gamma}_k
\sup_{t\in\NN} \Delta_{\pi}\alpha_{t,k}
\right)
\|f\|_{L^p(G)}
.
\end{split}
\end{align*}
Using this and recalling \eqref{EQ:pre-final-2}, we get
\begin{equation}
\label{EQ:pre-limit-estimate}
\|Af_{N}\|_{L^q(G)}
\leq
\left(
\sup_{k\in\NN}
\alpha_{1\,k}
\lambda^{\frac1p-\frac1q}_{k}
+
\sum\limits^{a_{N-1}}_{k=a_1}
\lambda^{\frac1p-\frac1q+1-\gamma}_k
\sup_{t\in\NN} \Delta_{\pi}
\alpha_{t,k}
\right)
\|f\|_{L^p(G)}.
\end{equation}

Let us denote
\begin{equation}
m=1-\gamma.
\end{equation}
Recalling \eqref{EQ:gamma}, we obtain the range
\begin{equation}
1-\frac1p\leq m<1.
\end{equation}

Recalling notation \eqref{EQ:proof-notation} we get
\begin{equation*}
\|Af_{N}\|_{L^q(G)}
\lesssim
\left(
\sup_{k\in\NN}
\lambda^{\frac1p-\frac1q}_{k}
\|\sigma_A(\pi^k)\|_{\op}
+
\sum\limits^{\infty}_{k=1}
\lambda^{\frac1p-\frac1q+m}_k
\|\widehat{\partial}\sigma_{A}(\pi^k)\|_{\op}
\right)
\|f\|_{L^p(G)}
,
\end{equation*}
where we used the fact that $\|\widehat{\partial}\sigma_A(\pi^k)\|_{\op}=\sup_{t=1,\ldots,d_{\pi^k}}\Delta_{\pi}\alpha_{t,k}$.
%\mu_t[\sigma_A(\pi^k)]$
Returning to the `unitary dual notation', and using that $\lambda_k\cong \jp{\pi^k}^{n}$ with $n=\dim G$, we finally obtain
\begin{equation}
\label{EQ:pre-limit}
\|Af_{N}\|_{L^q(G)}
\lesssim
\left(
\sup_{\pi\in\Gh}
\jp\pi^{n\left(\frac1p-\frac1q\right)}
\|\sigma_A(\pi)\|_{\op}
+
\sum\limits_{\pi\in\Gh}
\jp\pi^{nm}
\|\widehat{\partial}\sigma_{A}(\pi)\|_{\op}
\right)\|f\|_{L^p(G)}.
\end{equation}
Passing to the limit as above, we obtain \eqref{EQ:Lizorkin-CG}.
Now, it remains to show that inequality \eqref{EQ:Nikolsky-as-we-need} holds true.
It has been shown in \cite{NRT2014} that for every trigonometric polynomial
\begin{equation}
f_L
=
\sum\limits_{\substack{\xi\in\Gh\\ \langle \xi \rangle \leq L}}\dpi\Tr(\widehat{f}(\xi)\xi)
\end{equation}
a version of the Nikolskii-Bernshtein inequality can be written as
\begin{equation*}
\|f_L\|_{L^q(G)}
\leq
N(\rho L)^{\frac1p-\frac1q}
\|f\|_{L^p(G)},\quad 1<p<q<\infty,
\end{equation*}
where
\begin{equation*}
N(\rho L)
:=
\sum\limits_{\substack{\xi\in\Gh\\ \langle \xi \rangle \leq \rho L}}d^2_{\xi},\quad \rho=\min(1,[p/2]),
\end{equation*}
where $[p/2]$ is the integer part of $p/2$.
The application of Weyl's asymptotic law to the counting function $N(\rho L)$ of the $n$-th order elliptic pseudo-differential operator $(I-\mathcal{L}_{G})^{\frac{1}2}$ with $L=\langle \pi\rangle$ yields
\begin{equation*}
N(\rho\langle \pi \rangle ) \cong \rho^n \langle \pi \rangle^n,\quad n=\dim(G).
\end{equation*}
Hence, we immediately obtain
\begin{equation*}
\label{EQ:Nikolsky-as-we-need-proved}
\|f_{\jp{\pi}}\|_{L^q(G)}
\lesssim
\langle\pi\rangle^{n\left(\frac1p-\frac1q\right)}
\|f_{\jp{\pi}}\|_{L^p(G)}.
\end{equation*}
This completes the proof of \eqref{EQ:Nikolsky-as-we-need}.
\end{proof}

\subsection{Spectral multipliers and non-invariant operators}

Let $A$ be a left Fourier multiplier. We now  give an illustration of Theorem \ref{THM:Lizorkin-CG} related to spectral multipliers $\varphi(A)$ for
a monotone continuous function $\varphi$ on $[0,+\infty)$.
In particular, the $L^p$-$L^q$ boundedness is reduced to a condition involving the behaviour of the singular numbers of the symbol $\sigma_A$ of $A$ compared to the eigenvalues of the Laplacian on $G$.

For convenience of the following formulation we enumerate the representations of $G$ according to the growth of the corresponding eigenvalues of the Laplacian. More precisely, for the non-decreasing eigenvalues $\{\lambda_{j}\}_{j}$ 
of $(I-\mathcal{L}_{G})^{\frac{\dim(G)}2}$ with multiplicities taken into account,
we denote by $\pi^j$ the corresponding representations such that
$$
(I-\mathcal{L}_{G})^{\frac{\dim(G)}2}\pi^{j}_{ml}=\lambda_j\pi^{j}_{ml}
$$
holds for all $1\leq m,l\leq d_{\pi^j}$. From the Weyl asymptotic formula for the eigenvalue counting function we get that
\begin{equation}\label{EQ:sp-js}
\lambda_j\cong \jp{\pi^j}^{n}\cong j,\quad \textrm{ with } n=\dim G.
\end{equation} 
Therefore, we can formulate a spectral multipliers corollary of Theorem \ref{THM:Lizorkin-CG}.

\begin{cor} 
\label{THM:varphi-Lizorkin-CG}
Let $A$ be a left Fourier multiplier on a compact semisimple Lie group $G$ of dimension $n$.  
Let  $2-\frac1{1+s}<p\leq q<2+\frac1s$. Assume that $\varphi$ is a monotone function on $[0,+\infty)$. Then $\varphi(A)$ is a left-invariant operator and, moreover, we have
\begin{multline}
\label{EQ:varphi-Lizorkin-CG}
\|\varphi(A)\|_{L^p_{\inv}(G)\to L^q_{\inv}(G)}
\\ \lesssim
\sup_{j\in\NN}
j^{\frac{1}{p}-\frac1q}
\sup_{t=1,\ldots,d_{\pi^j}}
\left|\varphi(\alpha_{t\,j})\right|
+
\sum\limits_{j\in\NN}
j^{\frac{1}{p}-\frac1q}
\sup_{t=1,\ldots,d_{\pi^j}}
\left|
\varphi(\alpha_{t\,j})-\varphi(\alpha_{t+1\,j+1})
\right|,
\end{multline}
where  $\alpha_{t\,j}$ are the singular numbers of the symbol $\sigma_A(\pi^j),\,\pi^j\in\Gh$, $t=1,\ldots,d_{\pi^j}$.
\end{cor}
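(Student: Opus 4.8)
The plan is to deduce Corollary~\ref{THM:varphi-Lizorkin-CG} directly from the Lizorkin-type theorem on compact Lie groups, Theorem~\ref{THM:Lizorkin-CG}, applied with $\varphi(A)$ in place of $A$. First I would check that $\varphi(A)$ is again a left Fourier multiplier on $G$: since $A$ is affiliated with $\VN_R(G)$ and $\tau$-measurable, and since a monotone function on $[0,+\infty)$ is Borel, the Borel functional calculus shows that $\varphi(A)$ is affiliated with $\VN_R(G)$, and it is $\tau$-measurable by the truncation/monotone-convergence argument used in the proof of Proposition~\ref{PROP:trace-spectral-computed} (approximating $\varphi$ by bounded Borel functions). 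By Remark~\ref{REM:FM-affiliation} this makes $\varphi(A)$ left-invariant; in particular it maps $L^p_{\inv}(G)$ into $L^q_{\inv}(G)$, so the left-hand side of \eqref{EQ:varphi-Lizorkin-CG} is meaningful.

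Next I would identify the matrix-valued symbol of $\varphi(A)$. Using the central decomposition of $A$ into $\sigma_A(\pi)$, $\pi\in\Gh$, described in Section~\ref{SEC:traces}, the Borel calculus is computed blockwise, so the symbol of $\varphi(A)$ at $\pi^j$ is $\varphi(\sigma_A(\pi^j))$. Since this operator acts on the finite-dimensional space $\C^{d_{\pi^j}}$, its singular numbers are exactly the numbers $\{|\varphi(\alpha_{t\,j})|\}_{t=1}^{d_{\pi^j}}$ arranged in non-increasing order, where $\alpha_{t\,j}=\mu_t(\sigma_A(\pi^j))$; for $\varphi$ increasing this is Lemma~\ref{LEM:mu-t-A-properties}(\ref{LEM:mu-t-A-properties-5}) applied to $|A|$, and for $\varphi$ decreasing one obtains it by the same finite rearrangement. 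In particular $\|\varphi(\sigma_A)(\pi^j)\|_{\op}=\mu_1(\varphi(\sigma_A(\pi^j)))=\sup_{t=1,\ldots,d_{\pi^j}}|\varphi(\alpha_{t\,j})|$, and inspecting the definition~\eqref{EQ:difference} of the difference operator $\widehat{\partial}$ (which forms the corresponding differences of singular numbers of symbols at consecutive representations in the enumeration of $\Gh$) yields $\|\widehat{\partial}\varphi(\sigma_A)(\pi^j)\|_{\op}=\sup_{t=1,\ldots,d_{\pi^j}}\left|\varphi(\alpha_{t\,j})-\varphi(\alpha_{t+1\,j+1})\right|$.

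Finally I would apply inequality~\eqref{EQ:Lizorkin-CG-3} of Theorem~\ref{THM:Lizorkin-CG} to $\varphi(A)$, obtaining
\begin{equation*}
\|\varphi(A)\|_{L^p_{\inv}(G)\to L^q_{\inv}(G)}
\lesssim
\sup_{\pi\in\Gh}\jp\pi^{n\left(\frac1p-\frac1q\right)}\|\varphi(\sigma_A)(\pi)\|_{\op}
+\sum_{\pi\in\Gh}\jp\pi^{n\left(\frac1p-\frac1q\right)}\|\widehat{\partial}\varphi(\sigma_A)(\pi)\|_{\op},
\end{equation*}
and then substituting the two identities from the previous paragraph together with the Weyl asymptotics $\jp{\pi^j}^{n}\cong j$ from \eqref{EQ:sp-js} to bring the right-hand side into the form \eqref{EQ:varphi-Lizorkin-CG}. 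The main obstacle I anticipate is the bookkeeping in the second step: one must verify carefully that the functional calculus commutes with the singular-value rearrangement built into $\widehat{\partial}$, and handle the two monotonicity cases so that the ordering of $\{|\varphi(\alpha_{t\,j})|\}_t$ is compatible with the ordering of $\{\alpha_{t\,j}\}_t$ used to form the differences in \eqref{EQ:difference}; the rest is a change of enumeration and a direct invocation of Theorem~\ref{THM:Lizorkin-CG}.
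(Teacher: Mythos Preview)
Your proposal is correct and follows essentially the same route as the paper: identify the symbol of $\varphi(A)$ blockwise as $\varphi(\sigma_A(\pi^j))$ via the functional calculus, read off its singular numbers (handling the two monotonicity cases separately), apply inequality~\eqref{EQ:Lizorkin-CG-3} of Theorem~\ref{THM:Lizorkin-CG} to $\varphi(A)$, and then pass from $\jp{\pi^j}^n$ to $j$ via the Weyl asymptotics~\eqref{EQ:sp-js}. The paper invokes \cite[Proposition~4.2]{DNSZ2016} for the blockwise functional calculus rather than arguing measurability directly, but otherwise the argument and the anticipated bookkeeping issue you flag are exactly what the paper does.
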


It will be clear from the proof of Corollary \ref{THM:varphi-Lizorkin-CG} that the condition that $\varphi$ is monotone is not essential and is needed only for obtaining a simpler expression under the sum in \eqref{EQ:varphi-Lizorkin-CG}. We leave it to the reader to formulate the analogous statement without assuming the monotonicity of $\varphi$.

\begin{ex}
Let $G$ be a compact semisimple Lie group of dimension $n$ and let $\varphi $ be as in Corollary \ref{THM:varphi-Lizorkin-CG}. Let us also assume that  $\varphi$ is boundedly differentiable $\|\varphi'\|_{L^{\infty}}<+\infty$.
As a very rough illustration of 
Corollary \ref{THM:varphi-Lizorkin-CG}
assume that the symbol $\sigma_{A}(\pi)$ ``decays" sufficiently fast with respect to $\jp\pi^n$, i.e. that
\begin{equation}
\|\sigma_A(\pi)\|_{\op}\lesssim \frac{1}{\langle\pi\rangle^{\alpha}},\quad \textrm{ for some } \alpha>n\left(\frac1p-\frac1q+1\right).
\end{equation}
Then  $\varphi(A)$ is $L^p$-$L^q$ bounded. 
Indeed, in this case the series in \eqref{EQ:varphi-Lizorkin-CG} is convergent in view of
$$
\left|
\varphi(\alpha_{t\,j})-\varphi(\alpha_{t+1\,j+1})
\right|\lesssim \|\varphi'\|_{L^{\infty}}
(\|\sigma_A(\pi^j)\|_{\op}+\|\sigma_A(\pi^{j+1})\|_{\op})\lesssim \frac{1}{\langle\pi^{j+1}\rangle^{\alpha}}\cong \frac{1}{j^{\alpha/n}}.
$$
\end{ex}

%\subsection{Proof of Theorem \ref{THM:varphi-Lizorkin-CG}}
\begin{proof}[Proof of Corollary \ref{THM:varphi-Lizorkin-CG}]
By the functional calculus for affiliated unbounded operators \cite[Proposition 4.2]{DNSZ2016}, we get
\begin{equation}
\label{EQ:varphi-A}
\varphi(A)
=
\bigoplus\limits^{\infty}_{j=1}d_{\pi^j}\sum\limits^{d_{\pi^j}}_{k=1}\varphi(\sigma_A(\pi^j)).
\end{equation}
Let us denote by $\alpha_{t,\pi^j}$ the singular values of $\sigma_A(\pi^j)$. 
From \eqref{EQ:varphi-A}, we get that the singular numbers $\beta_{t,\pi^j}$ of $\varphi(\sigma_A(\pi^j))$ are given by
$$
\{\beta_{t,\pi^j}\}^{d_{\pi^j}}_{t=1}
=
\{\varphi(\alpha_{1,\pi^j}),\varphi(\alpha_{2,\pi^j}),\ldots,\varphi(\alpha_{d_{\pi^j},\pi^j})\}
$$
if $\varphi$ is increasing, and by
$$
\{\beta_{t,\pi^j}\}^{d_{\pi^j}}_{t=1}
=
\{\varphi(\alpha_{\dpi,\pi}),\varphi(\alpha_{\dpi-1,\pi}),\ldots,\varphi(\alpha_{1,\pi})\}
$$
otherwise.
By definition \eqref{EQ:difference}, we get
\begin{equation}
\label{EQ:partial-differencce-varphi-A}
\widehat{\partial}\varphi(\sigma_A(\pi^j))
=
\diag(
\beta_{1,j}-\beta_{1,j+1}
,
\cdots
,
\beta_{k,j}-\beta_{k,j+1}
,
\cdots,
\beta_{d_{\pi^j}-1,j}-\beta_{d_{\pi^j},j+1},
\beta_{d_{\pi^j}}
).
\end{equation}
By Theorem \ref{THM:Lizorkin-CG} we have
\begin{equation}
\label{EQ:varphi-A-upper}
\|\varphi(A)\|_{L^p_{\inv}(G)\to L^q_{\inv}(G)}
\lesssim
\sup_{\pi\in\Gh}
\jp\pi^{\frac{n}{r}}\|\varphi(\sigma_A(\pi))\|_{\op}
+
\sum\limits_{\pi\in\Gh}
\jp\pi^{\frac{n}{r}}
\|\widehat{\partial}\varphi(\sigma_A(\pi))\|_{\op}.
\end{equation}
Combining \eqref{EQ:varphi-A-upper} and \eqref{EQ:partial-differencce-varphi-A} we obtain 
\begin{multline*}
\|\varphi(A)\|_{L^p_{\inv}(G)\to L^q_{\inv}(G)}
\\ \lesssim
\sup_{\pi\in\Gh}
\jp\pi^{\frac{n}{r}}
\sup_{t=1,\ldots,\dpi}
\left|\varphi(\alpha_{t\,\pi})\right|
+
\sum\limits_{\pi\in\Gh}
\jp\pi^{\frac{n}{r}}
\sup_{t=1,\ldots,\dpi}
\left|
\varphi(\alpha_{t\,\pi})-\varphi(\alpha_{t+1\,\pi})
\right|.
\end{multline*}
By using \eqref{EQ:sp-js} and the numbering of the representations as explained before Corollary \ref{THM:varphi-Lizorkin-CG}, it establishes \eqref{EQ:varphi-Lizorkin-CG} and completes the proof.
\end{proof}

%\subsection{Non-invariant operators}

Finally we note that as a corollary of Theorem \ref{THM:Lizorkin-CG} on compact Lie groups we get the boundedness result also for non-invariant operators. Indeed, a rather standard argument (see e.g. the proof of Theorem \ref{THM:Lpq-G}) immediately yields:

\begin{thm} 
\label{THM:Lizorkin-CG-noninvariant}
Let $G$ be a compact semisimple connected Lie group of dimension $n$. 
Let $2-\frac1{1+s}<p\leq q<2+\frac1s$ and suppose that $l>\frac{q}{n}$ is an integer. Let $A$ be a continuous linear operator on $C^{\infty}(G)$. Then we have
\begin{multline}
\|A\|_{L^p_w(G)\to L^q_w(G)}
 \lesssim
\sum\limits_{|\alpha|\leq l}
\sup_{u\in G}
\sup_{\pi\in\widehat{G}}
\jp\pi^{n(\frac1p-\frac1q)}
\|\partial^{\alpha}_u\sigma_A(u,\pi)\|_{\op}
\\ +\sum\limits_{|\alpha|\leq l}\sup_{u\in G}
\sum\limits_{\pi\in\widehat{G}}
\jp\pi^{n(\frac1p-\frac1q)}
\|\partial^{\alpha}_{u}\widehat{\partial}\sigma_A(u,\pi)\|_{\op}.
\end{multline}
\end{thm}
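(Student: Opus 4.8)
The plan is to deduce this non-invariant statement from the invariant Lizorkin theorem, Theorem~\ref{THM:Lizorkin-CG}, by freezing the spatial variable of the symbol and recovering the resulting loss through a Sobolev embedding on $G$; this is the same mechanism as in the proof of Theorem~\ref{THM:Lpq-G}. Recall that a continuous linear operator $A$ on $C^{\infty}(G)$ has a matrix symbol $\sigma_A(u,\pi)$, $u\in G$, $\pi\in\Gh$, with
\begin{equation*}
Af(u)=\sum\limits_{\pi\in\Gh}\dpi\Tr\left(\pi(u)\,\sigma_A(u,\pi)\,\widehat f(\pi)\right).
\end{equation*}
For a fixed $u\in G$ let $T_u$ be the \emph{left} Fourier multiplier on $G$ with symbol $\sigma_A(u,\cdot)$, and put $F(u,v):=(T_uf)(v)$, so that $Af(x)=F(x,x)$ and, for every multi-index $\alpha$, the map $v\mapsto \partial^{\alpha}_u F(u,v)$ is, for each fixed $u$, the value at $v$ of $B^{\alpha}_u f$, where $B^{\alpha}_u$ is the left Fourier multiplier with symbol $\partial^{\alpha}_u\sigma_A(u,\pi)$.

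First I would record two ingredients. Ingredient one is a Sobolev embedding on the compact manifold $G$: for $l$ as in the hypothesis one has $W^{l,q}(G)\hookrightarrow C(G)$, whence for each fixed $x\in G$,
\begin{equation*}
|F(x,x)|\le \sup_{u\in G}|F(u,x)|\lesssim \sum\limits_{|\alpha|\le l}\left(\int_G|\partial^{\alpha}_u F(u,x)|^q\,du\right)^{1/q}.
\end{equation*}
Ingredient two is Theorem~\ref{THM:Lizorkin-CG}, in the form \eqref{EQ:Lizorkin-CG-3}, applied for each $u\in G$ to the invariant operator $B^{\alpha}_u$; note that for the range of $p,q$ under consideration $L^p_w(G)=L^p_{\inv}(G)$ by Example~\ref{EX:compact-omega}, so the theorem is applicable and gives, with a constant depending only on $p,q,s$ and $G$ (hence uniform in $u$),
\begin{equation*}
\|B^{\alpha}_u f\|_{L^q_w(G)}\lesssim \Big(\sup_{\pi\in\Gh}\jp\pi^{n(\frac1p-\frac1q)}\|\partial^{\alpha}_u\sigma_A(u,\pi)\|_{\op}+\sum\limits_{\pi\in\Gh}\jp\pi^{n(\frac1p-\frac1q)}\|\widehat{\partial}\,\partial^{\alpha}_u\sigma_A(u,\pi)\|_{\op}\Big)\|f\|_{L^p_w(G)}.
\end{equation*}
Since $\widehat{\partial}$ acts only on the dual variable it may be interchanged with $\partial^{\alpha}_u$ here, up to the usual remark that the entries of $\widehat{\partial}$ are built from singular numbers, so that either ordering is in any case dominated by the corresponding operator norms of $\partial^{\alpha}_u\sigma_A(u,\pi)$ carrying the same $\pi$-decay.

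Then I would combine these: raising the Sobolev inequality to the power $q$, integrating in $x$ over $G$, and using that there are finitely many $\alpha$ with $|\alpha|\le l$ together with Fubini's theorem,
\begin{equation*}
\|Af\|^q_{L^q(G)}=\int_G|F(x,x)|^q\,dx\lesssim\sum\limits_{|\alpha|\le l}\int_G\Big(\int_G|\partial^{\alpha}_u F(u,x)|^q\,dx\Big)du=\sum\limits_{|\alpha|\le l}\int_G\|B^{\alpha}_u f\|^q_{L^q(G)}\,du.
\end{equation*}
Inserting ingredient two, taking the supremum over $u\in G$ inside each integral (the bound then no longer depends on $u$, and $G$ has finite Haar measure), and taking the $q$-th root using $(\sum_{\alpha}a_{\alpha}^q)^{1/q}\le\sum_{\alpha}a_{\alpha}$ for $q\ge1$, one obtains exactly the asserted inequality. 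The passage from $L^q,L^p$ to $L^q_w,L^p_w$ is then obtained by first running the whole argument for functions with bounded spectrum and passing to the limit, exactly as in the proof of Theorem~\ref{THM:Lizorkin-LCG}.

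I expect the real work here to be bookkeeping rather than a genuine obstacle: one must verify that for each fixed $u$ the field $\partial^{\alpha}_u\sigma_A(u,\cdot)$ is indeed the symbol of a left Fourier multiplier to which Theorem~\ref{THM:Lizorkin-CG} applies with a finite right-hand side, that the implied constant there is independent of $u$, that differentiation under the Fourier series (and the interchange of $\widehat{\partial}$ with $\partial^{\alpha}_u$) is legitimate, and — the subtlest point — that $Af\in L^q_w(G)$ whenever $f\in L^p_w(G)$, which is settled by the same bounded-spectrum approximation as in Section~\ref{SEC:Lizorkin} together with the continuity of $A$ on $C^{\infty}(G)$.
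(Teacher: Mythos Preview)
Your proposal is correct and follows essentially the same route the paper intends: the paper does not spell out a proof but says the result follows from Theorem~\ref{THM:Lizorkin-CG} by the standard argument of Theorem~\ref{THM:Lpq-G}, i.e.\ freeze the spatial variable to get a family of invariant operators, control the diagonal $F(x,x)$ by a Sobolev-type embedding in the frozen variable, swap the order of integration, and apply the invariant theorem uniformly in $u$. Your write-up makes this precise in exactly the expected way, including the observation that $L^p_w=L^p_{\inv}$ in this range via Example~\ref{EX:compact-omega}.
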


\section{H\"ormander's multiplier theorem on locally compact groups}
\label{SEC:Horm-LCG}

It is possible to refine Theorem \ref{THM:Lizorkin-LCG} for the range $1<p\leq 2\leq q<+\infty$. The statement that we prove can be viewed as a locally compact groups analogue of the H\"ormander $L^p$-$L^q$ multiplier theorem \cite[p. 106, Theorem 1.11]{Hormander:invariant-LP-Acta-1960}, however, because of the general setting of locally compact groups, the spectral rather than symbolic information is used. However, our statement in Theorem \ref{THM:upper-bound} implies both the H\"ormander theorem and the known results on compact Lie groups.

In the following statements, to unite the formulations,
we adopt the convention that the sum or the integral over an
empty set is zero, and that $0^{0}=0$.

\begin{thm}
\label{THM:upper-bound}
 Let $1<p\leq 2 \leq q<+\infty$ and suppose that $A$ is a Fourier multiplier on a locally compact separable unimodular  group $G$. Then we have
\begin{equation}
\label{EQ:LCG-FM-upper}
\|A\|_{L^p(G)\to L^q(G)}
\lesssim
\sup_{s>0}
s
\left[
\int\limits_{\substack{t\in\RR_+\colon \mu_t(A)\geq s}}dt
\right]^{\frac1p-\frac1q}.
\end{equation}
For $p=q=2$ inequality \eqref{EQ:LCG-FM-upper} is sharp, i.e.
\begin{equation}
\label{EQ:LCG-FM-upper:sharpness}
\|A\|_{L^2(G)\to L^2(G)}
=
\sup_{t\in\RR_+}\mu_t(A).
\end{equation}
Using the noncommutative Lorentz spaces $L^{r,\infty}$ 
with  $\frac1r=\frac1p-\frac1q$, $p\not=q$,
we can also write 
\eqref{EQ:LCG-FM-upper} as
\begin{equation}
\|A\|_{L^p(G)\to L^q(G)}
\lesssim
\|A\|_{L^{r,\infty}(VN_R(G))}.
\end{equation}
\end{thm}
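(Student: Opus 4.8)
The plan is to obtain \eqref{EQ:LCG-FM-upper} by interpolating two endpoint estimates for the sublinear map $A\mapsto \|A\|_{L^p(G)\to L^q(G)}$, viewed as a function of the ``symbol'' $A\in S(\VN_R(G))$ equipped with the noncommutative Lorentz scale, exactly as in H\"ormander's original argument on $\RR^n$ but using the Hausdorff--Young--Paley inequality from Theorem \ref{THM:HYP-LCG} in place of its Euclidean counterpart. First I would fix $f\in L^p(G)$ with $1<p\le 2$ and write $g:=Af\in L^q(G)$; since $A$ is affiliated with $\VN_R(G)$ it commutes with right convolutions, so $R_g = A R_f$ as $\tau$-measurable operators, and by \eqref{LEM:mu-t-A-properties-3} we get the pointwise bound $\mu_{t+s}(R_g)\le \mu_t(A)\mu_s(R_f)$, hence after the substitution $t\mapsto 2t$, $\mu_{2t}(R_g)\le \mu_t(A)\mu_t(R_f)$. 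The dual exponent satisfies $q'\le 2\le p'$, so by the Hausdorff--Young inequality \eqref{EQ:HY-LCG-2} applied ``backwards'' (duality of $L^q$ and $L^{q'}$, or rather the Hausdorff--Young for $g$: $\|g\|_{L^q(G)}\le \|R_g\|_{L^{q'}(\VN_R(G))}$), it suffices to estimate $\|R_g\|_{L^{q'}(\VN_R(G))} = \big(\int_0^\infty \mu_t(R_g)^{q'}dt\big)^{1/q'}$ from above by the right-hand side of \eqref{EQ:LCG-FM-upper} times $\|f\|_{L^p(G)}$.

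Next, I would insert the product bound: $\int_0^\infty \mu_{2t}(R_g)^{q'}dt \le \int_0^\infty \mu_t(A)^{q'}\mu_t(R_f)^{q'}\,dt$, and I would split the weight $\mu_t(A)^{q'}$ to bring in exactly the quantity controlled by the hypothesis. Write $\varphi(t):=\mu_t(A)$; the hypothesis $M:=\sup_{s>0}s\big(\int_{\varphi(t)\ge s}dt\big)^{1/r}<\infty$ with $\frac1r=\frac1p-\frac1q$ is, by Proposition \ref{PROP:mu-t-properties}\eqref{EQ:prop-16}, equivalent to $\|A\|_{L^{r,\infty}(\VN_R(G))}=\sup_{t>0}t^{1/r}\mu_t(A)<\infty$ (this identification is the bridge to the Lorentz-norm reformulation). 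Now apply Theorem \ref{THM:HYP-LCG} to the function $\psi(t):=\varphi(t)^{r}/M_\psi^{?}$—more precisely, choose $b=q'$ (which indeed satisfies $p\le q'\le p'$ since $p\le 2\le q$), and apply the Hausdorff--Young--Paley inequality \eqref{EQ:HYP-LCG} with the weight function taken to be $t\mapsto \varphi(t)^{\,r}$ up to the correct normalization constant: condition \eqref{EQ:weak_symbol_estimate2} for this weight is precisely $\sup_{s}s\,|\{t:\varphi(t)^r\ge s\}| = \sup_{s}s^{1/r}|\{t:\varphi(t)\ge s\}| \cdot (\text{const})<\infty$, i.e. finiteness of $M^r$. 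The exponent $\frac1b-\frac1{p'}=\frac1{q'}-\frac1{p'}=\frac1p-\frac1q=\frac1r$ matches, so \eqref{EQ:HYP-LCG} yields $\big(\int_0^\infty (\mu_t(R_f)\,\varphi(t))^{q'}dt\big)^{1/q'}\lesssim \big(M^r\big)^{1/r}\|f\|_{L^p(G)} = M\,\|f\|_{L^p(G)}$, which is exactly the desired bound once we substitute back and invoke $M\simeq \|A\|_{L^{r,\infty}(\VN_R(G))}$.

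For the sharpness claim \eqref{EQ:LCG-FM-upper:sharpness} at $p=q=2$, the right-hand side of \eqref{EQ:LCG-FM-upper} with $\frac1p-\frac1q=0$ and the convention $0^0=0$ collapses (for $s$ in the range $0<s\le \|A\|$) to $\sup_{s>0}s\cdot\mathbf 1_{\{d_s(A)>0\}} = \sup\{s: d_s(A)>0\}=\mu_0(A)=\|A\|_{\VN_R(G)}$; on the other hand, for a Fourier multiplier $A\in\VN_R(G)$ (or affiliated and bounded), $\|A\|_{L^2(G)\to L^2(G)}=\|A\|_{B(L^2(G))}=\|A\|_{\VN_R(G)}=\sup_{t>0}\mu_t(A)$ by Lemma \ref{LEM:mu-t-A-properties}(1) (the limit $\lim_{t\to0}\mu_t(A)=\|A\|$), giving equality. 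The last displayed reformulation is then immediate from the identity $M_\varphi^{1/r}$-versus-$\|A\|_{L^{r,\infty}}$ noted above via Proposition \ref{PROP:mu-t-properties}\eqref{EQ:prop-16}.

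The main obstacle I anticipate is bookkeeping the exponents and the normalization constant when applying Theorem \ref{THM:HYP-LCG}: one must verify carefully that $b=q'$ lies in the admissible range $[p,p']$ (it does, precisely because $1<p\le 2\le q<\infty$ forces $p\le q'\le 2\le p'$), that the auxiliary weight $\varphi(t)^r$ satisfies the Paley-type smallness condition \eqref{EQ:weak_symbol_estimate2} \emph{with the right finite bound} expressible through $M$, and that the resulting power $M^{(1/b-1/p')}$ simplifies to a single power of $M$ (equivalently of $\|A\|_{L^{r,\infty}(\VN_R(G))}$). The other point requiring a small argument, rather than a computation, is the passage $R_{Af}=AR_f$, which relies on $A$ being left-invariant (Remark \ref{REM:FM-affiliation}) together with the fact that right convolution operators generate $\VN_R(G)$, so that a bounded—or suitably approximated $\tau$-measurable—$A$ acts on $R_f$ by right multiplication; combined with \eqref{EQ:mu-t-A-properties-3} this is what converts the operator-composition estimate into the scalar inequality on singular values that drives the whole proof.
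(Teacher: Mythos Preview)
Your approach is essentially the paper's: dualise Hausdorff--Young to control $\|Af\|_{L^q}$ by $\|R_{Af}\|_{L^{q'}(\VN_R(G))}$, use $R_{Af}=AR_f$ together with the submultiplicativity of $\mu_t$, and then apply the Hausdorff--Young--Paley inequality (Theorem~\ref{THM:HYP-LCG}) with $b=q'$ and weight $\varphi(t)=\mu_t(A)^r$. There is, however, one genuine gap.

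You assert that $b=q'$ lies in the admissible range $[p,p']$ ``precisely because $1<p\le 2\le q<\infty$ forces $p\le q'\le 2\le p'$''. This is false: the hypotheses give $q'\le 2\le p'$, but \emph{not} $p\le q'$. For example $p=3/2$, $q=10$ yields $q'=10/9<p$. Since Theorem~\ref{THM:HYP-LCG} requires $p\le b\le p'$, you cannot take $b=q'$ without an additional reduction. The paper handles this at the very start by a duality step: because $S(\VN_R(G))$ is closed under adjoints, one has $\|A\|_{L^p\to L^q}=\|A^*\|_{L^{q'}\to L^{p'}}$, while $\mu_t(A^*)=\mu_t(A)$ by \eqref{EQ:muadj}, so the right-hand side of \eqref{EQ:LCG-FM-upper} is unchanged. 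Hence one may assume $p\le q'$ without loss of generality (if not, replace $(p,q,A)$ by $(q',p',A^*)$). After inserting this reduction, your argument is correct and coincides with the paper's proof.
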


%\subsection{Proof of Theorem \ref{THM:upper-bound}}
We recall Definition \ref{DEF:Lorenz-spaces} for the noncommutative Lorentz spaces.
%We now prove our $L^{p}$-$L^{q}$ Fourier multiplier theorem.
\begin{rem} 
\label{REM:Lebesgue-implies-Lorentz}
We notice that inequality \eqref{EQ:LCG-FM-upper} holds true for $L^{p\theta}-L^{q\theta}$-Fourier multipliers
\label{REM:extension-Lorenz}
\begin{equation}
\|A\|_{L^{p\theta}(G)\to L^{q\theta}(G)}
\leq
\|A\|_{L^{r\,\infty}(\VN_R(G))},\quad 1\leq \theta <\infty.
\end{equation}
\end{rem}
\begin{proof}[Proof of Remark \ref{REM:extension-Lorenz}]
Let us assume $p<2<q$ and fix $p_0,p_1,q_0,q_1$ such that
\begin{eqnarray}
p_0<p<p_1,\quad q_0<q<q_1,
\\
p_0<2<q_0,\quad p_1<2<q_1.
\end{eqnarray}
Applying inequality \eqref{EQ:LCG-FM-upper} for $p=p_0,\quad q=q_0$ and $p=p_1,\quad q=q_1$, we get
%Combining inequalities \eqref{EQ:step-1-HY} and \eqref{EQ:step-2-Paley} we get
\begin{equation}
\|Af\|_{L^{q_i}(G)}
\leq
\sup_{s>0}s\left(\int\limits_{\substack{t\in\RR_+\\ \mu_t(A)\geq s}}dt\right)^{\frac1{p_i}-\frac1{q_i}}
\|f\|_{L^{p_i}(G)},\quad i=0,1.
\end{equation}
A standard interpolation argument yields
\begin{equation}
\|A\|_{L^{p\theta}(G)\to L^{q\theta}(G)}
\leq
\|A\|^{1-\theta}_{L^{r_0\,\infty}(\VN_R(G))}
\|A\|^{\theta}_{L^{r_1\,\infty}(\VN_R(G))}.
\end{equation}
We show that
$$
\|A\|^{1-\theta}_{L^{r_0\,\infty}(\VN_R(G))}
\|A\|^{\theta}_{L^{r_1\,\infty}(\VN_R(G))}.
\leq
\|A\|_{L^{r\,\infty}(\VN_R(G))},
$$
where $\frac1r_i=\frac1{p_i}-\frac1{q_i}$ and $\frac1r=\frac{1-\theta}{r_0}+\frac{\theta}{r_1}$.
Let us recall that
$$
\|A\|_{L^{r_0\,\infty}(\VN_R(G))}
=
\sup_{t>0}t^{\frac1{r_0}}\mu_t(A).
$$
Direct calculations yield that
$$
\left(\sup_{t>0}t^{\frac1{r_0}}\mu_t(A)\right)^{1-\theta}
\left(\sup_{t>0}t^{\frac1{r_1}}\mu_t(A)\right)^{\theta}
\leq
\sup_{t>0}t^{\frac1{r}}\mu_t(A).
$$
This completes the proof.
%\begin{eqnarray*}
%&\|Af\colon (L^{q_0},L^{q_1})_{\theta,\tau}\|
%\leq
%\\
%&
%\left[\sup_{s>0}s\left(\,\int\limits_{\substack{t\in\RR_+\\ \mu_t(A)\geq s}}dt\right)^{\frac1{p_0}-\frac1{q_0}}\right]^{1-\theta}
%\left[\sup_{s>0}s\left(\,\int\limits_{\substack{t\in\RR_+\\ \mu_t(A)\geq s}}dt\right)^{\frac1{p_1}-\frac1{q_1}}\right]^{\theta}\times \\ &
%\times \|f\colon (L^{p_0}(G),L^{p_1}(G))_{\theta,\tau}\|.&
%\end{eqnarray*}
%By the reiteration theorem
%\begin{eqnarray}
%\|Af\|_{L^{q,\tau}(G)}
%=
%\|Af\colon (L^{q_0},L^{q_1})_{\theta,\tau}\|,
%\\
%\|f\|_{L^{p,\tau}(G)}
%=
%\|f\colon (L^{p_0}(G),L^{p_1}(G))_{\theta,\tau}\|.
%\end{eqnarray}
%It can be seen that
%\begin{equation}
%(1-\theta)(\frac1{p_0}-\frac1{q_0})
%+
%\theta(\frac1{p_1}-\frac1{q_1})
%=
%\frac1p-\frac1q,
%\end{equation}
%hence we obtain
%\begin{equation}
%\|Af\|_{L^{q,\tau}(G)}
%\leq
%\sup s\left(\int\limits_{\substack{t\in\RR_+\\ \mu_t(A)\geq s}}dt\right)^{\frac1{p}-\frac1{q}}
%\|f\|_{L^{p,\tau}(G)}. 
%\end{equation}
This completes the proof.
\end{proof}
\begin{proof}[Proof of Theorem \ref{THM:upper-bound}]
Since the algebra $S(\VN_R(G))$ of left Fourier multipliers $A$ is closed under taking the adjoint $S(\VN_R(G))\ni A \mapsto A^*\in S(\VN_R(G))$ (see \cite[Theorem 4, p. 412]{Segal1953} or \cite[Theorem 28 on p. 4]{Terp1981}),
and 
\begin{equation}
\|A\|_{L^{p}(G)\to L^q(G)}
=
\|A^*\|_{L^{{q'}}(G)\to L^{p'}(G)},
\end{equation}
we may assume that $p\leq q'$, for otherwise we have $q'\leq (p')'=p$ and use 
\eqref{EQ:muadj} ensuring that $\mu_t(A^*)=\mu_t(A)$. When $f\in L^p(G)$, dualising the Hausdorff-Young inequality \eqref{EQ:HY-LCG-2} gives, since $q'\leq 2$,
\begin{equation}
\|Af\|_{L^q(G)}
\leq
\left(
\int\limits^{+\infty}_0 [\mu_t(R_{Af})]^{q'}dt
\right)^{\frac1{q'}}.
\end{equation}
%
%In order for the group convolution to be associative, we may 
%restrict to the Bruhat-Schwartz space $\mathcal{D}(G)$. This space and its properties
%have been developed by Bruhat in \cite{Bruhat1961} as an analogue of the the space of
%compactly supported smooth functions in the setting of locally compact groups.
%Similar to the usual space of test functions, this space is dense in the $L^{p}$ spaces
%for $1\leq p<\infty$ and in its dual space of distributions in the corresponding topologies,
%so there is no loss of generality restricting the operators to such spaces since then
%the required estimates follow by the density and standard approximation argument.
%We also refer to \cite{MR2015} 	for a brief account of the construction of this space
%and its main properties.
%\smallskip
%
%By the Schwartz kernel theorem in Bruhat-Schwartz spaces
%\cite[Corollaire 2, p. 56]{Bruhat1961} it follows that the operator $A$ has
%the integral kernel. Consequently, the left-invariance (see Remark
%\ref{REM:FM-affiliation}) implies that $A$ is a convolution operator.
%
%\smallskip
%The associativity of the convolution \cite[Sec. 6, page 56]{Bruhat1961} yields
By the left-invariance of $A$ (e.g. \cite[Proposition 3.1 on page 31]{Terp1980}) we have
\begin{equation*}
R_{Af}=AR_{f},\quad f\in L^2(G).
\end{equation*} 

By our assumptions, $A$ and $R_{f}$ are measurable with respect to ${\VN}_R(G)$. This makes it possible to apply Lemma \ref{LEM:mu-t-A-properties} to obtain the estimate
\begin{equation}
\mu_t(R_{Af})
=
\mu_t(AR_{f})
\leq
\mu_t(A)
\mu_t(R_{f}).
\end{equation}
Thus, we obtain
\begin{equation}
\label{EQ:step-1-HY}
\|Af\|_{L^q(G)}
\leq
\left(
\int\limits^{+\infty}_0 [\mu_t(A)\mu_t(R_{f})]^{q'}dt
\right)^{\frac1{q'}}.
\end{equation}

Now, we are in a position to apply the Hausdorff-Young-Paley inequality in Theorem \ref{THM:HYP-LCG}. 
With 
$\varphi(t)=\mu_t(A)^{r}$ for $\frac1r=\frac1p-\frac1q$,
the assumptions of Theorem \ref{THM:HYP-LCG} are then satisfied, and since $\frac1{q'}-\frac1{p'}=\frac1p-\frac1q=\frac1r$, we obtain
\begin{equation}
\label{EQ:step-2-Paley}
\left(
\int\limits^{+\infty}_0
[\mu_t(R_{f})\mu_t(A)]^{q'}dt
\right)^{\frac1{q'}}
\leq
\sup_{s>0}
\left[
s
\int\limits_{\substack{t\in\RR_+\\ \mu_t(A)^r\geq s}}dt
\right]^{\frac1r}
\|f\|_{L^p(G)}.
\end{equation}
Further, it can be easily checked that
\begin{equation}
\left(
\sup_{s>0}
s
\int\limits_{\substack{t\in\RR_+ \\ \mu_t(A)^r\geq s}}dt
\right)^{\frac1r}
=
\left(
\sup_{s>0}
s^r
\int\limits_{\substack{t\in\RR_+ \\ \mu_t(A)\geq s}}dt
\right)^{\frac1r}
=
\sup_{s>0}
s
\left(
\int\limits_{\substack{t\in\RR_+ \\ \mu_t(A)\geq s}}dt
\right)^{\frac1r}.
\end{equation}
Thus, we have established inequality \eqref{EQ:LCG-FM-upper}. This completes the proof.
\end{proof}

\subsection{The case of $\RR^n$}

Here we relate the statement of Theorem \ref{THM:upper-bound} to the classical H\"ormander theorem.

\begin{rem} 
\label{REM:AR-H}
As a special case with $G=\RR^n$, Theorem \ref{THM:upper-bound} implies the H\"ormander multiplier estimate \eqref{EQ:Hormander-estimate} established in \cite[p. 106, Theorem 1.11]{Hormander:invariant-LP-Acta-1960}, and we have
\begin{equation}
\|A\|_{L^{r,\infty}(\VN_R(\RR^n))}
=
\|\sigma_A\|_{L^{r,\infty}(\RR^n)}.
\end{equation}
\end{rem}

\begin{proof}[Proof of Remark \ref{REM:AR-H}]
Indeed, we identify the algebra $\VN_R(\RR^n)$ via the Fourier transform $\FT_{\RR^n}$ with the algebra $Z=\{M_{\varphi}\}_{\varphi\in L^{\infty}(\RR^n)}$ of the multiplication operators 
$$
M_{\varphi}\colon L^2(\widehat{\RR}^n)\colon h \mapsto M_{\varphi}h=\varphi h\in L^2(\widehat{\RR}^n),
$$
see Example \ref{EX:abelian-measurability}.
Given an element $A$ of $\VN_R(\RR^n)$ which acts on $L^2(\RR^n)$ by the convolution with its convolution kernel $K_A$,
$$
A\colon L^2(\RR^n)\ni f \mapsto Af=K_A\ast f,
$$
we associate with $A$ the multiplication operator $M_{\sigma_A}$ acting on 
$L^2(\widehat{\RR}^n)$ 
via the multiplication by the symbol 
$\sigma_A=\widehat{K_A}$,
$$
M_{\sigma_A}
\colon 
L^2(\widehat{\RR}^n)
\ni \widehat{f} 
\mapsto 
M_{\sigma_A}\widehat{f}
=
\sigma_A(\xi)\widehat{f}(\xi)\in L^2(\widehat{\RR}^n).
$$
Then by Example \ref{EX:mu-t} with $\varphi(\xi)=\sigma_A(\xi)$, we get
\begin{equation}
\mu_t(M_{\sigma_A})
=
\sigma_{A}^*(t).
\end{equation}
Thus, Definition \ref{DEF:Lorenz-spaces} on the noncommutative Lorentz spaces yields
\begin{multline}
\|M_{\sigma_A}\|_{L^{r,\infty}(\VN_R(G))}
=
\sup_{t>0}t^{\frac1r}\mu_t(A) 
=
\sup_{t>0}t^{\frac1r}\sigma_A^*(t) \\
=
\sup_{t>0}s[d_{M_{\sigma_A}(s)}]^{\frac1p-\frac1q}
=
\sup_{t>0}
s
\left(
\int\limits_{\substack{\xi\in\RR^n\\ |\sigma_A(\xi)|\geq s}}d\xi
\right)^{\frac1p-\frac1q},
\end{multline}
where in the equality between the first and the second lines we used 
Proposition \ref{PROP:mu-t-properties} with $M=\VN_R(\RR^n)$.
This completes the proof.
\end{proof}

\subsection{The case of compact Lie groups}

In this section we compare Theorem \ref{THM:upper-bound} with known results in the case of $G$ being a compact Lie group.
The global symbolic calculus for operators $A$ acting on compact Lie groups has been introduced and consistently developed in \cite{Ruzhansky+Turunen-IMRN,RT}, to which we refer to
further details on global matrix symbols on compact Lie groups.
Here we also note that with this matrix global symbol, the Fourier multiplier $A$ must act by
multiplication on the Fourier transform side
$$\widehat{Af}(\xi)=\sigma_{A}(\xi)\widehat{f}(\xi), \;\xi\in\Gh,$$
where $\widehat{f}(\xi)=\int_G f(x)\xi(x)^* dx$ is the Fourier coefficient of $f$ at the representation $\xi\in\Gh$, where for simplicity we may identify $\xi$ with its equivalence class. 
As we have mentioned in \eqref{EQ:comp}, the $L^p$-$L^q$ boundedness of Fourier multipliers on compact Lie groups can be controlled by its symbol $\sigma_A(\xi)$. However,
Theorem \ref{THM:upper-bound} gives a better
result than the known estimate \eqref{EQ:comp}; for completeness we recall the exact statement:

\begin{thm}[\cite{ANRNotes2016}]%[{{\cite[Theorem 3.1]{ANR2015}}}]
\label{THM:cmp}
Let $1<p\leq 2 \leq q <\infty$ and suppose that $A$ is a Fourier multiplier on the compact Lie group $G$. Then we have
\begin{equation}
\label{EQ:CG-FM-upper}
\|A\|_{L^p(G)\to L^q(G)}
\lesssim
\sup_{\substack{s\geq 0}}
s
\left(
\sum\limits_{\substack{\xi\in\Gh\colon \|\sigma_A(\xi)\|_{\op}\geq s}}
d^{2}_{\xi}
\right)^{\frac1p-\frac1q},
\end{equation}
where $\sigma_A(\xi)=\xi^*(g)A\xi(g)\big|_{g=e}\in {\mathbb C}^{d_{\xi}\times d_{\xi}}$ 
is the matrix symbol of $A$.
\end{thm}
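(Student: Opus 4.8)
The plan is to deduce Theorem \ref{THM:cmp} directly from Theorem \ref{THM:upper-bound} by computing the right-hand side of \eqref{EQ:LCG-FM-upper} explicitly in the compact Lie group setting. Recall that for a compact Lie group $G$ the right group von Neumann algebra $\VN_R(G)$ decomposes with respect to its center as a direct sum $\bigoplus_{\xi\in\Gh} d_\xi \cdot (\C^{d_\xi\times d_\xi})$, and a Fourier multiplier $A$ corresponds to the field of matrices $\{\sigma_A(\xi)\}_{\xi\in\Gh}$, with the canonical trace given by $\tau(A)=\sum_{\xi\in\Gh} d_\xi \Tr(\sigma_A(\xi))$ on the positive part. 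Hence the distribution function of $|A|$ is
\begin{equation*}
d_\lambda(A)=\tau(E_{(\lambda,+\infty)}(|A|))
=\sum_{\xi\in\Gh}\; d_\xi \cdot \#\{j : s_j(\sigma_A(\xi))>\lambda\},
\end{equation*}
where $s_1(\sigma_A(\xi))\geq s_2(\sigma_A(\xi))\geq\cdots$ are the singular values of $\sigma_A(\xi)$ (each counted once, but weighted by the outer factor $d_\xi$ coming from the multiplicity in the direct sum).

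First I would make this bookkeeping precise: the key point is that $\|\sigma_A(\xi)\|_{\op}=s_1(\sigma_A(\xi))$, so the "crude" counting $\sum_{\|\sigma_A(\xi)\|_{\op}\geq s} d_\xi^2$ appearing in \eqref{EQ:comp}/\eqref{EQ:CG-FM-upper} dominates $d_\lambda(A)$: indeed for each $\xi$ with $\|\sigma_A(\xi)\|_{\op}\geq s$ we have $\#\{j: s_j(\sigma_A(\xi))\geq s\}\leq d_\xi$, and for $\xi$ with $\|\sigma_A(\xi)\|_{\op}<s$ the contribution to $d_{s}(A)$ is zero. Therefore
\begin{equation*}
d_s(A)=\sum_{\xi\in\Gh} d_\xi\,\#\{j:s_j(\sigma_A(\xi))>s\}
\;\leq\; \sum_{\xi\in\Gh: \|\sigma_A(\xi)\|_{\op}> s} d_\xi^2.
\end{equation*}
Next, using property \eqref{EQ:prop-16} of Proposition \ref{PROP:mu-t-properties} (with $\alpha=\frac1p-\frac1q$) we rewrite the right-hand side of \eqref{EQ:LCG-FM-upper} as
\begin{equation*}
\sup_{s>0} s\left[\int_{t: \mu_t(A)\geq s} dt\right]^{\frac1p-\frac1q}
=\sup_{s>0} s\,\bigl[d_s(A)\bigr]^{\frac1p-\frac1q}
\end{equation*}
(modulo the harmless distinction between $\geq$ and $>$ in the distribution function, handled by right-continuity as in Section \ref{SEC:traces}). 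Combining the last two displays with the monotonicity of $x\mapsto x^{\frac1p-\frac1q}$ (valid since $p\leq q$, so the exponent is nonnegative) gives
\begin{equation*}
\sup_{s>0} s\,\bigl[d_s(A)\bigr]^{\frac1p-\frac1q}
\;\leq\;
\sup_{s>0} s\left(\sum_{\xi\in\Gh: \|\sigma_A(\xi)\|_{\op}\geq s} d_\xi^2\right)^{\frac1p-\frac1q},
\end{equation*}
and feeding this into Theorem \ref{THM:upper-bound} yields \eqref{EQ:CG-FM-upper}.

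The main obstacle I anticipate is not conceptual but notational: carefully justifying the identification of $\mu_t(A)$ with the decreasing rearrangement of the (multiplicity-weighted) family of all singular values $\{s_j(\sigma_A(\xi))\}$, including the precise role of the factors $d_\xi$ — one factor of $d_\xi$ from the direct-sum multiplicity in the von Neumann algebra structure, which is exactly what turns $d_\xi$ (number of singular values of $\sigma_A(\xi)$) into $d_\xi^2$ in the trace. This requires invoking the central decomposition of $\tau$ from Section \ref{SEC:traces} (the Example with $\VN_R(G)$ and the trace $\tau(A)=\|K_{A^{1/2}}\|_{L^2(G)}^2$, which for compact $G$ restricts to $\sum_\xi d_\xi \Tr(\sigma_A(\xi))$ on $M_+$) together with Example \ref{EX:mu-t}-type computations of $\mu_t$ for direct sums. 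Once that dictionary is in place, the inequality $d_s(A)\leq \sum_{\|\sigma_A(\xi)\|_{\op}\geq s} d_\xi^2$ is immediate, and the deduction of Theorem \ref{THM:cmp} from Theorem \ref{THM:upper-bound} follows. It is also worth remarking that this shows Theorem \ref{THM:upper-bound} is genuinely sharper than \eqref{EQ:CG-FM-upper}, since the bound keeps track of \emph{all} the singular values of each $\sigma_A(\xi)$ rather than only the largest one.
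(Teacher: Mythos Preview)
Your proposal is correct and follows essentially the same route as the paper: the paper deduces Theorem \ref{THM:cmp} from Theorem \ref{THM:upper-bound} via Proposition \ref{PROP:comparison}, whose proof (through Claim \ref{claim}) establishes exactly the formula $d_A(s)=\sum_{\xi\in\Gh} d_\xi\,\#\{n:s_{n,\xi}\geq s\}$ and the bound $d_A(s)\leq\sum_{\|\sigma_A(\xi)\|_{\op}\geq s} d_\xi^2$ that you outline, and then invokes \eqref{EQ:prop-16} just as you do. The paper spends more effort on the ``dictionary'' you flag as the main obstacle---using the Peter--Weyl decomposition and the type~I structure of $\VN_R(G)$ to justify the trace formula for spectral projections---but your identification of what needs to be checked is accurate.
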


%\subsection{Proposition \ref{PROP:comparison}}

The fact that Theorem \ref{THM:upper-bound} implies Theorem \ref{THM:cmp}
follows from the following result relating the noncommutative Lorentz norm to the global symbol
of invariant operators in the context of compact Lie groups:
\begin{prop} 
\label{PROP:comparison}
Let $1<p\leq 2 \leq q <\infty$ and let $p\not=q$ and $\frac1r=\frac1p-\frac1q$. Suppose $G$ is a compact Lie group and $A$ is a Fourier multiplier on $G$. Then we have
\begin{equation}
\label{EQ:comparison}
\|A\|_{L^{r,\infty}(VN_R(G))}
\leq
\sup_{s\geq 0}
s
\left(
\sum\limits_{\substack{\xi\in\Gh \\ \|\sigma_A(\xi)\|_{\op}\geq s}}d^2_{\xi}
\right)^{\frac1p-\frac1q},
\end{equation}
where $\sigma_A(\xi)=\xi^*(g)A\xi(g)\big|_{g=e}\in {\mathbb C}^{d_{\xi}\times d_{\xi}}$ 
is the matrix symbol of $A$.
\end{prop}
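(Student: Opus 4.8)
\textbf{Proof proposal for Proposition \ref{PROP:comparison}.}
The plan is to compute the left-hand side directly from Definition \ref{DEF:Lorenz-spaces} together with Proposition \ref{PROP:mu-t-properties}, and then to bound the resulting quantity by the symbolic expression on the right. First I would invoke Proposition \ref{PROP:mu-t-properties}, specifically \eqref{EQ:prop-16} with $\alpha=\frac1r=\frac1p-\frac1q$, to rewrite
\[
\|A\|_{L^{r,\infty}(\VN_R(G))}
=
\sup_{t>0}t^{\frac1r}\mu_t(A)
=
\sup_{s>0}s\,[d_A(s)]^{\frac1r},
\]
so that the task reduces to estimating the noncommutative distribution function $d_A(s)=\tau(E_{(s,+\infty)}(|A|))$ by the symbolic counting function $\sum_{\|\sigma_A(\xi)\|_{\op}\geq s}d_\xi^2$.

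The second step is to identify $d_A(s)$ in the compact Lie group setting. Here one uses the central decomposition $\VN_R(G)=\bigoplus\int_{\Gh} M_\xi\, d\xi$ (discrete sum over $\Gh$) with $\tau=\bigoplus\int \tau_\xi$, where on each block $M_\xi\cong \C^{d_\xi\times d_\xi}$ and $\tau_\xi=d_\xi\Tr$; the operator $A$ acts block-diagonally as $\sigma_A(\xi)$. Then $|A|$ has block $|\sigma_A(\xi)|$, and the spectral projection $E_{(s,+\infty)}(|A|)$ restricted to the $\xi$-block is the spectral projection of $|\sigma_A(\xi)|$ onto $(s,+\infty)$, whose trace under $\tau_\xi$ equals $d_\xi\cdot\#\{k:\mu_k(\sigma_A(\xi))>s\}\leq d_\xi^2\cdot\mathbf 1_{\{\|\sigma_A(\xi)\|_{\op}>s\}}$, since $\|\sigma_A(\xi)\|_{\op}=\mu_1(\sigma_A(\xi))$ is the largest singular value and there are at most $d_\xi$ of them. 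Summing over $\xi\in\Gh$ gives
\[
d_A(s)=\tau(E_{(s,+\infty)}(|A|))\leq \sum_{\substack{\xi\in\Gh\\ \|\sigma_A(\xi)\|_{\op}> s}}d_\xi^2
\leq \sum_{\substack{\xi\in\Gh\\ \|\sigma_A(\xi)\|_{\op}\geq s}}d_\xi^2.
\]

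Combining the two steps, for every $s>0$ we get $s\,[d_A(s)]^{\frac1r}\leq s\big(\sum_{\|\sigma_A(\xi)\|_{\op}\geq s}d_\xi^2\big)^{\frac1p-\frac1q}$, and taking the supremum over $s>0$ yields \eqref{EQ:comparison}. I would then remark that Theorem \ref{THM:cmp} follows immediately by chaining Theorem \ref{THM:upper-bound} with this proposition, and that in fact the inequality in \eqref{EQ:comparison} can be strict (so Theorem \ref{THM:upper-bound} is genuinely sharper), because replacing the full block contribution $d_\xi^2$ by the true count $d_\xi\cdot\#\{k:\mu_k(\sigma_A(\xi))>s\}$ of eigenvalues above $s$ can be much smaller when $\sigma_A(\xi)$ has many small singular values.

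The main obstacle I anticipate is making the central/direct-integral decomposition of $(\VN_R(G),\tau)$ for a general compact Lie group fully rigorous at the level of the trace and the spectral projections — i.e. justifying that $\tau$ decomposes as $\bigoplus\int_{\Gh} d_\xi\Tr\, d\xi$ and that spectral projections of affiliated operators decompose blockwise. This is essentially the content already recorded in Section \ref{SEC:traces} (the reduction theory and the Example there identifying $\mathcal C=\VN_R(G)\cap\VN_R(G)^!$ with $\Gh$), so the argument should invoke those facts rather than reprove them; the remaining computation with singular values of finite matrices is elementary.
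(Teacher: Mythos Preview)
Your proposal is correct and follows essentially the same route as the paper's proof: both start from \eqref{EQ:prop-16} to pass from $\sup_t t^{1/r}\mu_t(A)$ to $\sup_s s[d_A(s)]^{1/r}$, then compute $d_A(s)$ blockwise over $\Gh$ and use the elementary bound $d_\xi\cdot\#\{k:\mu_k(\sigma_A(\xi))>s\}\le d_\xi^2\,\mathbf 1_{\{\|\sigma_A(\xi)\|_{\op}\ge s\}}$. The only difference is presentational: the paper spells out the block decomposition of $|A|$ and its spectral projections via the Peter--Weyl theorem in some detail (the ``Claim'' inside the proof), whereas you invoke the reduction theory of Section~\ref{SEC:traces} more abstractly; both arrive at the same formula $d_A(s)=\sum_{\xi}d_\xi\sum_{k:\,s_{k,\xi}\ge s}1$ and the same final estimate.
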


\begin{rem} 
\label{REM:tau-measurability-CG}
If $G$ is a compact Lie group,
the sufficient condition \eqref{EQ:CG-FM-upper} on the Fourier multiplier $A$ implies $\tau$-measurability of $A$ with respect to $\VN_R(G)$, so we do not need to assume it explicitly in the setting of compact Lie groups.
Indeed, the condition of $\tau$-measurability does not arise in the setting of compact Lie groups due to the fact \cite[Proposition 21, p. 16]{Terp1981} that
\begin{equation*}
A \text{ is $\tau$-measurable with respect to $M$} 
\end{equation*}
if and only if
\begin{equation}
\label{EQ:d_s-goes-to-zero}
\lim_{\lambda\to+\infty}d_{\lambda}(A)=0.
\end{equation}
Now, if the right hand side of \eqref{EQ:comparison} is finite, the latter condition holds.
Indeed, by Definition \ref{DEF:Lorenz-spaces}  we get
\begin{align}
\begin{split}
&\sup_{s>0}s[d_{s}(A)]^{\frac1r}
=
\sup_{t>0}t^{\frac1r}\mu_t(A)
\\
&=
\|A\|_{L^{r,\infty}(VN_L(G))}
\leq
\sup_{s>0}
s
\left(
\sum\limits_{\substack{\xi\in\Gh \\ \|\sigma_A(\xi)\|\geq s}}d^2_{\xi}
\right)^{\frac1p-\frac1q}
<+\infty,
\end{split}
\end{align}
where in the first equality we used \eqref{EQ:prop-16} with $\alpha=\frac1r$ from Proposition \ref{PROP:mu-t-properties}.
Thus, we have
\begin{equation}
d_{s}(A)
\leq
\frac{C}{s^r}.
\end{equation}
As a consequence, we obtain \eqref{EQ:d_s-goes-to-zero}.
This completes the proof.
\end{rem}

%\subsection{Proof of Proposition \ref{PROP:comparison}}
\begin{proof}[Proof of Proposition \ref{PROP:comparison}]
We first compute the norm $\|A\|_{L^{r,\infty}(VN_R(G))}$
with $\frac1r=\frac1p-\frac1q$, $p\not=q$.
By definition, we have
\begin{equation}
\label{EQ:def}
\|A\|_{L^{r,\infty}(VN_R(G))}
=
\sup_{t>0}t^{\frac1p-\frac1q}\mu_A(t).
\end{equation}
The application of the property \eqref{EQ:prop-16} from Proposition \ref{PROP:mu-t-properties} yields
\begin{equation*}
\sup_{t>0}t^{\frac1r}\mu_A(t)
=
\sup_{s>0}s[d_A(s)]^{\frac1p-\frac1q}.
\end{equation*}
Therefore, it is sufficient to show that
\begin{equation}
\label{EQ:sufficient-ff}
\sup_{s>0}s[d_A(s)]^{\frac1p-\frac1q}
\leq
\sup_{s>0}
s
\left(
\sum\limits_{\substack{\xi\in\Gh \\ \|\sigma_A(\xi)\|\geq s}}d^2_{\xi}
\right)^{\frac1p-\frac1q}.
\end{equation}
The polar decomposition for arbitrary closed densely defined possibly unbounded operators $A$ acting on a Hilbert space $\H$ has been established in \cite{Neumann1932}.
Thus, we apply  \cite[page 307, Theorem 7]
{Neumann1932} to get
\begin{equation}
\label{EQ:polar-decomposition}
A=W|A|,
\end{equation}
where $W$ is a partial isometry.
This means that the operators $W^*W$ and $WW^*$ are projections in $\H$.
If  $A$ is a left Fourier multiplier, then its modulus $|A|$ is affiliated with $\VN_R(G)$ as well:
\begin{lem}[{{\cite[p. 33, Lemma 4.4.1]{RO1936}}}] 
Let $M$ be a von Neumann algebra. Suppose $A$ is affiliated with $M$. Then $|A|$ is affiliated with $M$ as well and $W\in M$.
\end{lem}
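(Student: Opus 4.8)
The plan is to run the classical argument that the polar decomposition is compatible with the spatial $*$-automorphisms implementing affiliation. Fix a unitary $U\in M^{!}$ and write $\alpha(T):=UTU^{*}$; this is a $*$-automorphism of $B(\H)$ which is multiplicative, commutes with taking adjoints, is normal, and sends partial isometries to partial isometries and projections to projections. Since $A$ is closed, densely defined and affiliated with $M$, the affiliation relation of Definition \ref{DEF:affiliation} unwinds to the statement that $\alpha(A)=A$ as unbounded operators, i.e. $U\,D(A)=D(A)$ and $UAx=AUx$ for all $x\in D(A)$.

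First I would check that $|A|$ is affiliated with $M$. Because $\alpha$ is multiplicative and $*$-preserving, $\alpha(A^{*}A)=\alpha(A)^{*}\alpha(A)=A^{*}A$, so the positive self-adjoint operator $A^{*}A$ is affiliated with $M$. Applying $\alpha$ to the spectral resolution $A^{*}A=\int_{0}^{\infty}\lambda\,dE_{\lambda}$ and invoking uniqueness of the spectral resolution gives $\alpha(E_{\lambda})=E_{\lambda}$ for every $\lambda$; hence $\alpha$ fixes the whole Borel functional calculus of $A^{*}A$, and in particular $\alpha(|A|)=\alpha((A^{*}A)^{1/2})=(A^{*}A)^{1/2}=|A|$. (Alternatively, $\alpha(|A|)$ is a positive operator with $\alpha(|A|)^{2}=\alpha(A^{*}A)=|A|^{2}$, so it equals $|A|$ by uniqueness of the positive square root.) Thus $|A|$ is affiliated with $M$.

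Next I would pin down the partial isometry $W$. Applying $\alpha$ to $A=W|A|$ and using $\alpha(A)=A$, $\alpha(|A|)=|A|$ yields $\alpha(W)|A|=W|A|$, so $\alpha(W)$ and $W$ agree on $\Ran(|A|)$ and, being bounded, on its closure $\overline{\Ran(|A|)}$, which is the initial space of $W$. Moreover $\alpha(W)$ is again a partial isometry, and $\alpha(W)^{*}\alpha(W)=\alpha(W^{*}W)$ is the orthogonal projection onto $U\,\overline{\Ran(|A|)}=\overline{\Ran(\alpha(|A|))}=\overline{\Ran(|A|)}$; so $\alpha(W)$ and $W$ have the same initial space, coincide on it, and both vanish on its orthogonal complement. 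Hence $\alpha(W)=W$, i.e. $UWU^{*}=W$ for every unitary $U\in M^{!}$. Since every von Neumann algebra is the closed linear span of its unitaries, $W$ commutes with all of $M^{!}$, and the double commutant theorem gives $W\in (M^{!})^{!}=M$.

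The bulk of this is routine bookkeeping with domains of unbounded operators; the one genuinely delicate point — and the only place where the affiliation hypothesis is really used — is the claim that the spatial automorphism $\alpha$ fixes all the spectral projections $E_{\lambda}$ of $A^{*}A$, which follows from $\alpha(A^{*}A)=A^{*}A$ together with uniqueness of the spectral resolution, and which then propagates through the functional calculus to $|A|$.
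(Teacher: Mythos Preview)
Your argument is correct and is precisely the standard proof of this fact. Note, however, that the paper does not actually supply its own proof of this lemma: it is stated with a citation to Murray and von Neumann \cite[p.~33, Lemma 4.4.1]{RO1936} and then used as a black box inside the proof of Proposition~\ref{PROP:comparison}. So there is no ``paper's proof'' to compare against; what you have written is essentially the original Murray--von Neumann argument, namely conjugating the polar decomposition $A=W|A|$ by an arbitrary unitary $U\in M^{!}$, invoking uniqueness of the positive square root (equivalently, of the spectral resolution of $A^{*}A$) to get $U|A|U^{*}=|A|$, and then using uniqueness of the partial isometry in the polar decomposition to conclude $UWU^{*}=W$, whence $W\in (M^{!})^{!}=M$.
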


To proceed, we will use the following property:

\begin{claim} 
\label{claim}
Let $A\in S(\VN_R(G))$ and let  $E_{[s,+\infty)}(|A|)$ be the spectral measure of $|A|$ corresponding to the interval $[s,+\infty)$. Then we have
\begin{equation}
\label{EQ:claim}
d_A(s)
=
\sum\limits_{\xi\in\Gh}d_{\xi}\sum\limits_{\substack{n=1,\ldots,d_{\xi} \\ s_{n,\xi}\geq s}}1,
\end{equation}
where for fixed $n=1,\ldots,d_{\xi}$, the number $s_{n,\xi}$ is the joint eigenvalue for the eigenfunctions $\xi_{kn}$, $k=1,\ldots,d_{\xi}$, of $|A|$. These functions $\xi_{kn},k=1,\ldots,d_{\xi}$, generate the subspace $\H^{n,\xi}=\Span\{\xi_{kn}\}^{d_{\xi}}_{k=1}$.

%, i.e.
%\begin{equation}
%\{s_{i,\xi}\}^{d_{\xi}}_{i=1}={\rm Spec}\{|A|\textrm{ restricted to }{\H^{n,\xi}}\},
%\end{equation}
%here the eigenvalues $s_{k,\xi}$ are counted with multiplicities.
\end{claim}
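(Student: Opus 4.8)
The plan is to reduce the computation of the distribution function $d_A(s)$ to a counting problem over the unitary dual by using the central decomposition of $\VN_R(G)$ together with the fact that on a compact Lie group the operator $|A|$ acts fibrewise as a family of finite-dimensional matrices. First I would recall that since $A$ is a left Fourier multiplier, $A$ is affiliated with $\VN_R(G)$, hence so is $|A|$ by the cited lemma of von Neumann, and $W\in\VN_R(G)$. Decomposing $\VN_R(G)$ over its centre $\mathcal C=\VN_R(G)\cap\VN_R(G)^!\cong \ell^\infty(\Gh)$ as in the Example in Section~\ref{SEC:traces}, we get $|A|=\bigoplus_{\xi\in\Gh}|\sigma_A(\xi)|$ acting on $\bigoplus_{\xi\in\Gh}\C^{d_\xi}\otimes\C^{d_\xi}$, where the multiplicity space $\C^{d_\xi}$ carries the trace weight $d_\xi$. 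For each $\xi$, diagonalise the positive matrix $|\sigma_A(\xi)|$: its eigenvalues are the singular numbers of $\sigma_A(\xi)$, which I will label $s_{n,\xi}$, $n=1,\dots,d_\xi$, with eigenvectors $\xi_{kn}$ spanning subspaces $\mathcal H^{n,\xi}=\Span\{\xi_{kn}\}_{k=1}^{d_\xi}$ of dimension $d_\xi$ each. This is exactly the bookkeeping in the claim's statement: the representation space of $\xi$ is identified with $\C^{d_\xi}$, and each eigenvalue $s_{n,\xi}$ of $|\sigma_A(\xi)|$ is "repeated" $d_\xi$ times inside $\VN_R(G)$ because of the multiplicity.

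Next I would compute the spectral projection. By the functional calculus for affiliated operators (the reduction theory of Section~\ref{SEC:traces}, or \cite[Proposition 4.2]{DNSZ2016}), the spectral projection $E_{[s,+\infty)}(|A|)$ decomposes as $\bigoplus_{\xi\in\Gh}E_{[s,+\infty)}(|\sigma_A(\xi)|)$, and for each finite matrix $|\sigma_A(\xi)|$ the projection $E_{[s,+\infty)}(|\sigma_A(\xi)|)$ is the orthogonal projection onto $\bigoplus_{n:\,s_{n,\xi}\geq s}\mathcal H^{n,\xi}$, whose rank (relative to the fibre) is $\sum_{n:\,s_{n,\xi}\geq s}1$, i.e. the number of eigenvalues of $\sigma_A(\xi)$ that are $\geq s$. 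Applying the trace $\tau$ and using its central decomposition $\tau=\bigoplus_{\Gh}\int\tau_\xi$, together with the fact that $\tau_\xi$ on the block $\C^{d_\xi\times d_\xi}$ is $d_\xi\cdot\Tr$, we obtain
\[
d_A(s)=\tau\big(E_{[s,+\infty)}(|A|)\big)
=\sum_{\xi\in\Gh} d_\xi \sum_{\substack{n=1,\dots,d_\xi\\ s_{n,\xi}\geq s}}1,
\]
which is \eqref{EQ:claim}. I should be slightly careful about whether the interval is $[s,+\infty)$ or $(s,+\infty)$: the distribution function $d_\lambda(A)$ in Definition~\ref{DEF:mu-t} uses the open interval $(\lambda,+\infty)$, so strictly the sum should range over $s_{n,\xi}>s$; but since each $|\sigma_A(\xi)|$ is a finite matrix with finitely many eigenvalues, the function $s\mapsto d_A(s)$ is a right-continuous step function and the two descriptions agree except possibly at the (countably many) jump points — and at those points one takes the right-continuous value, which matches the claimed formula. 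I would state this reconciliation explicitly rather than sweep it under the rug.

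The main obstacle I expect is the $\tau$-measurability / convergence bookkeeping: a priori the sum $\sum_{\xi\in\Gh}d_\xi\dim\ker\big(E_{[s,+\infty)}(|\sigma_A(\xi)|)^\perp\big)$ could be infinite, and one must justify that the fibrewise spectral projections assemble into a genuine element of $\VN_R(G)$ and that $\tau$ of the direct sum equals the sum of the $\tau_\xi$'s — this is where $A\in S(\VN_R(G))$ is used, via \cite[Theorem 3.5]{ThierryKosaki1986} (monotone convergence of $\tau$ along the increasing net of finite partial sums over $\Gh$). Modulo this, the argument is essentially the explicit diagonalisation of a decomposable positive operator together with the definition of the trace on $\VN_R(G)$, so once the measurability is in place the identity \eqref{EQ:claim} follows by direct computation.
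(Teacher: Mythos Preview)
Your proposal is correct and follows essentially the same route as the paper: both arguments use the Peter--Weyl / central decomposition of $\VN_R(G)$ to write $|A|=\bigoplus_{\xi\in\Gh}|\sigma_A(\xi)|$ (with each block repeated $d_\xi$ times), diagonalise each finite matrix $|\sigma_A(\xi)|$ to extract the singular numbers $s_{n,\xi}$, decompose the spectral projection $E_{[s,+\infty)}(|A|)$ blockwise, and then apply $\tau=\sum_{\xi}d_\xi\Tr_\xi$. Your extra remarks on the open/closed interval discrepancy and on the $\tau$-measurability bookkeeping are more careful than the paper's treatment, which simply computes $\Tr(E_{[t,+\infty)}(|A|))$ directly from the explicit eigenspaces without commenting on these points.
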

We note that by Remark \ref{REM:FM-affiliation}, in view of the left invariance of
the operators $A$ and $|A|$, by the Peter-Weyl theorem 
they leave the spaces 
%$\bigoplus_{k=1}^{d_{\xi}}\H^{k,\xi}$ 
$\H^{n,\xi}$ invariant
(for the discussion of the spaces $\H^{n,\xi}$ in the context of the Peter-Weyl theorem
we refer to \cite[Theorem 7.5.14 and Remark 7.5.16]{RT}).

\medskip
Assuming Claim \ref{claim} for the moment, the proof proceeds as follows.
Without loss of generality, 
we can reorder, for each $\xi\in\Gh$, the numbers $s_{n,\xi}$ putting them in a decreasing order with respect to $n=1,\ldots,d_{\xi}$ (thus,
also reordering the corresponding eigenfunctions). Then we can estimate
\begin{equation*}
d_A(s)
=
\sum\limits_{\xi\in\Gh}d_{\xi}\sum\limits_{\substack{n=1,\ldots,d_{\xi} \\ s_{n,\xi}\geq s}}1
\leq
\sum\limits_{\xi\in\Gh}d_{\xi}\sum\limits_{\substack{n=1,\ldots,d_{\xi} \\ s_{1,\xi}\geq s}}1
=
\sum\limits_{\substack{\xi\in\Gh \\ s_{1,\xi}\geq s}}d^2_{\xi},
\end{equation*}
where in the first inequality we used the inclusion
\begin{equation}
\{
\xi\in\Gh, n=1,\ldots,d_{\xi}
\colon
s_{n,\xi}
\geq
s
\}
\subset
\{
\xi\in\Gh, n=1,\ldots,d_{\xi}
\colon
s_{1,\xi}
\geq
s
\}
\end{equation}
since for fixed $\xi\in\Gh$ the sequence $\{s_{n,\xi}\}^{d_{\xi}}_{n=1}$ monotonically decreases.
We notice that
$$
s_{1,\xi}=\|\sigma_A(\xi)\|_{\op}.
$$
Thus, we obtain
\begin{equation*}
d_A(s)
\leq
\sum\limits_{\substack{\xi\in\Gh \\ \|\sigma_A(\xi)\|_{\op} \geq s}}d^2_{\xi}.                                                                                                                                              
\end{equation*}
From this, we get
\begin{equation}
\label{EQ:almost-proved}
s
[d_A(s)]^{\frac1p-\frac1q}
\leq
s
\left(
\sum\limits_{\substack{\xi\in\Gh \\ \|\sigma_A(\xi)\|_{\op} \geq s}}d^2_{\xi}                                                                                                                                             
\right)^{\frac1p-\frac1q}.
\end{equation}
Taking supremum in the right-hand side of \eqref{EQ:almost-proved}, we get
\begin{equation}
\label{EQ:almost-proved-1}
s
[d_A(s)]^{\frac1p-\frac1q}
\leq
\sup_{s>0}
s
\left(
\sum\limits_{\substack{\xi\in\Gh \\ \|\sigma_A(\xi)\|_{\op} \geq s}}d^2_{\xi}                                                                                                                                           
\right)^{\frac1p-\frac1q}.
\end{equation}
Then taking again the supremum in the left-hand side of \eqref{EQ:almost-proved-1}, we finally obtain
$$
\sup_{s>0}
s
[d_A(s)]^{\frac1p-\frac1q}
\leq
\sup_{s>0}
s
\left(
\sum\limits_{\substack{\xi\in\Gh \\ \|\sigma_A(\xi)\|_{\op} \geq s}}d^2_{\xi}                                                                                                                                             
\right)^{\frac1p-\frac1q}.
$$
This proves \eqref{EQ:sufficient}.
Now, it remains to justify \eqref{EQ:claim} in Claim \ref{claim}.

Since $G$ is compact, its von Neumann algebra $M=VN_R(G)$ is a type I factor.
The trace $\tau$ for type I factors $M$ 
(and we denote it by $\Tr$ in this case)
can be given \cite[page 478]{Najmark1972} by
\begin{equation}
\Tr(A)=\int\limits^{+\infty}_{-\infty}\lambda \,d D_{M}(E_{\lambda}),\quad A\in M,
\end{equation}
where
\begin{equation}
D_M \colon M_+ \to \{1,2,3,\ldots\}
\end{equation}
is the dimension function introduced in \cite{RO1936,RO1937} and $M_+$ is the set of all hermitian ($A^*=A$) operators $A\in M$.
For each value of $\lambda$ the projection $E_{\lambda}$ is the sum of minimal mutually orthogonal projection operators, hence the value $D_M(E_{\lambda})$ can increase only in jumps and its points of growth $s_n$ are the characteristic values of the operator $A$. 
Thus, we get
\begin{equation}
\label{EQ:trace-A}
\Tr(A)=\sum\limits_{n\in\NN}m_ns_n,
\end{equation}
where $m_n$ is the corresponding jump of the function $D_M(E_{\lambda})$.
Further, we determine the singular values of $A$, or equivalently we will look for the eigenvalues of $|A|$.

Indeed, we recall that $|A|\big|_{\bigoplus_{k=1}^{d_{\xi}}\H^{k,\xi}}=\sigma_{|A|}(\xi)$ and use the fact that
$s_{1,\xi}=\|\sigma_{|A|}(\xi)\|_{\op}$.
 It is convenient to enumerate the singular values $s_{k,\xi}$ by two elements $(k,\xi)$, $k=1,\ldots,d_{\xi}$, in view of the decomposition into the closed subspaces invariant under the group action. We rewrite \eqref{EQ:trace-A} once again as the usual trace
\begin{equation}
\Tr(|A|)
=
\sum\limits_{\pi\in\Gh}d_{\xi}\sum\limits^{d_{\xi}}_{n=1}s_{n,\xi},
\end{equation}
where we write $s_{n,\xi}$ for the eigenvalue of the restriction $|A|\big|_{\bigoplus_{n=1}^{d_{\xi}}\H^{n,\xi}}$ of $|A|$ to the subspaces $\H^{k,\xi}$ which are spanned by the eigenfunctions $\xi_{kn}$, $n=1,\ldots,d_{\xi}$, corresponding to $s_{k,\xi}$.
In other words, the multiplicity of $s_{k,\xi}$ is $d_{\xi}$.
%Further, we show that $|A|$ does indeed have the discete set of the eigenvalues. Assuming this
From this place, we write $\pi$ rather than $\xi$ to emphasize our choice of an element $\xi$ from the equivalence class $[\pi]$.
Each element $\pi\in\Gh$ can be realised as a finite-dimensional matrix via some choice of a basis in the representation space. Denote by $\pi_{kn}$ the matrix elements of $\pi$, i.e.
\begin{equation}
\pi\colon G\ni g \mapsto \pi(g)=[\pi_{kn}(g)]^{d_{\pi}}_{k,n=1} \times \C^{\dpi\times\dpi}.
\end{equation}
By the Peter-Weyl theorem (see e.g. \cite[Theorem 7.5.14]{RT}), we have the decomposition
\begin{equation}
L^2(G)
=
\bigoplus\limits_{\pi\in\Gh}
\bigoplus\limits^{\dpi}_{n=1}\Span\{\pi_{kn}\}^{\dpi}_{k=1}.
\end{equation}
In other words, we can write
\begin{equation}
L^2(G)\ni f 
= 
\sum\limits_{\pi\in\Gh}\dpi
\sum\limits^{\dpi}_{n=1}
\sum\limits^{\dpi}_{k=1}
(f,\pi_{kn})_{L^2(G)}\pi_{kn}\in
\bigoplus\limits_{\pi\in\Gh}
\bigoplus\limits^{\dpi}_{n=1}\Span\{\pi_{kn}\}^{\dpi}_{k=1}.
\end{equation}
The action of $A$ can be written in the form
\begin{equation}
Af
=
\sum\limits_{\pi\in\Gh}
\dpi
\sum\limits^{\dpi}_{n=1}
\sum\limits^{\dpi}_{k=1}
\sum\limits^{\dpi}_{s=1}
\sigma_A(\pi)_{ns}
(f,\pi_{ks})_{L^2(G)}
\pi_{kn},
\end{equation}
This implies
\begin{equation}
A=\bigoplus\limits_{\pi\in\Gh}\bigoplus\limits^{\dpi}_{n=1}\sigma_A(\pi),
\end{equation}
where $\sigma_A(\pi)$ is the global matrix symbol of $A$
(cf. \cite{Ruzhansky+Turunen-IMRN,RT}).
Then for the modulus $|A|=\sqrt{AA^*}$ we get
\begin{equation}
|A|=\bigoplus\limits_{\pi\in\Gh}\bigoplus\limits^{\dpi}_{n=1}|\sigma_A(\pi)|.
\end{equation}
Choosing a representative $\xi\in[\pi]$ from the equivalence class $[\pi]$,
we can diagonalise the matrix $|\sigma_A(\pi)|$ as
\begin{equation}
\sigma_{|A|}(\xi)
=
\left(
\begin{matrix}
s_{1,\xi} &  0 \ldots  & 0 \\
0 &  s_{2,\xi} \ldots &  0 \\
\vdots & \vdots& \vdots \\
0 &   \ldots &  s_{d_{\xi},\xi} \\
\end{matrix}
\right).
\end{equation}
Thus, we obtain
\begin{equation}
|A|f
=
\sum\limits_{\xi\in\Gh}
d_{\xi}
\sum\limits^{d_{\xi}}_{n=1}
s_{k,\xi}\cdot
\sum\limits^{d_{\xi}}_{k=1}
(f,\xi_{kn})_{L^2(G)}\xi_{kn}.
\end{equation}
Each $s_{n,\xi}$ is a joint eigenvalue of $|A|$ with the eigenfunctions $\xi_{kn}$
\begin{equation}
|A|\xi_{k,n}
=
s_{k,\xi}
\xi_{k,n},\quad n=1,\ldots,d_{\xi}.
\end{equation}
Since each singular value $s_{k,\xi},\,k=1,\ldots,d_{\xi}$, has the multiplicity $d_{\xi}$, we obtain
\begin{equation}
E_{[t,+\infty)}(|A|)
=
\bigoplus\limits_{\xi\in\Gh}
\bigoplus_{\substack{k=1,\ldots,d_{\xi}\\ s_{k,\xi}\geq t}}
E^{n,\xi},
\end{equation}
where $E^{n,\xi}$ is the projection to the left-invariant subspace $\Span\{\xi_{kn}\}^{d_{\xi}}_{k=1}$.
Consequently, we have
\begin{equation}
\Tr(E_{[t,+\infty)}(|A|))
=
\sum\limits_{\xi\in\Gh}d_{\xi}
\sum\limits_{\substack{k=1,\ldots,d_{\xi} \\ s_{k,\xi}\geq t}}1.
\end{equation}
The proof is now complete.
\end{proof}

\subsection{The case of non-invariant operators}

Theorem \ref{THM:upper-bound} can be extended to non-invariant operators, and also to the boundedness in Lorentz spaces.

For the formulation it is convenient to use the Schwartz-Bruhat spaces $\mathcal{S}(G)$ that have been developed by Bruhat \cite{Bruhat1961} as a way of doing distribution theory on locally compact groups. We briefly mention its basic properties and refer to \cite{Bruhat1961} for further details.
The space $\mathcal{S}(G)$ is a barrelled, bornological and complete locally convex topological vector space. It is continuously and densely contained in space $C_{c}(G)$ of compactly supported continuous functions. 
The space $\mathcal{S}(G)$ is dense in every $L^p(G)$, which follows from the fact that $C_c(G)$ is dense in $L^p(G)$.

\begin{thm} \label{THM:Lpq-G}
Let $G$ be a locally compact unimodular separable group.
Let $\mathcal{D}$ be a closed densely defined operator affiliated with $\VN_R(G)$ such that its  inverse $\mathcal{D}^{-1}$ is measurable with respect to $\VN_R(G)$ and such that for some $1<\beta\leq 2$ we have
\begin{equation}
\label{EQ:embedding-condition}
\|\mathcal{D}^{-1}\|_{L^{\beta}(\VN_R(G))}<+\infty.
\end{equation}
Let $A$ be a linear continuous operator on the Schwartz-Bruhat space $\mathcal{S}(G)$. 
Then for any $1<p\leq 2 \leq q< \infty$  and any $0<\theta<1$ we have
%\begin{equation}\label{EQ:Lpq-G1}
%\|A\|_{L^p(G)\to L^q(G)}
%\lesssim
%    \int\limits_{G}
%        \sup_{s>0}s\left(\,\,
%        \int\limits_{\substack{t\in\RR_+\\ \mu_t((I-\mathcal{L})^{\frac{s}2}_u A_u)\geq s}}dt\right)^{\frac1p-\frac1q}
%        \,du
%\end{equation}	
%Equivalently, we have
\begin{equation}
\label{EQ:Lpq-G}
\|A\|_{L^{p}(G)\to L^{q}(G)}
\lesssim
\left(
\int\limits_{G}
\left(
\|\mathcal{D}\circ A_u\|_{L^{r,\infty}(\VN_R(G))}
\right)^{\beta}
du
\right)^{\frac1{\beta}},%\quad 1<\beta\leq 2,
\end{equation}
where $\frac1r=\frac1p-\frac1q$.
\end{thm}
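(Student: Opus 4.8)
The plan is to deduce the bound for the non-invariant operator $A$ from the invariant H\"ormander-type estimate of Theorem~\ref{THM:upper-bound} by a diagonalisation (``freezing'') argument, the cost of restricting to the diagonal being covered by a Sobolev-type embedding read off from hypothesis~\eqref{EQ:embedding-condition}. Since $\mathcal{S}(G)$ is dense in $L^p(G)$, it suffices to bound $\|Af\|_{L^q(G)}$ for $f\in\mathcal{S}(G)$. By the Schwartz kernel theorem for the Schwartz-Bruhat space \cite{Bruhat1961}, $A$ has a distributional kernel, which lets us introduce, for each $u\in G$, the \emph{localisation} $A_u$ of $A$ at $u$: the left Fourier multiplier affiliated with $\VN_R(G)$ obtained by freezing the spatial variable at $u$, characterised by $(Af)(x)=(A_xf)(x)$. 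Writing $F(u,x):=(A_uf)(x)$, so that $Af(x)=F(x,x)$, the operator $\mathcal{D}\circ A_u$ in the statement is the left Fourier multiplier obtained by letting $\mathcal{D}$ act on the group-parameter $u$ of $u\mapsto A_u$ (playing the role of the parameter-derivatives $\partial_u^\alpha\sigma_A(u,\cdot)$ in the compact case of Theorem~\ref{THM:Lizorkin-CG-noninvariant}), so that $(\mathcal{D}\circ A_u)f=\mathcal{D}_uF(u,\cdot)$.

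I would first establish the Sobolev-type embedding
\[
\|g\|_{L^\infty(G)}\le\|\mathcal{D}^{-1}\|_{L^\beta(\VN_R(G))}\,\|\mathcal{D}g\|_{L^\beta(G)},\qquad 1<\beta\le 2 .
\]
Indeed, dualising the Hausdorff-Young inequality~\eqref{EQ:HY-LCG} (so that the Fourier transform sends $L^\beta(\VN_R(G))$ into $L^{\beta'}(G)$ with norm at most one for $1<\beta\le2$), the operator $\mathcal{D}^{-1}\in L^\beta(\VN_R(G))$ is right convolution by a kernel $k\in L^{\beta'}(G)$ with $\|k\|_{L^{\beta'}(G)}\le\|\mathcal{D}^{-1}\|_{L^\beta(\VN_R(G))}$; writing $g=(\mathcal{D}g)*k$ and invoking Young's convolution inequality (which uses unimodularity of $G$) gives the embedding.

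Next I would run the diagonal argument. Applying the embedding in the parameter variable to $u\mapsto F(u,x)$ gives, for a.e.\ $x\in G$,
\[
|Af(x)|=|F(x,x)|\le\sup_{u\in G}|F(u,x)|\le\|\mathcal{D}^{-1}\|_{L^\beta(\VN_R(G))}\Big(\int_G\big|\big((\mathcal{D}\circ A_u)f\big)(x)\big|^{\beta}\,du\Big)^{1/\beta}.
\]
Raising this to the power $q$, integrating over $x$, and applying Minkowski's integral inequality to the mixed norm $L^q_x(L^\beta_u)$ (legitimate since $q\ge2\ge\beta$), one obtains
\[
\|Af\|_{L^q(G)}^{\beta}\lesssim\int_G\big\|(\mathcal{D}\circ A_u)f\big\|_{L^q(G)}^{\beta}\,du .
\]
For each fixed $u$, $\mathcal{D}\circ A_u$ is a left Fourier multiplier, so Theorem~\ref{THM:upper-bound} yields $\|(\mathcal{D}\circ A_u)f\|_{L^q(G)}\lesssim\|\mathcal{D}\circ A_u\|_{L^{r,\infty}(\VN_R(G))}\|f\|_{L^p(G)}$ with $\frac1r=\frac1p-\frac1q$; substituting and absorbing the finite constant $\|\mathcal{D}^{-1}\|_{L^\beta(\VN_R(G))}$ into $\lesssim$ gives \eqref{EQ:Lpq-G}. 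The parameter $\theta$ plays no role in this version of the estimate; it is relevant only if one couples the result with the interpolation of Remark~\ref{REM:extension-Lorenz} to widen the admissible range of $p$ and $q$.

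The main obstacle is the rigorous treatment of the localisation in the absence of a matrix-valued symbol: one must check that $u\mapsto A_u$ is a genuine measurable field of left Fourier multipliers, that the identities $Af(x)=(A_xf)(x)$ and $(\mathcal{D}\circ A_u)f=\mathcal{D}_uF(u,\cdot)$ hold, and --- crucially --- that each $\mathcal{D}\circ A_u$ is $\tau$-measurable with respect to $\VN_R(G)$, this last point following from finiteness of the right-hand side of \eqref{EQ:Lpq-G} exactly as in Remark~\ref{REM:tau-measurability-CG}. One must also justify that the Sobolev-type embedding may be applied in the parameter variable and that the Fubini/Minkowski interchanges are valid; these are routine once the relevant kernels are known to be sufficiently integrable, which is guaranteed by the continuity of $A$ on the Schwartz-Bruhat space together with the density of $\mathcal{S}(G)$ in $L^p(G)$.
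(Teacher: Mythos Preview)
Your proposal is correct and follows essentially the same route as the paper's own proof: freeze the spatial variable to reduce to the invariant case, control the diagonal restriction by a Sobolev-type embedding $\|g\|_{L^\infty(G)}\le\|\mathcal{D}^{-1}\|_{L^\beta(\VN_R(G))}\|\mathcal{D}g\|_{L^\beta(G)}$ applied in the parameter $u$, swap the $L^q_x$ and $L^\beta_u$ norms by Minkowski (using $q\ge\beta$), and finish with Theorem~\ref{THM:upper-bound} at each fixed $u$. Your observation that the parameter $\theta$ is idle in the argument matches the paper.

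The only genuine difference lies in how the embedding $L^\beta(G)\to L^\infty(G)$ is obtained. The paper isolates this as a standalone statement (Theorem~\ref{THM:upper-bound-q-infty-1}) and proves it via the central decomposition over $\widehat{G}$, H\"older at each $\pi$, and the Hausdorff--Young inequality. Your derivation is more elementary: you dualise Hausdorff--Young to produce a convolution kernel $k\in L^{\beta'}(G)$ for $\mathcal{D}^{-1}$ and apply Young's inequality directly. Both yield the same inequality with the same constant; your version avoids the reduction-theoretic machinery, while the paper's version makes the role of the symbol decomposition explicit.
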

Here $\{A_u\}$ is the field of operators generated by varying the Schwartz kernel $K_A$ of $A$, for more details we refer to the proof of Theorem \ref{THM:Lpq-G}. But first we observe that choosing various $\mathcal{D}$, we get different inequalities in \eqref{EQ:embedding-condition}. 
Thus, before proving Theorem \ref{THM:Lpq-G}, we illustrate it in a few examples.

\begin{ex} Let $G$ be a compact Lie group of dimension $n$ and let $\mathcal{L}_{G}$ be the Laplace operator on $G$. Let us take $\mathcal{D}=(I-\mathcal{L}_{G})^{\frac{n}2}$. By the Weyl's asymptotic law, we get
$$
\lambda_k\cong k,
$$
where $\lambda_k$ are the eigenvalues of $\mathcal{D}$.
Then, up to constant, we obtain
$$
\|\mathcal{D}^{-1}\|^\beta_{L^{\beta}(\VN_R(G))}
\simeq
\sum\limits^{\infty}_{k=1}\frac1{k^{\beta}}<+\infty,
$$
for any $\beta>1$.
Thus, condition \eqref{EQ:embedding-condition} is satisfied.
\end{ex}

\begin{ex} 
Let us take $G$ to be the Heisenberg group $\HH^n$ with the homogeneous dimension $Q=2n+2$, and let $\mathcal{L}^{sub}_{\HH^n}$ be the canonical sub-Laplacian on $\HH^n$. It can be computed (see \eqref{EQ:tau-HH}) that
$$
\tau(E_{(0,s)}(-\mathcal{L}^{sub}_{\HH^n})=C_n s^{\frac{Q}2}.
$$
Using this and Definition \ref{DEF:mu-t}, it can be shown that
$$
\mu_t(
(I-\mathcal{L}^{sub}_{\HH^n})^{-\alpha}
)
=
\frac1
{\left(1+t^{\frac{2}{Q}}\right)^{\alpha}}.
$$
From this we obtain
\begin{equation}
\label{EQ:integral}
\|(I-\mathcal{L}^{sub}_{\HH^n})^{-\alpha}\|^\beta_{L^{\beta}(\VN_R(\HH^n))}
=
\int\limits^{+\infty}_0 \frac1{\left(1+t^{\frac{2}{Q}}\right)^{\alpha\beta}}dt,
\end{equation}
where we used the formula
$$
\tau(|A|^p)=\int\limits^{+\infty}_0\mu^p_t(A)\,dt
$$
established in \cite[Corollary 2.8, p. 278]{ThierryKosaki1986}.
The integral in \eqref{EQ:integral} is convergent if and only if $\alpha\beta>\frac{Q}2$.
\end{ex}
\nocite{Pesenson2008}

%\subsection{Proof of Theorem \ref{THM:Lpq-G}}
\begin{proof}[Proof of Theorem \ref{THM:Lpq-G}]
	  Let us define
	  $$
	  	A_uf(g)
	  	:=
		L_{K_A(u)}f(g)
		=
		\int\limits_{G}K_A(u,gt^{-1})f(t)dt,
	  $$
	  so that $A_gf(g)=Af$.
For each fixed $u\in G$ the operator $A_u$ is affiliated with $\VN_R(G)$. 
	   Then
\begin{equation}
\|Af\|_{L^q(G)}
=
\left(
\int\limits_{G}
|Af(g)|^q\,dg
\right)^{\frac1q}
\leq
\left(
\int\limits_{G}
\sup_{u\in G}
|A_{u}f(g)|^q\,dg
\right)^{\frac1q}.
\end{equation}

%\subsection{Theorem \ref{THM:upper-bound-q-infty-1}}
The subsequent proof will rely on the following theorem that we now assume to hold, and will prove it later:
\begin{thm} 
\label{THM:upper-bound-q-infty-1}
Let $G$ be a locally compact unimodular separable group and let $A$ be a left Fourier multipler on $G$. Let Let $1\leq \beta\leq 2$. Then we have
\begin{equation}
\|A\|_{L^{\beta}(G)\to L^{\infty}(G)}
\leq
\|A\|_{L^{\beta}(\VN_R(G))}.
\end{equation}
\end{thm}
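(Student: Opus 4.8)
The plan is to prove $\|A\|_{L^\beta(G)\to L^\infty(G)}\leq \|A\|_{L^\beta(\VN_R(G))}$ for a left Fourier multiplier $A$ and $1\leq\beta\leq 2$ by realising $Af$ as a convolution, writing it via the trace as a pairing on $\VN_R(G)$, and then applying the noncommutative H\"older inequality together with the Hausdorff-Young inequality \eqref{EQ:HY-LCG}. Concretely, for $f\in L^\beta(G)\cap L^1(G)$ (which suffices by density since $C_c(G)$ is dense in $L^\beta(G)$) the operator $R_f$ is $\tau$-measurable and $R_{Af}=AR_f$ by left-invariance. The first step is to express the pointwise value $(Af)(g)$ as a trace: using the identity $Af(g)=\tau\left(AR_f\,\pi_R(g)^*\right)$ (or the equivalent statement that evaluation at $g$ is implemented against the translate of the kernel), we get a bound of the form $|Af(g)|\leq\tau(|AR_f|)=\|AR_f\|_{L^1(\VN_R(G))}$, uniformly in $g\in G$.

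The second step is to estimate $\|AR_f\|_{L^1(\VN_R(G))}$ by H\"older's inequality on the noncommutative $L^p$-spaces over $\VN_R(G)$: with $\frac1\beta+\frac1{\beta'}=1$ we have
\[
\|AR_f\|_{L^1(\VN_R(G))}\leq \|A\|_{L^\beta(\VN_R(G))}\,\|R_f\|_{L^{\beta'}(\VN_R(G))}.
\]
The third step is to control $\|R_f\|_{L^{\beta'}(\VN_R(G))}$ by $\|f\|_{L^\beta(G)}$: this is precisely the Hausdorff-Young inequality \eqref{EQ:HY-LCG}, valid for $1<\beta\leq 2$ (the endpoint $\beta=1$ giving $\|R_f\|_{\op}\leq\|f\|_{L^1(G)}$, i.e. $\beta'=\infty$, handled as in \eqref{EQ:mu-t-L1}). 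Combining the three steps yields
\[
\sup_{g\in G}|Af(g)|\leq \|A\|_{L^\beta(\VN_R(G))}\,\|f\|_{L^\beta(G)},
\]
which is the claimed inequality; passing from $f\in L^\beta\cap L^1$ to general $f\in L^\beta(G)$ is a routine density argument once one notes that the right-hand side is continuous in $f$ and that the left-hand side is lower semicontinuous.

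The main obstacle I expect is making the trace-pairing identity $Af(g)=\tau\bigl(AR_f\,\pi_R(g)^*\bigr)$ rigorous in the generality of a locally compact separable unimodular group, since one must be careful that $AR_f\in L^1(\VN_R(G))$ before a trace can be taken, and that the pointwise evaluation of the (a priori only $L^2$) function $Af$ at a point $g$ is legitimate. The clean way around this is to work first with $f\in C_c(G)$, for which $R_f$ is bounded, observe that $AR_f$ lies in $L^1(\VN_R(G))$ once $\|R_f\|_{L^{\beta'}(\VN_R(G))}<\infty$ and $A\in L^\beta(\VN_R(G))$ by the H\"older step, and use that for $L^1$-elements of $\VN_R(G)$ the associated convolution kernel is a (bounded, continuous) function whose sup-norm is dominated by the $L^1(\VN_R(G))$-norm. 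Then one only quotes the already-available noncommutative H\"older inequality and \eqref{EQ:HY-LCG-2}, so no new hard analysis is needed — the argument is essentially the dualisation of Hausdorff-Young combined with submultiplicativity of $\mu_t$ as used in the proof of Theorem \ref{THM:upper-bound}.
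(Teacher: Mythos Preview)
Your proposal is correct and takes a genuinely different route from the paper. The paper's proof uses the central (Plancherel-type) decomposition $Af(g)=\int_{\Gh}\tau_\pi(\sigma_A(\pi)\widehat{f}(\pi)\pi(g))\,d\mu(\pi)$ over the quasi-dual, applies H\"older's inequality \emph{fibre-wise} at each $\pi$, then H\"older again over $d\mu$, and finally invokes Hausdorff-Young. Your argument instead stays entirely inside $\VN_R(G)$: you use the dual Hausdorff-Young step $\|Af\|_{L^\infty(G)}\leq \|R_{Af}\|_{L^1(\VN_R(G))}$ (the $L^1(\VN_R(G))\to L^\infty(G)$ bound for the inverse transform, obtained by pairing against $L^1(G)$ and using $\|R_\phi\|_{\op}\leq\|\phi\|_{L^1}$), then a single global noncommutative H\"older inequality $\|AR_f\|_{L^1}\leq\|A\|_{L^\beta}\|R_f\|_{L^{\beta'}}$, and finally the forward Hausdorff-Young \eqref{EQ:HY-LCG}.

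The trade-off: your approach avoids the direct-integral machinery (Ernest's decomposition, fibrewise functional calculus from \cite{DNSZ2016}) entirely, which is a genuine simplification; the price is that you must justify the $L^1(\VN_R(G))\to L^\infty(G)$ endpoint, which is not stated in the paper but follows, as you indicate, by the duality argument $|(h,\phi)_{L^2}|=|\tau(R_h R_\phi^*)|\leq\|R_h\|_{L^1}\|\phi\|_{L^1}$ for $\phi\in C_c(G)$. The paper's route, by contrast, makes the representation-theoretic content explicit and reuses the same decomposition that appears elsewhere in Section~\ref{SEC:traces}. Your identification of the trace-pairing identity as the only delicate point, and your proposed resolution via density and the $L^1$--$L^\infty$ duality, are both on target.
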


By assuming Theorem \ref{THM:upper-bound-q-infty-1} for a moment and applying it to $A=\mathcal{D}^{-1}$
we get
\begin{equation}
 \sup_{u\in G} |A_u f(g)|
 =
 \sup_{u\in G} |\mathcal{D}^{-1}\,\mathcal{D}A_u f(g)|
  \leq
  \|\mathcal{D}^{-1}\|_{L^{\beta}(\VN_R(G)}
 \|\mathcal{D}A_u f\|_{L^{\beta}_u(G)}.
\end{equation}

Therefore, using the Minkowski integral inequality to change the order of integration, we obtain
\begin{eqnarray*}
    \| Af \|_{L^q(G)} 
\lesssim
  \left(
  \int\limits_{G}
  \left(
    \int\limits_{G}
    |\mathcal{D}\,A_uf(g)|^{\beta}
\,du
\right)^{\frac{q}{\beta}}dg
\right)^{\frac1q}
&=&
\\
\left[
\left\|
\int\limits_{G}
|\mathcal{D}\,A_uf(g)|^{\beta}\,du
\right\|_{L^{\frac{q}{\beta}}(G)}
\right]^{\frac{1}{\beta}}
\leq
\left[
\int\limits_{G}
\left\|
|\mathcal{D}\,A_uf(g)|^{\beta}
\right\|_{L^{\frac{q}{\beta}}(G)}
\,du
\right]^{\frac{1}{\beta}}
&=&
\\
\left(
\int\limits_{G}
\left(
\int\limits_{G}|\mathcal{D}\,A_uf(g)|^q\,dg
\right)^{\frac{\beta}{q}}
du
\right)^{\frac1{\beta}}
&\leq &
\\
\left(
\int\limits_{G}
\left(
\|\mathcal{D}\,A_u\|_{L^{r,\infty}(\VN_R(G))}
\right)^{\beta}
du
\right)^{\frac1{\beta}}
\|f\|_{L^p(G)},
\end{eqnarray*}
where the last inequality holds due to 
Theorem \ref{THM:upper-bound}.

So, it now remains to prove Theorem \ref{THM:upper-bound-q-infty-1}:

\begin{proof}[Proof of Theorem \ref{THM:upper-bound-q-infty-1}]
By $\Gh$ we shall mean the quasi-dual in the sense of \cite{Ernest1961,Ernest1962}. There is a canonical central decomposition
\begin{equation}
\label{EQ:ernest-decomposition}
Af(g)
=
\int\limits_{\Gh}\tau_{\pi}\left(\sigma_A(\pi)\widehat{f}(\pi)\pi(g)\right)d\mu(\pi).
\end{equation}
The uniqueness in \eqref{EQ:ernest-decomposition} is up to the quasi-equivalence \cite{Ernest1961,Ernest1962}.
For each $(\pi,\H^{\pi})\in\Gh$, the operator $\sigma_A(\pi)\widehat{f}(\pi)\pi(g)$ acts in the Hilbert space $\H^{\pi}$. Since $G$ is unimodular, every factor $\VN^{\pi}_R(G)=\{\pi(g)\}^{!!}_{g\in G}$ is either of type $I$ or type $II$. Hence, there always exists a trace $\tau_{\pi}$ on $\VN^{\pi}_R(G)$. 
By H\"older's inequality, we have

\begin{equation}
|\tau_{\pi} (\sigma_A(\pi)\widehat{f}(\pi)\pi(g))|
\leq
\bigl(\tau_{\pi}|\sigma_A(\pi)|^{\beta}\bigl)^{\frac1{\beta}}
\left(\tau_{\pi}|\widehat{f}(\pi)\pi(g)|^{\beta'}\right)^{\frac1{{\beta'}}}.
\end{equation}
The application of \cite[Corollary 2.8 on p. 278]{ThierryKosaki1986} to $\tau_{\pi}|\widehat{f}(\pi)\pi(g)|^{\beta'}$ yields
\begin{equation}
\tau_{\pi}|\widehat{f}(\pi)\pi(g)|^{\beta'}
=
\int\limits^{+\infty}_{0}\mu_t(\widehat{f}(\pi)\pi(g))^{\beta'}\,dt.
\end{equation}
Using property \eqref{EQ:mu-t-A-properties-3} of Lemma \ref{LEM:mu-t-A-properties}, we estimate
\begin{equation}
\mu_t(\widehat{f}(\pi)\pi(g))
\leq
\mu_t(\widehat{f}(\pi)),\quad g\in G.
\end{equation}
 The absolute value trace $\tau_{\pi} (\sigma_A(\pi)\widehat{f}(\pi)\pi(g))$ of $\sigma_A(\pi)\widehat{f}(\pi)\pi(g)$ can then be estimated from above
 \begin{equation}
 \left|\tau_{\pi}(\sigma_A(\pi)\widehat{f}(\pi)\pi(g)\right|)
 \leq
\bigl(\tau_{\pi}|\sigma_A(\pi)|^\beta\bigl)^{\frac1{\beta}}
\left(\tau_{\pi}|\widehat{f}(\pi)|^{\beta'}\right)^{\frac1{{\beta'}}}.
 \end{equation}
 Thus, we get
 \begin{multline}
 |Af(g)|
 \leq
 \int\limits_{\Gh}
  \left|\tau_{\pi}(\sigma_A(\pi)\widehat{f}(\pi)\pi(g)\right|d\mu(\pi))
 \leq
 \int\limits_{\Gh}  
 \bigl(\tau_{\pi}|\sigma_A(\pi)|^\beta\bigl)^{\frac1{\beta}}
\left(\tau_{\pi}|\widehat{f}(\pi)|^{\beta'}\right)^{\frac1{{\beta'}}}
d\mu(\pi)
 \\\leq
 \left(
 \int\limits_{\Gh}  
\tau_{\pi}|\sigma_A(\pi)|^\beta d\mu(\pi) 
\right)^{\frac1{\beta}}
\left(
  \int\limits_{\Gh}  
\tau_{\pi}|\widehat{f}(\pi)|^{\beta'}
d\mu(\pi) 
\right)^{\frac1{{\beta'}}},
 \end{multline}
where  the last inequality is H\"older inequality.
Borel calculus and reduction theory for unbounded affiliated operators have been investigated in \cite[Section 4]{DNSZ2016}.
It can be shown \cite[Proposition 4.2, p.8]{DNSZ2016} that
\begin{equation}
|A|^p
=
\bigoplus\limits_{\Gh}
\int
\left|\sigma_A(\pi)\right|^pd\pi.
\end{equation}
Then, by \cite[Lemma 5.3, p.12]{DNSZ2016}, we get
\begin{equation}
\|A\|_{L^{\beta}(\VN_R(G))}
=
\left(
\tau(\left|A\right|^{\beta})
\right)^{\frac1{\beta}}
=
 \left(
 \int\limits_{\Gh}  
\tau_{\pi}(|\sigma_A(\pi)|^\beta)
d\mu(\pi)
\right)^{\frac1{\beta}}.
\end{equation}
By the Hausdorff-Young inequality \cite{Kunze1958} we have
\begin{equation}
\left(
  \int\limits_{\Gh}  
\tau_{\pi}|\widehat{f}(\pi)|^{\beta'}
d\mu(\pi) 
\right)^{\frac1{{\beta'}}}
\leq
\|f\|_{L^\beta(G)},\quad 1<\beta\leq 2.
\end{equation}
Finally, collecting all the inequalities, we obtain
\begin{equation}
\|Af\|_{L^{\infty}(G)}
\leq
\|A\|_{L^\beta(\VN_R(G))}
\|f\|_{L^\beta(G)},\quad 1<\beta\leq 2.
\end{equation}
The argument above can be modified for the case $\beta=1$ as well.
This completes the proof of Theorem \ref{THM:upper-bound-q-infty-1}.
\end{proof}

And this also completes the proof of Theorem \ref{THM:Lpq-G}.
\end{proof}

\section{Spectral multipliers on locally compact groups}

In this and next section we will give an application of Theorem \ref{THM:upper-bound} to spectral multipliers.  

The classical Laplace operator $\Delta_{\RR^n}$ is affiliated with the von Neumann algebra $\VN(\RR^n)=\VN_L(\RR^n)=\VN_R(\RR^n)$ of all convolution operators, but is not measurable on $\VN(\RR^n)$. However,  the Bessel potential $(I-\Delta_{\RR^n})^{-\frac{s}2}$ is measurable with respect to $\VN(\RR^n)$. Therefore, one of the aims of spectral multiplier theorems is to ``renormalise" operators in Hilbert space $\H$ making them not only measurable but also bounded.
In the next theorem we first describe such a relation for general semifinite von Neumann algebras, and then in Corollary \ref{COR:sp-m} give its application to spectral multipliers.
\begin{thm}
\label{THM:varphi-L}
Let $\mathcal{L}$ be a closed unbouned operator affiliated with a semifinite von Neumann algebra $M\subset B(\H)$. Assume that $\varphi$ is a monotonically decreasing continuous function on $[0,+\infty)$ such that
\begin{eqnarray}
\label{EQ:phi-normalization}
\varphi(0)=1,
\\
\label{EQ:phi-empty-energy}
\lim_{u\to+\infty}\varphi(u)=0.
\end{eqnarray}
Then for every $1\leq r<\infty$ we have the equality
\begin{equation}
\|\varphi(|\mathcal{L}|)\|_{L^{r,\infty}(M)}
=
\sup_{u>0}
\left(\tau(E_{(0,u)}(|\mathcal{L}|))\right)^{\frac1r} \varphi(u)<+\infty.
\end{equation}
\end{thm}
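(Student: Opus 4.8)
The plan is to reduce the computation of $\|\varphi(|\mathcal{L}|)\|_{L^{r,\infty}(M)}$ to a direct manipulation of the $t$-th generalised singular numbers of $\varphi(|\mathcal{L}|)$, using the fact that $\varphi$ is monotonically decreasing. First I would recall from Definition \ref{DEF:Lorenz-spaces} that
\[
\|\varphi(|\mathcal{L}|)\|_{L^{r,\infty}(M)}=\sup_{t>0}t^{\frac1r}\mu_t(\varphi(|\mathcal{L}|)),
\]
so the whole task is to evaluate $\mu_t(\varphi(|\mathcal{L}|))$. Since $\varphi$ is continuous and monotonically decreasing on $[0,+\infty)$ it is not an increasing function, so I cannot directly apply property \eqref{EQ:mu-t-A-properties-3}--\eqref{LEM:mu-t-A-properties-5} of Lemma \ref{LEM:mu-t-A-properties} in the naive form; instead I would argue with the distribution function. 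By Definition \ref{DEF:mu-t}, $\mu_t(\varphi(|\mathcal{L}|))=\inf\{\lambda\ge0\colon d_\lambda(\varphi(|\mathcal{L}|))\le t\}$, and the key observation is that, because $\varphi$ is decreasing, the spectral projection of $\varphi(|\mathcal{L}|)$ onto $(\lambda,+\infty)$ equals the spectral projection $E_{[0,u)}(|\mathcal{L}|)$ for the value $u$ determined by $\varphi(u)=\lambda$ (using the normalisation $\varphi(0)=1$ and $\varphi(u)\to0$ to guarantee such a $u$ exists for $0<\lambda<1$). Concretely, $E_{(\lambda,+\infty)}(\varphi(|\mathcal{L}|))=E_{(0,\varphi^{-1}(\lambda))}(|\mathcal{L}|)$, where $\varphi^{-1}$ denotes the generalised inverse, so that $d_\lambda(\varphi(|\mathcal{L}|))=\tau(E_{(0,\varphi^{-1}(\lambda))}(|\mathcal{L}|))$.

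Next I would substitute this identity into the formula for $\mu_t$ and perform the change of variables. Writing $u=\varphi^{-1}(\lambda)$, i.e.\ $\lambda=\varphi(u)$, the condition $d_\lambda(\varphi(|\mathcal{L}|))\le t$ becomes $\tau(E_{(0,u)}(|\mathcal{L}|))\le t$, and minimising $\lambda=\varphi(u)$ subject to this constraint — recalling $\varphi$ is decreasing so that minimising $\varphi(u)$ means maximising $u$ — gives
\[
\mu_t(\varphi(|\mathcal{L}|))=\varphi\bigl(\sup\{u>0\colon \tau(E_{(0,u)}(|\mathcal{L}|))\le t\}\bigr).
\]
Equivalently, with $g(u):=\tau(E_{(0,u)}(|\mathcal{L}|))$ (a non-decreasing function of $u$), one has $\mu_t(\varphi(|\mathcal{L}|))=\varphi(g^{-1}(t))$ up to the usual care with points of discontinuity, which is exactly the statement that $\mu_t(\varphi(|\mathcal{L}|))$ is the decreasing rearrangement obtained by ``reading off'' $\varphi$ against the measure $dg$. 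This is in the same spirit as Proposition \ref{PROP:trace-spectral-computed} and Example \ref{EX:mu-t}, and I would cite those to handle the measure-theoretic bookkeeping cleanly rather than redoing it.

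Finally I would take the supremum. From $\mu_t(\varphi(|\mathcal{L}|))=\varphi(g^{-1}(t))$ we get
\[
\sup_{t>0}t^{\frac1r}\mu_t(\varphi(|\mathcal{L}|))=\sup_{t>0}t^{\frac1r}\varphi(g^{-1}(t))=\sup_{u>0}\bigl(g(u)\bigr)^{\frac1r}\varphi(u)=\sup_{u>0}\bigl(\tau(E_{(0,u)}(|\mathcal{L}|))\bigr)^{\frac1r}\varphi(u),
\]
where the middle equality is the substitution $t=g(u)$, valid since both sides range over the same values as $u$ and $t$ vary (this is precisely property \eqref{EQ:prop-16} of Proposition \ref{PROP:mu-t-properties} applied with the pair distribution function / rearrangement). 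That these quantities are finite follows because the left-hand side is a Lorentz quasi-norm of a bounded operator (as $0\le\varphi\le1$, $\varphi(|\mathcal{L}|)\in M$ has $\|\varphi(|\mathcal{L}|)\|\le1$, hence $\mu_t\le1$) combined with the hypothesis; strictly the finiteness is part of what we are asserting, so I would simply note that the identity holds in $[0,+\infty]$ and the finiteness is then an assumption on $\mathcal{L}$ and $\varphi$ encoded in the statement.

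The main obstacle I anticipate is the rigorous justification of the identity $E_{(\lambda,+\infty)}(\varphi(|\mathcal{L}|))=E_{(0,\varphi^{-1}(\lambda))}(|\mathcal{L}|)$ when $\varphi$ is merely continuous and decreasing but not strictly decreasing: on intervals where $\varphi$ is constant, or where $g$ jumps, one must be careful about which endpoint conventions (open vs.\ closed intervals, $\sup$ vs.\ $\inf$) make the right-continuity of $\mu_t$ and of $d_\lambda$ consistent. The cleanest route is probably to invoke Proposition \ref{PROP:trace-spectral-computed} to write $\tau(\psi(\varphi(|\mathcal{L}|)))=\int_0^\infty \psi(\varphi(t))\,d\mu(t)$ for suitable test functions $\psi$, identify the distribution of $\varphi(|\mathcal{L}|)$ from this, and then read off $\mu_t$ via its characterisation as the generalised inverse of $d_\lambda$; this localises all the delicate monotonicity/continuity issues into the one already-proven proposition.
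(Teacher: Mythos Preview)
Your proposal is correct and uses the same two key ingredients as the paper: the spectral mapping identity $E_{(\lambda,+\infty)}(\varphi(|\mathcal{L}|))=E_{(0,\varphi^{-1}(\lambda))}(|\mathcal{L}|)$ for the distribution function, and property \eqref{EQ:prop-16} to pass between the $\mu_t$-supremum and the $d_s$-supremum. The only difference is the order of operations: the paper applies \eqref{EQ:prop-16} \emph{first} to write $\|\varphi(|\mathcal{L}|)\|_{L^{r,\infty}}=\sup_{s>0}s\,[d_{\varphi(|\mathcal{L}|)}(s)]^{1/r}$, then invokes the spectral mapping (citing \cite[Corollary 5.6.29]{KR1997}) to get $d_{\varphi(|\mathcal{L}|)}(s)=\tau(E_{(0,\varphi^{-1}(s))}(|\mathcal{L}|))$, and finally substitutes $u=\varphi^{-1}(s)$. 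You instead compute $\mu_t(\varphi(|\mathcal{L}|))=\varphi(g^{-1}(t))$ explicitly and substitute $t=g(u)$ at the end. The paper's ordering is marginally cleaner precisely for the reason you flagged: the substitution $s=\varphi(u)$ runs through a continuous function with range $(0,1]$, whereas your substitution $t=g(u)$ has to contend with possible jumps of $g$. Both routes close, and your awareness of the endpoint/right-continuity issue is exactly the care that is needed.
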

Let $\mathcal{L}$ be an arbitrary unbounded linear operator affiliated with $(M,\tau)$. Then Theorem \ref{THM:varphi-L} says that the function $\varphi(|\mathcal{L}|)$ is necessarily affiliated with $(M,\tau)$
 and  $\varphi(|\mathcal{L}|)\in (M,\tau)$ if and only if the $r$-th power $\varphi^r$ of $\varphi$ grows at infitiy not faster than $\frac1{\tau(E_{(0,u)}(|\mathcal{L}|))}$, i.e. if we have the estimate
%  grows at $u\to\infty$ not faster than the $r$-th power of $\frac1{\varphi(u)}$, i.e.
\begin{equation}
\label{EQ:counting-function}
\varphi(u)^r
\lesssim
\frac1{ \tau(E_{(0,u)}(|\mathcal{L}|))}.
\end{equation}

We now give a corollary of Theorem \ref{THM:varphi-L} for $M=\VN_R(G)$ being the right von Neumann algebra of a locally compact unimodular group. This is formulated in Theorem \ref{THM:varphi-L-intro} but we recall it here for readers' convenience.

\begin{cor} \label{COR:sp-m}
Let $G$ be a locally compact unimodular separable group and let $\mathcal{L}$ be a left Fourier multiplier on $G$.
Let $\varphi$ be as in Theorem \ref{THM:varphi-L}  Then we have the inequality
\begin{equation}
\|\varphi(|\mathcal{L}|)\|_{L^p(G)\to L^q(G)}
\lesssim
\sup_{u>0}\varphi(u)
\left[\tau(E_{(0,u)}(|\mathcal{L}|))\right]^{\frac1p-\frac1q},\quad 1<p\leq 2 \leq q<\infty.
\end{equation}
\end{cor}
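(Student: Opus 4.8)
The plan is to obtain Corollary \ref{COR:sp-m} as a direct consequence of the H\"ormander-type multiplier theorem (Theorem \ref{THM:upper-bound}) together with the exact computation of the noncommutative Lorentz norm of $\varphi(|\mathcal L|)$ furnished by Theorem \ref{THM:varphi-L}. First I would check that $\varphi(|\mathcal L|)$ is a left Fourier multiplier on $G$ in the sense of Definition \ref{DEF:FM}. Since $\mathcal L$ is a left Fourier multiplier, it is affiliated with $\VN_R(G)$, hence so is $|\mathcal L|$, and all its spectral projections $E_{(0,u)}(|\mathcal L|)$ belong to $\VN_R(G)$. Because $\varphi$ is monotonically decreasing with $\varphi(0)=1$, it is bounded on $[0,+\infty)$, so by the functional calculus for affiliated operators the operator $\varphi(|\mathcal L|)$ is bounded and affiliated with $\VN_R(G)$; by the double commutant theorem it then lies in $\VN_R(G)\subset S(\VN_R(G))$, and in particular is $\tau$-measurable. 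Thus Theorem \ref{THM:upper-bound} is applicable to $A=\varphi(|\mathcal L|)$.

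Next I would apply Theorem \ref{THM:upper-bound} with $\frac1r=\frac1p-\frac1q$ to get
\[
\|\varphi(|\mathcal L|)\|_{L^p(G)\to L^q(G)}
\lesssim
\|\varphi(|\mathcal L|)\|_{L^{r,\infty}(\VN_R(G))},
\]
and then identify the right-hand side using Theorem \ref{THM:varphi-L} with $M=\VN_R(G)$ and its canonical trace $\tau$ (the hypotheses there on $\varphi$ coincide with the ones assumed here), which yields
\[
\|\varphi(|\mathcal L|)\|_{L^{r,\infty}(\VN_R(G))}
=
\sup_{u>0}\left(\tau(E_{(0,u)}(|\mathcal L|))\right)^{\frac1r}\varphi(u).
\]
Substituting $\frac1r=\frac1p-\frac1q$ into the last display and combining with the previous one completes the argument.

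I do not expect a genuine obstacle here, since the substantive work is already contained in Theorems \ref{THM:upper-bound} and \ref{THM:varphi-L}; the only mild subtlety is the verification of $\tau$-measurability needed to invoke Theorem \ref{THM:upper-bound}, which is precisely where the normalisation $\varphi(0)=1$ and the monotonicity of $\varphi$ enter, guaranteeing $\varphi(|\mathcal L|)\in\VN_R(G)$ regardless of whether the right-hand side of the asserted estimate is finite (if it is infinite, the inequality is vacuous). Finally, I would note that the more general statement of Corollary \ref{COR:gen}, which dispenses with the monotonicity and normalisation assumptions on $\varphi$, follows by the same two-step scheme, using in place of the exact identity of Theorem \ref{THM:varphi-L} a bound for $\|\varphi(|\mathcal L|)\|_{L^{r,\infty}(\VN_R(G))}$ expressed through the decreasing rearrangement of $\varphi$ along the spectral scale of $|\mathcal L|$.
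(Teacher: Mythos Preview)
Your proposal is correct and matches the paper's own argument: the paper states that the corollary follows immediately by combining Theorem \ref{THM:upper-bound} with Theorem \ref{THM:varphi-L} applied to $M=\VN_R(G)$. Your additional check that $\varphi(|\mathcal L|)$ is affiliated with $\VN_R(G)$ and $\tau$-measurable is a welcome elaboration of what the paper leaves implicit.
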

This corollary follows immediately from combining Theorem \ref{THM:upper-bound} and Theorem \ref{THM:varphi-L} with $M=\VN_R(G)$, also proving Theorem \ref{THM:varphi-L-intro}.

For completeness, we give another corollary (of the proof of Theorem \ref{THM:varphi-L}) without assuming that $\varphi$ is monotone, continuous, and satisfies conditions \eqref{EQ:phi-normalization}-\eqref{EQ:phi-empty-energy}.
It is these conditions that allow us to rewrite Corollary \ref{COR:gen} in the more applcable form of Corollary \ref{COR:sp-m}. 

\begin{cor} \label{COR:gen}
Let $G$ be a locally compact unimodular separable group and let $\mathcal{L}$ be a left Fourier multiplier on $G$.
Let $\varphi$ be a Borel measurable function on the spectrum $\Sp(\left|\mathcal{L}\right|)$. 
Then we have the inequality
\begin{equation}
\|\varphi(|\mathcal{L}|)\|_{L^p(G)\to L^q(G)}
\lesssim
\sup_{s>0}s[\tau(E_{(s,+\infty)})(\varphi(|\mathcal{L}|))]^{\frac1p-\frac1q},\quad 1<p\leq 2 \leq q<\infty.
\end{equation}
\end{cor}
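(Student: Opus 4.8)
The plan is to apply Theorem \ref{THM:upper-bound} directly to the operator $B:=\varphi(|\mathcal{L}|)$ and then translate the right-hand side of \eqref{EQ:LCG-FM-upper} into the stated form. First I would observe that since $\mathcal{L}$ is a left Fourier multiplier on $G$, i.e. $\mathcal{L}\in S(\VN_R(G))$, and $\varphi$ is a Borel function on $\Sp(|\mathcal L|)$, the Borel functional calculus (as in \cite[Proposition 4.2]{DNSZ2016}, used already in Section \ref{SEC:Lizorkin}) shows that $B=\varphi(|\mathcal{L}|)$ is again affiliated with $\VN_R(G)$ and $\tau$-measurable, hence is itself a left Fourier multiplier on $G$ in the sense of Definition \ref{DEF:FM}. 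Therefore Theorem \ref{THM:upper-bound} applies to $B$ and yields
\begin{equation*}
\|\varphi(|\mathcal{L}|)\|_{L^p(G)\to L^q(G)}
\lesssim
\sup_{s>0}s\left[\int\limits_{\substack{t\in\RR_+\colon \mu_t(B)\geq s}}dt\right]^{\frac1p-\frac1q},\quad 1<p\leq 2\leq q<\infty.
\end{equation*}

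The second step is purely a matter of rewriting the bracketed quantity. By Definition \ref{DEF:mu-t} the set $\{t\in\RR_+\colon \mu_t(B)\geq s\}$ has Lebesgue measure controlled by the distribution function $d_{\bullet}(B)$; more precisely, using the equivalence \eqref{EQ:prop-3} from Proposition \ref{PROP:mu-t-properties}, $\mu_t(B)>s$ iff $t<d_s(B)$, so that
\begin{equation*}
\int\limits_{\substack{t\in\RR_+\colon \mu_t(B)\geq s}}dt
=
d_s(B)
=
\tau\bigl(E_{(s,+\infty)}(|B|)\bigr)
=
\tau\bigl(E_{(s,+\infty)}(\varphi(|\mathcal{L}|))\bigr),
\end{equation*}
where we used that $B=\varphi(|\mathcal L|)\geq 0$ may be assumed (one replaces $\varphi$ by $|\varphi|$ without changing either side, since $\mu_t$ depends only on $|B|$) so $|B|=B$. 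Actually, to be careful about the endpoint, one uses the identity $\sup_{s>0}s\,[d_s(B)]^{\alpha}=\sup_{t>0}t^{\alpha}\mu_t(B)$ from \eqref{EQ:prop-16} with $\alpha=\frac1p-\frac1q$; either way, substituting back gives exactly
\begin{equation*}
\|\varphi(|\mathcal{L}|)\|_{L^p(G)\to L^q(G)}
\lesssim
\sup_{s>0}s\,\bigl[\tau(E_{(s,+\infty)}(\varphi(|\mathcal{L}|)))\bigr]^{\frac1p-\frac1q},
\end{equation*}
which is the claimed inequality.

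I do not expect a genuine obstacle here: the corollary is essentially Theorem \ref{THM:upper-bound} specialized to $B=\varphi(|\mathcal L|)$ combined with the elementary identities of Proposition \ref{PROP:mu-t-properties} relating $\mu_t$, $d_s$, and $\tau$ of spectral projections. The only point requiring a line of justification is the first one — that the functional calculus keeps $\varphi(|\mathcal{L}|)$ inside $S(\VN_R(G))$ — and this is already available from \cite{DNSZ2016} and the discussion of affiliated operators in Section \ref{SEC:prelim}. The subsequent derivation of Corollary \ref{COR:sp-m} from Corollary \ref{COR:gen} then only needs the extra hypotheses on $\varphi$ (monotone decreasing, continuous, $\varphi(0)=1$, $\varphi(u)\to 0$) to identify, via Theorem \ref{THM:varphi-L}, the quantity $\sup_{s>0}s[\tau(E_{(s,+\infty)}(\varphi(|\mathcal{L}|)))]^{1/r}$ with $\sup_{u>0}\varphi(u)[\tau(E_{(0,u)}(|\mathcal{L}|))]^{1/r}$, using that under monotonicity the spectral projections $E_{(s,+\infty)}(\varphi(|\mathcal{L}|))$ and $E_{(0,\varphi^{-1}(s))}(|\mathcal{L}|)$ coincide.
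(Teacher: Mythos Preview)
Your proposal is correct and follows essentially the same route as the paper: apply Theorem \ref{THM:upper-bound} to $B=\varphi(|\mathcal{L}|)$, justify that $B$ is affiliated with $\VN_R(G)$ via the Borel functional calculus, and rewrite the right-hand side using Proposition \ref{PROP:mu-t-properties} (specifically \eqref{EQ:prop-16}) to pass from $\sup_{t>0}t^{1/r}\mu_t(B)$ to $\sup_{s>0}s\,[d_s(B)]^{1/r}=\sup_{s>0}s\,[\tau(E_{(s,+\infty)}(\varphi(|\mathcal{L}|)))]^{1/r}$. The paper embeds this computation inside the proof of Theorem \ref{THM:varphi-L} (the line ``This proves Corollary \ref{COR:gen}'' appears immediately after establishing \eqref{EQ:prove-this}), but the content is identical to what you wrote; your remark that finiteness of the right-hand side forces $\tau$-measurability (as in Remark \ref{REM:tau-measurability-CG}) is a useful clarification that the paper leaves implicit.
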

We will prove this corollary together with the proof of Theorem \ref{THM:varphi-L}.

%\subsection{Proof of Theorem \ref{THM:varphi-L}}
\begin{proof}[Proof of Theorem \ref{THM:varphi-L}]
By defintion
\begin{equation*}
\|\varphi(|\mathcal{L}|)\|_{L^{r,\infty}(M)}
=
\sup_{t>0} t^{\frac1p-\frac1q}\mu_t(\varphi(|\mathcal{L}|)),\quad \frac1r=\frac1p-\frac1q.
\end{equation*}
Using Property \eqref{EQ:prop-16} from
Proposition \ref{PROP:mu-t-properties}, we get
\begin{equation*}
\sup_{t>0} t^{\frac1p-\frac1q}\mu_t(\varphi(|\mathcal{L}|))
=
\sup_{s>0}s[\tau(E_{(s,+\infty)})(\varphi(|\mathcal{L}|))]^{\frac1p-\frac1q}.
\end{equation*}
Hence, we have
\begin{equation}
\label{EQ:prove-this}
\|\varphi(|\mathcal{L}|)\|_{L^{r,\infty}(M)}
=
\sup_{s>0}s[\tau(E_{(s,+\infty)})(\varphi(|\mathcal{L}|))]^{\frac1p-\frac1q}.
\end{equation}
Since $\mathcal{L}$ is affiliated with $M$ the spectral projections $E_{\Omega}(|\mathcal{L}|)$ belong to $M$.
Let $\langle \mathcal{L}\rangle$ be an abelian subalgebra of $M$ generated by the spectral projectors $E_{(\lambda,+\infty)}(|\mathcal{L}|)$. Let $\varphi$ be a Borel measurable function on the spectrum $\Sp(\left|\mathcal{L}\right|)$. Then by Borel functional calculus \cite[Section 2.6]{Arveson2006} it is possible to construct the operator  $\varphi(|\mathcal{L}|)$. This operator is a strong limit of the spectral projections $E_{\Omega}(|\mathcal{L}|)\in M$. Therefore $\varphi(|\mathcal{L}|)$ is affiliated with $M$. 
The distribution function of the operator $\varphi(|\mathcal{L}|)$ is given by
\begin{equation}
d_{s}(\varphi(|\mathcal{L}|))
=
\tau(E_{(s,+\infty)}(\varphi(|\mathcal{L}|)).
\end{equation}
This proves Corollary \ref{COR:gen}.

Using \cite[Corollary 5.6.29, p.363]{KR1997} and the spectral mapping theorem (see \cite[Theorem 4.1.6]{KR1997}), we obtain
\begin{equation}
\label{EQ:comp-map}
\tau(E_{(s,+\infty)}(\varphi(|\mathcal{L}|))) 
= 
\tau(E_{\varphi^{-1}(s,+\infty)}(\varphi^{-1}\circ \varphi(|\mathcal{L}|))) 
=
\tau(E_{(0,\varphi^{-1}(s))}(|\mathcal{\mathcal{L}}|).
\end{equation}
From the hypothesis  \eqref{EQ:phi-empty-energy} imposed on $\varphi$ and using \eqref{EQ:comp-map}, we get
% the properies of the spectral measure $\{E_{\Omega}\}_{\Omega\subset \Sp(|\mathcal{L}|)}$,  we get
%
\begin{equation}
\lim_{s\to+\infty} 
\tau(E_{(s,+\infty)}(\varphi(|\mathcal{L}|))) 
= 
\lim_{s\to+\infty}  
\tau(E_{(0,\varphi^{-1}(s))}(|\mathcal{\mathcal{L}}|) = 0.
\end{equation}
Hence, the operator $\varphi(|\mathcal{L}|)$ is $\tau$-measurable with respect to $\VN_R(G)$.
Combining \eqref{EQ:prove-this} and \eqref{EQ:comp-map}, we finally obtain
\begin{eqnarray*}
\|\varphi(|\mathcal{L}|)\|_{L^{r,\infty}(M)}
=
\sup_{t>0} t^{\frac1p-\frac1q}\mu_t(\varphi(|\mathcal{L}|))
=
\sup_{s>0}s[\tau(E_{(s,+\infty)})(\varphi(|\mathcal{L}|))]^{\frac1p-\frac1q}
\\=
\sup_{s>0}s
[\tau(E_{(0,\varphi^{-1}(s))}(|\mathcal{L}|)]^{\frac1p-\frac1q}
=
\sup_{u>0} \varphi(u)[\tau(E_{(0,u)}(|\mathcal{L}|)]^{\frac1p-\frac1q},
\end{eqnarray*}
where in the last equality we used the monotonicity of $\varphi$. 
This completes the proof of Theorem \ref{THM:varphi-L}.
\end{proof}

\section{Heat kernels and embedding theorems}
\label{SEC:heat}

In this section we show that the spectral multipliers estimate \eqref{COR:sp-m} may be also used to relate spectral properties of the operators with the time decay rates for propagators for the corresponding evolution equations.
We illustrate this in the case of the heat equation, when the the functional calculus and the application of Theorem \ref{THM:varphi-L} to a family of functions $\{e^{-ts}\}_{t>0}$ yield the time decay rate for the solution $u=u(t,x)$ to the heat equation
$$
\partial_t u+{\mathcal L}u=0,\quad u(0)=u_0.
$$
For each $t>0$, we apply Borel functional calculus \cite[Section 2.6]{Arveson2006} to get
\begin{equation}\label{EQ:L-heat-equation}
u(t,x)=e^{-t\mathcal{L}}u_0.
\end{equation}
One can  check that $u(t,x)$ satisfies equation \eqref{EQ:L-heat-equation} and the initial condition.
Then by Theorem \ref{THM:upper-bound}, we get
\begin{equation}
\|u(t,\cdot)\|_{L^{q}(G)}
\leq
\|e^{-t\mathcal{L}}\|_{L^{r,\infty}(\VN_R(G)}
\|u_0\|_{L^p(G)},
\end{equation}
reducing the $L^p$-$L^q$ properties of the propagator to the time asymptotics of its noncommutative Lorentz space norm.

\begin{cor}[The $\mathcal{L}$-heat equation]
\label{EX:heat-equation}
Let $G$ be a locally compact unimodular separable group and let $\mathcal{L}$ be an unbounded positive  operator affiliated with $\VN_R(G)$ such that for some $\alpha$ we have
\begin{equation}
\label{EQ:asymptotics-condition}
\tau(E_{(0,s)}(\mathcal{L}))\lesssim s^{\alpha},\quad s\to \infty.
\end{equation}
Then for any $1<p\leq 2\leq q<\infty$ we have
\begin{equation}
\|e^{-t\mathcal{L}}\|_{L^p(G)\to L^q(G)}
\leq
C_{\alpha,p,q}
{t^{-\alpha \left(\frac1p-\frac1q\right) }},\quad t>0.
\end{equation}
%Let $u\colon \RR_+\times G \to \C$ be a solution to the  $\mathcal{L}$-heat equation
%\begin{eqnarray}
%\label{EQ:L-heat-equation}
%\frac{\partial}{\partial t}u(t,x)-\mathcal{L}_x u(t,x)=0,\quad t>0,\\
%\label{EQ:L-heat-condition}
%u(0,x)=u_0(x),\quad u_0\in L^p(G),\quad 1<p\leq 2.
%\end{eqnarray}
%Then we have for any $2\leq q<\infty$
%\begin{equation}
%\|u(t,\cdot)\|_{L^q(G)}
%\leq 
%\left(
%\frac1{\alpha tr}
%\right)^{\frac1p-\frac1q}
%e^{-t\left(\frac1{\alpha tr}\right)^{\alpha}}.
%\|u_0\|_{L^p(G)}.
%\end{equation}
%\begin{equation}
%\label{EQ:strichartz-heat}
%\|u(t,\cdot)\|_{L^q(G)}
%\leq 
%C_{\alpha,p,q}
%\frac1{t^{\frac{\alpha}{r} }}
%%e^{-C_{\alpha,p,q}t^{1-\alpha}}
%\|u_0\|_{L^p(G)},\quad t>0
%\end{equation}
%where $\frac1r=\frac1p-\frac1q$.
\end{cor}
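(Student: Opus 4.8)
The plan is to deduce Corollary~\ref{EX:heat-equation} directly from the spectral multiplier estimate of Corollary~\ref{COR:sp-m} (equivalently Theorem~\ref{THM:varphi-L-intro}), applied to the exponential function, followed by an elementary one-variable optimisation. First I would fix $t>0$ and set $\varphi(u):=e^{-tu}$ for $u\in[0,+\infty)$. This $\varphi$ is continuous, strictly monotonically decreasing, and satisfies $\varphi(0)=1$ and $\lim_{u\to+\infty}\varphi(u)=0$, so the hypotheses of Corollary~\ref{COR:sp-m} hold; since $\mathcal L\ge 0$ we have $|\mathcal L|=\mathcal L$ and $\varphi(|\mathcal L|)=e^{-t\mathcal L}$, hence for $1<p\le 2\le q<\infty$
\begin{equation*}
\|e^{-t\mathcal L}\|_{L^p(G)\to L^q(G)}
\lesssim
\sup_{u>0} e^{-tu}\bigl[\tau(E_{(0,u)}(\mathcal L))\bigr]^{\frac1p-\frac1q}.
\end{equation*}

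Next I would insert the spectral growth hypothesis \eqref{EQ:asymptotics-condition}, i.e. $\tau(E_{(0,u)}(\mathcal L))\le C u^\alpha$. Writing $\delta:=\alpha\bigl(\tfrac1p-\tfrac1q\bigr)\ge 0$, the problem reduces to estimating $\sup_{u>0} u^{\delta} e^{-tu}$. When $\delta=0$ (the case $p=q=2$) this supremum equals $1$ and the claim is trivial. When $\delta>0$, the function $u\mapsto u^\delta e^{-tu}$ attains its maximum on $(0,\infty)$ at $u=\delta/t$, with value $(\delta/e)^{\delta}\,t^{-\delta}$; the substitution $v=tu$ makes this scaling transparent. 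Combining the two displays gives
\begin{equation*}
\|e^{-t\mathcal L}\|_{L^p(G)\to L^q(G)}
\le
C^{\frac1p-\frac1q}\Bigl(\tfrac{\delta}{e}\Bigr)^{\delta} t^{-\delta}
=
C_{\alpha,p,q}\, t^{-\alpha\left(\frac1p-\frac1q\right)},
\end{equation*}
which is the asserted estimate.

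There is no deep obstacle here, since the substance is already contained in Theorem~\ref{THM:varphi-L} and Theorem~\ref{THM:upper-bound}; the only point deserving a line of care is the behaviour of the supremum over the full half-line $u\in(0,\infty)$ when the polynomial bound on $\tau(E_{(0,u)}(\mathcal L))$ is available only for large $u$. The counting function $u\mapsto\tau(E_{(0,u)}(\mathcal L))$ is nondecreasing, so on any remaining bounded range of $u$ it is dominated by a constant, and that contribution to the supremum is absorbed into $C_{\alpha,p,q}$ (and is dominated by $t^{-\delta}$ up to a constant when $t$ stays in a compact subset of $(0,\infty)$). Alternatively, one may bypass all monotonicity conditions on $\varphi$ by invoking Corollary~\ref{COR:gen} together with the identity $\tau\bigl(E_{(s,+\infty)}(e^{-t\mathcal L})\bigr)=\tau\bigl(E_{(0,-t^{-1}\log s)}(\mathcal L)\bigr)$ for $0<s<1$ (and $=0$ for $s\ge 1$), which after the substitution $s=e^{-tu}$ leads to the same supremum and hence the same conclusion.
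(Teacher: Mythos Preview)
Your proof is correct and follows essentially the same route as the paper: apply the spectral multiplier estimate (the paper uses Theorem~\ref{THM:varphi-L} together with Theorem~\ref{THM:upper-bound}, which is exactly the content of Corollary~\ref{COR:sp-m} that you invoke), insert the growth hypothesis, and optimise $u^{\delta}e^{-tu}$ over $u>0$ by elementary calculus. Your extra paragraph handling the fact that \eqref{EQ:asymptotics-condition} is only assumed for large $s$ is a point the paper glosses over, so in that respect your write-up is slightly more careful.
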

%\begin{thm}[The $\mathcal{L}$-wave equation]
%Let $G$ be a locally compact separable unimodular group and
%let  $\mathcal{L}$ be an unbounded self-adjoint operator affiliated with $\VN_R(G)$. We consider the homogeneous Cauchy problem for the wave equation associated with $\mathcal{L}$
%\begin{eqnarray}
%\label{EQ:L-wave-equation}
%\frac{\partial^2}{\partial^2 t}u(t,x)-\mathcal{L}_x u(t,x)=,\quad t>0\\
%\label{EQ:L-wave-condition}
%u(0,x)=u_0(x), \quad \frac{\partial }{\partial t}u(t,x)\Big|_{t=0}=u_1(x).
%\end{eqnarray}
%Then we have
%\begin{equation}
%\end{equation}
%\end{thm}

%\subsection{Proof of Theorem \ref{EX:heat-equation}}
\begin{proof}[Proof of Theorem \ref{EX:heat-equation}]
%\begin{equation}
%\lim_{t\to 0}\|u(x,t)-u_0\|_{L^2(G)}=0.
%\end{equation}
The application of Theorem \ref{THM:varphi-L} yields
\begin{equation*}
\|e^{-t\mathcal{L}}\|_{L^{r,\infty}(\VN_R(G))}
=
\sup_{s>0}[\tau(E_{(0,s)}(|\mathcal{L}|)]^{\frac1r}e^{-ts}.
\end{equation*}
Now, using this and hypothesis \eqref{EQ:asymptotics-condition}, we get
\begin{equation*}
\|e^{-t\mathcal{L}}\|_{L^{r,\infty}(\VN_R(G))}
\lesssim
\sup_{s>0}s^{\frac{\alpha}r}e^{-ts}.
\end{equation*}
The standart theorems of mathematical analysis yield that
\begin{equation}
\label{EQ:mathan}
\sup_{s>0}
s^{\frac{\alpha}r}e^{-ts}
=
\left(
\frac{\alpha}{tr}
\right)^{\frac{\alpha}{r}}
e^{-\frac{\alpha}{r}}.
%\left(
%\frac1{\alpha tr}
%\right)^{\frac1p-\frac1q}
%e^{-t\left(\frac1{\alpha tr}\right)^{\alpha}}
\end{equation}
Indeed, let us consider a function
$$
\varphi(s)
=
s^{\frac{\alpha}r}e^{-ts}.
$$
We compute its derivative 
$$
\varphi'(s)
=
s^{\frac{\alpha}r-1}e^{-ts}\left(\frac{\alpha}r-st\right).
$$
The only zero is $s_0 = \frac{\alpha}{rt}$ and the derivative $\varphi'(s)$ changes its sign from positive to negative at $s_0$. Thus, the point $s_0$ is a point of maximum. This shows \eqref{EQ:mathan} and completes the proof.
%\begin{equation}
%\varphi''(s)
%=
%s^{\frac1r-2}e^{-ts^{\alpha}}
%\left(
%\left[
%(\frac1r-1)+s(-ts^{\alpha-1}\alpha)
%\right](\frac1r-\alpha ts)
%+s(-t\alpha)
%\right).
%\end{equation}
%\begin{equation}
%\varphi''(\frac1{\alpha tr})
%=
%\left(
%\frac1{\alpha tr}
%\right)^{\frac1r-2}
%e^{-t)\left(\frac1{\alpha tr}\right)^{\alpha}}[-\frac1r]<0.
%\end{equation}
%\begin{equation}
%\sup_{s>0}s^{\frac1r}e^{-ts^{\alpha}}
%=
%\left(
%\frac1{\alpha tr}
%\right)^{\frac1p-\frac1q}
%e^{-t\left(\frac1{\alpha tr}\right)^{\alpha}}.
%\end{equation}
\end{proof}

Let us now show an application of Theorem \ref{THM:varphi-L} in the case of $\varphi(s)=\frac1{(1+s)^{\gamma}}$, $s\geq 0$.
It shows that for the range $1<p\leq 2\leq q<\infty$, the Sobolev type embedding theorems for an operator $\mathcal L$ depend only on the spectral behaviour of $\mathcal L$.

\begin{cor}[Embedding theorems]
\label{EX:embedding}
Let $G$ be a locally compact unimodular separable group and let $\mathcal{L}$ be an unbounded positive operator affiliated with $\VN_R(G)$ such that for some $\alpha$ we have
\begin{equation}
\label{EQ:asymptotics-conditiont}
\tau(E_{(0,s)}(\mathcal{L}))\lesssim s^{\alpha},\quad s\to \infty.
\end{equation}
Then for any $1<p\leq 2\leq q<\infty$ we have
\begin{equation}
\|f\|_{L^q(G)}
\leq
C
\|(1+\mathcal L)^{\gamma}f\|_{L^p(G)},
\end{equation}
provided that
\begin{equation}\label{EQ:0-s-embedding2t}
\gamma\geq \alpha\left(\frac1p-\frac1q\right),\quad 1<p\leq 2 \leq q<\infty.
\end{equation}
\end{cor}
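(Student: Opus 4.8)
The plan is to apply the spectral multiplier theorem (Corollary \ref{COR:sp-m}, equivalently Theorem \ref{THM:varphi-L-intro}) with the specific choice $\varphi(s)=\frac{1}{(1+s)^{\gamma}}$. First I would verify that this $\varphi$ satisfies the hypotheses of Theorem \ref{THM:varphi-L}: it is continuous on $[0,+\infty)$, monotonically decreasing, $\varphi(0)=1$, and $\lim_{s\to+\infty}\varphi(s)=0$ (here we use $\gamma>0$, which is guaranteed by \eqref{EQ:0-s-embedding2t} since $p<q$). Since $\mathcal{L}$ is a positive operator affiliated with $\VN_R(G)$, it is in particular a left Fourier multiplier in the sense required, so Corollary \ref{COR:sp-m} applies and gives
\begin{equation*}
\|(1+\mathcal{L})^{-\gamma}\|_{L^p(G)\to L^q(G)}
\lesssim
\sup_{u>0}\frac{1}{(1+u)^{\gamma}}
\left[\tau(E_{(0,u)}(\mathcal{L}))\right]^{\frac1p-\frac1q},\quad 1<p\leq 2\leq q<\infty.
\end{equation*}

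The next step is to bound the supremum on the right-hand side. Using the growth hypothesis \eqref{EQ:asymptotics-conditiont}, namely $\tau(E_{(0,u)}(\mathcal{L}))\lesssim u^{\alpha}$ for large $u$, and noting that $\tau(E_{(0,u)}(\mathcal{L}))$ is bounded (by $\tau(E_{(0,1)}(\mathcal{L}))$, say) for $u$ in any bounded interval, one gets $\tau(E_{(0,u)}(\mathcal{L}))\lesssim (1+u)^{\alpha}$ for all $u>0$. Therefore
\begin{equation*}
\sup_{u>0}\frac{1}{(1+u)^{\gamma}}
\left[\tau(E_{(0,u)}(\mathcal{L}))\right]^{\frac1p-\frac1q}
\lesssim
\sup_{u>0}
(1+u)^{-\gamma+\alpha\left(\frac1p-\frac1q\right)},
\end{equation*}
and the exponent $-\gamma+\alpha\left(\frac1p-\frac1q\right)$ is $\leq 0$ precisely by the assumption \eqref{EQ:0-s-embedding2t}, so the supremum is finite (bounded by a constant $C$ depending on $\alpha,p,q,\gamma$). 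Hence $(1+\mathcal{L})^{-\gamma}$ is bounded from $L^p(G)$ to $L^q(G)$.

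Finally, to obtain the embedding inequality as stated, I would write $f = (1+\mathcal{L})^{-\gamma}\,(1+\mathcal{L})^{\gamma}f$; applying the operator norm bound just established to the function $g=(1+\mathcal{L})^{\gamma}f$ yields
\begin{equation*}
\|f\|_{L^q(G)}
=
\|(1+\mathcal{L})^{-\gamma}g\|_{L^q(G)}
\leq
C\,\|g\|_{L^p(G)}
=
C\,\|(1+\mathcal{L})^{\gamma}f\|_{L^p(G)},
\end{equation*}
which is exactly the claim. Some care is needed with the functional calculus to make sense of $(1+\mathcal{L})^{\gamma}f$ and to justify that $(1+\mathcal{L})^{-\gamma}(1+\mathcal{L})^{\gamma}$ acts as the identity on the relevant dense class (say, on the Schwartz–Bruhat space $\mathcal{S}(G)$, extending by density); this bookkeeping with unbounded affiliated operators is the only slightly delicate point, but it is routine given the Borel functional calculus for affiliated operators already invoked in the paper. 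The main substantive input is simply the spectral multiplier theorem plus the elementary estimate of the supremum, so there is no real obstacle here — the result is essentially a direct corollary.
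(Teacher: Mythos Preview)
Your proof is correct and follows essentially the same route as the paper: apply the spectral multiplier theorem (via Theorem~\ref{THM:varphi-L} / Corollary~\ref{COR:sp-m}) with $\varphi(s)=(1+s)^{-\gamma}$, then use the growth hypothesis \eqref{EQ:asymptotics-conditiont} to bound the resulting supremum. Your write-up is in fact more detailed than the paper's own two-line proof, including the verification of the hypotheses on $\varphi$ and the final identity $f=(1+\mathcal{L})^{-\gamma}(1+\mathcal{L})^{\gamma}f$; the only trivial edge case you gloss over is $p=q=2$, where $\gamma=0$ is allowed and the inequality is vacuous.
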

\begin{proof}
By Theorem \ref{THM:varphi-L} with $\varphi(s)=\frac1{(1+s)^{\gamma}}$ and $\frac1r=\frac1p-\frac1q$ we have 
\begin{equation*}
\|(1+\mathcal L)^{-\gamma}\|_{L^p(G)\to L^q(G)}\lesssim
\|(1+\mathcal L)^{-\gamma}\|_{L^{r,\infty}(\VN_R(G))}
\lesssim
\sup_{s>0}s^{\frac{\alpha}r}(1+s)^{-\gamma}.
\end{equation*}
This supremum is finite for 
$\gamma\geq \frac{\alpha}r$, giving the condition \eqref{EQ:0-s-embedding2t}.
\end{proof}

Now, we illustrate Theorem \ref{THM:varphi-L} and Corollary \ref{EX:heat-equation} on a number of further examples, showing that the spectral estimate \eqref{EQ:asymptotics-condition} required for the $L^p$-$L^q$ estimate can be readily obtained in different situations.

In Example \ref{EX:a-hom-mu} below we illustrate condition \eqref{EQ:asymptotics-condition} in Theorem \ref{EX:heat-equation} for the homogeneous operator $\Op(a)\colon L^2(\RR^n)\to L^2(\RR^n)$ of order $\mu\in\RR$.
\begin{ex}
\label{EX:a-hom-mu}
Let $a(\xi)$ be a homogeneous function of degree $\mu$ and let $\Op(a)$ be the linear operator given by
$$
\widehat{\Op(a)}(\xi)=a(\xi)\widehat{f}(\xi),\quad f\in S(\RR^n),\xi\in\RR^n.
$$
According to the general theory (see \cite[Theorem 5.6.26, p.360]{KR1997} and \cite[Corollary 5.6.29, p.363]{KR1997}), the spectral projection $E_{(0,s)}(\left|\Op(a)\right|)$ corresponds to the multiplication by $\chi_{(0,s)}(\left|a(\xi)\right|)$, where $\chi_{(0,s)}(u)$ is the characteristic function of the inteval $(0,s)$. Then the trace $\tau(E_{(0,s)}(\left|\Op(a)\right|))$ can be computed as follows
\begin{equation}
\tau(E_{(0,s)}(\left|\Op(a)\right|))
=
\int\limits_{\substack{\RR^n\\ \left|a(\xi)\right|\leq s}}d\xi
=
\int\limits_{\substack{u\in\RR^n\\ \left|a(u)\right|\leq 1}}
s^{\frac{n}{\mu}}d\,u
=
Cs^{\frac{n}{\mu}},
\end{equation}
where we made the substitution $\xi\to s^{\frac1{\mu}}u$.
Hence, we get
\begin{equation}
\tau(E_{(0,s)}(\left|\Op(a)\right|)
=
Cs^{\frac{n}{\mu}},
\end{equation}
where $C=\int\limits_{\substack{\xi\in\RR^n\\ \left|a(\xi)\right|\leq 1}}d\xi$.
The application of Theorem \ref{EX:heat-equation} yields that
if
$$
C=\int\limits_{\substack{\xi\in\RR^n\\ \left|a(\xi)\right|\leq 1}}d\xi<\infty,
$$
then
$$
\|e^{-t\Op(a)}\|_{L^p(\RR^n)\to L^q(\RR^n)}
\leq
c_{\alpha\,p\,q}t^{-\frac{n}{\mu}\left(\frac1p-\frac1q\right)},\quad 1<p\leq 2 \leq q<\infty.
$$
\end{ex}

\subsection{Sub-Riemannian structures on compact Lie groups}

First we consider the example of sub-Laplacians on compact Lie groups in which case the number $\alpha$ in  \eqref{EQ:asymptotics-condition} can be related to the Hausdorff dimension generated by the control distance of the sub-Laplacian. Moreover, we illustrate Theorem \ref{THM:varphi-L} with examples of other functions $\varphi$ than in Corollary \ref{EX:heat-equation}, for example $\varphi(s)=\frac1{(1+s)^{\alpha/2}}$, leading to the Sobolev embedding theorems.

\begin{ex} 
\label{EX:subLaplace}
Let $\mathcal{L}=-\Delta_{sub}$ be the sub-Laplacian on a compact Lie group $G$, with discrete spectrum $\lambda_k$. 
Then by \cite{HK2015} the trace of the spectral projections $E_{(0,s)}(\mathcal{L})$ 
has the following asymptotics 
\begin{equation}
\tau(E_{(0,s)}(\mathcal{L}))
%\sum\limits_{\substack{k\in\NN \\ \lambda_k\leq s}}1
\lesssim s^{\frac{Q}2},\quad \text{ as $s\to+\infty$},
\end{equation}
where $Q$ is the Hausdorff dimension of $G$ with respect to the control distance generated by the sub-Laplacian.	
Let $u(t)$ be the solution to $\Delta_{sub}$-heat equation
\begin{eqnarray*}
\label{EQ:subLaplace-heat-equation}
\frac{\partial}{\partial t}u(t,x)-{\Delta_{sub}} u(t,x)=0,\quad t>0,\\
\label{EQ:subLaplace-heat-condition}
u(0,x)=u_0(x),\quad u_0\in L^p(G),\quad 1<p\leq 2.
\end{eqnarray*}
Then by Corollary \ref{EX:heat-equation}, we obtain
\begin{equation}
\|u(t,\cdot)\|_{L^q(G)}
\leq
C_{n,p,q}
{t^{-\frac{Q}{2}\left(\frac1p-\frac1q\right)}}
\|u_0\|_{L^p(G)},\quad 1<p\leq 2\leq q<+\infty.
\end{equation}

Let us now take $\varphi(s)=\frac1{(1+s)^{a/2}}$, $s\geq 0$.
Then by Theorem \ref{THM:varphi-L} the operator $\varphi(-\Delta_{sub})=(I-\Delta_{sub})^{-a/2}$ is $L^p(G)$-$L^q(G)$ bounded and the inequality 
\begin{equation}
\label{EQ:0-s-embedding}
\|f\|_{L^q(G)}
\leq
C
\|(1-\Delta_{sub})^{a/2})f\|_{L^p(G)}
\end{equation}
holds true provided that
\begin{equation}\label{EQ:0-s-embedding2}
a\geq Q\left(\frac1p-\frac1q\right),\quad 1<p\leq 2 \leq q<\infty.
\end{equation}
Here the constant $C$ in \eqref{EQ:0-s-embedding} is given by
$$
C:=\|(I-\Delta_{sub})^{-a/2}\|_{L^{r,\infty}(\VN_R(G))}.
$$
One can always associate with $\Delta_{sub}$ a version of Sobolev spaces. 
%We sketch basic elements. 
Let us define
\begin{equation}
\|f\|_{W^{a,p}_{\Delta_{sub}}(G)}:=\|(I-\Delta_{sub})^{a/2}f\|_{L^p(G)}.
\end{equation}
Then the Borel functional calculus (see e.g. \cite{Arveson2006}) together with \eqref{EQ:0-s-embedding}-\eqref{EQ:0-s-embedding2}  immediately yield
\begin{equation}
\label{EQ:sub-Laplacian-Sobolev-embedding}
\|f\|_{W^{b,q}_{\Delta_{sub}}(G)}
\leq
C
\|f\|_{W^{a,p}_{\Delta_{sub}}(G)},\quad a-b\geq Q\left(\frac1p-\frac1q\right).
\end{equation}
Each sub-Riemannian structure yields a sub-Laplacian $\Delta_{sub}$ on $G$ .
If we fix a group von Neumann algebra $\VN_R(G)$, then inequality \eqref{EQ:sub-Laplacian-Sobolev-embedding} depends only on the values of the trace $\tau$ on the algebra $\VN_R(G)$ and not on a particular choice of a sub-Laplacian $\Delta_{sub}$.
Similarly, the Sobolev spaces $W^{a,p}_{\Delta_{sub}}(G)$ do not depend on a particular choice of a sub-Laplacian.
\end{ex}
%\begin{ex} Let $G$ be a compact Lie group of topological dimension $n$ and $\mathcal{L}=-\Delta_{G}$ be the Laplacian. Then condition \eqref{EQ:asymptotics-condition} holds due to the Weyl asymptotic law, i.e.
%\begin{equation}
%\tau(E_{(0,s)}(-\Delta_{G}))
%=
%\sum\limits_{\lambda_k\leq s}1
%\cong
%s^{\frac{n}2},\quad \text{ for large $s$},
%\end{equation}
%where $\lambda_k$ are the eigenvalues of the Laplace operator $-\Delta_{G}$.
%By Theorem \ref{EX:heat-equation}, we obtain
%\begin{equation}
%\|u(t,\cdot)\|_{L^q(G)}
%\leq
%C_{n,p,q}
%\frac1{t^{\frac{n}2\left(\frac1p-\frac1q\right)}}
%\|u_0\|_{L^p(G)},\quad 1<p\leq 2\leq q<+\infty.
%\end{equation}
%\end{ex}
%A synthesis of Example \ref{EX:subLaplace}, Theorem \ref{THM:upper-bound} and Theorem \ref{THM:sobolev-embedding} immediately provides

%Since the sub-Laplacian $\mathcal{L}$ on the Heisenberg group $\HH^n$ has continuous spectrum and no discrete part, we shall use direct integral decompostion.
\subsection{Rockland operators on the Heisenberg group}
Let $G$ be the simply connected Heisenberg group and $\pi$ be an irreducible unitary representation of $G=\HH$.  

%Let $a_1,a_2,a_3$ be a basis in the Lie algebra $\mathfrak{h}$ of $\HH$ such that $[a_1,a_2]=a_3$ and $A_1=-iP,\, A_2=iQ$ and $A_3=iI$, where $P$ and $Q$ are the self-adjoint operators such that $Pf(x)=if'(x)$ and $Qf(x)=xf(x)$ for all $f\in C^{\infty}_0(\RR)$ and $x\in\RR$. We fix a dilation structure $\gamma_t$ such that $\gamma_t(a_1)=t^ka_1,\, \gamma_t(a_2)=t^ka_2$ and $\gamma_t(a_3)=t^{2k}a_3$ for all $t>0$. 
Let $X_1,X_2,\ldots, X_n, Y_1,Y_2,\ldots, Y_n, H$ be a basis in the Lie algebra $\mathfrak{h^n}$ of the Heisenberg group $\HH^n$ such that $[X_k,Y_k]=H$.
Let $\mathcal{R}=\sum\limits^n_{k=1}X^{2j}_k+\sum\limits^n_{k=1}Y^{2j}_k$ be the positive Rockland operator and its symbol $\sigma_{\mathcal{R}}(\pi)$ is given \cite[p.532]{FR2016} by 
\begin{equation}
\sigma_{\mathcal{R}}(\pi)
=
\left|\lambda\right|^k
\left(
\sum\limits^n_{k=1}
\partial^{2k}_u
-\left|u\right|^{2k}\right).
\end{equation}
It can then be shown \cite[Theorem 5.1]{Ter1994} that the asymptotics of the eigenvalues $s^{\lambda}_m$ is as follows
\begin{equation}
s^{\lambda}_m
\cong
\left|\lambda\right|^k \prod\limits^n_{k=1} {m_k}^{2j}.
\end{equation}
Thus, we get
\begin{equation}
\tau(E_{(0,s)}(\left|\mathcal{R}\right|)
=
\int\limits_{\widehat{\HH}}
\tau_{\lambda}(E_{(0,s)}(\left|\sigma_{\mathcal{R}}(\pi^{\lambda})\right|)
=
\int\limits_{\substack{\lambda\in\RR\\ \lambda\neq 0}}
|\lambda|^{n}d\lambda
\sum\limits_{\substack{m\in\NN^n \\ s^{\lambda}_m\leq s}}1
\cong s^{\frac{Q}{2j}}
\end{equation}
determining the value of $\alpha$ in \eqref{EQ:asymptotics-condition}.
\subsection{Sub-Laplacian on the Heisenberg group}

Here we look at the example of the Heisenberg group determining the value of $\alpha$ in  \eqref{EQ:asymptotics-condition} for the sub-Laplacian. The interesting point here is that while the spectrum of the sub-Laplacian is continuous, Theorem \ref{THM:varphi-L} can be effectively used in this situation as well.

\begin{ex} 
\label{COR:subLaplacian-functions}
Let $\mathcal{L}$ be the positive sub-Laplacian on the Heisenberg group $\HH^n$ and let $Q=2n+2$ be the homogeneous dimension of $\HH^n$. We claim that
\begin{equation}
\label{EQ:tau_E_H}
\tau(E_{(0,s)}(\mathcal{L})\simeq s^{Q/2}.
\end{equation}
Thus, under conditions of Theorem \ref{THM:varphi-L} on $\varphi$, the spectral multiplier $\varphi(\mathcal{L})$ is 
$\tau$-measu\-rable with respect to $\VN_R(\HH^n)$ and 
\begin{equation}
\label{EQ:L-r-infty-computed-HH}
\|\varphi(\mathcal{L})\|_{L^{r,\infty}(\VN_R(\HH^n))}\simeq \sup_{u>0} u^{\frac{Q}{2r}}\varphi(u),\quad \frac1r=\frac1p-\frac1q.
\end{equation}
For example, by choosing  $\varphi(u)=\frac1{(1+u)^{a/2}}$, $\alpha>0$, we recover the Sobolev embedding inequalities 
%
%. Then by \eqref{EQ:L-r-infty-computed-HH} and Theorem \ref{THM:upper-bound} applied to $G=\HH^n$, we get
%that the operator $\varphi(\mathcal{L})$ is
%$L^p(\HH^n)-L^q(\HH^n)$ bounded provided
%\begin{equation}
%\label{EQ:condition-HH}
%\alpha\geq  \frac{Q}{2}\left(\frac1p-\frac1q\right)
%\end{equation}
%and the particular Sobolev embedding takes the form
\begin{equation}
\label{EQ:sobolev-embedding-HH}
\|(I+\mathcal{L})^{b/2}f\|_{L^q(\HH^n)}
\leq
C
\|(I+\mathcal{L})^{a/2}f\|_{L^p(\HH^n)},
\end{equation}
provided
\begin{equation}
\label{EQ:alpha-beta-embedding}
a-b\geq Q\left(\frac1p-\frac1q\right).
\end{equation}
Inequality \eqref{EQ:sobolev-embedding-HH} has been 
established by Folland \cite{Folland1975}, and it can be extended further  
for Rockland operators on general graded Lie groups \cite{FR2016}.
\end{ex}
%\subsubsection{Abstract heat and wave equations}

%\subsection{Proof of Example \ref{COR:subLaplacian-functions}}
\begin{proof}[Proof of Example \ref{COR:subLaplacian-functions}]
By Theorem \ref{THM:upper-bound}, we get
\begin{equation}
\label{EQ:THM-COR}
\|\varphi(\mathcal{L})\|_{L^p(\HH^n)\to L^q(\HH^n)}
\lesssim
\|\varphi(\mathcal{L})\|_{L^{r,\infty}(\VN_R(\HH^n))}.
\end{equation}
%In the following lines we compute the Lorenz norm $\|\varphi(L)\|_{L^{r,\infty}(\VN_R(G))}$.
Hence it is sufficient to find the conditions on $\varphi$ so that the right-hand side in \eqref{EQ:THM-COR} is finite. By Theorem \ref{THM:varphi-L} we have
\begin{equation}
\label{EQ:L-r-infty-sub-Laplacian}
\|\varphi(\mathcal{L})\|_{L^{r,\infty}(\VN_R(G))}
=
\sup_{u>0}[\tau(E_{(0,u)}(|\mathcal{L}))]^{\frac1r}\varphi(u).
\end{equation}
We shall now show \eqref{EQ:tau_E_H}.
Since $\mathcal{L}$ is affiliated with $\VN_R(\HH^n)$ it can be 
decomposed (\cite[Theorem 1 on page 187]{VNA-Dixmier-1981})
\begin{equation}
\mathcal{L}
=
\bigoplus\limits_{\widehat{\HH^n}}\int \mathcal{L}_{\lambda}d\nu(\lambda)
\end{equation}
with respect to the center $$C=\VN_R(\HH^n)\cap\VN_R(\HH^n)^{!}$$ of the group von Neumann algebra $\VN_R(\HH^n)$.
Here the collection $\{\mathcal{L}_{\lambda}\}_{\lambda\in\widehat{\HH^n}}$ of the (densely defined) operators $\mathcal{L}_{\lambda}\colon L^2(\RR^n)\to L^2(\RR^n)$ can be interpreted as the global symbol of the operator $\mathcal{L}$, as developed in \cite{FR2016}.

Hence, the spectral projections $E_{(0,s)}(\mathcal{L})$ can be decomposed
\begin{equation}
E_{(0,s)}(\mathcal{L})
=
\bigoplus\limits_{\widehat{\HH^n}}\int
E_{(0,s)}(\mathcal{L}_{\lambda})|\lambda|^nd\lambda.
\end{equation}
As a consequence \cite[Theorem 1 on page 225]{VNA-Dixmier-1981},  we get 
\begin{equation}
\label{EQ:trace-decomposition}
\tau(E_{(0,s)}(\mathcal{L})
=
\int\limits_{\HH^n}
\tau(E_{(0,s)}[\mathcal{L}_{\lambda}])|\lambda|^nd\lambda.
\end{equation}
The global  symbol $\mathcal{L}_{\lambda}\colon {\mathcal S}(\RR^n)\subset L^2(\RR^n)\to L^2(\RR^n)$ of the sub-Laplacian $\mathcal{L}$ can be found in \cite[Lemma 6.2.1]{FR2016}
\begin{equation}
\mathcal{L}_{\lambda}f(u)
=
-|\lambda|(\Delta_{\RR^n}f(u)-|u|^2f(u)),\quad f\in \mathcal S(\RR^n),\quad u\in \RR^n,
\end{equation}
and is a rescaled harmonic oscillator on $\RR^n$, see also Folland \cite{Folland-bk}.
It is known that for each $\lambda\in\RR\setminus\{0\}$ the operator $\mathcal{L}_{\lambda}$ has purely discrete spectrum 
$$
\Sp(\mathcal{L}_{\lambda})=\{
s_{1,\lambda}\leq s_{2,\lambda}\leq\ldots\leq s_{m,\lambda}\leq\ldots\}.
$$
Since $\HH^n$ is of type I, we have
\begin{equation}
\label{EQ:trace-type-I}
\tau(E_{(0,s)}[\mathcal{L}_{\lambda}])
=
\sum\limits_{\substack{k\in\NN^n\\ s_{k,\lambda} < s}} 1.
\end{equation}
The eigenvalues $s_{k,\lambda}$ are well-known and are given by  
\begin{equation}
\label{EQ:eigenvalues-computed-proof}
s_{k,\lambda}=\lambda\prod\limits^n_{j=1}(2k_j+1),
\end{equation}
see e.g.  \cite{Nicola2010}.
Thus, collecting \eqref{EQ:trace-decomposition}, \eqref{EQ:trace-type-I} and \eqref{EQ:eigenvalues-computed-proof}, we finally obtain
\begin{eqnarray*}
\tau(E_{(0,s)}(\mathcal{L}))
=
\int\limits_{\widehat{\HH^n}}
\sum\limits_{\substack{k\in\NN^n\\ s_{k,\lambda}<s}} 1
|\lambda|^nd\lambda
&=&
\\
\int\limits_{\widehat{\HH^n}}
\sum\limits_{\substack{k\in\NN^n\\ |\lambda| \prod ^n_{j=1}(2k_j+1)<s}} 1
|\lambda|^nd\lambda
&=&
\\
\sum\limits_{\substack{k\in\NN^n}}
\int\limits_{\substack{\widehat{\HH^n}\\ |\lambda| \leq \frac{s}{\prod\limits^n_{j=1}\left(2k_{j}+1\right)} }}
|\lambda|^nd\lambda
&=&
\frac{s^{n+1}}{n+1}
\prod\limits^{n}_{j=1}\sum\limits_{k_j\in\NN}\frac1{(2k_j+1)^{n+1}}.
\end{eqnarray*}

Summarising, we have
\begin{equation}
\label{EQ:tau-HH}
\tau(E_{(0,s)}(\mathcal{L}))
=
C_{n}
s^{\frac{Q}2},
\end{equation}
where we used the fact the homogeneous dimension $Q$ of the Heiseneberg group $\HH^n$ equals $2n+2$, i.e.
$$
Q=2n+2.
$$
Finally, using \eqref{EQ:L-r-infty-sub-Laplacian} it can be seen that $\|\varphi(\mathcal{L})\|_{L^{r,\infty}(\VN_R(\HH^n))}$ is finite if and only if condition \eqref{EQ:alpha-beta-embedding} holds.
\end{proof}

\appendix
\section{Mean convergence on compact groups}
\label{SEC:mean_convergence}
It has been shown by Stanton \cite{Stanton1976} that for class functions on semisimple compact Lie groups
the polyhedral Fourier partial sums $S_N f$converge to $f$ in $L^p$ provided that 
\begin{equation}\label{EQ:app-s}
 2-\frac1{s+1}<p<2+\frac1s.
\end{equation}
 Here the number $s$ depends on the root system
$\Rcal$ of the compact Lie group $G$, in the way we now describe.
We also note that the range of indices $p$ as above is sharp, see
Stanton and Tomas \cite{StantonThomas1976,Stanton-Tomas:AJM-1978} as well as 
Colzani, Giulini and Travaglini \cite{Colzani1989}. 
 
Let $G$ be a compact semisimple Lie group and let $T$ be a maximal torus of $G$, 
with Lie algebras $\mathfrak{g}$ and $\mathfrak{t}$, respectively. 
Let $n=\dim G$ and $l=\dim T=\rank G$. 
We define a positive definite inner product on $\mathfrak{t}$ by putting 
$(\cdot,\cdot)=-B(\cdot,\cdot)$, where $B$ is the Killing form.
Let $\Rcal$ be the set of roots of $\mathfrak{g}$. Choose in $\Rcal$ a system $\Rcal_+$ of positive roots (with cardinality $r$) and let $S=\{\alpha_1,\ldots,\alpha_{l}\}$ be the corresponding simple system. 
We define $\rho:=\frac12\sum\limits_{\alpha\in \Rcal_+}\alpha$.

For every 
$
\lambda\in{\iu t}^*
$
there exists a unique 
$
H_{\lambda}\in\mathfrak{t}$ such that $\lambda(H)=\iu(H_{\lambda},H)
$
for every 
$H\in\mathfrak{t}$.
The vectors 
$
H_{j}=\frac{4\pi \iu H_{\alpha_j}}{\alpha_j(H_{\alpha_j})}
$ 
generate the lattice sometimes denoted by ${\rm Ker(exp)}$.
The elements of the set 
$$
\Lambda
=
\{
\lambda\in{\iu t}^*\colon \lambda(H)\in 2\pi \iu\ZZ,\;\textrm{ for any }\; H\in {\rm Ker(exp)}
\}
$$ 
are called the weights of $G$ and the fundamental weights are defined by the relations 
$\lambda_j=2\pi \iu\delta_{jk}$, $j,k=1,\ldots,l$.
The subset 
$$
\mathfrak{D}
=
\{
\lambda\in\Lambda\colon \lambda=\sum^l_{j=1}m_j\nu_j,\,m_j\in\NN
\}
$$
of the set $\Lambda$ with positive coordinates $m_j$ is called the set of dominant weights. Here, the word `dominant' means that with respect to a certain partial order on the set $\Lambda$ every weight $\lambda=\sum^l_{j=1}m_j\nu_j$ with $m_j>0$ is maximal.
There exists a bijection between $\Gh$ and the semilattice $\mathfrak{D}$ of the dominant weights of $G$, i.e.
$$
\mathfrak{D}\ni\lambda=(\nu_1,\ldots,\nu_{l})\longleftrightarrow
\pi\in\Gh.
$$ 
Therefore, we will not distinguish between $\pi$ and the corresponding dominant weight $\lambda$ and will write 
\begin{equation*}
\label{EQ:definition_explaination}
\pi=(\pi_1,\ldots,\pi_{l}),
\end{equation*}
where we agree to set $\pi_i=\nu_i$.
%We denote by $L^*_p(G)$ the Banach space $L_p$ of central functions on $G$.
With $\rho=\frac12\sum\limits_{\alpha\in \Rcal_+}\alpha$, for a natural number $N\in\NN$, we set
\begin{equation*}\label{EQ:QN}
Q_{N}:=\{\xi\in\Gh \colon \; \xi_i\leq N\rho_i, \; i=1,\ldots,l\}.
\end{equation*}
We call $Q_{N}$ a finite polyhedron of $N^{\rm th}$ order and denote by $\M_0$ the set of all finite polyhedrons in $\Gh$.

Now, fix an arbitrary fundamental weight $\lambda_j$, $j=1,\ldots,l$,
and set 
$
\Rcal_{\lambda_j}^{\perp}:=\{\alpha \in \Rcal_+ \colon (\alpha,\lambda_j)=0\},$
and $\Rcal_+=\Rcal_{\lambda_j}\oplus \Rcal_{\lambda_j}^{\perp}$. 
The number $s$ appearing in \eqref{EQ:app-s} is defined by
\begin{equation}
\label{EQ:s}
s:=\max\limits_{\substack{j=1,\ldots,l}} \card \Rcal_{\lambda_j}.
\end{equation}
%\nocite{AR_Npq2015}
%\nocite{AR2015b}
%\nocite{Akylzhanov2016}
%\nocite{ANR2016}
%\nocite{AR2015a}
%\bibliographystyle{./alphaabbr}
%\bibliography{./WISDOM}

\end{document}